\newfont{\sdbl}{msbm9}
\newfont{\dbl}{msbm10 at 12pt}
\theoremstyle{definition}
\newcommand{\da}{{\mbox{\dbl A}}}
\newcommand{\dpp}{{\mbox{\dbl P}}}
\newcommand{\dz}{{\mbox{\dbl Z}}}
\newcommand{\dn}{{\mbox{\dbl N}}}
\newcommand{\sdz}{{\mbox{\sdbl Z}}}
\newcommand{\sdn}{{\mbox{\sdbl N}}}
\newcommand{\sdp}{{\mbox{\sdbl P}}}
\newcommand{\dc}{{\mbox{\dbl C}}}
\newcommand{\dq}{{\mbox{\dbl Q}}}
\newcommand{\ord}{\mathop{\rm ord}\nolimits}
\newcommand{\gr}{\mathop {\rm gr}}
\newcommand{\End}{\mathop {\rm End}}
\newcommand{\Adm}{\mathop {\rm Adm}}
\newcommand{\LT}{\mathop {\rm LT}}
\newcommand{\Sup}{\mathop {\rm Supp}}
\newcommand{\Quot}{\mathop {\rm Quot}}
\newcommand{\rk}{\mathop {\rm rk}}
\newcommand{\trdeg}{\mathop {\rm trdeg}}
\newcommand{\Spec}{\mathop {\rm Spec}}
\newcommand{\Proj}{\mathop {\rm Proj}}
\newcommand\limind{\mathop{\underrightarrow{\lim}}}
\newcommand{\Hom}{\mathop{\rm Hom}}
\newcommand{\idm}{\mathfrak{m}}
\newcommand{\cee}{{{\cal E}}}
\newtheorem{defin}{Definition}[section]
\newtheorem{rem}{Remark}[section]
\newtheorem{ex}{Example}[section]
\theoremstyle{plain}
\newtheorem{prop}{Proposition}[section]
\newtheorem{thm}{Theorem}[section]
\newtheorem{lem}{Lemma}[section]
\newtheorem{cor}{Corollary}[section]
\numberwithin{equation}{section}
\def\Hom{{{\rm Hom }}}
\def\Spec{{{\rm Spec \,}}}
\def\Proj{{{\rm Proj \,}}}
\def\Quot{{{\rm Quot \,}}}
\def\dim{{{\rm dim \,}}}
\def\ker{{{\rm ker \,}}}
\newcommand{\Ord}{\mathop {\rm \bf ord}}
\newcommand{\co}{{{\cal O}}}
\newcommand{\cf}{{{\cal F}}}
\newcommand{\cm}{{{\cal M}}}
\newcommand{\cd}{{{\cal D}}}
\title{Schur-Sato theory for quasi-elliptic rings}
\author{Alexander Zheglov\footnote{This work was supported by RSF grant no. 22-11-00272}}
\date{}
\begin{document}

\maketitle

\centerline{\em Dedicated to the memory of A.N. Parshin}

\begin{abstract}
The notion of quasi-elliptic rings appeared as a result of an attempt to classify a wide class of commutative rings of operators found in the theory of integrable systems, such as rings of commuting differential, difference, differential-difference, etc. operators. They are contained in a certain non-commutative "universal" ring - a purely algebraic analogue of the ring of pseudodifferential operators on a manifold, and admit (under certain mild restrictions) a convenient algebraic-geometric description. An important algebraic part of this description is the Schur-Sato theory - a generalisation of the well known theory for ordinary differential operators. Some parts of this theory were developed earlier in a series of papers, mostly for dimension two.

In this paper we present this theory in arbitrary dimension. We apply this theory to prove two classification theorems of quasi-elliptic rings in terms of certain pairs of subspaces (Schur pairs). They are necessary for the algebraic-geometric description of quasi-elliptic rings mentioned above. 

The theory is effective and has several other applications, among them is a new proof of the Abhyankar inversion formula.

\end{abstract}

\tableofcontents


\section{Introduction}

This work appeared as an attempt to classify a wide class of commutative
rings of operators found in the theory of (quantum) integrable systems, such as
rings of commuting differential, difference,  differential-difference, etc. operators. Typical examples are quantum integrable systems (\cite{BEG}, \cite{Krichever77}, \cite{ChalykhVeselov90}, \cite{Ch}, \cite{ChalykhVeselov93}, \cite{ChalykhVeselov98}, \cite{BerestEtigofGinzburg03}, etc.) and their isospectral deformations (\cite{BurbanZheglov2017}), commutative rings of difference or differential-difference operators (\cite{Mumford}, \cite{MoerbMumford}, \cite{KricheverNovikov2003}, \cite{MironovMaul}, \cite{MironovMaul1}, \cite{Heckman91}, etc.).

As a result a notion of {\it quasi-elliptic rings} appeared (see definition \ref{D:quasi-elliptic}). 
All such rings are contained in a certain non-commutative  "universe" ring $\hat{D}_n^{sym}$ (see section \ref{S:algebra_hat_D_n}) - a purely
algebraic analogue of the ring of pseudodifferential operators on a manifold, and admit
(under certain mild restrictions) a convenient algebraic-geometric description. An important algebraic part of this description is the {\it Schur-Sato theory} - a generalisation of the well known theory for ordinary differential operators, see the works \cite{Mul}, \cite{Mulase2} (our work was largely inspired by these articles), see also \cite[Ch. 9]{Zheglov_book} for a detailed exposition of this theory adopted to our generalization. The Schur-Sato theory is also one of steps toward the {\it higher dimensional Krichever correspondence} -- a higher-dimensional analogue of the well-known and fruitful interrelation between KdV- or, more general, KP-equations, and algebraic curves with additional geometric data on the other side. This theory describes a one-to-one correspondence between equivalence classes of objects in the following diagram: 
$$
\begin{array}{c}
\xymatrix{
[B\subset \hat{D}_n^{sym}]  \ar[rr]_-{\mbox{direct map}}  \ar@/^10pt/[rd]_-{\mbox{Schur theory}} & & \mbox{[Projective spectral data]} \ar@/_10pt/[ll]_-{\mbox{inverse map}} \ar@/_10pt/[ld]^-{\mbox{KPO map}}\\
& \mbox{[Schur pairs]} \ar@/_20pt/[ru]_-{\mbox{direct map}} \ar@/^20pt/[lu]^-{\mbox{Sato theory}} & \\
}
\end{array}
$$
Here $B$ are quasi-elliptic rings, and other notions like Schur pairs, Schur theory and Sato theory are explained in the relevant sections of this paper. The correspondence between the equivalence classes of quasi-elliptic rings $B$ and projective spectral data was announced in the paper \cite{Zheglov_belovezha}, though without details. These details as well as other parts of this diagram will be explained in a subsequent paper \cite{Zheglov2020}. The present paper contains necessary preliminary results for \cite{Zheglov2020}. Besides, we expect the results from this paper will be also useful for various applications, e.g. for explicit calculations of examples. 

The brief history of the Krichever correspondence and its higher-dimensional analogues is follows. In case $n=1$ (the classical Krichever correspondence) the theory is well known and rich, in particular there is a well known extensive classification theory of commutative rings of ordinary differential operators, see \cite{Wall}, \cite{Schur}, \cite{BC1}-\cite{BC3}, \cite{Baker}, \cite{Krichever77}, \cite{Krichever78}, \cite{Dr}, \cite{Mumford}, \cite{SegalWilson}, \cite{Verdier}, \cite{Sato}, \cite{SN}, \cite{SS}, \cite{Man}, \cite{Mul}, \cite{Previato}, etc., see also the book \cite{Zheglov_book}, where a detailed exposition of this classification is given, for more references. 

In case $n=2$ (and partially for general $n$, e.g. the KPO map (Krichever- Parshin-Osipov map) for certain geometric data in \cite{Osipov}) a higher-dimensional analogue of the Krichever correspondence was initiated in the works \cite{Parshin2000}, \cite{Parshin2001}, \cite{Pa} and then developed in the works \cite{Osipov}, \cite{ZheglovKP}, \cite{OsipovZheglov}, \cite{KOZ2009}, \cite{Ku1}, \cite{Zheglov2013}, \cite{KOZ2014}, \cite{KurkeZheglov}, \cite{Zheglov2018} and related works \cite{BurbanZheglov2017}, \cite{Zheglov_belovezha}, \cite{Kulikov}. In particular, one-to one correspondences in the diagram above are established  and some examples are calculated for $n=2$ in a series of works \cite{Zheglov2013}, \cite{KOZ2014}, \cite{KurkeZheglov}, \cite{Zheglov2018}, \cite{BurbanZheglov2017}, \cite{Kulikov}. 

In this paper we improve and extend the corresponding results from the $n=2$ case to the general case. 

Starting with the ring of partial differential operators $D_n=K[[x_1, \ldots ,x_n]][\partial_1, \ldots , \partial_n]$ (PDO for short), where $K$ is a field of characteristic zero, one can define its completion. There are several versions of completion. In \cite{Zheglov2013}, \cite{Zheglov2018} a non-symmetric version $\hat{D}_2$ (for $n=2$) was used. It was shown there that commutative subrings $B\subset \hat{D}_2$ satisfying certain mild conditions are classified in terms of certain geometric spectral data, and this classification is, in a sense, a natural generalisation of the Krichever classification of commuting ordinary differential operators (ODO for short). Recall that the classification of commuting ODOs is especially simple for  subrings of {\it rank one}, because any rank one commutative subalgebra of ODOs is essentially determined (up to a linear change of variables) by purely geometric spectral data consisting of a projective curve, a regular point on the curve, and a coherent torsion free sheaf of rank one over the curve, see e.g. \cite[Th. 10.26]{Zheglov_book} for detailed explanations of this case. The classification of commutative subrings in $\hat{D}_2$ of {\it rank one} (for an appropriately defined notion of rank) possess the analogous property, see \cite{Zheglov2018}, and the  refined classification of rank one quasi-elliptic rings in $\hat{D}_n$ was announced in the paper \cite{Zheglov_belovezha}. One of the main aims of this paper is to give a detailed proof of preliminary classification theorems of quasi-elliptic rings of rank one and even of rank $r$ in terms of Schur pairs (see the explanation below). They are necessary for the proof of theorem from \cite{Zheglov_belovezha}.

The advantage of the non-symmetric version $\hat{D}_n$  of the completion (see section \ref{S:algebra_D_n}) is the existence of the Schur theory for the related ring of formal pseudo-differential operators $\hat{E}_n$ (see sections \ref{S:algebra_D_n}, \ref{S:Schur_theory}  below) --  an important tool of the classification theory. On the other hand, there is a "symmetric" version of completion $\hat{D}_n^{sym}$ introduced first in \cite[Def. 5.1]{BurbanZheglov2017}, which is more convenient in some cases (in particular, important explicit examples from \cite{BurbanZheglov2017} were calculated in this ring), and which contains the non-symmetric one. We recall its definition and the basic properties of this ring (with slight improvements) in section \ref{S:algebra_hat_D_n}. The ring $\hat{D}_n^{sym}$ can be thought of as a simple purely algebraic analogue of the algebra of (analytic) pseudodifferential operators on a manifold, cf. \cite{Shubin}.\footnote{Although congenial constructions of rings have been encountered in various areas of mathematics, from analysis on manifolds to deformation quantization, quantum groups and number theory (cf. \cite{Fontaine}, cf. also remark \ref{R:Birkhoff} below), we could not find a more or less exact analogue of our construction in the literature. } 
At the end of section \ref{S:algebra_hat_D_n} we give one unexpected application of this construction - a new proof of  the Abhyankar inversion formula from \cite{Ab}, \cite[III.2]{BCR}. 

The extension of the classical Schur theory (cf. \cite{Schur}, \cite{Mul}) to the rings $\hat{D}_n$, $\hat{E}_n$ is given in section \ref{S:Schur_theory}. Some elements of this theory appeared before in \cite{Parshin2000} for the rings of pseudo-differential operators in $n$ variables and in \cite{Zheglov2013} for the completed ring $\hat{D}_2$. It is an important algebraic tool in studying basic algebraic properties of commuting operators, we deduce them in section \ref{S:algebraic_properties}.

The extension of the classical Sato theory (cf. \cite{SN}, \cite{Mul}) is given in section \ref{S:Sato_theory}. Some elements of this theory appeared before in \cite{Zheglov2013} for the case $n=2$, and in \cite[\S 5]{BurbanZheglov2017} for general $n$, but only for special subspaces. In particular, the Sato operators considered in loc. sit. belonged to the ring  $\hat{D}_n^{sym}$. On the other hand, the subspaces and Sato operators from \cite{Zheglov2013} were of different nature (in particular, they never belong to $\hat{D}_n^{sym}$). To join formally different previous versions of Sato theories we develop a unified Sato theory in general case, following the exposition of \cite{BurbanZheglov2017}. Now subspaces and Sato operators belong to a wider space (more precisely, a bimodule) $\hat{\Pi}_n$ introduced in section \ref{S:algebra_D_n}. As a corollary of this new unified Sato theory we get a notion of Schur pairs from the diagram above and derive two constructions connecting them with the quasi-elliptic rings of operators $B\subset \hat{D}_n^{sym}$, see section \ref{S:Schur_pairs}. Another application is a new short proof (though not effective) of the generalised Birkhoff  decomposition theorem by M. Mulase \cite{Mulase_inv}, see example \ref{R:Birkhoff}. 

In section \ref{S:general_Schur_theory} we develop another analogue of the Schur theory, now for the ring  $\hat{D}_n^{sym}$. The difference between this Schur theory and the Schur theory from section \ref{S:Schur_theory} is essential: the Schur operators from section \ref{S:Schur_theory} belong to the ring of formal pseudo-differential operators $\hat{E}_n$ (which contains a formal inverse to a derivation), and the Schur operators from section  \ref{S:general_Schur_theory} belong to the ring of "pseudodifferential" operators $\hat{D}_n^{sym}$ (which contains no formal inverses to the derivations). Besides, the difference between rings $\hat{E}_n$ and $\hat{D}_n^{sym}$ is  essential  already in the case $n=1$: in this case the ring $\hat{E}_1$ has no zero divisors, unlike the ring $\hat{D}_1^{sym}$. The Schur theory from section \ref{S:general_Schur_theory} is used for the proof of the refined classification of rank one quasi-elliptic rings in section \ref{S:classification} and for a description (in the same section) of admissible operators from section \ref{S:admissible_op}. Besides, it is more convenient for explicit calculations than the other Schur theory and is necessary for a number of results in the subsequent paper \cite{Zheglov2020}, cf. remark \ref{R:purity}. 

\smallskip 

The paper is organized as follows. 

In section \ref{S:algebra_hat_D_n} we introduce the definition and the basic properties of the ring $\hat{D}_n^{sym}$ and its order function $\Ord$, which is used extensively in the whole paper. At the end of section we give a new proof of the the Abhyankar inversion formula. 

In section \ref{S:algebra_D_n} we introduce the definition and  basic properties of the rings $\hat{D}_n$, $\hat{E}_n$, $E_n$, $\Pi_n$ and of the bimodule $\hat{\Pi}_n$, which are used later in the generalized Schur and Sato theories. Besides, we introduce the important technical notion of {\it slice decomposition} of an operator. 

In section \ref{S:spectral} we recall and extend the notion of the {\it spectral module}, and in section \ref{S:quasi-elliptic} we define another important order function $\ord_{\Gamma}$ and then define the notion of quasi-elliptic rings. The equivalence relations between these rings are introduced later, in section \ref{S:classification}. 

In section \ref{S:Schur_theory} we develop the extension of the classical Schur theory  to the rings $\hat{D}_n$, $\hat{E}_n$ as it was mentioned above. We follow standard steps of the classical theory from \cite{Mul}, introducing the notions of Schur operators, roots, normalized operators and admissible operators for our extended and completed rings. As an application of the Schur theory we derive basic algebraic properties of quasi-elliptic rings in section \ref{S:algebraic_properties}.

In section \ref{S:Sato_theory} we develop the extension of the classical Sato theory for the bimodule $\hat{\Pi}_n$. In this section are defined such important notions as regular operators (section \ref{S:regular_op}), Sato action and Sato operators (section \ref{S:Sato_theorem}), Schur pairs and (analytical) rank of quasi-elliptic rings and of Schur pairs (section \ref{S:Schur_pairs}). As an application of the theory we get a description of units (section \ref{S:Sato_theorem}), and derive two constructions connecting Schur pairs with the quasi-elliptic rings of operators (section \ref{S:Schur_pairs}). In section \ref{S:admissible_change} we introduce the notion of admissible linear changes of variables, which is used in the refined classification of rank one quasi-elliptic rings, and prove important technical statements necessary for the proof of this classification in section \ref{S:classification}. 

In section \ref{S:general_Schur_theory} we develop another analogue of the Schur theory for the ring  $\hat{D}_n^{sym}$ as it was mentioned above. This section is divided into 3 parts, where first we give a description of centralizers of operators with constant coefficients (section \ref{S:centralizers}), then we describe this theory for the case $n=1$ (section \ref{S:Schur,n=1}; in this case the results are simpler and serve as an induction step for the next section), and at last describe it in general case (section \ref{S:Schur,n}). 

In section \ref{S:classification} we combine all technique together to prove two classification theorems: one is for quasi-elliptic rings of any rank, and another is the refined classification for quasi-elliptic rings of rank one. We would like to emphasize that the definition of rank in this paper is weaker than in \cite{Zheglov2018}, \cite{Zheglov_belovezha}: it is just the analytical rank from these works, without extra conditions like finite generation of the spectral module, strongly admissibility of  quasi-elliptic ring, etc. 

\smallskip

In this paper all rings are assumed to be {\it rings over $K$}, i.e. associative $K$-algebras with the multiplicative unity $1$, where $K$ is a field of characteristic zero. Most results are proved even for a more general relative case, i.e. for rings over $K_y=K[[y_1, \ldots ,y_m]]$, $m\ge 0$. For reader's convenience we tried to keep the exposition of this paper self-contained. 

{\bf Acknowledgements.} I am grateful to  S. Gorchinskiy and D. Osipov for their interest and stimulating discussions. I'm also grateful to participants of the seminar on non-commutative geometry at MSU, especially for A. Arutyunov, A.S. Mischenko,  F. Popelenskiy, G. Sharygin for their attention and stimulating questions. I'd also like to thank the organizers of the conferences "Dynamics in Siberia-21" from Novosibirsk and "Frontier of Differential Geometry - 2021" from the Sino-Russian Mathematical Center in Beijing, where first results of this paper were presented, for warm and stimulating atmosphere.

\subsection{List of notations}

For reader's convenience we make an account of the most important notations used in this paper. 

1. $K_y:=K[[y_1, \ldots y_m]]$, $m\ge 0$, $\hat{R}_y:=K_y [[x_1,\ldots ,x_n]]$, the $K$-vector space 
$$
\cm_n := \hat{R}_y [[\partial_1, \dots, \partial_n]] = \left\{
\sum\limits_{\underline{k} \ge \underline{0}} a_{\underline{k}} \underline{\partial}^{\underline{k}} \; \left|\;  a_{\underline{k}} \in \hat{R}_y \right. \;\mbox{for all}\;  \underline{k} \in \dn_0^n
\right\},
$$
$\upsilon:\hat{R}_y\rightarrow \dn_0\cup \infty$ --  the discrete valuation defined by the unique maximal ideal $\idm = (y_1,\ldots ,y_m; x_1, \dots, x_n)$ of $\hat{R}_y$,  \\
for any element
$
0\neq P := \sum\limits_{\underline{k} \ge \underline{0}} a_{\underline{k}} \underline{\partial}^{\underline{k}} \in \cm_n
$
$$
\Ord (P) := \sup\bigl\{|\underline{k}| - \upsilon(a_{\underline{k}}) \; \big|\; \underline{k} \in \dn_0^n \bigr\} \in \dz \cup \{\infty \},
$$
$$
\hat{D}_n^{sym}:=\bigl\{Q \in \cm \,\big|\, \Ord (Q) < \infty \bigr\};
$$
$
P_m:= \sum\limits_{ |\underline{i}| - |\underline{k}| = m} \alpha_{\underline{k}, \underline{i}} \,  \underline{x}^{\underline{i}} \underline{\partial}^{\underline{k}}
$ -- the $m$-th \emph{homogeneous component} of $P$,\\
$\sigma (P):=P_{\Ord (P)}=P_{-d}$ -- the highest symbol.

2. $\hat{D}_n=\hat{D}_{n}^{n}[\partial_n]$, where 
$\hat{D}_{n}^{n}$ denotes the subring in $\hat{D}_n^{sym}$ consisting of operators {\it not depending on $\partial_n$},\\
$\hat{E}_n=\hat{D}_{n}^{n}((\partial_n^{-1}))$, $E_n=D_{n}^n((\partial_n^{-1}))$, $\Pi_n =\{P\in \hat{E}_n| \quad \Ord (P)<\infty\}$, $\hat{\cm}_n:=\{\sum_{i\in \sdz}a_{i}\partial_n^i, \quad a_i\in \hat{D}_n^n\},$
$$
\hat{\Pi}_n:=\{ P\in \hat{\cm}_n| \quad \Ord (P)<\infty \}
$$
-- the left $\hat{D}_n^{sym}$-module and right $\Pi_n$-module;\\
the  {\it order function} $\ord_n$  on $\hat{E}_n$ is defined as 
$
\ord_n(P)=l
$
if $\hat{E}_n\ni P=\sum_{s=-\infty}^lp_s\partial_n^s$.

3. The first {\it slice decomposition} of elements from $\hat{\Pi}_n$: 
$$
P = \sum\limits_{\underline{i} \ge \underline{0}} \frac{\underline{x}^{\underline{i}}}{\underline{i}!} \, P_{(\underline{i})}, \quad \mbox{\rm where} \quad \underline{i}\in \dn_0^n\times \dn_0^m, \quad \underline{x}^{\underline{i}}=x_1^{i_1}\ldots x_n^{i_n}y_1^{j_1}\ldots y_m^{j_m},\quad \underline{i}! =i_1!\ldots i_n!,
$$
$$
P_{(\underline{i})} = \underline{i}! \sum_{\substack{\underline{k} \in \sdn_0^{n-1}\times\sdz \\ |\underline{k}| - |\underline{i}| \le d = \Ord (P)}} \alpha_{\underline{k}, \underline{i}} \, \underline{\partial}^{\underline{k}}, \quad \alpha_{\underline{k}, \underline{i}}\in K \quad \mbox{-- slice}.
$$

4. $F:=\hat{D}_n^{sym}/(x_1,\ldots ,x_n) \hat{D}_n^{sym}\simeq K_y[[\partial_1,\ldots ,\partial_n]]\cap \Pi_n$ -- the {\it spectral module} of a commutative ring $B\subset \hat{D}_n^{sym}$.

5. The $\Gamma$-order and quasi-elliptic rings are defined in definitions \ref{D:G-order} and \ref{D:quasi-elliptic}. 

6. $V_n:= K_y\{\{\partial_1, \ldots , \partial_{n-1}\}\}((\partial_n^{-1}))$, where $K_y\{\{\partial_1, \ldots , \partial_{n-1}\}\}=K_y[[\partial_1, \ldots , \partial_{n-1}]]\cap \hat{D}_n^n$. 
It has a structure of a right $\hat{E}_n$-module via the isomorphism of vector spaces $V_n\simeq \hat{E}_n/(x_1, \ldots ,x_n) \hat{E}_n$.

The Schur pairs $A,W\subset V_n$, the notion of rank for them and for quasi-elliptic rings are defined in definitions \ref{D:sch} and \ref{D:an-alg-rank}. 

7. The second {\it slice decomposition} of elements from $\hat{\Pi}_n$:
$$
P = \sum\limits_{\underline{i} \ge \underline{0}} \frac{\underline{x}^{\underline{i}}}{\underline{i}!} \, P_{(\underline{i})}, \quad \mbox{\rm where} \quad \underline{x}^{\underline{i}}=x_1^{i_1}\ldots x_n^{i_n}, \quad  i!=i_1!\ldots i_n!
$$
$$
P_{(\underline{i})} = \underline{i}! \sum_{\substack{\underline{k} \in \sdn_0^{n-1}\times\sdz \\ |\underline{k}| - |\underline{i}| \le d = \Ord (P)}} \alpha_{\underline{k}, \underline{i}} \, \underline{\partial}^{\underline{k}} \in V_n\cap \Pi_n, \quad \alpha_{\underline{k}, \underline{i}}\in K_y
\quad \mbox{-- slice}.
$$

The partial slice decomposition: $P=\sum_{q\ge 0}P_{[q]}$, where 
$$
P_{[q]}:= \sum_{\underline{k}\in \sdn_0^n, |\underline{k}|=q} 
\frac{x^{\underline{k}}}{\underline{k}!}P_{(\underline{k})}.
$$ 
For an operator $P\in \hat{\Pi}_n$ we denote by $\bar{P}:=P|_{y=0}$. $\tilde{K}$ denotes an extension of $K$ by some roots of unity.

\section{The ring $\hat{D}_n^{sym}$ and its order function $\Ord$}
\label{S:algebra_hat_D_n}

Denote by $K_y:=K[[y_1, \ldots y_m]]$, $m\ge 0$, the ring of formal power series over $K$ (if $m=0$, we set $K_y=K$).  
In this subsection we define a symmetric version of completion of the algebra of PDOs $D_n=K_y[[x_1,\ldots ,x_n]][\partial_1,\ldots ,\partial_n]$. It can be thought of as a simple purely algebraic analogue of the algebra of (analytic) pseudodifferential operators on a manifold. Such a completion appeared first in the paper \cite{BurbanZheglov2017}, and we give here an improved exposition of its properties. 
We'll use here a slightly different notation than in \cite[\S 5]{BurbanZheglov2017}.

Denote $\hat{R}_y:=K_y [[x_1,\ldots ,x_n]]$. Consider the $K$-vector space 
$$
\cm_n := \hat{R}_y [[\partial_1, \dots, \partial_n]] = \left\{
\sum\limits_{\underline{k} \ge \underline{0}} a_{\underline{k}} \underline{\partial}^{\underline{k}} \; \left|\;  a_{\underline{k}} \in \hat{R}_y \right. \;\mbox{for all}\;  \underline{k} \in \dn_0^n
\right\},
$$
where $\underline{k}$ is the multi-index, $\underline{\partial}^{\underline{k}}=\partial_1^{k_1}\ldots \partial_n^{k_n}$, and $\underline{k}\ge \underline{0}$ means that $k_i\ge 0$ for all $1\le i\le n$. 

Let $\upsilon:\hat{R}_y\rightarrow \dn_0\cup \infty$ be the discrete valuation defined by the unique maximal ideal $\idm = (y_1,\ldots ,y_m; x_1, \dots, x_n)$ of $\hat{R}_y$. Denote by $|\underline{k}|=k_1+\ldots +k_n$. 
\begin{defin}
\label{D:AlgebraPi} For any element
$
0\neq P := \sum\limits_{\underline{k} \ge \underline{0}} a_{\underline{k}} \underline{\partial}^{\underline{k}} \in \cm_n
$
we define its \emph{order} to be
\begin{equation}\label{E:LastOrder}
\Ord (P) := \sup\bigl\{|\underline{k}| - \upsilon(a_{\underline{k}}) \; \big|\; \underline{k} \in \dn_0^n \bigr\} \in \dz \cup \{\infty \},
\end{equation}
and define $\Ord (0):=-\infty$. 
Define 
$$
\hat{D}_n^{sym}:=\bigl\{Q \in \cm \,\big|\, \Ord (Q) < \infty \bigr\}.
$$
Let $P\in \hat{D}_n^{sym}$. Then we have uniquely determined $\alpha_{\underline{k},\underline{i}}\in K$ such that 
\begin{equation}\label{E:expOperatorP}
P = \sum\limits_{\underline{k}, \underline{i} \, \ge \, \underline{0}} \alpha_{\underline{k}, \underline{i}} \,  \underline{x}^{\underline{i}} \underline{\partial}^{\underline{k}},
\end{equation}
where $\underline{i}\in \dn_0^{n+m}$, $\underline{x}^{\underline{i}}=x_1^{i_1}\ldots x_n^{i_n}y_1^{i_{n+1}}\ldots y_m^{i_{n+m}}$. 

For any $m \ge -d:= \Ord (P)$ we put:
$$
P_m:= \sum\limits_{ |\underline{i}| - |\underline{k}| = m} \alpha_{\underline{k}, \underline{i}} \,  \underline{x}^{\underline{i}} \underline{\partial}^{\underline{k}}
$$
to be the $m$-th \emph{homogeneous component} of $P$. Note that $\Ord (P_m) = -m$ and we have a {\it homogeneous decomposition} 
$P = \sum\limits_{m=-d}^\infty P_m.$
\end{defin}

\begin{rem}
Note that for a partial differential operator $P$ with {\it constant} highest symbol the order $\Ord (P)$ and the usual order  coincide. 
\end{rem}

\begin{defin}
\label{D:symbol}
Define the {\it highest symbol} of $P\in \hat{D}_n^{sym}$ as $\sigma (P):=P_{\Ord (P)}=P_{-d}$. We say that $P\in \hat{D}_n^{sym}$ is {\it homogeneous} if $P=\sigma (P)$. 
\end{defin}

\begin{thm}
\label{T:D_n_properties}
There are the following properties of $\hat{D}_n^{sym}$: 
\begin{enumerate}
\item
$\hat{D}_n^{sym}$ is a ring (with natural operations $\cdot$, $+$ coming from $D_n$); $\hat{D}_n^{sym}\supset D_n$. 
\item
$\hat{R}_y$ has a natural structure of a left $\hat{D}_n^{sym}$-module, which extends its natural structure of a left $D_n$-module. 
\item
We have a natural isomorphism of $K$-vector spaces 
$$
\hat{D}_n^{sym}/\idm \hat{D}_n^{sym}\rightarrow K[\partial_1,\ldots ,\partial_n].
$$
\item
Operators from $\hat{D}_n^{sym}$ can realise arbitrary endomorphisms of the $K$-algebra $\hat{R}$ which are continuous in the $\idm$-adic topology. 
\item
There are Dirac delta functions, operators of integration, difference operators.
\end{enumerate}
\end{thm}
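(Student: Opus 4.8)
The plan is to verify each of the five assertions essentially by unwinding the definitions, the only nontrivial point being that multiplication is well-defined and preserves finite $\Ord$.

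\textbf{Part (1): $\hat{D}_n^{sym}$ is a ring.} First I would check that the formal product of two elements of $\cm_n$ makes sense in $\cm_n$. Given $P=\sum a_{\underline k}\underline\partial^{\underline k}$ and $Q=\sum b_{\underline l}\underline\partial^{\underline l}$, one uses the Leibniz rule $\partial_j\circ f=f\partial_j+\partial_j(f)$ to move all coefficients to the left. For a fixed output multi-index $\underline m$, I would argue that only finitely many pairs $(\underline k,\underline l)$ with entries bounded by the relevant indices contribute a term with $\underline\partial^{\underline m}$ of any prescribed $\upsilon$-valuation, because $\partial_j$ applied to $\hat R_y$ raises $\upsilon$ by at least $-1$ while lowering the differential degree by $1$; the key quantitative statement is that if $\Ord(P)=d_1$ and $\Ord(Q)=d_2$ then every monomial $\underline x^{\underline i}\underline\partial^{\underline k}$ occurring in $PQ$ satisfies $|\underline k|-|\underline i|\le d_1+d_2$, so $PQ\in\cm_n$ and $\Ord(PQ)\le\Ord(P)+\Ord(Q)<\infty$. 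This is cleanest in the homogeneous grading: if $P=\sum_{m\ge-d_1}P_m$ and $Q=\sum_{m\ge-d_2}Q_m$ then $P_mQ_{m'}$ is again homogeneous of degree $m+m'$ (since conjugating $\underline\partial^{\underline k}$ past $\underline x^{\underline i}$ only produces terms of the same homogeneous degree, as $\deg x_j=1$, $\deg\partial_j=-1$ are preserved by $[\partial_j,x_j]=1$), and for fixed total degree $N$ there are finitely many summands $P_mQ_{N-m}$, each a finite sum of monomials. Associativity and distributivity are inherited from $D_n$ by continuity (density of $D_n$ in the $\idm$-adic-type topology induced by $\Ord$), and $D_n\subset\hat D_n^{sym}$ since PDOs trivially have finite order; the unit $1$ lies in $D_n$.

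\textbf{Parts (2)--(3).} For (2), the action of $\underline\partial^{\underline k}$ on $\hat R_y$ is the usual one, extended $\hat R_y$-linearly on the left; I must check convergence, i.e. that $P\cdot f$ for $P\in\hat D_n^{sym}$, $f\in\hat R_y$ lies in $\hat R_y$: writing $P=\sum\alpha_{\underline k,\underline i}\underline x^{\underline i}\underline\partial^{\underline k}$ with $|\underline k|-|\underline i|\le d$, the image of $f$ with $\upsilon(f)=v$ under $\underline x^{\underline i}\underline\partial^{\underline k}$ has $\upsilon\ge |\underline i|+v-|\underline k|\ge v-d$, and for each target valuation only finitely many $(\underline k,\underline i)$ contribute, so the sum converges $\idm$-adically in $\hat R_y$. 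Module compatibility with multiplication in $\hat D_n^{sym}$ follows from associativity in (1). For (3): the two-sided ideal $\idm\hat D_n^{sym}$ (generated by $x_1,\dots,x_n,y_1,\dots,y_m$) consists, on the level of the expansion (\ref{E:expOperatorP}), exactly of those $P$ with $\alpha_{\underline k,\underline 0}=0$ for all $\underline k$ (the part with no constant-coefficient term needs care: one checks $\idm\cdot\hat D_n^{sym}$ equals the span of monomials $\underline x^{\underline i}\underline\partial^{\underline k}$ with $|\underline i|\ge1$, using that $x_j\partial_l=\partial_l x_j-\delta_{jl}$ stays in this span), so the residue map sends $P$ to $\sum_{\underline k}\alpha_{\underline k,\underline 0}\underline\partial^{\underline k}\in K[\partial_1,\dots,\partial_n]$; this is a well-defined $K$-linear bijection, and since the relations $[\partial_j,\partial_l]=0$ survive it is a ring isomorphism onto the polynomial ring.

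\textbf{Parts (4)--(5).} For (4), given a continuous $K$-algebra endomorphism $\varphi$ of $\hat R:=\hat R_y$ (continuous for the $\idm$-adic topology), it is determined by the images $\varphi(x_j)$, $\varphi(y_l)\in\idm$ (continuity forces them into $\idm$, or at least that the induced map on $\gr$ be controlled); I would construct $P_\varphi\in\hat D_n^{sym}$ realising it by the Taylor-type formula $P_\varphi=\sum_{\underline k}\frac{(\varphi(\underline x)-\underline x)^{\underline k}}{\underline k!}\underline\partial^{\underline k}$ (interpreting $\underline x$ as the full tuple $x_1,\dots,x_n,y_1,\dots,y_m$ and using the corresponding $\partial$'s), checking that the coefficient of each $\underline\partial^{\underline k}$ has $\upsilon\ge|\underline k|$ because $\varphi(x_j)-x_j\in\idm^{\ge1}$ in the relevant sense, hence $\Ord(P_\varphi)\le0<\infty$, and that $P_\varphi$ acts on $\hat R$ as $f\mapsto\varphi(f)$ via the multivariate Taylor expansion (valid $\idm$-adically). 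For (5), I would simply exhibit the operators: a Dirac delta at $0$ is $\prod_j\delta(x_j)$ with $\delta(x)=\sum_{k\ge0}\frac{(-1)^k}{k!}x^k\partial^k$ reproducing evaluation-type behaviour — more precisely one writes down the shift operator $e^{c\partial}=\sum c^k\partial^k/k!$ (a difference operator, clearly of order $0$) and the formal "integration against $\delta$" as the composite that kills $\idm$, referencing (3); an operator of integration in $x_j$ is handled on monomials $x_j^k\mapsto x_j^{k+1}/(k+1)$, which one checks is continuous hence in $\hat D_n^{sym}$ by (4).

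\textbf{Main obstacle.} The genuinely substantive point is the closedness of $\hat D_n^{sym}$ under multiplication together with the order inequality $\Ord(PQ)\le\Ord(P)+\Ord(Q)$; everything else is bookkeeping. The subtlety is that $\cm_n$ is \emph{not} a ring (infinitely many Leibniz corrections can pile up on a single output monomial with unbounded valuations), so one must use the finiteness of $\Ord$ crucially to get convergence — I expect to spend most of the effort making the "finitely many contributing pairs per target degree-and-valuation" claim precise, most conveniently via the homogeneous decomposition where each homogeneous component is automatically a finite sum of monomials and the degrees simply add.
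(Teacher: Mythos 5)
Your overall strategy is reasonable, and for items 4 and 5 your constructions (the Taylor/shift operator $P_\varphi=\sum \underline{u}^{\underline{i}}\underline{\partial}^{\underline{i}}/\underline{i}!$ with $u_j=\varphi(x_j)-x_j$, and the delta function $\sum_k(-1)^kx^k\partial^k/k!$) coincide with the paper's, which disposes of items 1--3 by citation to an earlier paper and only writes out 4--5 explicitly. But there are concrete errors. First, in your ``cleanest'' version of item 1 you assert that each homogeneous component $P_m$, hence each product $P_mQ_{N-m}$, is ``a finite sum of monomials''. This is false: $P_m$ collects \emph{all} monomials $\underline{x}^{\underline{i}}\underline{\partial}^{\underline{k}}$ with $|\underline{i}|-|\underline{k}|=m$, and there are infinitely many of these (already $\sum_k x_1^k\partial_1^k$ is homogeneous of degree $0$). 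The finiteness you actually need is per output monomial $\underline{x}^{\underline{a}}\underline{\partial}^{\underline{b}}$: a contribution comes from $\underline{x}^{\underline{i}}\underline{\partial}^{\underline{k}}\cdot\underline{x}^{\underline{j}}\underline{\partial}^{\underline{l}}$ together with a choice $\underline{s}\le\min(\underline{k},\underline{j})$ satisfying $\underline{i}+\underline{j}-\underline{s}=\underline{a}$ and $\underline{k}+\underline{l}-\underline{s}=\underline{b}$; from $|\underline{s}|\le|\underline{j}|=|\underline{a}|+|\underline{s}|-|\underline{i}|$ one gets $|\underline{i}|\le|\underline{a}|$, and only then does $\Ord(P)<\infty$ bound $|\underline{k}|\le|\underline{i}|+\Ord(P)$ and hence $\underline{s}$, $\underline{j}$, $\underline{l}$. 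Your first, non-homogeneous formulation gestures at this, but the homogeneous shortcut as written does not close the argument --- and this is precisely the step you yourself single out as the main obstacle.

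Second, in item 5 the operator $e^{c\partial_j}=\sum_k c^k\partial_j^k/k!$ with a nonzero constant $c$ is not ``clearly of order $0$'': since $\Ord(\partial_j^k)=k$ and $\upsilon(c^k/k!)=0$, we get $\Ord(e^{c\partial_j})=\infty$, so this operator does not lie in $\hat{D}_n^{sym}$ at all (consistently, $x_j\mapsto x_j+c$ is not continuous in the $\idm$-adic topology). Shift operators belong to the ring only when the shift lies in $\idm$; the paper instead embeds ordinary difference operators via $T\mapsto x$, $n\mapsto -x\partial$. Relatedly, you cannot obtain the integration operator from item 4, because $x_j^k\mapsto x_j^{k+1}/(k+1)$ is not an algebra endomorphism; you must exhibit it directly, e.g. $\int_j=\sum_{k\ge0}\frac{x_j^{k+1}}{(k+1)!}(-\partial_j)^k$, which has $\Ord=-1$. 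Finally, a small point on item 4: for $m>0$ there are no operators $\partial/\partial y_l$ in $\hat{D}_n^{sym}$, so only endomorphisms fixing $K_y$ (determined by the images $\varphi(x_j)$ alone) can be realised, which is what the paper's construction implicitly assumes.
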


\begin{proof}
The proof of items 1-3 is essentially contained in \cite[Th. 5.3]{BurbanZheglov2017} (our extension of the coefficient ring is not essential for the proof).

4. Let $\alpha \in \End_{K-alg}^c\hat{R}$ be a continuous algebra endomorphism. Then it is defined by the images $\alpha (x_i)\in \idm $. Put $u_i=\alpha (x_i)-x_i$, and define $\hat{D}_n^{sym}\ni P_{\alpha}=\sum_{\underline{i}\ge 0} (\underline{i}!)^{-1}\underline{u}^{\underline{i}}\underline{\partial}^{\underline{i}}$. Then for any $f\in \hat{R}$ we have 
$$
P_{\alpha}\circ f(x_1,\ldots ,x_n)=f(x_1+u_1,\ldots ,x_n+u_n);
$$
in particular, $P_{\alpha}$ realize the endomorphism $\alpha$. Note that this operator can be produced by the following receipt. Consider a commutative power series ring \\
$K[[y_1, \ldots , y_m; x_1, \ldots ,x_n; \partial_1, \ldots ,\partial _n]]$ with a product denoted by $*$. If we take the formal exponent $\exp (u_1*\partial_1+\ldots +u_n*\partial_n)$, write all coefficients at the left hand side and replace $*$ by $\cdot$, we'll obtain the operator  $P_{\alpha}$.  

5. Note that $\delta_i:=\exp ((-x_i)*\partial_i)$ (a partial case of the shift operator  $P_{\alpha}$) realize the $i$-th delta-function (cf. \cite[Example 5.4]{BurbanZheglov2017}):
$$
\delta_i\circ f(x_1,\ldots ,x_{i-1},x_i,x_{i+1},\ldots ,x_n)=f(x_1,\ldots ,x_{i-1},0,x_{i+1},\ldots ,x_n);
$$
the operator
$$
\int_i:=(1-\exp ((-x_i)*\partial_i))\cdot \partial_i^{-1}=\sum_{k=0}^{\infty}\frac{x^{k+1}}{(k+1)!}(-\partial)^k
$$
realise the operators of integration (cf. \cite[Example 5.5]{BurbanZheglov2017})
$$
\int_i\circ x_i^m=\frac{x_i^{m+1}}{m+1};
$$
the ordinary difference operators $\sum_{i=0}^M f_i(n)T^i$ with $T\circ f(n)=f(n+1)$ can be embedded e.g. as follows 
$$
\sum_{i=0}^Mf_i(n)T^i\hookrightarrow \hat{D}_n^{sym} \mbox{\quad via\quad} T\mapsto x, n\mapsto -x\partial. 
$$
\end{proof}

\begin{rem}
\label{R:ord_properties}
Unlike the usual ring of PDOs the ring $\hat{D}_n^{sym}$ contains zero divisors. There are the following obvious properties of the order function (cf. the proof of \cite[Th. 5.3]{BurbanZheglov2017}):
\begin{enumerate}
\item
$\Ord (P\cdot Q)\le \Ord (P)+\Ord (Q)$, and the equality holds if $\sigma (P)\cdot \sigma (Q)\neq 0$,
\item
$\sigma (P\cdot Q)=\sigma (P)\cdot \sigma (Q)$, provided $\sigma (P)\cdot \sigma (Q)\neq 0$,
\item
$\Ord (P+Q)\le \max\{\Ord (P),\Ord (Q)\}$.
\end{enumerate}
In particular, the function $-\Ord$ determines  a discrete pseudo-valuation on the ring $\hat{D}_n^{sym}$. 
\end{rem}

\begin{rem}
\label{R:other_definitions}
There are other possible ways to define a "symmetric" completion of the ring $D_n$ (see \cite[\S 2.1.5]{Zheglov2013}). E.g. we can define for each sequence in $\idm D_n$, $\{(P_n)_{n\in \sdn}\}$, such that $P_n(R)$ converges uniformly in $\hat{R}$ (i.e. for any $k>0$ there is $N>0$ such that $P_n(\hat{R})\subseteq \idm^k$ for $n\ge N$) a $k$-linear operator $P: \hat{R}\rightarrow \hat{R}$ by
$$
P(f)=\limind_{n\rightarrow \infty}\sum_{v=0}^nP_v(f), \mbox{\quad} P:=\sum_n P_n,
$$
and define a completion to be the ring consisting of such operators. This completion is bigger, but $\hat{D}_n^{sym}$ has finer properties sufficient for many aims. In particular, the classification theory from \cite{Zheglov2013} deals with commutative subrings belonging to the more narrow ring $\hat{D}_n^{sym}$. 
\end{rem}

Now we'll deduce one unexpected corollary of our first theorem -- a new proof of the Abhyankar formula, see \cite[III.2]{BCR}, \cite{Ab}, \cite{Good}, \cite{Joni}. Following the exposition of \cite{BCR}, we introduce notation first.

Put 
$$
MA_n^0((K))=\{ F=(F_1, \ldots ,F_n)| \quad F_i\in K[[x_1, \ldots ,x_n]], F_i(0)=0,\quad i=1, \ldots , n\}.
$$
If $U\in K[[x_1, \ldots ,x_n]]$ and $F\in MA_n^0((K))$ we put $U(F)=U(F_1, \ldots , F_n)$. The Jacobian matrix of $F\in MA_n^0((K))$ is $J(F)=(\partial_i(F_j))$ and its Jacobian determinant is $j(F)=\det J(F)$.   There is a monoid homomorphism 
$$
J_0: MA_n^0((K))\rightarrow M_n(K), \quad J_0(F)=J(F)(0),
$$
and denote by $GA_n^0((K))$ the group of invertible elements in $MA_n^0((K))$ -- it consists of those $F$ for which $J_0(F)$ is invertible. We have a split exact sequence 
$$
1\rightarrow GA_n^1((K))\rightarrow GA_n^0((K))\stackrel{J_0}{\rightarrow} GL_n(K)\rightarrow 1,
$$
where, if $a=(a_{ij})\in GL_n(K)$, then the splitting is given via $L(a)=(L_1, \ldots ,L_n)$ with $L_i=\sum_j a_{ij}x_j$, $i=1, \ldots ,n$. The kernel $GA_n^1((K))$ of $J_0$ consists of those $F$ of the form $X+H$, where $H$ involves only monomials of degree $\ge 2$ in $x$.  

For a given $F$ define derivations $\bigtriangleup_i: K[[\underline{x}]]\rightarrow K[[\underline{x}]]$ by 
$$
\bigtriangleup_i(G):= \det J(F_1,\ldots ,F_{i-1}, G, F_{i+1}, \ldots , F_n), \quad G\in K[[\underline{x}]].
$$
If $j(F)=1$, we have $\bigtriangleup_i(F_j)=\delta_{ij}$, $i,j =1, \ldots ,n$. Thus the 
$\bigtriangleup_i$'s commute in $K[[F]]$ and hence also in $K[[\underline{x}]]$. Clearly, $\bigtriangleup_i=\sum_{j=1}^n\bigtriangleup_i(x_j)\partial_j$, and the map $\phi (x_i):=F_i$, $\phi (\partial_i):= \bigtriangleup_i$ determines an endomorphism of $D_n$. If $F\in K[x_1, \ldots ,x_n]$, the open Jacobian Conjecture asks whether the inverse map $G: K[x_1, \ldots ,x_n]\rightarrow K[x_1, \ldots ,x_n]$ is polynomial. Clearly, for this it suffices to show that the endomorphism $\phi$ of the $n$-th Weyl algebra is an automorphism, which is the subject of another open Dixmier conjecture, cf. \cite{Ts1}, \cite{Ts2}, \cite{BK}.

\begin{cor}[the Abhyankar formula]
\label{C:Abhyankar}
Let $F\in GA_n^1((K))$ with $j(F)=1$. Put $H_i:=x_i -F_i(\underline{x})$. Then the inverse map is given by the formula
\begin{equation}
\label{E:Abhyankar}
G_i=\sum_{\underline{p}\in \sdn_0^n}\frac{\underline{\partial}^{\underline{p}}}{\underline{p}!}(x_iH^{\underline{p}}).
\end{equation}
Analogously, for any $U\in K[[x_1, \ldots ,x_n]]$ we have 
\begin{equation}
\label{E:Abhyankar1}
U=\sum_{\underline{p}\in \sdn_0^n}\frac{\underline{\partial}^{\underline{p}}}{\underline{p}!}(U(F)H^{\underline{p}}).
\end{equation}
\end{cor}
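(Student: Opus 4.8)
The plan is to realise the substitution $f\mapsto f(F)$ by an invertible operator of $\hat{D}_n^{sym}$ and then to recognise its inverse in the operators on the right of \eqref{E:Abhyankar} and \eqref{E:Abhyankar1}.

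First I would take the continuous $K$-algebra automorphism $\alpha$ of $\hat{R}$ with $\alpha(x_i)=F_i$ (it is an automorphism because $F\in GA_n^1((K))$, with $\alpha^{-1}(x_i)=G_i$) and realise it, via Theorem~\ref{T:D_n_properties}(4), by
$$
P:=\sum_{\underline{i}\ge\underline 0}\frac{(-1)^{|\underline{i}|}}{\underline{i}!}\,\underline{H}^{\underline{i}}\underline{\partial}^{\underline{i}}\in\hat{D}_n^{sym}
$$
(so $u_i=\alpha(x_i)-x_i=-H_i$). Since $H_i\in\idm^2$, the summand with $\underline{i}=\underline 0$ is the unique one of order $0$, so $\Ord(P)=0$, $\sigma(P)=1$, and $P=1-Q$ with $\Ord(Q)<0$ is invertible; moreover $P^{-1}$ realises $\alpha^{-1}$, so $G_i=P^{-1}(x_i)$ and $U=\alpha^{-1}(U(F))=P^{-1}(U(F))$ for every $U\in K[[\underline x]]$. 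Thus everything reduces to identifying $P^{-1}$ with the operator
$$
\Phi:=\sum_{\underline{p}\ge\underline 0}\frac{1}{\underline{p}!}\,\underline{\partial}^{\underline{p}}\,\underline{H}^{\underline{p}}\in\hat{D}_n^{sym},
$$
for which $\Phi(x_i)=\sum_{\underline p}\frac{\underline\partial^{\underline p}}{\underline p!}(x_i\underline H^{\underline p})$ and $\Phi(U(F))=\sum_{\underline p}\frac{\underline\partial^{\underline p}}{\underline p!}(U(F)\underline H^{\underline p})$. The key observation is that $\Phi$ is the formal adjoint $P^{\dagger}$ of $P$, for the anti-involution of $\hat{D}_n^{sym}$ fixed by $\partial_i^{\dagger}=-\partial_i$ and $m_f^{\dagger}=m_f$ ($m_f$ denoting multiplication by $f\in\hat R$).

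Next I would bring in $j(F)=1$. From $\bigtriangleup_i(F_j)=\delta_{ij}$ and the chain rule one obtains, after a verification on $\hat{R}$ (which is a faithful $\hat{D}_n^{sym}$-module) and using $\bigtriangleup_i=\sum_k\bigtriangleup_i(x_k)\partial_k\in D_n$, the operator identities $P\partial_i=\bigtriangleup_i P$ and $P\,m_{x_j}=m_{F_j}P$. Since each vector field $\bigtriangleup_i$ is divergence-free, $\sum_k\partial_k\bigtriangleup_i(x_k)=0$ (the Piola identity), one has $\bigtriangleup_i^{\dagger}=-\bigtriangleup_i$, and taking adjoints gives $\partial_i\Phi=\Phi\bigtriangleup_i$ and $\Phi\,m_{F_j}=m_{x_j}\Phi$. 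Then $Z:=\Phi P=P^{\dagger}P$ commutes with every $\partial_i$ (because $Z\partial_i=\Phi\bigtriangleup_iP=\partial_i\Phi P=\partial_iZ$) and with every $m_{x_j}$ (similarly), so $Z=c$ with $c\in K_y$; as $P$ and $\Phi$ have coefficients in $K[[\underline x]]$, in fact $c\in K[[\underline x]]\cap K_y=K$. Evaluating on $1$ gives $c=\Phi(P(1))=\Phi(1)=\sum_{\underline p}\frac{1}{\underline p!}\underline\partial^{\underline p}(\underline H^{\underline p})$, whose free term is $1$ (each term with $\underline p\ne\underline 0$ lies in $\idm$, since $\underline H^{\underline p}\in\idm^{2|\underline p|}$ while $\underline\partial^{\underline p}$ lowers $\upsilon$ by at most $|\underline p|$); as $c$ is a constant this forces $c=1$. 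Hence $\Phi P=1$, i.e.\ $\Phi=P^{-1}$, which yields \eqref{E:Abhyankar} and \eqref{E:Abhyankar1}.

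The hard part will be the two conjugation identities $P\partial_i=\bigtriangleup_i P$, $P\,m_{x_j}=m_{F_j}P$ together with the divergence-freeness $\bigtriangleup_i^{\dagger}=-\bigtriangleup_i$, which is precisely where the hypothesis $j(F)=1$ is used; by comparison, checking that $\dagger$ is a well-defined anti-involution of $\hat{D}_n^{sym}$, that $\hat{R}$ is a faithful $\hat{D}_n^{sym}$-module, and the order estimates showing $P,\Phi\in\hat{D}_n^{sym}$ are all routine.
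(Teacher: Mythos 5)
Your proposal is correct and is essentially the paper's own argument: your $\dagger$ is the paper's anti-isomorphism $*$, your $P$ is the paper's $S=\exp(-H_1*\partial_1-\cdots-H_n*\partial_n)$, and your identity $\Phi P=P^{\dagger}P=1$ (proved by showing it is a central constant with free term $1$, via $P\partial_i=\bigtriangleup_iP$, $Pm_{x_j}=m_{F_j}P$ and the divergence-freeness of $\bigtriangleup_i$) is exactly the paper's $*(S)S=1$. One caveat on a step you dismiss as routine: $\dagger$ is \emph{not} a well-defined anti-involution of all of $\hat{D}_n^{sym}$ (normal-ordering $\partial^{\underline k}\underline x^{\underline i}$ produces, for a general element, infinitely many contributions to a fixed monomial, e.g.\ the adjoint of $\delta_1$ has divergent constant term); it is defined only on operators such as $P$, $\Phi$ and $\bigtriangleup_i$ that are sums of differential operators of strictly decreasing $\Ord$-order, which is all your argument needs and is exactly the restriction the paper makes explicit.
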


\begin{proof}
By theorem \ref{T:D_n_properties} the operator $S:=\exp (-H_1*\partial_1-\ldots - H_n*\partial_n)$ acts on  $f(x_1, \ldots ,x_n)$ as $S\circ f(\underline{x})=f(\underline{x}+H)$, hence $Sf(\underline{x})S^{-1}=f(\underline{x}+H)$ in the ring $\hat{D}_n^{sym}$. Since $\Ord (\exp (-H_1*\partial_1-\ldots - H_n*\partial_n)-1)<0$ by the assumptions on $H$, the inverse operator is given by the series $S^{-1}=1+S_-+S_-^2+\ldots $, where $S_-=1-S$. 

Denote by $*:D_n\rightarrow D_n$, $*(x_i):=x_i$, $*(\partial_i):=-\partial_i$  the anti-isomorphism on the ring of differential operators. It can be extended on some operators from $\hat{D}_n^{sym}$, in particular on $S, S^{-1}$ (since $S_-$ is a sum of differential operators of strictly decreasing order). Since we have 
$$
*(Sx_iS^{-1})=*(S^{-1})x_i*(S)=*(F_i)=F_i, \quad i=1, \ldots ,n,
$$
we get $[x_i,*(S)S]=0$ for all $i=1, \ldots ,n$, hence $f:=*(S)S\in K[[x_1, \ldots ,x_n]]$. On the other hand, we have
$$
S\partial_iS^{-1}=\partial_i +S\partial_i(S^{-1})=\sum_{j=1}^n\bigtriangleup_i(x_j)\partial_j, \quad i=1, \ldots ,n.
$$
Indeed, $S^{-1}=1-S_-^{-1}$, where all monomials of $S_-^{-1}$ depend on some $\partial_i$, and therefore all monomials of $S\partial_iS^{-1}$ depend on some $\partial_i$ as well. On the other hand, $[F_i,S\partial_jS^{-1}]= [x_i, \partial_j]=\delta_{ij}$, i.e. the operator $S\partial_jS^{-1}- \sum_{j=1}^n\bigtriangleup_i(x_j)\partial_j$ commutes with all $F_i$. Since all its monomials depend on some $\partial_i$, it must be zero. So, we have 
\begin{multline*}
*(S\partial_iS^{-1})=-*(S^{-1})\partial_i*(S)=*(\bigtriangleup_i)=-\sum_{j=1}^n\partial_j\bigtriangleup_i(x_j)= \\
- \sum_{j=1}^n\bigtriangleup_i(x_j)\partial_j - \sum_{j=1}^n\partial_j(\bigtriangleup_i(x_j)), \quad i=1, \ldots ,n.
\end{multline*}
Note that $\sum_{j=1}^n\partial_j(\bigtriangleup_i(x_j))=0$. For, the matrix $(\bigtriangleup_i(x_j))$, $i,j=1, \ldots ,n$ is the inverse matrix to the matrix $J(F)$, i.e. it is equal to $J(G)$.  It is not difficult to see that $J(G)=E+N$, where $N$ is a nilpotent matrix. Then
$$
\sum_{j=1}^n\partial_j(\bigtriangleup_i(x_j))= \sum_{j=1}^n \partial_j(\partial_i (G_j))=\partial_i (Tr(J(G))=0.
$$
Hence $*(S^{-1})\partial_i*(S)=S\partial_iS^{-1}$ and therefore $[\partial_i,*(S)S]=0$ for all $i=1, \ldots ,n$, i.e. $f\in K$. Since $f=\sum_{\underline{p}\in \sdn_0^n}\frac{\underline{\partial}^{\underline{p}}}{\underline{p}!}(H^{\underline{p}})$, it must be equal to $1$, i.e. $*(S)=S^{-1}$. 

Now we have $G_-=*(S)x_iS$. Since $G_i\in K[[x_1, \ldots ,x_n]]$ and $S=1-S_-$, where all monomials of $S_-$ depend on some $\partial_i$, it suffices to find only the free term of the operator $*(S)x_i$. Since 
$*(S)=\exp (\partial_1*H_1+\ldots +\partial_n*H_n)$, we immediately get the formula \eqref{E:Abhyankar}. The second formula \eqref{E:Abhyankar1} is now obvious.
\end{proof}

\section{The rings $\hat{D}_n$, $\hat{E}_n$, $E_n$ and their order function $\ord_n$}
\label{S:algebra_D_n}

From technical point of view it is more convenient to deal with a more narrow non-symmetric version of completion $\hat{D}_n$ (it is well adapted for the classification of commutative subrings). In particular, there is an analogue of the Schur theory which will be important for us (see below). 

\begin{defin}
\label{D:nonsymmetric}
We define $\hat{D}_1=D_1$ and define $\hat{D}_n=\hat{D}_{n}^{n}[\partial_n]$, where 
$\hat{D}_{n}^{n}$ denotes the subring in $\hat{D}_n^{sym}$ consisting of operators {\it not depending on $\partial_n$}. Obviously, $\hat{D}_n\subset \hat{D}_n^{sym}$.

We define associated pseudo-differential rings $\hat{E}_n=\hat{D}_{n}^{n}((\partial_n^{-1}))$, \\
$E_n=D_{n}^n((\partial_n^{-1}))$ (cf. \cite[\S 2.1.4]{Zheglov2013}, \cite[Def. 2.1]{Zheglov2018}).  

The subset
\begin{equation}
\label{F:Pi}
\Pi_n =\{P\in \hat{E}_n| \quad \Ord (P)<\infty\} 
\end{equation}
is, clearly, an associative subring. Consider the vector space 
$$
\hat{\cm}_n:=\{\sum_{i\in \sdz}a_{i}\partial_n^i, \quad a_i\in \hat{D}_n^n\}.
$$
The function $\Ord$ can be obviously extended to this space.

We define the vector subspace 
$$
\hat{\Pi}_n:=\{ P\in \hat{\cm}_n| \quad \Ord (P)<\infty \}.
$$
\end{defin}

\begin{defin}
\label{D:ord_n}
We define the  {\it order function} $\ord_n$  on $\hat{E}_n$ as
$
\ord_n(P)=l
$
if $\hat{E}_n\ni P=\sum_{s=-\infty}^lp_s\partial_n^s$, and set $\ord_n(0):=- \infty$. 

The coefficient $p_l$ is called {\it the highest term} and will be denoted by $HT_n(P)$ (as the term naturally associated with the function $\ord_n$).

The symbol $\sigma$ on $\hat{\Pi}_n$ can be defined in the same way as in $\hat{D}_n^{sym}$. 
\end{defin}

The order function $\ord_n$ and the highest term $HT_n$ behave like the $\Ord$-function and highest symbol. Namely, the following properties obviously hold:
\begin{enumerate}
\item 
$HT_n(P\cdot Q)=HT_n(P)\cdot HT_n(Q)$ provided $HT_n(P)\cdot HT_n(Q)\neq 0$;
\item
$\ord_n (P\cdot Q)\le \ord_n (P)+\ord_n (Q)$, and the equality holds if $HT_n (P)\cdot HT_n (Q)\neq 0$,
\item
$\ord_n (P+Q)\le \max\{\ord_n (P),\ord_n (Q)\}$.
\end{enumerate}
In particular, the function $-\ord_n$ determines  a discrete pseudo-valuation on the ring $\hat{D}_n$.

The space $\hat{\Pi}_n$ has a natural left $\hat{D}_n^{sym}$-module structure. To prove it we'll need one more definition:

\begin{defin}
\label{D:slices}
Let $P \in \hat{D}_n^{sym}$ be an operator of order $d$ given by the expansion (\ref{E:expOperatorP}). Then we have another form of the formal power series expansion of $P$ called \emph{slice decomposition}:
\begin{equation}
P = \sum\limits_{\underline{i} \ge \underline{0}} \frac{\underline{x}^{\underline{i}}}{\underline{i}!} \, P_{(\underline{i})}, \quad \mbox{\rm where} \quad \underline{i}\in \dn_0^n\times \dn_0^m, \quad \underline{x}^{\underline{i}}=x_1^{i_1}\ldots x_n^{i_n}y_1^{j_1}\ldots y_m^{j_m},\quad \underline{i}! =i_1!\ldots i_n!,
$$
$$
P_{(\underline{i})} = \underline{i}! \sum_{\substack{\underline{k} \ge \underline{0} \\ |\underline{k}| - |\underline{i}| \le d }} \alpha_{\underline{k}, \underline{i}} \, \underline{\partial}^{\underline{k}}, \quad \alpha_{\underline{k}, \underline{i}}\in K.
\end{equation}
For any multi-index $\underline{i}$, the partial differential operator with constant coefficients  $P_{(\underline{i})} \in K [\partial_1, \dots, \partial_n]$ is called $\underline{i}$-th slice of $P$.

Analogously, for any element $P\in \hat{\cm}_n$ the slice decomposition is defined as:
\begin{equation}
P = \sum\limits_{\underline{i} \ge \underline{0}} \frac{\underline{x}^{\underline{i}}}{\underline{i}!} \, P_{(\underline{i})}, \quad \mbox{\rm where} \quad
P_{(\underline{i})} = \underline{i}! \sum_{\substack{\underline{k} \in \sdn_0^{n-1}\times\sdz }} \alpha_{\underline{k}, \underline{i}} \, \underline{\partial}^{\underline{k}}, \quad \alpha_{\underline{k}, \underline{i}}\in K.
\end{equation}

\end{defin}

\begin{lem}
\label{L:module_structure}
There is a natural left $\hat{D}_n^{sym}$-module and right $\Pi_n$-module structure on $\hat{\Pi}_n$:
$(P\in \hat{D}_n^{sym}, A\in \hat{\Pi}_n, Q\in \Pi_n) \rightarrow P\circ A\circ Q\in \hat{\Pi}_n$. 

Besides, the properties 1-3 from remark \ref{R:ord_properties} hold also for the module $\hat{\Pi}_n$. 
\end{lem}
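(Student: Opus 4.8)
The plan is to reduce every assertion to bookkeeping with the monomial expansion \eqref{E:expOperatorP} (extended to $\hat{\cm}_n$ by allowing the exponent of $\partial_n$ to lie in $\dz$) and with the \emph{weight} $v(\underline{x}^{\underline{i}}\underline{\partial}^{\underline{k}}):=|\underline{k}|-|\underline{i}|$ of a monomial. First I would record three elementary facts: (i) for $0\ne P\in\hat{\cm}_n$ one has $\Ord(P)=\sup\{v(\mu):\mu\ \text{a monomial of}\ P\}$, the component $P_m$ consists of the monomials with $v=-m$, and $\sigma(P)$ of the monomials with $v(\mu)=\Ord(P)$; (ii) if $\Ord(P)=d<\infty$ then every monomial $\underline{x}^{\underline{i}}\underline{\partial}^{\underline{k}}$ of $P$ satisfies $|\underline{k}|\le|\underline{i}|+d$ (immediate from \eqref{E:LastOrder}, since $\upsilon(a_{\underline{k}})\le|\underline{i}|$ whenever $\underline{x}^{\underline{i}}$ occurs in $a_{\underline{k}}$); (iii) for $P\in\hat{D}_n^{sym}$ all exponents of $\partial_1,\dots,\partial_n$ are $\ge 0$, for $P\in\hat{\cm}_n$ (in particular $P\in\hat{\Pi}_n$ or $P\in\hat{E}_n$) the exponents of $\partial_1,\dots,\partial_{n-1}$ are $\ge 0$, and for $P\in\Pi_n\subset\hat{E}_n$ moreover the exponent of $\partial_n$ is $\le\ord_n(P)$.

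The technical heart is one computation together with a finiteness count. The computation: normal-ordering a product of monomials $\underline{x}^{\underline{i}}\underline{\partial}^{\underline{k}}\cdot\underline{x}^{\underline{j}}\underline{\partial}^{\underline{l}}$ via the Leibniz rule $\underline{\partial}^{\underline{k}}\underline{x}^{\underline{j}}=\sum_{\underline{0}\le\underline{p}\le\underline{j}}c_{\underline{p}}\,\underline{x}^{\underline{j}-\underline{p}}\underline{\partial}^{\underline{k}-\underline{p}}$ (a \emph{finite} sum, since a monomial $\underline{x}^{\underline{j}}$ has only finitely many nonzero derivatives, and this remains valid when $\underline{k}$ has a negative $\partial_n$-part) produces finitely many monomials $\underline{x}^{\underline{i}+\underline{j}-\underline{p}}\underline{\partial}^{\underline{k}+\underline{l}-\underline{p}}$, each of weight $v(\underline{x}^{\underline{i}}\underline{\partial}^{\underline{k}})+v(\underline{x}^{\underline{j}}\underline{\partial}^{\underline{l}})$; so composition is ``$v$-additive on monomials''. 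The count: fixing a target monomial $\underline{x}^{\underline{a}}\underline{\partial}^{\underline{b}}$, only finitely many pairs of monomials of $P$ and $A$ — respectively triples, for $P\circ A\circ Q$ — can produce it. Indeed the relation $\underline{i}+(\underline{j}-\underline{p})=\underline{a}$ forces $\underline{i}\le\underline{a}$, hence finitely many choices of the $x$-exponent $\underline{i}$ of the left factor; then (ii) and the sign constraints (iii) pin the $\partial$-exponent $\underline{k}$ of the left factor into a finite set; then $\underline{0}\le\underline{p}\le\underline{k}$ (or $\le\underline{j}$) leaves finitely many $\underline{p}$; and then the exponents of the remaining factor(s) are determined. (For left multiplication by $P\in\hat{D}_n^{sym}$ one uses only $\underline{k}\ge\underline{0}$ together with $|\underline{k}|\le|\underline{i}|+\Ord P$; for right multiplication by $Q\in\Pi_n$ one uses in addition the bound $\ord_n(Q)$ on the $\partial_n$-exponent of the $Q$-monomial, which via the target relation forces the $\partial_n$-exponent of the $A$-monomial to be bounded below, hence — combined with (ii)–(iii) — into a finite range.) Therefore $P\circ A$ and $P\circ A\circ Q$ are well-defined elements of $\hat{\cm}_n$ (each coefficient a finite sum), every monomial of which has weight $\ge-(\Ord P+\Ord A+\Ord Q)$; so they lie in $\hat{\Pi}_n$ and $\Ord(P\circ A\circ Q)\le\Ord P+\Ord A+\Ord Q$. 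This is the inequality in property 1 of Remark~\ref{R:ord_properties}; property 3 is trivial; and for property 2 and the equality case one notes that the weight-$(\Ord P+\Ord A)$ part of $P\circ A$ equals $\sigma(P)\circ\sigma(A)$ (only the top-weight parts of each factor reach that weight), so when $\sigma(P)\circ\sigma(A)\ne 0$ there is no cancellation, $\Ord(P\circ A)=\Ord P+\Ord A$ and $\sigma(P\circ A)=\sigma(P)\circ\sigma(A)$.

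It then remains to check the module axioms: bi-additivity, $1\circ A=A=A\circ 1$, associativity $(P_1P_2)\circ A=P_1\circ(P_2\circ A)$ and $A\circ(Q_1Q_2)=(A\circ Q_1)\circ Q_2$, and the commutation $(P\circ A)\circ Q=P\circ(A\circ Q)$ of the left $\hat{D}_n^{sym}$-action and the right $\Pi_n$-action (which is exactly what makes the notation $P\circ A\circ Q$ unambiguous). By the finiteness established above, every coefficient of every side of each identity is a finite sum, and the two sides give the same finite sum by the corresponding identity in the ordinary non-completed algebra $D_n\subset E_n$; so the axioms are verified coefficient-by-coefficient, reducing to the known associativity and distributivity there. (One may equivalently carry out the same bookkeeping in terms of the slice decomposition of Definition~\ref{D:slices}, which isolates the constant-coefficient parts $P_{(\underline{i})}$; the weight estimate is unchanged.)

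The main obstacle is the finiteness count of the second paragraph. The weight-additivity itself is a one-line Leibniz computation; the real content is that $P\circ A\circ Q$ is a \emph{genuine} element of $\hat{\cm}_n$ — with each $\partial_n$-coefficient a bona fide element of $\hat{D}_n^n$ — rather than a formal expression with divergent coefficients. This is exactly where the hypothesis $\Ord<\infty$ on all three factors is used, and, crucially, where the membership $Q\in\hat{E}_n$ (so that $Q$ has $\partial_n$-order bounded above) enters: without it a single target monomial could a priori receive contributions from infinitely many monomials of $A$, since $\hat{\Pi}_n$ itself permits arbitrarily negative powers of $\partial_n$.
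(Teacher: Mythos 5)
Your proof is correct and rests on the same underlying mechanism as the paper's: the only obstruction to defining the products is that a single coefficient could receive infinitely many contributions, and in both arguments this is ruled out by the observation that a monomial of $A$ with very negative $\partial_n$-exponent must, because $\Ord (A)<\infty$, carry a correspondingly large $x$-exponent and hence can only feed terms of high $x$-degree. The organization differs, though. The paper splits $A=A_++A_-$, defines $P\circ A_-=\sum_i(P\cdot A_i)\partial_n^{-i}$, and checks convergence of each $\partial_n^k$-coefficient slice by slice (the estimate $\Ord (C_{i,k})\le \Ord (P)+\Ord (A)-k$ forces the slice decomposition of $C_{i,k}$ to begin in $x$-degree at least $i+k-\Ord (P)$, so each fixed degree sees finitely many $i$), and then proves associativity by rearranging triple sums of $\partial_n$-homogeneous pieces inside $\Pi_n$; you instead run one monomial-level count, uniform for the left action, the right action and the bimodule identity, with the weight $v$ playing the role of the paper's homogeneous decomposition. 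Your version makes it cleaner to see exactly where $\Ord (P),\Ord (A)<\infty$ and $\ord_n(Q)<\infty$ enter (your closing paragraph is precisely the right diagnosis), at the cost of having to spell out the triple-product count and the matching of contributing triples on the two sides of $(P\circ A)\circ Q=P\circ(A\circ Q)$ a bit more carefully than you do. One slip to fix: the weights of the monomials of $P\circ A\circ Q$ are bounded \emph{above} by $\Ord (P)+\Ord (A)+\Ord (Q)$, not below by its negative; with your convention $\Ord=\sup v$ it is the upper bound that yields $\Ord (P\circ A\circ Q)\le \Ord (P)+\Ord (A)+\Ord (Q)$.
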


\begin{proof}
Any $A\in \hat{\Pi}_n$ can be uniquely represented as a sum $A=A_++A_-$, where $A_+\in \hat{D}_n^{sym}$ and $A_-=\sum_{i\ge 0}A_i\partial_n^{-i}$. We define a left $\hat{D}_n^{sym}$-module structure by the rule 
$P\circ A:= P\cdot A_+ + P\circ A_-$, where 
\begin{equation}
\label{E:umn}
P\circ A_-= \sum_{i\ge 0}(P\cdot A_i) \partial_n^{-i},
\end{equation}
and this sum is well defined by the following reasons. By the properties of the function $\Ord$ we have $\Ord (A_i)\le \Ord (A)+i$ for any $i\ge 0$, and therefore $\Ord (P\cdot A_i)\le \Ord (P)+\Ord (A_i)+i$ (in particular, $\Ord (P\circ A_-)< \infty$ is this sum is well defined). Let $P\cdot A_i=\sum_{j\ge 0}B_{i,j}\partial_n^j$, then 
$$
B_{i,j}=\sum_{l\ge 0}C_{j+l}^lP_{j+l}\partial_n^l(A_i),
$$
where $P=\sum_{l\ge 0}P_l\partial_n^l$, and 
$$
(P\cdot A_i)\partial_n^{-i}=\sum_{k=-i}^{\infty}C_{i,k}\partial_n^k,
$$
where $C_{i,k}=B_{i,k+i}$. Then the sum \eqref{E:umn} is well defined if the sums $\sum_{i\ge 0}C_{i,k}$ are well defined for any $k$. Note that $\Ord (C_{i,k})\le \Ord (P)+\Ord (A)-k$, and therefore we need to look at the slice decompositions of $C_{i,k}$. As 
$$
C_{i,k}=B_{i,k+i}=\sum_{l\ge 0}C_{k+i+l}^lP_{k+i+l}\partial_n^l(A_i),
$$
and $\Ord (P_l)\le \Ord (P)-l$, in the case when $\alpha :=\Ord (P)-k-i <0$ the slice decomposition is
$$
C_{i,k}=\sum_{|\underline{p}|\ge |\alpha |} \frac{x^{\underline{p}}}{\underline{p}!}(C_{i,k})_{(\underline{p})}.
$$
Thus, the sum $\sum_{i\ge 0} C_{i,k}$ is well defined, as there are only finitely many terms of the form $\frac{x^{\underline{p}}}{\underline{p}!}(C_{i,k})_{(\underline{p})}$ for any given $|\underline{p}|$ in the slice decompositions. 

By similar arguments, $P\circ A = \sum_{l=0}^{\infty} (P_l\partial_n^l)\circ A$ (as there are only finitely many terms of the form $\frac{x^{\underline{p}}}{\underline{p}!}((P_l\partial_n^l)\circ A)_{(\underline{p})}$ for any given $|\underline{p}|$ in the whole sum), and 
$$
(P_l\partial_n^l)\circ A=\sum_{i\in\sdz}(P_l\partial_n^l)\cdot (A_i\partial_n^{-i})
$$ 
(the product is taken in the ring $\Pi_n$), since there are only finitely many summands of given $\ord_n$-order in this sum. Therefore,
$$
P\circ A= \sum_{l\ge 0}\sum_{i\in \sdz}(P_l\partial_n^l)\cdot (A_i\partial_n^{-i}),
$$
in particular, all axioms of the left module are satisfied.

Analogously, we define a right $\Pi_n$-module structure by the rule $A\circ Q:= A\circ Q_++A\circ Q_-$, where $A\circ Q_+=A_+\cdot Q_+ + A_-\cdot Q_+$ (the first product is taken in $\hat{D}_n^{sym}$, and the second in $\Pi_n$), and $A\circ Q_-=A_+\circ Q_- + A_-\cdot Q_+$. By the same arguments as above 
$$
A\circ Q= \sum_{i\in \sdz}\sum_{j\ge m}(A_i\partial_n^{-i})\cdot (Q_j\partial_n^{-j}).
$$

Therefore, as $\Pi_n$ is an associative ring,
\begin{multline*}
(P\circ A)\circ Q=\sum_{j\ge m}\sum_{l\ge 0}\sum_{i\in\sdz} ((P_l\partial_n^l)\cdot (A_i\partial_n^{-i}))\cdot (Q_j\partial_n^{-j})= \\
\sum_{l\ge 0}\sum_{i\in\sdz}\sum_{j\ge m} (P_l\partial_n^l)\cdot ((A_i\partial_n^{-i})\cdot (Q_j\partial_n^{-j}))=P\circ (A\circ Q)
\end{multline*}
and $A$ is a bimodule. 

The properties 1-3 are now obvious.
\end{proof}

\section{Commutative subrings in $\hat{D}_n^{sym}$ and their spectral modules}
\label{S:spectral}

\subsection{Spectral modules}

Let $B\subset \hat{D}_n^{sym}$ be a commutative subring. 

\begin{defin}
\label{D:spectral_module}
The $B$-module $F:=\hat{D}_n^{sym}/(x_1,\ldots ,x_n) \hat{D}_n^{sym}\simeq K_y[[\partial_1,\ldots ,\partial_n]]\cap \Pi_n$ is called {\it spectral module} of the ring $B$. 
\end{defin}

Note that $F$ is actually a right $\hat{D}_n^{sym}$ module. However, since the ring $B$ is commutative, we will view $F$ as a left $B$-module, having the natural right action in mind. The following proposition explains the term "spectral", cf. \cite[Prop. 4.2]{Zheglov_belovezha}.

\begin{prop}
\label{P:spectral_module}
Let $K_y=K$. Let $B\subset \hat{D}_n^{sym}$ be a finitely generated commutative subring over $K_y$ such that the spectral module $F$ is finitely generated. 

For any character $\chi_q :B\rightarrow K_q$, where $q\subset B$ is a maximal ideal and $K_q=B/q$ is the residue field, consider the vector space 
$$
Sol(B,\chi_q )=\{f\in K_q[[x_1, \ldots , x_n]] \,\big|\,\quad Q (f)=\chi_q (Q)f\quad \forall Q\in B \}.
$$
Observe that $Sol(B,\chi_q )$ has a natural $B$-module structure: $f\in Sol(B,\chi_q ) \Rightarrow \forall Q\in B$ $Q(f)\in Sol(B,\chi_q )$. 

The following K--linear map
\begin{equation}
F \stackrel{\eta_{\chi_q}}\rightarrow \mathsf{Sol}\bigl(B, \chi_q\bigr)^\ast, \quad \underline{\partial}^{\underline{i}} \mapsto \Bigl(f \mapsto \left.\dfrac{1}{\underline{p}!} \dfrac{\underline{\partial}^{|p|}f}{\partial x_1^{p_1} \ldots \partial x_n^{p_n}}\right|_{(0, 0)}\Bigr)
\end{equation}
is also $B$--linear, where $\mathsf{Sol}\bigl(B, \chi_q\bigr)^\ast = \Hom_{K_q}\bigl(\mathsf{Sol}\bigl(B, \chi_q\bigr), K_q\bigr)$ is the vector space dual of the solution space. Moreover, the induced map
\begin{equation}
\label{E:solspaceisom}
F|_{\chi_q}:=(B/\ker \chi_q )\otimes_BF\stackrel{\bar{\eta}_{\chi_q}}\rightarrow Sol(B,\chi_q )^*
\end{equation}
is an isomorphism of $B$-modules.  In particular, $\dim_{K}\Bigl(\mathsf{Sol}\bigl(B, \chi_q\bigr)\Bigr)<\infty$  for any  $\chi_q$.
\end{prop}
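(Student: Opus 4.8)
The plan is to recognise the statement as a biduality statement for a perfect pairing and then feed in the two finiteness hypotheses. Since $K_y=K$, the stated isomorphism identifies $F$ with a polynomial ring, $F\cong K[\partial_1,\ldots,\partial_n]$ (the intersection with $\Pi_n$ in Definition~\ref{D:spectral_module} forces polynomials when $m=0$); combining this with the left $\hat{D}_n^{sym}$-action on $\hat{R}:=K[[x_1,\ldots,x_n]]$ from Theorem~\ref{T:D_n_properties}(2), I would work with the pairing
$$
\langle\,\cdot\,,\,\cdot\,\rangle\colon F\times\hat{R}\longrightarrow K,\qquad \langle\bar P,f\rangle:=\bigl(P(f)\bigr)\big|_{\underline{x}=\underline{0}},
$$
which descends to $F=\hat{D}_n^{sym}/(x_1,\ldots,x_n)\hat{D}_n^{sym}$ because $P\in(x_1,\ldots,x_n)\hat{D}_n^{sym}$ kills the constant term of $P(f)$. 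First I would check two properties. (i) \emph{Perfectness in an asymmetric sense}: on the basis $\{\underline{\partial}^{\underline{i}}\}$ one has $\langle\underline{\partial}^{\underline{i}},f\rangle=(\underline{\partial}^{\underline{i}}f)(\underline{0})$, so $\hat{R}$ is the \emph{full} $K$-linear dual of $F$ and $F$ separates the points of $\hat{R}$; this is exactly where $K_y=K$ enters, since for $m>0$ the module $F$ is a genuine power-series object and no such clean duality holds. (ii) \emph{Adjointness}: $\langle\bar P\cdot Q,f\rangle=\langle\bar P,Q(f)\rangle$ for all $P,Q\in\hat{D}_n^{sym}$ and $f\in\hat{R}$, immediate from associativity of the action, $((PQ)(f))(\underline{0})=(P(Q(f)))(\underline{0})$. (The factor $1/\underline{p}!$ in the displayed formula is a mere rescaling of the chosen basis of $F$ and is immaterial for the conclusion.)

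Next I would pass to the residue field. By Zariski's lemma ($B$ a finitely generated $K$-algebra, $q$ maximal), $K_q=B/q$ is finite over $K$; after extending scalars I would set $\tilde F=K_q[\partial_1,\ldots,\partial_n]$, $\tilde R=K_q[[x_1,\ldots,x_n]]$, $B_q=B\otimes_KK_q$, extend $\langle\,\cdot\,,\,\cdot\,\rangle$ to $\tilde F\times\tilde R\to K_q$ (properties (i),(ii) persist), and let $\mathfrak q=\ker(\chi_q\colon B_q\to K_q)$, a maximal ideal with $B_q/\mathfrak q=K_q$ which is the $K_q$-span of the elements $Q-\chi_q(Q)$, $Q\in B$. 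Now $Sol(B,\chi_q)\subset\tilde R$, and over $K_q$ the map $\eta_{\chi_q}$ of the statement becomes the composite $\tilde F\xrightarrow{\langle\,\cdot\,,\,\cdot\,\rangle}(\tilde F)^{*}\twoheadrightarrow Sol(B,\chi_q)^{*}$ (restriction of functionals). Its $B$-linearity (for the action of $B$ on $Sol(B,\chi_q)^{*}$ through $\chi_q$) follows in one line from (ii): $\langle\bar P\cdot Q,f\rangle=\langle\bar P,Q(f)\rangle=\langle\bar P,\chi_q(Q)f\rangle=\chi_q(Q)\langle\bar P,f\rangle$ for $Q\in B$, $f\in Sol(B,\chi_q)$. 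Hence $\eta_{\chi_q}$ annihilates $\mathfrak q\tilde F$ and factors through $\tilde F/\mathfrak q\tilde F=(B/\ker\chi_q)\otimes_BF=F|_{\chi_q}$, giving $\bar\eta_{\chi_q}$.

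The heart of the argument is the identification
$$
Sol(B,\chi_q)=(\mathfrak q\tilde F)^{\perp}\subset\tilde R,
$$
the annihilator of $\mathfrak q\tilde F$ under the pairing: for $f\in\tilde R$, membership in $(\mathfrak q\tilde F)^{\perp}$ means $\langle\bar P\cdot a,f\rangle=0$ for all $\bar P\in\tilde F$, $a\in\mathfrak q$; by (ii) this reads $\langle\bar P,a(f)\rangle=0$ for all such $\bar P,a$; by the $\tilde R$-side of (i) it reads $a(f)=0$ for all $a\in\mathfrak q$; and since $\mathfrak q$ is spanned by the $Q-\chi_q(Q)$ this is exactly $Q(f)=\chi_q(Q)f$ for all $Q\in B$. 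Next I would bring in finiteness: $F$ finitely generated over $B$ makes $F|_{\chi_q}=K_q\otimes_BF$ finite-dimensional over the field $K_q$, so $\mathfrak q\tilde F=\ker(\tilde F\to F|_{\chi_q})$ has finite codimension in $\tilde F$; and for a perfect pairing $\tilde F\times\tilde R\to K_q$ with $\tilde R=(\tilde F)^{*}$, every finite-codimensional $U\subset\tilde F$ satisfies $U^{\perp}\cong(\tilde F/U)^{*}$ and $U^{\perp\perp}=U$ (biduality of the finite-dimensional $\tilde F/U$). Applying this with $U=\mathfrak q\tilde F$ and the identification above gives $\dim_{K_q}Sol(B,\chi_q)=\dim_{K_q}F|_{\chi_q}<\infty$ and $Sol(B,\chi_q)^{\perp}=\mathfrak q\tilde F$. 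Since $\ker\eta_{\chi_q}=Sol(B,\chi_q)^{\perp}=\mathfrak q\tilde F$, the induced $\bar\eta_{\chi_q}\colon F|_{\chi_q}\to Sol(B,\chi_q)^{*}$ is injective between $K_q$-spaces of equal finite dimension, hence an isomorphism; finally $\dim_KSol(B,\chi_q)=[K_q:K]\dim_{K_q}Sol(B,\chi_q)<\infty$. The main obstacle is the foundational step (i)--(ii): checking that the operator-action pairing descends to $F$, is perfect in the asymmetric sense used (and seeing that this is precisely the role of the hypothesis $K_y=K$), and is adjoint to the bimodule structure of Lemma~\ref{L:module_structure}; everything after that is formal duality together with Zariski's lemma and the finite generation of $F$ over $B$.
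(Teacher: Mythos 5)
Your proof is correct and is essentially the paper's own argument recast in the language of a perfect pairing and annihilators: your properties (i)--(ii) are exactly the module isomorphism $\Phi:\Hom_{K_q}(F,K_q)\to K_q[[x_1,\ldots,x_n]]$ that the paper uses, your identification $Sol(B,\chi_q)=(\mathfrak{q}\tilde F)^{\perp}$ is the paper's $\mathsf{Im}(\Psi)=\mathsf{Sol}(B,\chi_q)$, and your biduality step for the finite-codimensional subspace $\mathfrak{q}\tilde F$ is the paper's double-dual step $\bigl(B/\ker(\chi_q)\otimes_B F\bigr)^{\ast\ast}\cong B/\ker(\chi_q)\otimes_B F$. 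The differences are only presentational (you make Zariski's lemma and the base change to $K_q$ explicit, and you correctly observe that the normalizing factor $1/\underline{p}!$ in the displayed formula is harmless for the isomorphism claim), so the approach matches the paper's.
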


\begin{proof}
In the one-dimensional case, for the ring $D_1$, the isomorphism (\ref{E:solspaceisom}) is due to Mumford \cite[Section 2]{Mumford}. In our case the proof is practically the same as for partial differential operators \cite[Th. 4.5 item 2]{BurbanZheglov2017}, cf. \cite[Theorem 1.14]{BurbanZheglov2016}, \cite[Rem. 2.3]{KOZ2014}. For convenience of the reader we give it here. 

First note that the following map
\begin{equation}
\Hom_{K_q}\bigl(F, K_q\bigr) \stackrel{\Phi}\rightarrow K_q[[x_1, \ldots , x_n]], \quad \lambda \mapsto \sum\limits_{\underline{p}\ge 0} \frac{1}{\underline{p}!} \lambda(\underline{\partial}^{\underline{p}}) \underline{x}^{\underline{p}}
\end{equation}
is  an  isomorphism of left $\hat{D}_n^{sym}$--modules, where the $\hat{D}_n^{sym}$-module structure on $\Hom_{K_q}$ is given, as usual, by the rule $(P\cdot \lambda )(-):= \lambda (-\cdot P)$ and $\hat{D}_n^{sym}$ acts on $K_q[[x_1, \ldots ,x_n]]$ by the usual application of an  operator on a function.

Let $B \stackrel{\chi_q}\rightarrow K_q$ be a character, then $K_q = B/\ker(\chi_q)$ is a left $B$--module. We obtain a $B$--linear map
\begin{equation}
\Psi: \Hom_{B}(F, K_q) \stackrel{I}\rightarrow \Hom_{K_q}(F, K_q) \stackrel{\Phi}\rightarrow K_q[[x_1, \ldots , x_n]],
\end{equation}
where $I$ is the forgetful map. The image of $I$ consists of those $K_q$-linear functionals, which are also $B$--linear, i.e.
\begin{equation*}
\mathsf{Im}(I) = \bigl\{\lambda \in \Hom_{K_q}(F, K_q) \; \big| \;  \lambda(\,-\, \cdot P) = \chi_q(P)\cdot \lambda(\,-\,)\; \mbox{for all} \; P \in B\bigr\}.
\end{equation*}
This implies that $\mathsf{Im}(\Psi) = \mathsf{Sol}(B, \chi_q)$. Next, we have  a canonical  isomorphism of $B$--modules:
$
\Hom_{B}(F, K_q) \cong \Hom_{K_q}\bigl(B/\ker(\chi_q) \otimes_{B} F, K_q\bigr).
$
 Dualizing again, we get an isomorphism of vector spaces
$$
\Psi^\ast: \mathsf{Sol}(B, \chi_q)^\ast \rightarrow \bigl(B/\ker(\chi_q) \otimes_{B} F\bigr)^{\ast\ast} \cong B/\ker(\chi_q) \otimes_{B} F.
$$
It remains to observe that $\Psi^\ast$ is also $B$--linear and  $\bigl(\Psi^\ast\bigr)^{-1} = \bar{\eta}_{\chi_q}$.
\end{proof}

Typical examples of rings satisfying the conditions from proposition are rings with special condition on their highest symbols, e.g.:

\begin{thm}{\cite[T.2.1]{KOZ2014}}
\label{T:techn5.2}
 Let $P_1,\ldots ,P_n\in D_n=K[[x_1,\ldots ,x_n]][\partial_1,\ldots ,\partial_n]\subset \hat{D}_n^{sym}$ be any commuting operators of positive order.  Let $B$ be any commutative $K$-subalgebra in $D_n$ which contains the operators $P_1,\ldots ,P_n$.
Assume that the intersection of the characteristic divisors of $P_1,\ldots ,P_n$ is empty.

Then the map from $\gr(D_n)$ to $\gr(D_n)/\idm \gr(D_n)  = K[ \xi_1, ..., \xi_n]$, where $\gr$ denotes the associated graded ring with respect to the filtration defined by the {\it usual order} on $D_n$, induces an embedding on $\gr(B)$ and we also have the following properties.
\begin{enumerate}
\item \label{I:fg}
 $ K[\xi_1,\ldots ,\xi_n]$ is finitely generated as $\gr (B)$-module.
\item
The rings $B$ and $\gr B$ are finitely generated integral $K$-algebras of Krull dimension $n$.
\item \label{I:uni}
The affine variety $U = \Spec \, B$ over $K$ can be naturally completed to an $n$-dimensional irreducible projective variety $X$ with boundary $C$ which is an integral Weil divisor not contained in the singular locus of $X$. Moreover, $C$ is an unirational and ample $\dq$-Cartier   divisor.
\item The $B$-module $F = D_n/  \idm D_n$, which defines a coherent sheaf on $U$, can be naturally extended to a torsion free coherent
sheaf $\cf$ on $X$. Moreover, the self-intersection index $(C^n)$ on $X$  is equal to $\delta^n/\rk (\cf)$, where 
\begin{equation}
\label{E:delta}
\delta= {\rm gcd} \ \{n   \mid  B_n/B_{n-1} \ne 0 , \, n\ge 1\} \mbox{.}
\end{equation}

\end{enumerate}
\end{thm}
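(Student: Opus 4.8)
The plan is to compare $B$ with the commutative ring of its symbols. Filter $D_n$ by the usual order, so that $\gr D_n\cong\hat{R}[\xi_1,\ldots,\xi_n]$ with $\hat{R}=K[[x_1,\ldots,x_n]]$ and $\partial_i\mapsto\xi_i$, the bracket induced by the filtration being the Poisson bracket $\{f,h\}=\sum_l(\partial_{\xi_l}f\,\partial_{x_l}h-\partial_{x_l}f\,\partial_{\xi_l}h)$, and reduction modulo $\idm=(x_1,\ldots,x_n)$ giving a ring homomorphism $\pi\colon\gr D_n\twoheadrightarrow K[\underline{\xi}]$, $a(\underline{x})\xi^{\underline{k}}\mapsto a(\underline{0})\xi^{\underline{k}}$. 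The order filtration restricts to $B$ and, since $(B\cap D_{n,\le k})\cap D_{n,\le k-1}=B\cap D_{n,\le k-1}$, induces an inclusion of graded rings $\gr B\hookrightarrow\gr D_n$; as $\gr D_n$ is a domain, both $\gr B$ and $B$ are integral domains. Put $g_i:=\pi(\sigma(P_i))\in K[\underline{\xi}]$, homogeneous of degree $d_i=\ord P_i\ge1$. The hypothesis that the characteristic divisors of $P_1,\ldots,P_n$ have empty intersection means exactly that $g_1,\ldots,g_n$ have no common zero on $\dpp^{n-1}_K$, i.e.\ $\sqrt{(g_1,\ldots,g_n)}=(\underline{\xi})$ in $K[\underline{\xi}]$; hence $K[\underline{\xi}]$ is module-finite over $K[g_1,\ldots,g_n]$ and $\trdeg_K K(g_1,\ldots,g_n)=n$. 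The whole theorem follows once one shows that $\pi$ restricts to an \emph{embedding} $\gr B\hookrightarrow K[\underline{\xi}]$; this is the only step where commutativity is used.

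To prove the embedding I would argue by contradiction: suppose $0\ne Q\in B$ has $\pi(\sigma(Q))=0$. Decompose $\sigma(Q)=\sum_{j\ge0}\sigma(Q)^{(j)}$ into summands homogeneous of degree $j$ in $\underline{x}$; then $\sigma(Q)^{(0)}=0$, so there is a least index $j_0\ge1$ with $R:=\sigma(Q)^{(j_0)}\ne0$. From $[Q,P_i]=0$ we get $\{\sigma(Q),\sigma(P_i)\}=0$ in $\gr D_n$ for each $i$. Taking the homogeneous component of smallest $\underline{x}$-degree — it has degree $j_0-1$ and, since $\sigma(P_i)^{(0)}=g_i$ does not involve $\underline{x}$, equals $\{R,g_i\}=-\sum_{l}\partial_{x_l}(R)\,\partial_{\xi_l}(g_i)$ — we obtain $\sum_{l=1}^{n}\partial_{x_l}(R)\,\partial_{\xi_l}(g_i)=0$ for all $i$. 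After renumbering we may assume $g_1,\ldots,g_n$ are algebraically independent over $K$; since $\chara K=0$ the extension $K(\underline{\xi})/K(g_1,\ldots,g_n)$ is finite and separable, so the Jacobian $\det(\partial_{\xi_l}g_i)_{1\le i,l\le n}$ is nonzero and the linear system forces $\partial_{x_l}(R)=0$ for all $l$. But $R$ is homogeneous of degree $j_0\ge1$ in $\underline{x}$, so Euler's relation $j_0R=\sum_l x_l\partial_{x_l}(R)=0$ gives $R=0$, a contradiction. (The same computation applied to $f\in B\cap\hat{R}$, whose symbol in $\gr D_n$ is $f$ itself, shows $B\cap\hat{R}=K$.)

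Granting the embedding, items (1) and (2) are routine commutative algebra. Viewing $\gr B=\pi(\gr B)$ inside $K[\underline{\xi}]$, we have $K[g_1,\ldots,g_n]\subseteq\gr B\subseteq K[\underline{\xi}]$ with $K[\underline{\xi}]$ module-finite over the Noetherian ring $K[g_1,\ldots,g_n]$; hence $\gr B$ is module-finite over $K[g_1,\ldots,g_n]$ and $K[\underline{\xi}]$ is module-finite over $\gr B$, which is item (1). By the Artin--Tate lemma $\gr B$ is a finitely generated $K$-algebra; choosing finitely many homogeneous generators of $\gr B$ and lifting them to $B$, an induction on order shows that $B$ is finitely generated and that the Rees algebra $\widetilde{B}=\bigoplus_{k\ge0}(B\cap D_{n,\le k})t^k\subseteq B[t]$ is finitely generated and graded, with $\widetilde{B}/(t)\cong\gr B$ and $\widetilde{B}/(t-1)\cong B$. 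Finally $\dim\gr B=\dim K[\underline{\xi}]=n$ by the finiteness, and $\dim B=\dim\gr B$ (Gelfand--Kirillov dimension is unchanged on passing to $\gr B$ and equals Krull dimension for finitely generated commutative $K$-algebras); together with the domain property this is item (2).

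For (3) and (4) I would take $X:=\Proj\widetilde{B}$ and $C:=V_+(t)$. Then $X$ is an irreducible projective variety of dimension $n$, $X\setminus C=D_+(t)=\Spec B=U$, and $C=\Proj(\gr B)$ is an integral divisor of dimension $n-1$; it is unirational because the finite inclusion $\gr B\hookrightarrow K[\underline{\xi}]$ induces a finite surjection $\dpp^{n-1}\to C$. Since $t\in\widetilde{B}_1$, $mC$ is the zero scheme of the section $t^m$ of the ample line bundle $\co_X(m)$ for suitable $m$, so $C$ is ample and $\dq$-Cartier; and the local ring $\co_{X,\eta_C}$ at the generic point of $C$ is the homogeneous localization $\widetilde{B}_{(t\widetilde{B})}$, which one identifies with $\{f\in\Frac B:\ord f\le0\}$, the valuation ring of the discrete valuation $-\ord$ on $\Frac B$ (recall $\ord$ has value group $\delta\dz$ on $\Frac B$, where $\delta=\gcd\{k\ge1\mid(\gr B)_k\ne0\}$); hence $\co_{X,\eta_C}$ is a DVR and $C\not\subseteq\Sing X$. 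This gives item (3). For (4): the spectral module $F=D_n/\idm D_n$ carries the induced order filtration, with $\gr F\cong\gr D_n/\idm\gr D_n=K[\underline{\xi}]$ on which $\gr B$ acts by multiplication through $\gr B\hookrightarrow K[\underline{\xi}]$; module-finiteness of $K[\underline{\xi}]$ over $\gr B$ then gives that $F$ is a finitely generated $B$-module, and torsion-freeness of $\gr F$ over the domain $\gr B$ passes, via leading symbols, to torsion-freeness of $F$ over $B$, so the Rees module $\widetilde{F}=\bigoplus_k F_{\le k}\,t^k$ over $\widetilde{B}$ (with $F_{\le k}$ the image of $D_{n,\le k}$ in $F$) defines a coherent torsion-free sheaf $\cf$ on $X$ extending $F$, with $\rk\cf=\rk_{\gr B}\gr F=[K(\underline{\xi}):\Frac(\gr B)]=:r$. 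Finally, the normalized valuation of the prime divisor $C$ is $v_C=-\ord/\delta$, so $h^0(X,\co_X(NC))=\dim_K\{f\in B:\ord f\le\delta N\}=\dim_K(B\cap D_{n,\le\delta N})$, whence by ampleness $(C^n)=\lim_N\tfrac{n!}{N^n}\dim_K(B\cap D_{n,\le\delta N})=\delta^n\lim_k\tfrac{n!}{k^n}\dim_K(B\cap D_{n,\le k})$; and from $\dim_K(B\cap D_{n,\le k})=\sum_{l\le k}\dim_K(\gr B)_l$, the exact sequences $0\to(\gr B)_l\to K[\underline{\xi}]_l\to(K[\underline{\xi}]/\gr B)_l\to0$, and $\rk_{\gr B}(K[\underline{\xi}]/\gr B)=r-1$, one gets $\dim_K(B\cap D_{n,\le k})\sim k^n/(rn!)$, so $(C^n)=\delta^n/r=\delta^n/\rk\cf$, as claimed in \eqref{E:delta}. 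The step I expect to be the real obstacle is the embedding $\gr B\hookrightarrow K[\underline{\xi}]$: it is the only place where the relations $[Q,P_i]=0$ must be combined — through the Poisson bracket on $\gr D_n$ and the Jacobian criterion — with the geometric hypothesis on the characteristic divisors, in order to rule out leading symbols supported on $\{\underline{x}=\underline{0}\}$; everything after that is bookkeeping, the only delicate point being the factor $\delta^n$, which comes from the value group $\delta\dz$ of the order valuation attached to $C$.
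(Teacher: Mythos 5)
This theorem is not proved in the paper at all: it is quoted verbatim from \cite[T.2.1]{KOZ2014} and used only as a source of examples for Proposition \ref{P:spectral_module}, so there is no in-paper proof to compare against. Your reconstruction is correct and follows the same strategy as the cited source: the decisive step is exactly the one you isolate — passing from $[Q,P_i]=0$ to $\{\sigma(Q),\sigma(P_i)\}=0$, extracting the lowest $\underline{x}$-homogeneous component, and using the nonvanishing Jacobian $\det(\partial_{\xi_l}g_i)$ (guaranteed in characteristic zero by the empty intersection of characteristic divisors) together with Euler's relation to force the embedding $\gr B\hookrightarrow K[\underline{\xi}]$ — after which items (1)–(4) follow from the Rees-algebra/$\Proj$ construction as you describe. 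The only detail glossed over is that the identification $h^0(X,\co_X(NC))=\dim_K(B\cap D_{n,\le\delta N})$ strictly requires passing to the normalization of $B$ when $X$ is not normal, but this changes the dimension count only by $O(N^{n-1})$ and so does not affect the leading coefficient $\delta^n/\rk\cf$.
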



\subsection{$\Gamma$-order and quasi elliptic rings}
\label{S:quasi-elliptic}

In this subsection we describe commutative subrings in $\hat{D}_n^{sym}$ mentioned in Introduction -- the quasi-elliptic rings -- that admit effective description in terms of algebro-geometric spectral data. 

First we introduce the notion of $\Gamma$-order. This order is defined on {\it some elements} of the ring $\hat{D}_n^{sym}$ or of the ring $\hat{E}_n$.  

\begin{defin}
\label{D:G-order}
Let's denote by $\hat{D}_n^{i_1,\ldots ,i_q}$ the subring in $\hat{D}_n^{sym}$ consisting of operators {\it not depending} on $\partial_{i_1},\ldots ,\partial_{i_q}$. The $\Gamma$-order is defined recursively. 

We say that a nonzero operator $P\in \hat{D}_n^{2,3,\ldots n}$  has $\Gamma$-order $k_1$ if $P=\sum_{s=0}^{k_1}p_s\partial_1^s$, where $0\neq p_{k_1}\in \hat{R}$. 

We say that a nonzero operator $P\in \hat{D}_n^{i+1,i+2,\ldots ,n}$  has $\Gamma$-order $(k_1, \ldots ,k_i)$ if $P=\sum_{s=0}^{k_i}p_s\partial_i^s$, where $p_s\in \hat{D}_n^{i, i+1, \ldots ,n}$, and the $\Gamma$-order of $p_{k_i}$ is $(k_1,\ldots ,k_{i-1})$. 

We say that a nonzero operator $P\in \hat{D}_n^{sym}$ {\it has $\Gamma$-order} 
$$\ord_{\Gamma} (P)=(k_1,\ldots ,k_n)$$ 
if $P=\sum_{s=0}^{k_n}p_s\partial_n^s$, where $p_s\in \hat{D}_n^{n}$,
and the $\Gamma$-order of $p_{k_n}$ is $(k_1,\ldots ,k_{n-1})$. 

In this situation we say that the operator $P$ is {\it monic} if the highest coefficient (defined recursively in analogous way)  $p_{k_1,\ldots ,k_n}$ is $1$.

The same definition works for the rings $\hat{E}_n,\hat{\Pi}_n$ with minimal modification at the last step: $P\in \hat{\Pi}_n$ {\it has $\Gamma$-order} 
$$
\ord_{\Gamma} (P)=(k_1,\ldots ,k_n)
$$ 
if $P=\sum_{s=-\infty}^{k_n}p_s\partial_n^s$, where $p_s\in \hat{D}_n^{n}$,
and the $\Gamma$-order of $p_{k_n}$ is $(k_1,\ldots ,k_{n-1})$.

The $\Gamma$-orders are ordered with respect to the anti-lexicographical order on $\dz^n$, i.e. $(k_1,\ldots ,k_n)< (k_1',\ldots ,k_n')$ iff $k_n<k_n'$ or $k_n=k_n'$ and $k_{n-1}<k_{n-1}'$, or $k_n=k_n'$, $k_{n-1}=k_{n-1}'$ and $k_{n-2}<k_{n-2}'$ etc. 
\end{defin}

\begin{defin}
\label{D:growth_cond}
We say that an operator $P\in \hat{E}_n$ {\it satisfies  condition $A_{1}$} if $\Ord (P)\le |\ord_{\Gamma}(P)|$.
\end{defin}

It is easy to see that any {\it monic} operator $P$ satisfies condition $A_1$  iff  $\Ord (P)=|\ord_{\Gamma}(P)|$.

The $\Gamma$-order, like the function $\Ord$, has similar properties:

\begin{lem}{(cf. \cite[Lemma 2.7]{Zheglov2013})}
\label{L:lemma2.7}
Assume $P_1,P_2\in \hat{E}_n$ satisfy  the condition $A_{1}$. Then $P_1P_2$ satisfies the condition $A_{1}$ 
and $\ord_{\Gamma}(P_1P_2)= \ord_{\Gamma}(P_1)+\ord_{\Gamma}(P_2)$. 
\end{lem}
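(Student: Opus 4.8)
The plan is to mimic the classical argument that the leading term of a product of pseudodifferential operators is the product of leading terms, but now with respect to the $\Gamma$-order rather than the usual order $\ord_n$, and with care taken that the $\Gamma$-order is a vector-valued ``recursive highest term'' function. First I would reduce the statement to the assertion about \emph{monic} operators: writing $P_i = c_i Q_i$ where $c_i\in K$ is the highest coefficient (the single scalar $p_{k_1,\ldots,k_n}$ from Definition \ref{D:G-order}, which lies in $K$ because it is the highest coefficient of a slice) and $Q_i$ is monic, the claim $\ord_\Gamma(P_1P_2)=\ord_\Gamma(P_1)+\ord_\Gamma(P_2)$ together with condition $A_1$ for $P_1P_2$ follows from the corresponding statement for $Q_1Q_2$ plus the multiplicativity $c_1c_2\neq 0$. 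By the observation immediately after Definition \ref{D:growth_cond}, a monic operator satisfies $A_1$ iff $\Ord=|\ord_\Gamma|$, so it suffices to show: if $Q_1,Q_2$ are monic with $\Ord(Q_i)=|\ord_\Gamma(Q_i)|$, then $Q_1Q_2$ is monic with $\ord_\Gamma(Q_1Q_2)=\ord_\Gamma(Q_1)+\ord_\Gamma(Q_2)$ and $\Ord(Q_1Q_2)=|\ord_\Gamma(Q_1)|+|\ord_\Gamma(Q_2)|$.

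Next I would set up the induction. The $\Gamma$-order is defined recursively over the variables $\partial_n,\partial_{n-1},\ldots,\partial_1$, so the natural induction is on the number $n$ of variables (the base case $n=1$ being essentially the statement that in $\hat E_1$ or $D_1$ the highest $\partial_1$-coefficient of a product is the product of highest coefficients, which is standard). For the inductive step write $Q_i=\sum_{s\le k_n^{(i)}} p_s^{(i)}\partial_n^s$ with $p_s^{(i)}\in\hat D_n^n$ and $p_{k_n^{(i)}}^{(i)}$ monic of $\Gamma$-order $(k_1^{(i)},\ldots,k_{n-1}^{(i)})$. Using that in $\hat E_n$ (resp.\ $\hat D_n$, $\hat\Pi_n$) the $\partial_n$-leading coefficient of $Q_1Q_2$ is $p_{k_n^{(1)}}^{(1)}\cdot p_{k_n^{(2)}}^{(2)}$ — this needs the remark that $p_s^{(1)}\partial_n^s$ applied to $p_t^{(2)}$ only lowers, never raises, the $\partial_n$-degree, which is the defining property of the associated graded of $\hat E_n$ with respect to $\ord_n$ — I reduce to the product of the two $(n-1)$-variable monic operators $p_{k_n^{(1)}}^{(1)}, p_{k_n^{(2)}}^{(2)}$, for which the inductive hypothesis gives $\ord_\Gamma$ additive, monic, and $\Ord$ additive. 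The only remaining point is to reconcile the $\Ord$-bookkeeping: one has $\Ord(Q_i)=|\ord_\Gamma(Q_i)|=k_n^{(i)}+|(k_1^{(i)},\ldots,k_{n-1}^{(i)})|$ by hypothesis, and from Remark \ref{R:ord_properties}(1)–(2) and $\sigma(Q_1)\cdot\sigma(Q_2)\neq 0$ (which holds because the highest symbols are nonzero monomials $\underline\partial^{\underline k^{(i)}}$ up to the scalar, hence their product is again a nonzero monomial) one gets $\Ord(Q_1Q_2)=\Ord(Q_1)+\Ord(Q_2)=|\ord_\Gamma(Q_1)|+|\ord_\Gamma(Q_2)|=|\ord_\Gamma(Q_1)+\ord_\Gamma(Q_2)|$, which both re-establishes condition $A_1$ and confirms there is no drop in the $\Gamma$-order.

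\textbf{Main obstacle.} The delicate part is verifying that no cancellation occurs in the top $\Gamma$-graded piece, i.e.\ that the ``highest coefficient'' (iterated over all $n$ variables) of $Q_1Q_2$ is genuinely the product of the highest coefficients and does not accidentally vanish or get augmented by lower terms — in particular one must check that when $p_s^{(1)}\partial_n^s$ acts on a coefficient $p_t^{(2)}\in\hat D_n^n$, the commutator corrections $[\partial_n^s,p_t^{(2)}]$ cannot contribute to the $\partial_n^{k_n^{(1)}+k_n^{(2)}}$-coefficient, and similarly at each recursive level for $\partial_{n-1},\ldots,\partial_1$. This is exactly the statement that the $\Gamma$-order filtration is multiplicative with ``abelian'' associated graded, and it is plausibly already packaged in the properties of $\ord_n$ listed after Definition \ref{D:ord_n} together with an induction; the bookkeeping across the $\hat E_n$ / $\hat D_n$ / $\hat\Pi_n$ trichotomy (the three cases of Definition \ref{D:G-order}) should be uniform since the only difference is whether $s$ ranges over $\dz$ or $\dn_0$, which does not affect the top-degree analysis. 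I would also double-check that $\sigma(P_i)$ in the $\Ord$-sense is compatible with the $\Gamma$-highest term so that Remark \ref{R:ord_properties} can legitimately be invoked — this compatibility is precisely what condition $A_1$ with equality encodes.
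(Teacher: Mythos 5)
Your core strategy --- induction on $n$, using that the top $\partial_n$-coefficient of a product is the product of the top coefficients (multiplicativity of $HT_n$), bottoming out in the fact that $\hat R_y$ is a domain --- is exactly the paper's proof, which is essentially a three-line version of your inductive step. However, two of the auxiliary claims you build around that induction are false.

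First, the reduction to monic operators does not work as stated: the highest coefficient $p_{k_1,\ldots ,k_n}$ of Definition \ref{D:G-order} is a nonzero element of $\hat R_y=K_y[[x_1,\ldots ,x_n]]$, not a scalar in $K$ (it is the iterated top coefficient with respect to the $\partial_i$'s, not a slice coefficient), so in general one cannot write $P_i=c_iQ_i$ with $c_i\in K$ and $Q_i$ monic. The paper simply runs the induction on arbitrary operators and invokes the domain property of $\hat R_y$ at the innermost level of the recursion; no monic reduction is needed.

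Second, your route to condition $A_1$ for the product via $\sigma(Q_1)\cdot\sigma(Q_2)\neq 0$ rests on the claim that a monic operator satisfying $A_1$ has highest symbol equal to the monomial $\underline{\partial}^{\underline{k}}$, and this is false. For example, $Q=\partial_2^3+x_1\partial_1^2\partial_2^2\in\hat D_2$ is monic with $\ord_{\Gamma}(Q)=(0,3)$ and $\Ord (Q)=3=|\ord_{\Gamma}(Q)|$, yet $\sigma(Q)=\partial_2^3+x_1\partial_1^2\partial_2^2$ is not a monomial. The correct (and simpler) argument is the one the paper uses: once $\ord_{\Gamma}(P_1P_2)=\ord_{\Gamma}(P_1)+\ord_{\Gamma}(P_2)$ is established, subadditivity of $\Ord$ gives $\Ord (P_1P_2)\le\Ord (P_1)+\Ord (P_2)\le|\ord_{\Gamma}(P_1)|+|\ord_{\Gamma}(P_2)|=|\ord_{\Gamma}(P_1P_2)|$, which is precisely condition $A_1$ for the product; no statement about highest symbols is required.
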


\begin{proof}
The first property follows from the second and from the properties of function $\Ord$. The second property follows by induction on $n$ because of the properties of $HT_n$. For $n=1$ this is true, because $\hat{R}_y$ is a domain.  
\end{proof}

\begin{cor}
\label{C:corol2.1}
If the operator $S=1-S_-$, where $S_-\in \hat{D}_n^n[[\partial_n^{-1}]]\partial_n^{-1}$, satisfies the condition $A_{1}$ (or equivalently $\Ord (S)=|\ord_{\Gamma}(S)|=0$), then the operator $S^{-1}=1+\sum_{q=1}^{\infty}(S_-)^q$ also satisfies it (and $\Ord (S^{-1})=0$, $|\ord_{\Gamma}(S^{-1})|=0$). 
\end{cor}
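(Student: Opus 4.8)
The statement to prove is Corollary~\ref{C:corol2.1}: if $S = 1 - S_-$ with $S_- \in \hat{D}_n^n[[\partial_n^{-1}]]\partial_n^{-1}$ satisfies condition $A_1$, then $S^{-1} = 1 + \sum_{q=1}^\infty (S_-)^q$ satisfies it too. The first thing to check is that the series $\sum_{q=1}^\infty (S_-)^q$ actually converges to a well-defined element of $\hat{\Pi}_n$ (or $\hat{E}_n$): since $S_- \in \hat{D}_n^n[[\partial_n^{-1}]]\partial_n^{-1}$ has $\ord_n(S_-) \le -1$, and the properties of $\ord_n$ listed before Lemma~\ref{L:module_structure} give $\ord_n((S_-)^q) \le -q$, only finitely many summands contribute to each $\partial_n$-degree, so the sum makes sense and $\ord_n(S^{-1} - 1) \le -1$. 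One should also record that $S \cdot S^{-1} = S^{-1}\cdot S = 1$ formally, which is the usual geometric-series identity and needs only that the multiplication is continuous in the $\ord_n$-adic sense.

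The core of the argument is the bound $\Ord(S^{-1}) = 0$ and $|\ord_\Gamma(S^{-1})| = 0$. The key input is Lemma~\ref{L:lemma2.7}: since $S_-$ has $\Ord(S_-) \le 0$ — indeed $\Ord(S_-) \le |\ord_\Gamma(S_-)| \le 0$ because every term of $S_-$ carries a strictly negative power of $\partial_n$, forcing $k_n \le -1$ hence $|\ord_\Gamma| \le -1 < 0$, wait — more carefully, $S_-$ itself satisfies condition $A_1$: we are told $S = 1 - S_-$ satisfies $A_1$ with $\Ord(S) = |\ord_\Gamma(S)| = 0$, and since the leading part of $S$ (the $\partial_n^0$, $x$-degree-$0$ part) is $1$, we get $\ord_\Gamma(S) = (0,\ldots,0)$ and the lower-order part $S_-$ has $|\ord_\Gamma(S_-)| < 0$ and $\Ord(S_-) \le |\ord_\Gamma(S_-)| < 0$. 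Then by Lemma~\ref{L:lemma2.7} (applied inductively) each power $(S_-)^q$ satisfies $A_1$ with $\ord_\Gamma((S_-)^q) = q\cdot\ord_\Gamma(S_-)$, so $|\ord_\Gamma((S_-)^q)| = q|\ord_\Gamma(S_-)| \to -\infty$ and $\Ord((S_-)^q) \le q|\ord_\Gamma(S_-)| < 0$ for $q \ge 1$. Hence by property~3 of $\Ord$ (subadditivity under sums, remark~\ref{R:ord_properties}) applied to the convergent series, $\Ord\!\left(\sum_{q\ge 1}(S_-)^q\right) \le \max_q \Ord((S_-)^q) < 0$, and therefore $\Ord(S^{-1}) = \Ord(1 + \sum_{q\ge1}(S_-)^q) = 0$ with leading term $1$. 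The same reasoning with $\ord_\Gamma$ in place of $\Ord$: the $\Gamma$-orders of the $(S_-)^q$ are all strictly below $(0,\ldots,0)$ in the anti-lexicographic order, so adding $1$ gives $\ord_\Gamma(S^{-1}) = (0,\ldots,0)$, i.e. $|\ord_\Gamma(S^{-1})| = 0$, and condition $A_1$ holds for $S^{-1}$ since $\Ord(S^{-1}) = 0 = |\ord_\Gamma(S^{-1})|$.

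One subtlety I would be careful about is that Lemma~\ref{L:lemma2.7} is stated for two operators, so extending it to arbitrary finite products $(S_-)^q$ requires an easy induction on $q$ — I would state this explicitly. A second point is making sure the passage from the finite-product statements to the infinite sum is legitimate: I would note that for each fixed target $\partial_n$-degree $s$ and each fixed $\underline x$-degree, only finitely many $q$ contribute (because $\ord_n((S_-)^q) \le -q$ and, within a fixed $\partial_n$-degree, the $\Ord$-bound $\Ord((S_-)^q) \le q|\ord_\Gamma(S_-)|$ controls how negative the $\upsilon$-valuation can be), so the supremum defining $\Ord$ of the sum is really a maximum over finitely many $q$ once we also fix that we are below degree $0$. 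The main obstacle, then, is not any single hard estimate but the bookkeeping of simultaneously tracking two order functions ($\Ord$ and $\ord_\Gamma$) through an infinite series and justifying that the termwise bounds survive summation; once Lemma~\ref{L:lemma2.7} and remark~\ref{R:ord_properties} are in hand, each individual step is routine.
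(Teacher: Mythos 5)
Your overall strategy (convergence of the geometric series via $\ord_n((S_-)^q)\le -q$, then order bounds on the tail) is the same as the paper's, but the key estimates you use for the tail are obtained by a faulty route. The claim that $|\ord_{\Gamma}(S_-)|<0$ (and hence $\Ord(S_-)\le|\ord_{\Gamma}(S_-)|<0$) is false in general: the $\Gamma$-order of an element of $\hat{D}_n^n[[\partial_n^{-1}]]\partial_n^{-1}$ has last component $k_n\le -1$, but its first $n-1$ components are \emph{non-negative} and can be large, so $|\ord_{\Gamma}(S_-)|=k_1+\cdots+k_n$ need not be negative. For instance $S_-=x_1^{4}\partial_1^{5}\partial_n^{-1}$ gives a legitimate $S=1-S_-$ with $\Ord(S)=|\ord_{\Gamma}(S)|=0$, yet $|\ord_{\Gamma}(S_-)|=4$ and $\Ord(S_-)=0$, so neither quantity is $<0$ and in fact $\Ord((S_-)^q)=0$ for all $q$. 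Worse, $S_-$ need not satisfy condition $A_1$ at all (take $S_-=\partial_n^{-1}+x_1^{3}\partial_1^{5}\partial_n^{-2}$: then $\Ord(S_-)=0>-1=|\ord_{\Gamma}(S_-)|$), so Lemma \ref{L:lemma2.7} cannot be invoked to control $(S_-)^q$, and your inequality $\Ord\bigl(\sum_{q\ge1}(S_-)^q\bigr)<0$ is unavailable.

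Fortunately none of this is needed, and the correct argument is a simplification of what you wrote (and is what the paper does). From $S_-=1-S$ and the subadditivity of $\Ord$ under sums one gets $\Ord(S_-)\le 0$, hence $\Ord((S_-)^q)\le q\,\Ord(S_-)\le 0$ by submultiplicativity alone, with no appeal to $A_1$ or to $\ord_{\Gamma}(S_-)$; summing (finitely many terms per $\partial_n$-degree, as you correctly note) gives $\Ord(S^{-1})\le 0$, and the constant term $1$ forces equality. For the $\Gamma$-order you should argue directly with the anti-lexicographic order rather than with $|\ord_{\Gamma}|$: since $\ord_n((S_-)^q)\le -q\le -1$, the coefficient of $\partial_n^{0}$ in $S^{-1}$ is exactly $1$, whence $\ord_{\Gamma}(S^{-1})=(0,\ldots,0)$ by definition (note that ``$\ord_{\Gamma}$ strictly below $(0,\ldots,0)$ anti-lexicographically'' is \emph{not} the same as ``$|\ord_{\Gamma}|<0$''; your write-up conflates the two). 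With these corrections the proof closes: $\Ord(S^{-1})=0=|\ord_{\Gamma}(S^{-1})|$, so $S^{-1}$ satisfies $A_1$.
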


\begin{proof}
By the properties of $\Ord$ we have $\Ord (S_-)\le 0$. So, the corollary will be proved if we show the series $1+\sum_{q=1}^{\infty}(S_-)^q$ is well defined. By the properties of $\ord_n$ we have $\ord_n (S_-)^k \le -k$. So, the series is well defined and determines the inverse operator. Clearly, it satisfies $A_1$, and $\ord_{\Gamma}(S^{-1})=(0,\ldots ,0)$. 
\end{proof}

\begin{defin}
\label{D:quasi-elliptic}
The subring $B\subset \hat{D}_n\subset \hat{D}_n^{sym}$ of commuting operators is called {\it 1-quasi elliptic} (or just quasi-elliptic for brevity) if there are $n$ operators $P_1,\ldots ,P_n$ such that 
\begin{enumerate}
\item
$\ord_{\Gamma}(P_i)=(0,\ldots 0,1,0\ldots 0, l_i)$  for $1\le i<n$,  
where $1$ stands at the $i$-th place and $l_i\in \dz_+$; 
\item
$\ord_{\Gamma}(P_n)=(0,\ldots , 0, l_n),$ where $l_n>0$;
\item
For $1\le i\le n$ $\Ord (P_i)=|\ord_{\Gamma}(P_i)|$;
\item
$P_i$ are monic.
\end{enumerate}

In general, we call operators $P_1,\ldots ,P_n\in \hat{E}_n$ {\it formally quasi elliptic} if they satisfy the conditions 1-3 above, and we call them monic  formally quasi elliptic, if they satisfy the conditions 1-4 above. 
\end{defin}

\begin{rem}
\label{R:purity}
According to definition, the quasi-elliptic rings are contained in $\hat{D}_n$. However we'll consider them in $\hat{D}_n^{sym}$ because of a weak equivalence relation used in one of classification theorems \ref{T:schurpair1} below. Besides, if we consider a quasi-elliptic ring of rank one (see definition \ref{D:an-alg-rank} below) satisfying mild properties, then its centralizer in $\hat{D}_n^{sym}$ will be still commutative quasi-elliptic ring contained in $\hat{D}_n$. The proof of this result appears to be quite non-trivial and requires not only the technique developed in this paper, but several extra results we'll present in a subsequent paper \cite{Zheglov2020}. 
\end{rem}

\begin{ex}
Immediately from definition \ref{D:quasi-elliptic} it follows that  1-quasi-elliptic subalgebras in $\hat{D}_1$ are exactly {\it elliptic} subalgebras of ordinary differential operators.  

Recall that an ordinary differential operator $P = a_n \partial^n + a_{n-1} \partial^{n-1} + \dots + a_0 \in D_1$ of positive order $n$ is called \emph{(formally) elliptic} if $a_n \in K^*$. A ring $B \subset D_1$ containing an  elliptic element is called \emph{elliptic}.

There is a well known extensive classification theory of such rings, see Introduction. 
\end{ex}

\section{Schur theory for $\hat{D}_n$}
\label{S:Schur_theory}

Many properties of 1-quasi elliptic rings are based on the technique developed in a higher-dimensional  analogue of the Schur theory. In this section we collect technical statements of this theory for the ring $\hat{D}_n$. A review of the Schur theory for the ring $D_1$ see in \cite{Mul} and in \cite[\S 5]{Quandt} for the  relative case. For the ring $\hat{D}_2$ this theory is contained in \cite{Zheglov2013}, \cite{KurkeZheglov}. 

\subsection{Roots of formally quasi-elliptic operators}
\label{S:roots}

\begin{lem}
\label{L:lemma2.9}
Let $P_1,\ldots , P_n\in \hat{D}_n^{sym}$ be monic formally quasi-elliptic operators with $\ord_{\Gamma}(P_i)=(0,\ldots ,0,1,0,\ldots ,l_i)$, $i<n$, where $1$ stands at the $i$-th place, and $\ord_{\Gamma}(P_n)=(0,\ldots ,0, l_n)$. Then 
\begin{enumerate}
\item\label{a}
There exist unique monic commuting operators $L_1, \ldots , L_n\in \hat{E}_n$ with $\ord_{\Gamma} (L_i)=(0,\ldots ,0,1,0,\ldots ,0)$ ($1$ is on the $i$-th place) such that $L_n^{l_n}=P_n$, $L_iL_n^{l_i}=P_i$, $i<n$. 
\item\label{b}
If $P_1, \ldots , P_n$ satisfy the condition $A_{1}$ then $L_1,\ldots ,L_n$ satisfy the condition $A_{1}$.  In particular, they are monic formally quasi-elliptic as well.
\item\label{c}
If $P_1, \ldots , P_n\in D_n$ then $L_1,\ldots ,L_n\in  E_n$.
\end{enumerate} 
\end{lem}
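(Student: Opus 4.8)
The plan is to construct the $L_i$ by a recursive root extraction procedure analogous to the classical case (extracting an $l$-th root of a monic ordinary pseudo-differential operator), working in the ring $\hat{E}_n$ and then proving commutativity and uniqueness by formal arguments. First I would treat $L_n$: since $P_n = \sum_{s=0}^{l_n} p_s \partial_n^s$ with $p_{l_n}=1$ (here $p_s \in \hat{D}_n^n$ and the leading coefficient is monic with respect to $\Gamma$-order in the remaining variables), I want $L_n = \partial_n + \sum_{j \le 0} \ell_j \partial_n^j$ with $\ell_j \in \hat{D}_n^n$ such that $L_n^{l_n} = P_n$. Comparing coefficients of $\partial_n^s$ for $s = l_n - 1, l_n - 2, \dots$ downward, the coefficient of $\partial_n^{l_n - 1 - k}$ in $L_n^{l_n}$ has the form $l_n \ell_{-k} + (\text{a differential polynomial in } \ell_0, \ell_{-1}, \dots, \ell_{-k+1})$, so since $\mathrm{char}\, K = 0$ we can solve recursively and uniquely for each $\ell_{-k}$. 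The fact that the resulting operator actually lies in $\hat{E}_n$ (not just in some larger completion) follows because at each step the orders decrease: one checks by induction that $\ell_{-k} \in \hat{D}_n^n$, using that $\hat{D}_n^n$ is closed under the operations involved. Then for $i < n$ I would set $L_i := P_i L_n^{-l_i}$; here $L_n^{-l_i}$ makes sense in $\hat{E}_n$ because $L_n$ is monic in $\partial_n$ (its highest $\ord_n$-term is $1$), so $L_n^{-1} = \partial_n^{-1}(1 + \dots)^{-1}$ is a well-defined element of $\hat{E}_n$ via the geometric series, and one reads off that $\ord_\Gamma(L_i) = \ord_\Gamma(P_i) - l_i\,\ord_\Gamma(L_n) = (0,\dots,1,\dots,0)$ with the $1$ in the $i$-th place.

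Next I would establish commutativity: $[L_i, L_j] = 0$ for all $i,j$. For $[L_i, L_n]$ with $i<n$: $L_i L_n = P_i L_n^{-l_i} L_n = P_i L_n^{1-l_i}$ and $L_n L_i = L_n P_i L_n^{-l_i}$, so $[L_i,L_n]=0$ is equivalent to $[L_n, P_i]=0$; but $[L_n^{l_n}, P_i] = [P_n, P_i] = 0$, and I would argue that an $l_n$-th root of a given monic operator is unique (by the same recursive coefficient comparison as above, run in the ring generated over the centralizer of $P_i$), hence $L_n$ commutes with everything that commutes with $L_n^{l_n} = P_n$ — in particular with $P_i$. (Alternatively: the standard Schur-theoretic lemma that if $[Q, R^{l}]=0$ and $R$ is monic then $[Q,R]=0$, proved by looking at the top nonzero term of $[Q,R]$ and deriving a contradiction with $\mathrm{char}=0$.) Once $[L_i,L_n]=0$ for all $i$, commutativity of $L_i$ and $L_j$ ($i,j<n$) follows similarly from $[L_i L_n^{l_i}, L_j L_n^{l_j}] = [P_i,P_j]=0$ together with the fact that $L_n$ is a non-zero-divisor in $\hat{E}_n$ (it is monic in $\partial_n$ and $\hat{E}_n$ restricted to monic operators has good cancellation). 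Uniqueness of the tuple $(L_1,\dots,L_n)$ with the stated $\Gamma$-orders and $L_n^{l_n}=P_n$, $L_i L_n^{l_i}=P_i$ then reduces to uniqueness of the root $L_n$, which was already shown, plus the formula $L_i = P_i L_n^{-l_i}$ forced by the relations.

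For item \ref{b}: assuming the $P_i$ satisfy condition $A_1$, since they are monic this means $\Ord(P_i) = |\ord_\Gamma(P_i)|$. By Lemma \ref{L:lemma2.7} and Corollary \ref{C:corol2.1}, the set of operators satisfying $A_1$ is closed under products and under inversion of operators of the form $1 - S_-$; writing $L_n = \partial_n(1 + T)$ with $T \in \hat{D}_n^n[[\partial_n^{-1}]]\partial_n^{-1}$ and using $\Ord(L_n^{l_n}) = \Ord(P_n) = l_n$, one shows $\Ord(L_n)=1=|\ord_\Gamma(L_n)|$, hence $L_n$ satisfies $A_1$; then $L_i = P_i L_n^{-l_i}$ satisfies $A_1$ as a product of operators satisfying it (again invoking Lemma \ref{L:lemma2.7}). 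That $L_1,\dots,L_n$ are monic formally quasi-elliptic is then immediate from the computed $\Gamma$-orders and $\Ord = |\ord_\Gamma|$. For item \ref{c}: if the $P_i \in D_n$, then the recursion producing the coefficients $\ell_{-k}$ of $L_n$ involves only polynomial differential operations on the (polynomial-coefficient) coefficients of $P_n$ and division by integers, so each $\ell_{-k} \in D_n^n$, giving $L_n \in E_n = D_n^n((\partial_n^{-1}))$; and $L_n^{-1} \in E_n$ as well, so $L_i = P_i L_n^{-l_i} \in E_n$.

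The main obstacle I expect is the careful bookkeeping showing the recursively constructed root $L_n$ genuinely lies in $\hat{E}_n$ — i.e. that its coefficients $\ell_{-k}$ stay in $\hat{D}_n^n$ and that the $\Ord$-orders behave correctly so that condition $A_1$ propagates — rather than merely in the larger completion described in Remark \ref{R:other_definitions}. This is where one must exploit the precise finiteness built into the definition of $\Ord$ and the slice decomposition, much as in Lemma \ref{L:module_structure}. The commutativity and uniqueness arguments, by contrast, are formal consequences of the $\mathrm{char}\,K = 0$ uniqueness of roots and the good order/valuation behavior already recorded in Remark \ref{R:ord_properties} and Lemma \ref{L:lemma2.7}.
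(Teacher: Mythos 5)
Your proposal is correct and follows essentially the same route as the paper: construct $L_n$ by recursively solving $l_nu_{-k}+F(u_0,\dots,u_{-k+1})=p_{l_n-1-k}$ (using $\chara K=0$), set $L_i:=P_iL_n^{-l_i}$, and propagate condition $A_1$ via the order bounds $\Ord(u_{-k})\le 1+k$ together with Lemma \ref{L:lemma2.7} and Corollary \ref{C:corol2.1}. The only difference is that you spell out the commutativity of the $L_i$ (via the standard top-term argument for $[Q,R^l]=0\Rightarrow[Q,R]=0$ and invertibility of $L_n$), which the paper leaves implicit by referring to the $n=2$ case.
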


\begin{proof}
The proof is an easy generalisation of analogous claim in \cite[Lemma 2.9]{Zheglov2013}. 

\ref{a}. We can find each coefficient of the operator $L_n=\partial_n+u_0+u_{-1}\partial_n^{-1}+\ldots $, where $u_i\in \hat{D}_n^n$ step by step, by solving the system of equations, which can be obtained by comparing the coefficients of $P_n$ and $L_n^{l_n}$: if $P_n=\sum_{i=1}^{l_n}p_i\partial_n^i$ with $p_{l_n}=1$, then
\begin{equation}
\label{vs5}
l_n u_0=p_{l_n-1}, \mbox{\quad} l_n u_{-i}+F(u_0,\ldots ,u_{-i+1})=p_{l_n-1-i},
\end{equation}
where $F$ is a polynomial in $u_0,\ldots ,u_{-i+1}$ and their derivatives. 
Clearly, this system is uniquely solvable. So, the operator $L_n$ is uniquely defined. Note that $L_n$ is invertible in $\hat{E}_n$ and $\ord_{\Gamma} (L_n^{-1})=(0,\ldots ,0,-1)$. Therefore, the operators $L_i=P_iL_n^{-l_i}$ for $i<n$ are also uniquely defined. 

The same arguments show that item \ref{c} is true.  

The proof of item \ref{b} is by induction. If $P_1, \ldots , P_n$ satisfy the condition $A_1$, then from equations \eqref{vs5} we get $\Ord (u_0)\le 1$ and $\Ord (u_{-i})\le 1+i$, because $F(u_0,\ldots ,u_{-i+1})$ is the coefficient at $\partial_n^{-i}$ of the expression $(\partial_n+u_0+\ldots + u_{-i+1}\partial_n^{-i+1})^{l_n})$ whose order is $\ge 1+i$ by induction (and clearly $\Ord (p_{l_n-1-i})\le 1+i$). Then $\Ord (L_n) =1$ (since $\Ord (\partial_n)=1$). Obviously, $\ord_{\Gamma}(L_n)=(0,\ldots ,0,1)$ and therefore $L_n$ satisfies the condition $A_1$.
\end{proof}

\subsection{Normalized quasi-elliptic operators}
\label{S:normalized_op}

\begin{defin}
\label{D:defin2.19}
We say that monic formally quasi-elliptic operators $P_1, \ldots , P_n \in \hat{E}_n$  are {\it almost normalized} if 
$$P_n=\partial_n^{l_n}+ \sum_{s=-\infty}^{l_n-1}p_{n,s}\partial_n^{s} \mbox{\quad} P_i=\partial_i\partial_n^{l_i}+ \sum_{s=-\infty}^{l_i-1}p_{i,s}\partial_n^{s}, \quad i<n$$ where $p_{j,s}\in \hat{D}_n^n$.  

We say that $P_1, \ldots , P_n$ are {\it normalized} if 
$$P_n=\partial_n^{l_n}+ \sum_{s=-\infty}^{l_n-2}p_{n,s}\partial_n^{s} \mbox{\quad} P_i=\partial_i\partial_n^{l_i}+ \sum_{s=-\infty}^{l_i-1}p_{i,s}\partial_n^{s},\quad i<n$$ where $p_{j,s}\in \hat{D}_n^n$.  

We call a commutative subring $B\subset \hat{E}_n$ {\it normalized} if it contains a set of normalized quasi-elliptic operators.
\end{defin}

\begin{cor}
\label{C:normalised_roots}
Let $P_1,\ldots , P_n\in \hat{D}_n^{sym}$ be monic formally quasi-elliptic operators from lemma \ref{L:lemma2.9}. 

They are normalised (almost normalised) iff the corresponding operators $L_1, \ldots , L_n$ are normalised (almost normalised). 
\end{cor}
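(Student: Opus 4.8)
The plan is to reduce the statement to the relationship between the operators $P_i$ and their roots $L_i$ established in Lemma \ref{L:lemma2.9}, and then to trace the normalization conditions through the formulas $L_n^{l_n}=P_n$, $L_i L_n^{l_i}=P_i$ and their inverses $P_n = L_n^{l_n}$, $P_i = L_i L_n^{l_i}$. First I would fix notation: write $L_n = \partial_n + u_0 + u_{-1}\partial_n^{-1} + \ldots$ with $u_j \in \hat{D}_n^n$, and recall from the proof of Lemma \ref{L:lemma2.9}\ref{a} that the coefficients of $L_n$ are determined by the system \eqref{vs5}, in particular $l_n u_0 = p_{l_n-1}$, where $P_n = \sum_{i=0}^{l_n} p_i \partial_n^i$, $p_{l_n}=1$. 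Thus the subleading coefficient $u_0$ of $L_n$ vanishes if and only if $p_{l_n-1}$, the subleading coefficient of $P_n$, vanishes. This is exactly the difference between "almost normalized" ($p_{n,l_n-1}$ arbitrary, i.e. $u_0$ arbitrary) and "normalized" ($p_{n,l_n-1}=0$, i.e. $u_0=0$) at the $P_n$/$L_n$ level.

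Next I would handle the forward direction: assume $L_1,\ldots,L_n$ are normalized (resp. almost normalized) in the sense of Definition \ref{D:defin2.19}, and compute $P_n = L_n^{l_n}$ and $P_i = L_i L_n^{l_i}$. Since $\ord_n(L_n) = 1$ and $HT_n(L_n)=1$, raising to the $l_n$-th power gives $\ord_n(P_n) = l_n$ with leading coefficient $1$; the coefficient of $\partial_n^{l_n-1}$ in $L_n^{l_n}$ is $l_n u_0$ plus contributions from lower terms of $L_n$ — but a short check shows the only term contributing to $\partial_n^{l_n-1}$ is $l_n u_0$ (all other products lower the $\partial_n$-degree by at least two or involve derivatives that land in lower degree). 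Hence $p_{n,l_n-1} = l_n u_0$, so $P_n$ is normalized iff $u_0 = 0$ iff $L_n$ is normalized; and $P_n$ is always almost normalized once it is monic formally quasi-elliptic. For $P_i = L_i L_n^{l_i}$ with $L_i = \partial_i + (\text{lower order in }\partial_n)$ and $\ord_\Gamma(L_i)=(0,\ldots,1,\ldots,0)$: the product has highest $\partial_n$-part $\partial_i \partial_n^{l_i}$, and the coefficient of $\partial_n^{l_i-1}$ is the $\partial_n^{l_i-1}$-coefficient of $\partial_i \cdot L_n^{l_i}$ plus the $\partial_n^{l_i}$-coefficient of $(\text{subleading part of }L_i)\cdot \partial_n^{l_i}$; in either of the two normalization regimes the condition on $P_i$ (coefficients of $\partial_n^s$ for $s \le l_i-1$ lying in $\hat{D}_n^n$, with no extra vanishing required for $i<n$) is automatic, so the $i<n$ operators impose no constraint beyond being almost normalized. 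For the reverse direction I would use the uniqueness part of Lemma \ref{L:lemma2.9}\ref{a}: $L_n$ is the \emph{unique} monic root, its coefficients are forced by \eqref{vs5}, so $p_{n,l_n-1}=0 \Rightarrow u_0=0$ directly, and then normalization of $L_n$ follows; normalization of $L_i = P_i L_n^{-l_i}$ follows since $L_n^{-1}$ has $\ord_\Gamma = (0,\ldots,0,-1)$ and (by Corollary \ref{C:corol2.1}-type reasoning) $L_n^{-1} = \partial_n^{-1}(1 + \ldots)$ with the correction terms of strictly lower $\partial_n$-order, so the top two $\partial_n$-layers of $P_i$ control the top two layers of $L_i$.

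The main obstacle, and the one step I would write out carefully rather than wave at, is the bookkeeping of exactly which products of lower-order terms of $L_n$ (and of $L_i$) feed into the $\partial_n^{l_n-1}$ (resp. $\partial_n^{l_i-1}$) coefficient of the power $L_n^{l_n}$ (resp. the product $L_i L_n^{l_i}$), since the $u_j$ are differential operators not scalars and $\partial_n^k u_j \ne u_j \partial_n^k$. The clean way to do this is to observe that conjugating or multiplying by $\partial_n^{-j}$ only produces correction terms of \emph{strictly} lower $\partial_n$-order, so modulo $\hat{D}_n^n[[\partial_n^{-1}]]\partial_n^{l_n-2}$ one has $L_n^{l_n} \equiv \partial_n^{l_n} + l_n u_0 \partial_n^{l_n-1}$ and modulo $\hat{D}_n^n[[\partial_n^{-1}]]\partial_n^{l_i-2}$ one has $L_i L_n^{l_i} \equiv \partial_i \partial_n^{l_i} + (\text{a single }\hat{D}_n^n\text{-coefficient})\partial_n^{l_i-1}$; these two congruences, together with the uniqueness of roots, give both implications at once. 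Everything else is a direct unwinding of Definition \ref{D:defin2.19} and Lemma \ref{L:lemma2.9}.
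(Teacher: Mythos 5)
Your proposal is correct and follows the same route as the paper, which simply observes that the claim follows from the proof of Lemma \ref{L:lemma2.9}: the identity $l_n u_0 = p_{l_n-1}$ from the recursion \eqref{vs5} gives the equivalence for $P_n$ versus $L_n$, and $L_i=P_iL_n^{-l_i}$ with $HT_n(L_n^{-l_i})=\partial_n^{-l_i}+(\text{lower order})$ handles $i<n$. Your write-up just makes explicit the bookkeeping the paper leaves implicit.
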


\begin{proof}
This claim follows directly from the proof of lemma \ref{L:lemma2.9}.
\end{proof}

\begin{lem}
\label{L:lemma7}
For any monic formally quasi-elliptic operators $P_1, \ldots , P_n \in \hat{E}_n$ with $\ord_{\Gamma}(P_i)=(0,\ldots ,0,1,0,\ldots ,l_i)$, $i<n$, where $1$ stands at the $i$-th place, and $\ord_{\Gamma}(P_n)=(0,\ldots ,0, l_n)$, we have

\begin{enumerate}
\item\label{1a}
There exists an invertible function $f\in \hat{R}_y$ such that the operators \\
$f^{-1}P_1f, \ldots f^{-1}P_nf$ are almost normalized.
\item\label{1b}
If  $P_1, \ldots , P_n$ are almost normalised, then there exists an invertible operator $S\in \hat{D}_n^n$  of the form $S=\exp (\int pdx_n)$ with $\Ord (S)=0$, where  $\partial_i(p)=0$ for $i<n$,  such that the operators $S^{-1}P_1S, \ldots , S^{-1}P_nS$ are normalized. 

Combining with previous item, for any monic formally quasi-elliptic operators \\
$P_1, \ldots , P_n \in \hat{E}_n$ there exists an invertible operator $S\in \hat{D}_n^n$ with $\Ord (S)=0$ such that the operators $S^{-1}P_1S, \ldots , S^{-1}P_nS$ are normalized. 
\item\label{2*}
If $S_1$ is another operator with such a property, then all coefficients of the operator $S^{-1}S_1$ are from $K_y$. In particular, if $K_y=K$, then $S^{-1}S_1\in K$.
\end{enumerate}

\end{lem}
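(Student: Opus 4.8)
The plan is to treat the three items in turn, working throughout with the $\partial_n$-expansion $P=\sum_s p_s\partial_n^s$ ($p_s\in\hat D_n^n$) and exploiting the commutativity of $P_1,\ldots,P_n$. By Corollary \ref{C:normalised_roots} one could equivalently argue with the roots $L_1,\ldots,L_n$ of Lemma \ref{L:lemma2.9}, but I will stay with the $P_i$ themselves, which keeps the bookkeeping on leading coefficients shorter.

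For item \ref{1a} I would write $HT_n(P_i)=\partial_i+v_i$ for $i<n$, with $v_i\in\hat D_n^{i,\ldots,n}$ and $\Ord(v_i)\le1$, and $HT_n(P_n)=1$. Comparing the coefficient of $\partial_n^{\,l_i+l_j}$ on the two sides of $P_iP_j=P_jP_i$ gives $[\partial_i+v_i,\partial_j+v_j]=0$, and, together with the relations read off from $[P_i,P_n]=0$, this forces the $v_i$ (after, if necessary, a preliminary admissible linear change of variables bringing the leading symbols to $\xi_1,\ldots,\xi_{n-1}$, cf. section \ref{S:admissible_change}) to be functions in $\hat R_y$ satisfying $\partial_iv_j=\partial_jv_i$. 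The formal Poincaré lemma then produces $g\in\hat R_y$ with $\partial_ig=v_i$, and $f:=\exp(-g)$ does the job: since $f^{-1}(\partial_i+v_i)f=\partial_i+\partial_i(\log f)+f^{-1}v_if=\partial_i$, all the leading $\partial_n$-coefficients become $\partial_i$; and since $\Ord(f)=0$ and $\sigma$ is multiplicative (Remark \ref{R:ord_properties}), the $\Gamma$-orders, monicity, and condition $A_1$ are unchanged, so $f^{-1}P_1f,\ldots,f^{-1}P_nf$ are almost normalized. I expect this to be the main obstacle: extracting from commutativity exactly that the leading corrections are closed, and organising the (possibly inductive, peeling off $x_n$) reduction cleanly.

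For item \ref{1b} I would assume the $P_i$ are already almost normalized, so $P_n=\partial_n^{l_n}+p_{n,l_n-1}\partial_n^{l_n-1}+\cdots$ and $P_i=\partial_i\partial_n^{l_i}+\cdots$ with $p_{i,s},p_{n,s}\in\hat D_n^n$. The key point here is cheap: comparing the coefficient of $\partial_n^{\,l_i+l_n-1}$ in $P_iP_n=P_nP_i$ gives $[\partial_i,p_{n,l_n-1}]=0$ for every $i<n$, so $p_{n,l_n-1}$ does not involve $x_1,\ldots,x_{n-1}$ and hence lies in the \emph{commutative} subring $K_y[[x_n]][\partial_1,\ldots,\partial_{n-1}]\subset\hat D_n^n$. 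Set $p:=-\tfrac1{l_n}p_{n,l_n-1}$ (so $\partial_i(p)=0$ for $i<n$) and $S:=\exp\bigl(\int p\,dx_n\bigr)$, the antiderivative and exponential being formed inside that commutative ring; condition $A_1$ gives $\Ord(p_{n,l_n-1})\le1$, hence $\Ord(\int p\,dx_n)\le0$, so the slices of the exponential are well defined and $S$ is invertible with $\Ord(S)=0$. As $S$ commutes with $\partial_1,\ldots,\partial_{n-1}$ and $S^{-1}\partial_nS=\partial_n+p$, conjugation by $S$ leaves $HT_n(P_i)=\partial_i$ ($i<n$) untouched, while expanding $S^{-1}P_nS=(\partial_n+p)^{l_n}+\bigl(S^{-1}p_{n,l_n-1}S\bigr)(\partial_n+p)^{l_n-1}+\cdots$ and using $S^{-1}p_{n,l_n-1}S=p_{n,l_n-1}$ shows its $\partial_n^{l_n-1}$-coefficient is $l_np+p_{n,l_n-1}=0$. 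Thus $S^{-1}P_1S,\ldots,S^{-1}P_nS$ are normalized; composing this $S$ with the function from item \ref{1a} gives the last assertion of \ref{1b}.

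For item \ref{2*} I would put $T:=S^{-1}S_1\in\hat D_n^n$, so $\Ord(T)=0=\ord_n(T)$ and $T^{-1}(S^{-1}P_iS)T=S_1^{-1}P_iS_1$ with both families normalized. Comparing highest $\partial_n$-coefficients (multiplicativity of $HT_n$, and $HT_n(T)=T$) gives $T^{-1}\partial_iT=\partial_i$, i.e. $[\partial_i,T]=0$, for $i<n$; comparing the $\partial_n^{l_n-1}$-coefficients of $T^{-1}(S^{-1}P_nS)T$ and $S_1^{-1}P_nS_1$, which are $l_nT^{-1}[\partial_n,T]$ and $0$ respectively, gives $[\partial_n,T]=0$. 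Hence $T$ commutes with all $\partial_1,\ldots,\partial_n$, so every coefficient of $T$ lies in $K_y$; and if $K_y=K$, then $\Ord(T)=0$ forces $|\underline{k}|\le0$ for each monomial $\underline{\partial}^{\underline{k}}$ occurring in $T$, so $T\in K$.
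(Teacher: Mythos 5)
Your items \ref{1b} and \ref{2*} reproduce the paper's argument almost verbatim: the same computation of the $\partial_n^{l_i+l_n-1}$-coefficient of $[P_i,P_n]$ giving $\partial_i(p_{n,l_n-1})=0$, the same operator $S=\exp(-\int p_{n,l_n-1}/l_n\,dx_n)$ with the same order estimates, and the same extraction of $\partial_i(S^{-1}S_1)=0$ for all $i\le n$ from preservation of the normalized form. The only genuine divergence is in item \ref{1a}: the paper builds $f$ \emph{sequentially} as $f_1\cdots f_{n-1}$ with $f_j=\exp(-\int g_j\,dx_j)$, using commutativity at each step to check that the freshly produced $g_j$ no longer depends on the variables already used (so that later conjugations do not disturb earlier normalizations), whereas you build a single potential $g$ with $\partial_i g=v_i$ via the closedness relations $\partial_i v_j=\partial_j v_i$ and a formal Poincar\'e lemma. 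These are equivalent in content (your closedness relations and the paper's independence statements are the same consequences of $[P_i,P_j]=0$ read off the top $\partial_n$-coefficients), and your packaging is arguably cleaner; just normalize the antiderivatives to vanish at $\underline{x}=\underline{0}$ so that the exponential is a well-defined unit of $\hat R_y$, as the paper does in item \ref{1b}. One thing you should strike, however, is the parenthetical appeal to a ``preliminary admissible linear change of variables'': the lemma asserts that conjugation by a function alone suffices, so no such change is permitted, and the admissible-change machinery of section \ref{S:admissible_change} is developed later for Schur pairs and rests on this lemma, so invoking it here would be circular. The paper instead takes $g_i=HT_n(P_i)-\partial_i$ to lie in $\hat R_y$ directly from monicity and commutativity, and your proof should do the same, deriving the closedness relations from $[HT_n(P_i),HT_n(P_j)]=0$ once the $v_i$ are known to be functions.
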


\begin{proof} The proof is an easy generalisation of analogous claim in \cite[Lemma 2.10]{Zheglov2013}. 

\ref{1a}) Let's show that there exists an invertible function $f\in \hat{R}_y$ such that 
\begin{equation}
\label{vs2}
f^{-1}P_nf=\partial_n^{l_n}+ \sum_{s=-\infty}^{l_n-1}p'_{n,s}\partial_n^{s}, \mbox{\quad} f^{-1}P_if=\partial_i\partial_n^{l_i}+ \sum_{s=-\infty}^{l_i-1}p'_{i,s}\partial_n^{s}.
\end{equation}
Let $P_i=\sum_{s=-\infty}^{l_i}p_{i,s}\partial_n^{s}$ and $p_{i,l_i}=\partial_i\partial_n^{l_i}+g_i$, $g_i\in \hat{R}_y$. Since the operators $P_i$ commute, $g_i$ don't depend on $x_n$ for any $i<n$. 

Easy direct computations show that for any invertible function $f\in\hat{R}_y$ we have 
$$
f^{-1}P_nf=\partial_n^{l_n}+ \sum_{s=-\infty}^{l_n-1}p'_{n,s}\partial_n^{s}, \mbox{\quad} f^{-1}P_if=(\partial_i+ f^{-1}\partial_i(f)+g_i)\partial_n^{l_i}+ \sum_{s=-\infty}^{l_i-1}p'_{i,s}\partial_n^{s}, \quad i<n
$$
with some coefficients $p'_{j,s}\in \hat{D}_n^n$. Now we can construct the needed function by induction. 
First, we can find a function $f_1$ in the form $f_1=\exp (-\int g_1dx_1)$. After conjugating all operators by $f_1$ we get new operators $P_1, \ldots, P_n$, where $P_1$ has the from from equation \eqref{vs2}. Since the operators $P_i$ commute, the new functions $g_i$, $i>1$ do not depend on $x_n,x_1$ (as easy direct calculations with the highest terms show). Then we can find a function $f_2$ in the form $f_2=\exp (-\int g_2dx_2)$. Since it does not depend on $x_n, x_1$ too, after conjugating all operators by $f_2$ we get new operators $P_1, \ldots, P_n$, where $P_1,P_2$ has the from from equation \eqref{vs2}. Continuing this procedure, we can find the needed function as the product $f:= f_1\ldots f_{n-1}$. 

\ref{1b}) By item \ref{1a}, conjugating by an appropriate function $f$ we can reduce the claim to the operators $P_1, \ldots , P_n$ that look like the right hand side in (\ref{vs2}) (obviously, $\Ord (f)=0$). 

Then direct calculations show that for any $i<n$
$$
0=[P_i,P_n]= \partial_i(p_{n,l_n-1})\partial_n^{l_n+l_i-1}+A,
$$
where $A$ consists of monomials with $\ord_n$-order less that $l_n+l_i-1$, that is the expression $p_{n,l_n-1}$ don't depend on $x_i$, $i<n$. Therefore, we may look for an operator $S$ such that $\partial_i(S)=0$, $i<n$. Direct computations (note that such an $S$ commutes with 
$p_{n,l_n-1}\in \hat{D}_n^n$) show that 
\begin{equation}
\label{E:bla}
S^{-1}P_nS=\partial_n^{l_n}+ (p_{n,l_n-1} + l_nS^{-1}\partial_n(S))\partial_n^{l_n-1}+ \sum_{s=-\infty}^{l_n-2}p'_{n,s}\partial_n^{s}, \mbox{\quad} S^{-1}P_iS=\partial_i\partial_n^{l_i}+ \sum_{s=-\infty}^{l_i-1}p'_{i,s}\partial_n^{s}. 
\end{equation}
Hence, we can find a needed operator in the form $S=\exp (-\int p_{n,l_n-1}/l_n dx_n)$, where we choose the integral in such a way that $(-\int p_{n,l_n-1}/l_n dx_n)|_{\underline{x}=\underline{0}}=0$. Since $\Ord (P_n)=l_n$, we have $\Ord (p_{n,l_n-1})\le 1$ and therefore $\Ord (-\int p_{n,l_n-1}/l_n dx_n)\le 0$.
Since $p_{n,l_n-1}\in \hat{D}_n^n$ and the coefficients of the expression $(-\int p_{n,l_n-1}/l_n dx_n)$ don't depend on $x_1, \ldots , x_{n-1}$, the exponent is well defined and its order is zero. clearly, $S$ is invertible. Since the conjugation by $f$ and $S$ preserves the $\ord_{\Gamma}$-orders and $\Ord$-orders of operators $P_i$, the normalised operators are also formally quasi-elliptic. 

\ref{2*}) Let $S':=S^{-1}S_1$. Then conjugation by $S'$ preserves the normalised form of operators $P_1, \ldots ,P_n$. Then from \eqref{E:bla} it follows that $\partial_n(S')=0$. Clearly, we must also have  $(S')^{-1}\partial_i S'=\partial_i$, where from $\partial_i (S')=0$ for all $i\le n$. So, the coefficients of the operator $S'$ are from $K_y$. If $K_y=K$, then, since $\Ord (S')=0$, it follows $S'\in K$. 
\end{proof}

\subsection{Higher-dimensional analogue of the Schur theorem}
\label{SS:Schur_theorem}

\begin{thm}
\label{T:lemma8}
Let $L_1, \ldots , L_n\in \hat{E}_n$ be  monic almost normalized formally quasi-elliptic operators with $\ord_{\Gamma} (L_i)=(0,\ldots ,0, 1, 0,\ldots ,0)$, where $1$ stands at the $i$-th place, $1\le i\le n$:
$$ L_n=\partial_n + v_{n,0} + \sum_{q=1}^{\infty}v_{n,q}\partial_n^{-q} \quad  L_i=\partial_i + \sum_{q=1}^{\infty}v_{i,q}\partial_n^{-q}, \quad i<n.$$
 Then 
\begin{enumerate}
\item\label{1}
There exists a monic invertible operator $S=1+S^-$, where $S^-\in \hat{D}_n^n[[\partial_n^{-1}]]\partial_n^{-1}$, with $\ord_{\Gamma}(S)=\ord_{\Gamma}(S^{-1})=(0,0)$ such that $S^{-1}\partial_iS=L_i$,  $1\le i< n$, $S^{-1}L_{n,0}S=L_n$, where $L_{n,0}=\partial_n +v_{n,0}$. 

Moreover, $S$ satisfies the condition $A_{1}$, i.e. in particular $\Ord (S)=|\ord_{\Gamma}(S)|=\Ord (S^{-1})=|\ord_{\Gamma}(S^{-1})|=0$
\item\label{2}
If $S_1$ is another operator with such a property, then 
$$SS_1^{-1}\in K_y [[\partial_1, \ldots ,\partial_{n-1}]]((L_{n,0}^{-1}))\cap \Pi_n$$ 
and satisfies $A_{1}$. In particular, if $K_y=K$ or $SS_1^{-1}\in E_n$, then $SS_1^{-1}\in K_y [\partial_1, \ldots ,\partial_{n-1}]((L_{n,0}^{-1}))$. 

\item
\label{3}
If $L_1, \ldots ,L_n\in  E_n$, then $S\in  E_n$.
\end{enumerate}

\end{thm}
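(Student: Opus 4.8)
The plan is to construct the operator $S$ coefficient-by-coefficient with respect to the $\ord_n$-filtration, exactly as in the classical Schur construction, and then to establish the order estimate $\Ord(S)=0$ a posteriori. Write $S=1+\sum_{q\ge 1}s_q\partial_n^{-q}$ with unknown $s_q\in\hat{D}_n^n$. The equations $S^{-1}\partial_iS=L_i$ for $i<n$ are equivalent to $\partial_iS-SL_i=0$, and $S^{-1}L_{n,0}S=L_n$ is equivalent to $L_{n,0}S-SL_n=0$; expanding each of these in powers of $\partial_n^{-1}$ and comparing the coefficient of $\partial_n^{-q}$ gives, for each $q$, a system of the form $[\partial_i,s_q]=(\text{polynomial in }s_1,\dots,s_{q-1}\text{ and the }v_{*,*})$ for $i<n$ together with one more equation from $L_{n,0}$ of the shape $l_n\,\partial_n(s_q)=(\dots)$ (here I use that $L_{n,0}=\partial_n+v_{n,0}$ has a nonzero "leading $\partial_n$" so that the $\partial_n(s_q)$ term survives, analogous to \eqref{vs5}). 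First I would check that this system is consistently and uniquely solvable for $s_q$ given $s_1,\dots,s_{q-1}$: the $[\partial_i,-]$ equations determine $s_q$ up to an element of $\hat{D}_n^n$ with zero $\partial_i$-derivatives, and the $\partial_n(s_q)$ equation then pins it down (fixing the constant of integration so that the relevant coefficient vanishes at $\underline{x}=\underline{0}$); the compatibility of the overdetermined system follows from $[\partial_i,\partial_j]=0$ and the commutativity $[L_i,L_j]=0$, which is where the almost-normalized hypothesis and commuting hypothesis on the $L_i$ get used. This produces a well-defined formal $S$ with $\ord_n(S^-)\le -1$, hence $\ord_{\Gamma}(S)=\ord_{\Gamma}(S^{-1})=(0,\dots,0)$ by Corollary \ref{C:corol2.1}.

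The main obstacle is the order estimate: showing $S\in\hat{D}_n$, i.e. $\Ord(S)<\infty$, and more precisely that $S$ satisfies condition $A_1$, so that $\Ord(S)=0$. Here I would argue by induction on $q$ that $\Ord(s_q)\le q$. Assuming $\Ord(s_1),\dots,\Ord(s_{q-1})$ satisfy the estimate, the right-hand sides of the defining equations for $s_q$ are expressions built from $s_1,\dots,s_{q-1}$ (whose orders are controlled by induction) and from the $v_{i,r},v_{n,r}$ (for which the $A_1$-hypothesis on $L_i$, via Lemma \ref{L:lemma2.7}, gives $\Ord(v_{i,r})\le r$ and $\Ord(v_{n,r})\le r$), so one reads off $\Ord\big([\partial_i,s_q]\big)\le q+1$ and $\Ord\big(\partial_n(s_q)\big)\le q+1$; since $\Ord(\partial_i)=\Ord(\partial_n)=1$, taking the "anti-derivative" raises the order by at most one more than it should, but the careful bookkeeping — the same as in \cite[Lemma 2.9, Lemma 2.10]{Zheglov2013} and in Lemma \ref{L:lemma2.9}\ref{b} above — shows that in fact $\Ord(s_q)\le q$. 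Combined with $\ord_{\Gamma}(S)=(0,\dots,0)$, this gives $\Ord(S)=|\ord_{\Gamma}(S)|=0$, i.e. condition $A_1$, and the same for $S^{-1}$ by Corollary \ref{C:corol2.1}. Item \ref{3} is immediate from this construction: if all the $v_{*,*}$ lie in $D_n^n$ then every $s_q$, being obtained from them by commutators, multiplications and finitely many integrations, again lies in $D_n^n$, so $S\in E_n$.

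For item \ref{2}, suppose $S_1=1+S_1^-$ is another solution. Then $T:=SS_1^{-1}$ satisfies $T^{-1}\partial_iT=\partial_i$ for $i<n$ and $T^{-1}L_{n,0}T=L_{n,0}$, i.e. $T$ commutes with $\partial_1,\dots,\partial_{n-1}$ and with $L_{n,0}=\partial_n+v_{n,0}$. The commutation $[\partial_i,T]=0$ for $i<n$ forces the coefficients of $T$ (written in powers of $\partial_n^{-1}$) to be independent of $x_1,\dots,x_{n-1}$, hence to lie in $K_y[[\partial_n^{-1}]]$ after we also use that $T$ is a series in $\partial_n^{-1}$ with coefficients in $\hat{D}_n^n$; I would then invoke the description of the centralizer of $L_{n,0}$ (an operator with constant-in-$x_1,\dots,x_{n-1}$ coefficients and leading term $\partial_n$) — this is the $n=1$ Schur-type centralizer statement, to which the problem reduces once the $x_i$-dependence for $i<n$ has been eliminated — to conclude $T\in K_y[[\partial_1,\dots,\partial_{n-1}]]((L_{n,0}^{-1}))\cap\Pi_n$. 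By Corollary \ref{C:corol2.1} and Lemma \ref{L:lemma2.7}, $T$ satisfies $A_1$. Finally, if $K_y=K$ then a $\Pi_n$-element with constant coefficients lying in $K[[\partial_1,\dots,\partial_{n-1}]]((L_{n,0}^{-1}))$ and having finite $\Ord$ must in fact be a Laurent series, i.e. $T\in K[\partial_1,\dots,\partial_{n-1}]((L_{n,0}^{-1}))$; similarly if $T\in E_n$ the coefficients are automatically polynomial in $\partial_1,\dots,\partial_{n-1}$. The delicate point in item \ref{2} is the reduction to the one-variable centralizer statement and keeping track of which completion the resulting series lives in; everything else is bookkeeping with the $\ord_n$- and $\Ord$-filtrations.
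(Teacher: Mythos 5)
Your proposal is correct and follows essentially the same route as the paper: a recursive coefficient-by-coefficient construction of $S$ (the paper realizes it as an infinite product of elementary factors $S_k=1+s_k\partial_n^{-k}$, each solving the compatible system $\partial_i(s_k)=-v_{i,k}$, $\partial_n(s_k)+[v_{n,0},s_k]=-v_{n,k}$, rather than solving for all $s_q$ at once, but this is only a presentational difference), with the same order estimate $\Ord(s_q)\le q$ coming from condition $A_1$ on the $L_i$, and the same treatment of items \ref{2} and \ref{3} via rewriting $SS_1^{-1}$ as a series in $L_{n,0}^{-1}$ with constant coefficients. The only small slip is writing the $L_{n,0}$-equation as $l_n\,\partial_n(s_q)=(\dots)$: there is no factor $l_n$ here, and the commutator term $[v_{n,0},s_q]$ must be kept, but this does not affect solvability or the order bookkeeping.
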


\begin{proof}  The proof is an easy generalisation of analogous claim in \cite[Lemma 2.11]{Zheglov2013}, cf. also analogous theorem in \cite{Parshin2000}.

\ref{1}) It suffices to prove the following fact: if 
$$
L_n=\partial_n + v_{n,0} + \sum_{q=k\ge 1}^{\infty}v_{n,q}\partial_n^{-q} \quad 
L_i=\partial_i + \sum_{q=k\ge 1}^{\infty}v_{i,q}\partial_n^{-q}, \quad 1\le i,j< n, \mbox{\quad } [L_i,L_j]=0, $$ 
then there exists an operator $S_k=1+s_k\partial_n^{-k}$ with $s_k\in\hat{D}_n^n$ such that 
$$S_k^{-1}L_iS_k=\partial_i + \sum_{q=k+1}^{\infty}v'_{i,q}\partial_n^{-q}, \quad \quad 1\le i< n, \quad S_k^{-1}L_nS_k=\partial_n +v_{n,0} + \sum_{q=k+1}^{\infty}v'_{n,q}\partial_n^{-q}.$$

Indeed, if this fact is proved, then $S^{-1}=\prod_{q=1}^{\infty}S_q$, where $S_1$ is taken for given $L_1, \ldots , L_n$, $S_2$ is taken for $S_1^{-1}L_iS_1$, and so on. It is not difficult to see that the sequence $\{\prod_{q=1}^{m}S_q\}$ is convergent in $\hat{E}_n$ (with respect to the topology given by pseudo-valuation $-\ord_n$), i.e. the infinite product is well defined. 

To prove the fact let's note first that, since $[L_i,L_j]=0$, it follows $\partial_i(v_{j,k})- \partial_j(v_{i,k})=0$ for $i,j \neq n$ and $\partial_n(v_{j,k})- \partial_j(v_{n,k})+
[v_{n,0},v_{j,k}]=0$, $\partial_j(v_{n,0})=0$ for $j\neq n$. Besides,
$$
S_k^{-1}\partial_iS_k=\partial_i +S_k^{-1}\partial_i(S_k )=\partial_i + \partial_i(s_k)\partial_n^{-k}+\ldots , \quad i<n 
$$
$$
S_k^{-1}L_{n,0}S_k=\partial_n +S_k^{-1}\partial_n(S_k )+S_k^{-1}v_{n,0}S_k=\partial_n + v_{n,0} + (\partial_n(s_k)+[v_{n,0},s_k])\partial_n^{-k}+\ldots ,
$$
whence $s_k$ can be found from the following system:
\begin{equation}
\label{sv2}
\partial_i(s_k)=-v_{i,k} \mbox{\quad} \partial_n(s_k)+[v_{n,0},s_k]=-v_{n,k}, \quad 1\le i< n. 
\end{equation}
This system is solvable, because it is compatible and all coefficients of $v_{i,k}$ belong to $\hat{R}_y$ (so that we can always integrate them). 

Since $\Ord (L_i)=|\ord_{\Gamma}(L_i)|=1$ for all $1\le i\le n$, we have $\Ord (v_{i,k})\le 1+k$ and therefore $\Ord (s_k)\le k$. Hence $\Ord S_k=0$  (since $\Ord (s_k\partial_n^{-k})\le 0$ and $\Ord (1)=0$). Therefore, $\Ord (\prod_{q=1}^{\infty}S_q)=0$. Clearly, the operator $(\prod_{q=1}^{\infty}S_q)$ is invertible, because it is of the form $1+S^-$, where $S^-\in \hat{D}_n^n[[\partial_n^{-1}]]\partial_n^{-1}$. By the same reason its $\Gamma$-order is $(0,\ldots ,0)$, i.e. the operator $S:=(\prod_{q=1}^{\infty}S_q)^{-1}$ satisfies the condition $A_1$. The rest of the claim follows from corollary \ref{C:corol2.1}.

\ref{2}) Clearly, $[S S_1^{-1}, \partial_i]=0$ for any $1\le i< n$ and $[S S_1^{-1}, L_{n,0}]=0$. Note that any series in $\hat{E}_n$ can be written as a series in $L_{n,0}^{-1}$ with coefficients in $\hat{D}_n^n$ and vice versa (note that $L_{n,0}^{-1}=\partial_n^{-1}(1+v_{n,0}\partial_n^{-1})^{-1}$ and $\partial_n^{-1}=L_{n,0}^{-1}(1-v_{n,0}L_{n,0}^{-1})^{-1}$). Therefore, the operator $S S_1^{-1}$ written as a series in $L_{n,0}^{-1}$ has constant coefficients, for all these coefficients commute with $\partial_i$, $i<n$ and with $v_{n,0}$ (as they don't depend on $\partial_n$ and on $x_i$ with $i<n$) and therefore must commute also with $\partial_n$, i.e. it has coefficients from $K_y$. 

By lemma \ref{L:lemma2.7} and corollary \ref{C:corol2.1} it satisfies the condition $A_1$. So, if $K_y=K$, then $SS_1^{-1}\in K[\partial_1, \ldots ,\partial_{n-1}]((\partial_n^{-1}))$. 

\ref{3}) The proof is the same as in item \ref{1}). Note that in this case $L_{n,0}, L_{n,0}^{-1}\in E_n$.
\end{proof}

An immediate corollary of all these technical statements is 

\begin{cor}
\label{C:corol3.1}
Let $P_1,\ldots , P_n\in \hat{E}_n$ be monic formally quasi-elliptic operators. 

The set of commuting with $P_1,\ldots , P_n$ operators in $\hat{E}_n$ is a commutative ring, which can be embedded into the ring $K_y [[\partial_1, \ldots ,\partial_{n-1}]]((\partial_n^{-1}))$. 

Analogously, the set of commuting with $P_1,\ldots , P_n$ operators in $\Pi_n$ is a commutative ring, which can be embedded into the ring $K_y [[\partial_1, \ldots ,\partial_{n-1}]]((\partial_n^{-1}))\cap \Pi_n$. 

If $K_y=K$, then it can be embedded into the ring $K[\partial_1, \ldots ,\partial_{n-1}]((\partial_n^{-1}))$ \\
($K [\partial_1, \ldots ,\partial_{n-1}]((\partial_n^{-1}))\cap \Pi_n$ respectively). 

\end{cor}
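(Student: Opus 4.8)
The plan is to reduce everything to the Schur theorem \ref{T:lemma8} by first normalizing the given operators. Start with monic formally quasi-elliptic $P_1,\ldots,P_n\in\hat{E}_n$. By Lemma \ref{L:lemma2.9}\ref{a} there exist unique monic commuting roots $L_1,\ldots,L_n\in\hat{E}_n$ with $\ord_\Gamma(L_i)=(0,\ldots,0,1,0,\ldots,0)$ and $L_n^{l_n}=P_n$, $L_iL_n^{l_i}=P_i$. An operator commutes with all $P_i$ if and only if it commutes with all $L_i$: one direction is clear since $L_i$ are built from $P_i$ by taking roots inside the commutative subring they generate in $\hat{E}_n$ (or: conversely $P_i$ are polynomials in the $L_j$), and the other is immediate. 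So the centralizer of $\{P_i\}$ equals the centralizer of $\{L_i\}$, and I may assume from the start that the operators are the $L_i$ with unit $\Gamma$-orders.

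Next I normalize. By Lemma \ref{L:lemma7}\ref{1b} there is an invertible operator $T\in\hat{D}_n^n$ with $\Ord(T)=0$ such that $T^{-1}L_iT$ are normalized (in particular almost normalized); conjugation by $T$ is an automorphism of $\hat{E}_n$ carrying $\Pi_n$ to itself (it preserves $\Ord$ and $\ord_n$), so it suffices to prove the statement for almost normalized operators, which I now rename $L_1,\ldots,L_n$, of the form displayed in Theorem \ref{T:lemma8}. Then Theorem \ref{T:lemma8}\ref{1} gives an invertible $S=1+S^-$ with $\Ord(S)=\Ord(S^{-1})=0$ such that $S^{-1}\partial_iS=L_i$ for $i<n$ and $S^{-1}L_{n,0}S=L_n$, where $L_{n,0}=\partial_n+v_{n,0}$. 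An operator $Q\in\hat{E}_n$ commutes with $L_1,\ldots,L_n$ if and only if $SQS^{-1}$ commutes with $\partial_1,\ldots,\partial_{n-1}$ and with $L_{n,0}$. The first set of conditions forces $SQS^{-1}\in\hat{D}_n^n$ with coefficients independent of $x_1,\ldots,x_{n-1}$ — wait, more carefully: $SQS^{-1}\in\hat{E}_n$, written as $\sum_s a_s\partial_n^s$, commuting with $\partial_i$ ($i<n$) forces each $a_s$ to be independent of $x_1,\ldots,x_{n-1}$; commuting with $L_{n,0}$ then, after rewriting $SQS^{-1}$ as a series in $L_{n,0}^{-1}$ with coefficients in $\hat{D}_n^n$ (using the substitutions $L_{n,0}^{-1}=\partial_n^{-1}(1+v_{n,0}\partial_n^{-1})^{-1}$ and its inverse, exactly as in the proof of \ref{T:lemma8}\ref{2}), forces those coefficients to commute with $\partial_n$, hence to lie in $K_y$. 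Therefore $SQS^{-1}\in K_y[[\partial_1,\ldots,\partial_{n-1}]]((L_{n,0}^{-1}))$, and since $L_{n,0}\in\hat{D}_n^n$ has the same $\Ord$ and $\ord_n$ behaviour as $\partial_n$, this algebra is contained in $K_y[[\partial_1,\ldots,\partial_{n-1}]]((\partial_n^{-1}))$. Undoing the conjugations by $S$ and by $T$ and noting that both $S^{\pm1}$ and $T^{\pm 1}$ lie in $\hat{E}_n$ (indeed in $\hat{D}_n^n((\partial_n^{-1}))$), the centralizer of $\{P_i\}$ embeds — via $Q\mapsto SQS^{-1}$ composed with the $T$-conjugation — as a commutative subring of $K_y[[\partial_1,\ldots,\partial_{n-1}]]((\partial_n^{-1}))$; commutativity follows because the target is commutative.

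For the $\Pi_n$ statement, observe that conjugation by $S$ and $T$ preserves the condition $\Ord<\infty$ (Lemma \ref{L:module_structure}, and $S,S^{-1},T,T^{-1}$ have finite $\Ord$), so $Q\in\Pi_n$ iff its image lies in $K_y[[\partial_1,\ldots,\partial_{n-1}]]((\partial_n^{-1}))\cap\Pi_n$. Finally, if $K_y=K$ then $K[[\partial_1,\ldots,\partial_{n-1}]]=K[\partial_1,\ldots,\partial_{n-1}]$ is automatic — wait, that is false as stated; rather, one uses that a commuting operator of finite $\Ord$ written in the normalized coordinates is a \emph{polynomial} in $\partial_1,\ldots,\partial_{n-1}$ because the $\Ord$ and $\Gamma$-order constraints (via condition $A_1$, Lemma \ref{L:lemma2.7}, Corollary \ref{C:corol2.1}) bound the $\partial_i$-degrees, exactly as in the conclusion of Theorem \ref{T:lemma8}\ref{2}. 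I expect the main obstacle to be the bookkeeping in this last point — making precise why finiteness of $\Ord$ together with the $A_1$-type constraints rules out infinitely many positive powers of $\partial_1,\ldots,\partial_{n-1}$ — and the careful verification that the change-of-variables maps ($\mathrm{Ad}_S$, $\mathrm{Ad}_T$) are well-defined ring homomorphisms on the relevant centralizers, both of which follow the template already laid out in the proof of Theorem \ref{T:lemma8}.
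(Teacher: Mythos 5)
Your overall strategy (normalize, conjugate by the Schur operator of Theorem \ref{T:lemma8}, then force constant coefficients by a leading--coefficient argument) is the same as the paper's, and the second half of your argument --- from the application of Theorem \ref{T:lemma8} onward --- is essentially the paper's proof. The one genuine gap is your opening reduction: you assert that the centralizer of $\{P_i\}$ equals the centralizer of the roots $\{L_i\}$, and the direction you actually need ($[Q,P_i]=0$ for all $i$ implies $[Q,L_i]=0$ for all $i$) is exactly the one you dismiss as ``clear since $L_i$ are built from $P_i$ by taking roots inside the commutative subring they generate''. That is not a proof: the $L_i$ are roots taken in $\hat{E}_n$, not elements of the subring generated by the $P_i$, and in a noncommutative ring an element commuting with $a^{k}$ need not commute with $a$. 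The claim is true here, but it needs an argument of the same kind as the corollary itself --- for instance, expand $0=[Q,L_n^{l_n}]=\sum_{j=0}^{l_n-1}L_n^{j}[Q,L_n]L_n^{l_n-1-j}$, use that ${HT}_n(L_n)=1$ so the top $\ord_n$-term of the sum is $l_n\,{HT}_n([Q,L_n])$, conclude $[Q,L_n]=0$, and then $[Q,P_i]=[Q,L_i]L_n^{l_i}$ with $L_n^{l_i}$ invertible gives $[Q,L_i]=0$. The paper sidesteps this entirely: it conjugates the $P_i$ themselves, getting $S_1P_n'S_1^{-1}=\partial_n^{l_n}$ and $S_1P_i'S_1^{-1}=\partial_i\partial_n^{l_i}$, and runs the leading-coefficient argument on $[Q',\partial_n^{l_n}]=0$ and $[Q',\partial_i\partial_n^{l_i}]=0$ directly, never needing $Q$ to commute with the $L_i$. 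Either supply the lemma above or drop the reduction and follow the paper's route.

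Two smaller points. Since you normalize (not merely almost normalize) before invoking Theorem \ref{T:lemma8}, you have $v_{n,0}=0$ and $L_{n,0}=\partial_n$, so the inclusion $K_y[[\partial_1,\ldots,\partial_{n-1}]]((L_{n,0}^{-1}))\subseteq K_y[[\partial_1,\ldots,\partial_{n-1}]]((\partial_n^{-1}))$ is trivially true in your setting; but the justification you offer (``same $\Ord$ and $\ord_n$ behaviour'') would not suffice if $v_{n,0}$ genuinely depended on $x_n$, since then rewriting powers of $L_{n,0}^{-1}$ in powers of $\partial_n^{-1}$ produces non-constant coefficients. For $K_y=K$ your self-correction lands near the right reason, which is simply that each coefficient $q_j\in K[[\partial_1,\ldots,\partial_{n-1}]]$ of the conjugated operator lies in $\hat{D}_n^{n}$ and hence has finite $\Ord$, which for a constant-coefficient series forces it to be a polynomial; no appeal to condition $A_1$ or Lemma \ref{L:lemma2.7} is needed there.
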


\begin{proof}
By lemma \ref{L:lemma7} there exists an invertible operator $S\in \hat{D}_n^n$ such that the operators $P_i':=S^{-1}P_iS$ are normalised. By lemma \ref{L:lemma2.9} and corollary \ref{C:normalised_roots} the corresponding operators $L_1, \ldots , L_n$ are normalised as well. By theorem \ref{T:lemma8} there is an invertible operator $S_1$ such that $S_1L_iS_1^{-1}=\partial_i$. In particular,  $S_1P_n'S_1^{-1}=\partial_n^{l_n}$ and $S_1P_i'S_1^{-1}= \partial_i\partial_n^{l_i}$ for $i<n$. 

If $[Q,P_i]=0$, then $[S_1S^{-1} Q SS_1^{-1}, \partial_n^{l_n}]=0$ and $[S_1S^{-1}QSS_1^{-1}, \partial_i\partial_n^{l_i}]=0$ for $i<n$. Put $Q':= S_1S^{-1} Q SS_1^{-1}$, and assume $Q'=\sum_{j=N}^{\infty} q_j\partial_n^{-j}$ with $q_j\in \hat{D}_n^n$. Assume that $q_l$ is the first coefficient not belonging to the ring $K_y [[\partial_1, \ldots ,\partial_{n-1}]]$. Then
$$
0=[Q',\partial_n^{l_n}]= -l_i \partial_n (q_l)\partial_n^{l_i-1-l} + \mbox{terms of lower $\ord_n$-order,}
$$
where from $\partial_n (q_l)=0$. Analogously,
$$
0=[Q',\partial_i\partial_n^{l_i}]= -\partial_i(q_l)\partial_n^{l_i-l}+ \mbox{terms of lower $\ord_n$-order,}
$$
where from $\partial_i (q_l)=0$ for all $i<n$. Hence, $q_l$ has constant coefficients, $q_l\in K_y[[\partial_1, \ldots ,\partial_{n-1}]]$ -- a contradiction. If $K_y=K$, then, since $\Ord (q_l)< \infty$, we must have $q_l\in K [\partial_1, \ldots ,\partial_{n-1}]$. 

So, $Q'\in K_y [\partial_1, \ldots ,\partial_{n-1}]((\partial_n^{-1}))$ and therefore the set of commuting with $P_1,\ldots , P_n$ operators in $\hat{E}_n$ form a commutative subring, which can be embedded into the ring $K_y [\partial_1, \ldots ,\partial_{n-1}]((\partial_n^{-1}))$.

If $Q\in \Pi_n$, then $Q'\in \Pi_n$ too, since $S, S_1$ are of finite order. So, \\
$Q'\in K_y [[\partial_1, \ldots ,\partial_{n-1}]]((\partial_n^{-1}))\cap \Pi_n$.
\end{proof}

\begin{rem}
\label{R:Schur_oper}
The embedding from corollary is made with the help of conjugation by an operator $S$. We will call it a {\it Schur operator}. Note that it has an invertible symbol, thus we get  $\sigma (S^{-1}QS)= \sigma (S)^{-1}\sigma (Q)\sigma (S)\in K_y [[\partial_1, \ldots ,\partial_{n-1}]]((\partial_n^{-1}))\cap \Pi_n$ for any operator $Q$ commuting with $P_1, \ldots , P_n$. Denote by $B$  the ring of operators commuting with $P_1, \ldots , P_n$. Then we get  an embedding of the ring $\gr (B)$ into the ring of pseudo-differential operators with  constant coefficients, where  $\gr (B)$ denotes the associated graded ring with respect to the filtration defined by the function $\Ord$. 
\end{rem}

Another corollary is the result on "purity" of rings of differential operators. 

\begin{cor}
\label{C:purity}
Assume $K_y=K$. Let $B\subset D_n\subset \hat{D}_n$ be a quasi-elliptic ring of commuting partial differential operators. Then the centraliser $C(B)\subset \hat{D}_n$  is a ring of partial differential operators, i.e. $C(B)\subset D_n$.
\end{cor}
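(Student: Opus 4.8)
The plan is to reduce the statement about the centralizer $C(B)$ inside $\hat{D}_n$ to the structural description of centralizers obtained in Corollary \ref{C:corol3.1}, and then argue that a pseudo-differential operator in $\hat{E}_n$ which commutes with a set of honest differential operators and has finite $\Ord$ cannot in fact involve any negative powers of $\partial_n$. First I would note that since $B\subset D_n$ is quasi-elliptic, it contains $n$ monic formally quasi-elliptic operators $P_1,\ldots,P_n\in D_n\subset\hat{D}_n\subset\hat{D}_n^{sym}$; by Lemma \ref{L:lemma2.9} item \ref{c}, their roots $L_1,\ldots,L_n$ lie in $E_n$, and by Lemma \ref{L:lemma7} there is an invertible $S\in D_n^n$ (with $\Ord(S)=0$; in the polynomial-coefficient situation this $S$ lies in $E_n$ as well) bringing the $P_i$ to normalized form, after which Theorem \ref{T:lemma8} item \ref{3} gives a Schur operator, again in $E_n$, conjugating the normalized roots to $\partial_1,\ldots,\partial_{n-1},\partial_n$ (more precisely $P_n'$ to $\partial_n^{l_n}$ and $P_i'$ to $\partial_i\partial_n^{l_i}$). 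The upshot, exactly as in the proof of Corollary \ref{C:corol3.1}, is that there is an invertible operator $T\in E_n$ of finite order such that $T^{-1}Q T\in K[\partial_1,\ldots,\partial_{n-1}]((\partial_n^{-1}))$ for every $Q\in C(B)$.

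The core of the argument is then the following: take $Q\in C(B)\subset\hat{D}_n$. On one hand $Q\in\hat{D}_n=\hat{D}_n^n[\partial_n]$, so $Q$ is a \emph{polynomial} in $\partial_n$, i.e. $\ord_n(Q)<\infty$ and $Q$ has no negative powers of $\partial_n$. On the other hand, writing $Q=T\,Q'\,T^{-1}$ with $Q'\in K[\partial_1,\ldots,\partial_{n-1}]((\partial_n^{-1}))$ and $T\in E_n$, I would want to conclude that $Q'$ itself is polynomial in $\partial_n$, hence $Q'\in K[\partial_1,\ldots,\partial_n]=D_n^n[\partial_n]$, and then $Q=TQ'T^{-1}$ — a product of elements of $E_n$ — lies in $E_n$; combined with $Q\in\hat{D}_n$ and $\ord_n(Q)<\infty$ this forces $Q\in D_n^n[\partial_n]\subset D_n$. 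To see that $Q'$ has no negative $\partial_n$-part: the set of all $Q'=T^{-1}QT$ for $Q\in C(B)$ is a commutative subring of $K[\partial_1,\ldots,\partial_{n-1}]((\partial_n^{-1}))$ which, via $T$, is isomorphic to $C(B)$, and it contains $\partial_n^{l_n}$, $\partial_i\partial_n^{l_i}$ $(i<n)$. If some $Q'$ had a nonzero tail $\sum_{s<0}q_s'\partial_n^s$ of negative $\partial_n$-order, then $Q=TQ'T^{-1}$ would have $\ord_n(Q)=\ord_n(Q')$ by the multiplicativity of $HT_n$ (since $HT_n(T)$ is invertible, being a unit in $E_n$), but it would also have to agree with an element of $\hat{D}_n^n[\partial_n]$, which has no negative $\partial_n$-powers at all — already the lowest-$\ord_n$ slice obstructs this. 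A clean way to phrase this: the conjugation $Q\mapsto T^{-1}QT$ takes $\hat{D}_n$ into $\hat{E}_n$ but $\hat{D}_n\cap \big(T E_n^{<0} T^{-1}\big)$ cannot contain a differential operator with finite $\ord_n$ unless that operator is zero.

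Alternatively, and perhaps more transparently, I would argue directly on $Q$ without passing back through $T$: since $Q\in\hat{D}_n$, write $Q=\sum_{s=0}^{N}q_s\partial_n^s$ with $q_s\in\hat{D}_n^n$, $q_N\neq 0$; I want to show each $q_s\in D_n^n=K[[x_1,\ldots,x_{n-1}]][\partial_1,\ldots,\partial_{n-1}]$, i.e. that the $q_s$ have polynomial (finite) dependence on the derivations — equivalently that $Q\in D_n$. Here I would exploit that $Q$ commutes with $P_1,\ldots,P_n\in D_n$: one can run the Schur-type normalization \emph{relative to $D_n$}, i.e. observe that all the conjugating operators $f$, $S$, $S_1$ built in Lemmas \ref{L:lemma2.9}, \ref{L:lemma7} and Theorem \ref{T:lemma8} out of polynomial-coefficient data lie in $E_n$ (this is precisely the content of items \ref{c} of Lemma \ref{L:lemma2.9}, \ref{3} of Theorem \ref{T:lemma8}), so the whole reduction can be performed inside $E_n$, and the normal form of $C(B)$ computed there is $K[\partial_1,\ldots,\partial_{n-1}]((\partial_n^{-1}))$. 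Since $Q\in\hat{D}_n\cap E_n$ — note $\hat{D}_n\subset\hat{E}_n$ and $Q$ has finite $\ord_n$, and commuting with the $P_i$ keeps it in the $E_n$-picture — the image $T^{-1}QT$ lands in $K[\partial_1,\ldots,\partial_{n-1}]((\partial_n^{-1}))\cap E_n=K[\partial_1,\ldots,\partial_{n-1},\partial_n^{\pm 1}]$, and using again that $Q$, being in $\hat{D}_n$, has $\partial_n$-degree bounded below by $0$, we get $Q\in D_n$.

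**Main obstacle.** The delicate point is not the existence of the normal form — that is handed to us by Corollary \ref{C:corol3.1} — but controlling the \emph{coefficient ring} through the conjugations: one must be sure that conjugating a polynomial-coefficient differential operator $Q$ by the Schur operator $T\in E_n$ and back again does not surreptitiously introduce either formal power series coefficients (so that one genuinely lands in $D_n$ and not just in $\hat{D}_n$) or an infinite negative $\partial_n$-tail that only cancels against something outside $E_n$. The cleanest resolution is to keep everything inside $E_n$ from the start — all the auxiliary operators $f,S,S_1,T$ are in $E_n$ by the polynomial-coefficient versions of the preceding lemmas — so that $T^{-1}QT\in E_n$ is automatic, and then intersect with the normal-form ring; the remaining check, that $\hat{D}_n\cap E_n$ lands back in $D_n$ after conjugation, is a short degree-in-$\partial_n$ bookkeeping argument using the multiplicativity of $HT_n$.
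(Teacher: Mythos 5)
Your overall strategy coincides with the paper's: conjugate $C(B)$ to constant coefficients by operators that stay in $E_n$, then finish with $\hat{D}_n\cap E_n=D_n$. But there is a genuine gap at the normalization step. You assert that all the conjugating operators — including the one from lemma \ref{L:lemma7} that brings the $P_i$ (equivalently their roots $L_i$) to fully \emph{normalized} form — lie in $E_n$. That operator is $S=\exp(-\int p_{n,l_n-1}/l_n\,dx_n)$, and $p_{n,l_n-1}\in D_n^n$ is in general a genuine differential operator in $\partial_1,\ldots,\partial_{n-1}$ (e.g.\ $P_n=\partial_n^2+\partial_1\partial_n$ is monic formally quasi-elliptic with $p_{n,1}=\partial_1$); its exponential is then an infinite series in those derivations, hence lies in $\hat{D}_n^n$ but \emph{not} in $E_n$. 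Items \ref{c} of lemma \ref{L:lemma2.9} and \ref{3} of theorem \ref{T:lemma8} give no such guarantee for this step, and once a single conjugating factor leaves $E_n$ you can no longer conclude that $Q=TQ'T^{-1}$ is a product of elements of $E_n$. The paper's proof is structured precisely to avoid this: it stops at the \emph{almost} normalized form (conjugation by the invertible function $f$ only, which stays in $E_n$), applies theorem \ref{T:lemma8} to the almost normalized roots, and shows $SC(B)S^{-1}\subset K[\partial_1,\ldots,\partial_{n-1}]((L_{n,0}^{-1}))$ with $L_{n,0}=\partial_n+v_{n,0}$ in place of $\partial_n$; the commutator computation must then be redone against $L_{n,0}^{l_n}$ and $\partial_iL_{n,0}^{l_i}$ (using $[q_l,L_{n,0}]\in\hat{D}_n^n$), but the target ring is still inside $E_n$, which is all the final step needs.

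Two smaller defects. Your attempt to show that $Q'=T^{-1}QT$ has no negative $\partial_n$-tail is both unnecessary and unsound: unnecessary because $K[\partial_1,\ldots,\partial_{n-1}]((L_{n,0}^{-1}))$ (or $((\partial_n^{-1}))$) is already contained in $E_n$ whatever the tail, which is all you need to conclude $Q\in E_n$; unsound because multiplicativity of $HT_n$ controls only the top $\partial_n$-term, and conjugation by an element of $E_n$ with an infinite tail freely creates or destroys infinite negative tails, so there is no "lowest slice" obstruction. Likewise the identity $K[\partial_1,\ldots,\partial_{n-1}]((\partial_n^{-1}))\cap E_n=K[\partial_1,\ldots,\partial_{n-1},\partial_n^{\pm1}]$ is false, since the left-hand ring is already a subring of $E_n$. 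The correct closing step, which you do state in your first variant, is simply $Q\in\hat{D}_n\cap E_n=D_n^n[\partial_n]=D_n$.
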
 

\begin{proof}
The proof is an easy generalisation of \cite[Prop.3.1]{Zheglov2013}. 

By lemma \ref{L:lemma7} there exists an invertible function $f$ such that the operators $P_i':=f^{-1}P_if$ are almost normalised, where $P_i$ are quasi-elliptic operators from $B$. By lemma \ref{L:lemma2.9} and corollary \ref{C:normalised_roots} the corresponding operators $L_1, \ldots , L_n\in E_n$ are almost normalised as well.

Then the same arguments as in corollary \ref{C:corol3.1} combined with theorem \ref{T:lemma8}, item 3 show that there is an operator $S\in E_n$ such that $SBS^{-1}\subset K_y[\partial_1, \ldots ,\partial_{n-1}]((L_{n,0}^{-1}))$. Namely, If $[Q,P_i']=0$, then $[S Q S^{-1}, L_{n,0}^{l_n}]=0$ and $[SQS^{-1}, \partial_i L_{n,0}^{l_i}]=0$ for $i<n$. Put $Q':= S Q S^{-1}$, and assume $Q'=\sum_{j=N}^{\infty} q_jL_{n,0}^{-j}$ with $q_j\in \hat{D}_n^n$. Assume that $q_l$ is the first coefficient not belonging to the ring $K [\partial_1, \ldots ,\partial_{n-1}]$. Then (note that $[q_l,L_{n,0}]\in \hat{D}_n^n$)
$$
0=[Q',L_{n,0}^{l_n}]= -l_i [q_l,L_{n,0}]\partial_n^{l_i-1-l} + \mbox{terms of lower $\ord_n$-order,}
$$
where from $[q_l,L_{n,0}]=0$. Analogously,
$$
0=[Q',\partial_iL_{n,0}^{l_i}]= -\partial_i(q_l)\partial_n^{l_i-l}+ \mbox{terms of lower $\ord_n$-order,}
$$
where from $\partial_i (q_l)=0$ for all $i<n$. Hence, $q_l$ has constant coefficients. Since $\Ord (q_l)< \infty$, we must have $q_l\in K [\partial_1, \ldots ,\partial_{n-1}]$ -- a contradiction.

Then we have also 
$$SC(B)S^{-1}\subset K[\partial_1, \ldots ,\partial_{n-1}]((L_{n,0}^{-1}))\subset E_n.$$
Therefore, $C(B)\subset \hat{D}_n\cap E_n=D_n$.
\end{proof}

\subsection{Admissible operators}
\label{S:admissible_op}

Yet another corollary is the description of the so called admissible operators. They appear naturally as compositions of invertible operators $S, S^{-1}$ from lemma \ref{L:lemma7} and theorem \ref{T:lemma8} for different choices of formally quasi-elliptic operators $P_1, \ldots ,P_n$. We will need this description  to prove the classification theorems in section \ref{S:classification}.
In this section we give only preparatory result, the final result will be given in proposition \ref{P:admissible} below.

\begin{defin}{(cf. \cite[Def.3.3]{Zheglov2013})}
\label{D:admissible}
An operator $T\in \Pi_n$ is said to be 1-admissible if it is an invertible operator, $\ord_n(T)=0$,  such that  \\
$T^{-1}\partial_iT\in K_y[[\partial_1, \ldots , \partial_{n-1}]]((\partial_n^{-1}))\cap \Pi_n$ for $1\le i\le n$. The set of all admissible operators in $\hat{E}_n$ is denoted by $\Adm_1 (\hat{E}_n)$. 
\end{defin}

\begin{lem}
\label{L:conjugation_preserves_order}
Let $T\in \Pi_n$ be a 1-admissible  operator.

Then it holds
$$
\ord_n (T^{-1}qT)= \ord_n (q), \quad \Ord (T^{-1}qT)= \Ord (q) \quad \forall q\in  K_y[[\partial_1, \ldots , \partial_{n-1}]]((\partial_n^{-1}))\cap \Pi_n.
$$

In particular, the operators $T^{-1}\partial_iT$ satisfy the condition $A_1$. 
\end{lem}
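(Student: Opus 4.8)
The plan is to exploit the fact that $T \in \Pi_n$ is invertible with $\ord_n(T) = 0$, hence also $\ord_n(T^{-1}) = 0$ (since $HT_n$ is multiplicative and $HT_n(T)\cdot HT_n(T^{-1}) = HT_n(1) = 1$, so $HT_n(T)$ is a unit, forcing $\ord_n(T^{-1}) = -\ord_n(T) = 0$). Then for any $q$ in $R := K_y[[\partial_1,\ldots,\partial_{n-1}]]((\partial_n^{-1})) \cap \Pi_n$, the product rule for $\ord_n$ gives $\ord_n(T^{-1}qT) \le \ord_n(T^{-1}) + \ord_n(q) + \ord_n(T) = \ord_n(q)$. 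The reverse inequality follows by applying the same estimate to $q = T(T^{-1}qT)T^{-1}$. The only subtle point is that one must know $T^{-1}qT$ again lies in $\Pi_n$ so that $\ord_n$ and $\Ord$ are both finite and the pseudo-valuation properties apply; this is immediate since $T, T^{-1}, q \in \Pi_n$ and $\Pi_n$ is a ring by Definition \ref{D:nonsymmetric}. So the $\ord_n$ statement is essentially formal from the pseudo-valuation axioms.

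The $\Ord$ statement requires slightly more. First, $\Ord(T^{-1}qT) \le \Ord(T^{-1}) + \Ord(q) + \Ord(T) < \infty$ by the properties in Remark \ref{R:ord_properties}, and symmetrically $\Ord(q) \le \Ord(T) + \Ord(T^{-1}qT) + \Ord(T^{-1})$. To conclude equality I would argue that $\Ord(T) = \Ord(T^{-1}) = 0$: indeed $T$ is $1$-admissible, and for $i = n$ we have $T^{-1}\partial_n T \in R$ with $\ord_n(T^{-1}\partial_n T) = \ord_n(\partial_n) = 1$ by the first part, so its highest term lies in $K_y[[\partial_1,\ldots,\partial_{n-1}]]$; combined with the requirement $\ord_n(T) = 0$ and the structure $T = HT_n(T)\partial_n^{\,0} + (\text{lower})$ one shows $HT_n(T) \in K_y^\ast$, whence $\sigma(T)$ is a nonzero constant and $\Ord(T) = 0$, similarly $\Ord(T^{-1}) = 0$. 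Plugging $\Ord(T) = \Ord(T^{-1}) = 0$ into both inequalities above yields $\Ord(T^{-1}qT) = \Ord(q)$.

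Finally, the "in particular" clause: the operators $\partial_i$ satisfy $\Ord(\partial_i) = 1 = |\ord_\Gamma(\partial_i)|$, so they satisfy condition $A_1$; we must check $T^{-1}\partial_i T$ does too. By the main statement, $\Ord(T^{-1}\partial_i T) = \Ord(\partial_i) = 1$. On the other hand $T^{-1}\partial_i T \in R$, and one computes its $\ord_\Gamma$: its $\ord_n$-order equals $\ord_n(\partial_i)$, which is $0$ for $i<n$ and $1$ for $i=n$, and since its coefficients lie in $K_y[[\partial_1,\ldots,\partial_{n-1}]]$ the remaining components of $\ord_\Gamma$ are read off from the top $\partial_n$-coefficient, giving $|\ord_\Gamma(T^{-1}\partial_i T)| = 1$ in each case. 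Hence $\Ord = |\ord_\Gamma|$, i.e. condition $A_1$ holds. The main obstacle is the bookkeeping in the paragraph showing $\Ord(T) = \Ord(T^{-1}) = 0$ — one has to be careful that $1$-admissibility really pins down the highest symbol of $T$ to be a unit and not merely controls orders — but once that is in hand the rest is a direct application of the pseudo-valuation inequalities from Remark \ref{R:ord_properties}.
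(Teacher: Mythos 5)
Your proposal breaks down at its very first step, and the failure propagates. You deduce $\ord_n(T^{-1})=0$ from "$HT_n(T)\cdot HT_n(T^{-1})=HT_n(1)=1$", but $HT_n$ is multiplicative only \emph{provided the product of the highest terms is nonzero}, and the coefficient ring $\hat{D}_n^n$ has zero divisors (e.g.\ the delta operators $\delta_i$), so from $TT^{-1}=1$ you cannot conclude that $HT_n(T)$ is a unit or that $\ord_n(T^{-1})=0$; a priori one could have $HT_n(T)\cdot HT_n(T^{-1})=0$ and $\ord_n(T^{-1})>0$. The paper deliberately avoids this: it first gets the one-sided bound $\ord_n(T^{-1}qT)\le\ord_n(q)$ using that $HT_n(T^{-1}qT)$ has constant coefficients (hence is not a right zero divisor, so the order of $T\cdot(T^{-1}qT)$ is exactly additive), and then rules out strict inequality by looking at $\ord_n(T^{-1}q^lT)=ls$ for $l\ll 0$ (resp.\ $l\gg 0$ when $\ord_n(q)=0$), which uses only the \emph{finiteness} of $\ord_n(T^{-1})$.

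The $\Ord$ part of your argument is worse, because it rests on $\Ord(T)=\Ord(T^{-1})=0$. Definition \ref{D:admissible} only gives $T\in\Pi_n$, i.e.\ $\Ord(T)<\infty$; the equality $\Ord(T)=\Ord(T^{-1})=0$ is a nontrivial \emph{consequence} established only in Proposition \ref{P:admissible}, whose proof uses the present lemma, so invoking it here is circular. Moreover your justification for it — that $1$-admissibility pins down $HT_n(T)\in K_y^\ast$ — is false: by Proposition \ref{P:admissible} every $1$-admissible $T$ factors as $S_2S_1$ with $S_2$ monic and $S_1\in\hat{D}_n^n$ an arbitrary invertible operator of order zero, so $HT_n(T)=S_1$ is in general a genuinely non-constant operator. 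The paper instead proves $\Ord(T^{-1}\partial_iT)\le 1$ by the same exact-additivity trick applied to $T\cdot q_i'=\partial_i\cdot T$ (needing only $\Ord(T)<\infty$), and then obtains the reverse inequality $\Ord(q_i')\ge 1$ by a separate contradiction argument: if $\Ord(q_i')\le 0$ one writes $q_i'=c_i+a_i+w_i$ with $c_i\in K$, $a_i\in\idm\cap K_y$, $\ord_n(w_i)<0$, and derives $\partial_i-c_i=Tw_iT^{-1}+Ta_iT^{-1}$, contradicting $\ord_n(\partial_i-c_i)=0$ and $\partial_i-c_i\notin\idm$. You would need to supply arguments of this kind; the pseudo-valuation inequalities alone do not suffice here.
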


\begin{proof}
Let $q'=T^{-1}q T$. Assume, say, $s= \ord_n(q')\neq \ord_n(q)$. Since $Tq'=qT$ and ${HT}_n(q')$, being an operator with constant coefficients, is not a right zero divisor, we have, by section \ref{S:algebra_D_n}, 
\begin{equation}
\label{E:inequality}
\ord_n(Tq')=\ord_n(T)+\ord_n(q')\le \ord_n(q)+\ord_n(T),
\end{equation}
i.e. $s=\ord_n(q')\le \ord_n(q)$. If we take an invertible $q$ in $\hat{E}_n$ (say $q=\partial_n$), 
then $\ord_n(T^{-1}q^l T)=\ord_n((T^{-1}q T)^l)=ls$ for any $l\in \dz$, because $T^{-1}q T$ has constant coefficients. On the other hand,  we have by section \ref{S:algebra_D_n}
$$
sl=\ord_n(T^{-1}q^l T)\le \ord_n(T^{-1})+\ord_n(q^l)+\ord_n(T)=\ord_n(T^{-1})+l\ord_n(q),
$$
which is impossible if $l\ll 0$ and $s\neq \ord_n(q)\neq 0$, since $T^{-1}\in \hat{E}_n$ has finite $\ord_n$-order. 
If $\ord_n (q)=0$ and $s<0$, then by the same arguments,
$$
0=\ord_n(q^l)=\ord_n(T(q')^lT^{-1})\le ls+\ord_n(T^{-1}),
$$ 
which is impossible if $l\gg 0$.  Since any $q$ is a product of an invertible element ($\partial_n^k$) and an element of $\ord_n$-order zero, we are done. 

Now let's show that $\Ord (T^{-1}\partial_iT)=1$ for $1\le i\le n$. Let $q_i'=T^{-1}\partial_iT$. By the same arguments as above (see \eqref{E:inequality}) we have $\Ord (q_i')\le \Ord (\partial_i)=1$ (note that we use only that $T$ has finite $\Ord$-order, since $T\in \Pi_n$). Since $\ord_n(q_n')=1$ we have $q_n'=c_n\partial_n + l.o.t.$, where l.o.t. mean terms with $\ord_n$-order less than one. Note that $c_n=1$ (it is easy to see directly by comparing the coefficients  of operators $\partial_n T$ and $T(c_n\partial_n + l.o.t.)$). So, we have $\Ord (q_n')\ge 1$, hence $\Ord (q_n')=1$.  

 If $\Ord (q_i')\le 0$, then ${HT}_n(q_i')\in K \quad \mod \quad \idm\cap K_y$. If ${HT}_n(q_i') = c_i+a_i$, where $c_i\in K$ and $a_i\in \idm\cap K_y$, then $q_i'=c_i+a_i+w_i$, where $\ord_n(w_i)<0$. Then $w_i+a_i=T^{-1}(\partial_i-c_i)T$, but in this case $\ord_n(Tw_iT^{-1})< 0$, $Ta_iT^{-1}\in \idm\cap K_y$ and $\partial_i-c_i= Tw_iT^{-1} + Ta_iT^{-1}$ -- a contradiction with $\ord_n(\partial_i-c_i)=0$, $\partial_i-c_i\notin \idm$. Thus, $\Ord (q_i')= 1$. 
 
 Clearly, then we should have $\Ord (T^{-1}qT)= \Ord (q)$ for all $q$.
\end{proof}

\subsection{Algebraic properties of quasi-elliptic rings}
\label{S:algebraic_properties}
 
In this section we give another application of the Schur theory -- a description of basic algebraic properties of quasi elliptic subrings. 

\begin{prop}
\label{P:quasi-ell-properties}
Let $B$ be a quasi elliptic commutative subring in $\hat{D}_n^{sym}$. Then
\begin{enumerate}
\item
$B$ and $gr(B)$ are integral, where $\gr$ denotes the associated graded ring with respect to the filtration defined by the function $\Ord$, and the function $-\Ord$ induces a discrete valuation of rank one on $B$ and on its field of fractions $\Quot (B)$ as well as on $gr(B)$ embedded into $\hat{D}_n^{sym}$, cf. remark \ref{R:Schur_oper};
\item
if $K_y=K$, then the $\Gamma$-order is defined on all elements of $B$, in particular, the function $-\ord_{\Gamma}$ is a discrete valuation of rank $n$;
\item
the natural map 
$$
\Phi: gr(\hat{D}_n^{sym}) \rightarrow gr(\hat{D}_n^{sym})/(x_1, \ldots , x_n) gr(\hat{D}_n^{sym})\simeq K_y[\xi_1,\ldots ,\xi_n]
$$
induces an embedding of vector spaces on $gr(B)$;
\item
the spectral module $F$ is torsion free;
\item
if $K_y=K$ then for any $P\in B$ holds: $\Ord (P)=|\ord_{\Gamma}(\sigma(P))|$;
\item
$\trdeg_{K'}(\Quot (B))=n$, where $K'=\Quot (K_y)$, the field $\Quot (B)$ is finitely generated over $K'$ and the localisation $\Quot (B)\cdot F$ is a finitely generated $\Quot (B)$-module. 
\end{enumerate}
\end{prop}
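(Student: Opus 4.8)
The plan is to conjugate $B$ by a Schur operator into the commutative ring of constant-coefficient operators, prove all six statements there, and transport them back. Fix monic formally quasi-elliptic operators $P_1,\ldots,P_n\in B$ as in Definition~\ref{D:quasi-elliptic}. By Lemma~\ref{L:lemma7}, Lemma~\ref{L:lemma2.9}, Corollary~\ref{C:normalised_roots} and Theorem~\ref{T:lemma8} there is an invertible operator $T$, a product of an invertible function, an operator in $\hat D_n^n$ with coefficients independent of $x_1,\ldots,x_{n-1}$, and an operator $1+S^-$ with $S^-\in\hat D_n^n[[\partial_n^{-1}]]\partial_n^{-1}$, all of $\Ord$ zero, such that (cf. the proof of Corollary~\ref{C:corol3.1}) $TP_iT^{-1}=\partial_i\partial_n^{l_i}$ for $i<n$, $TP_nT^{-1}=\partial_n^{l_n}$, and $B'':=TBT^{-1}$ lies in the model ring $\mathcal R:=K_y[[\partial_1,\ldots,\partial_{n-1}]]((\partial_n^{-1}))\cap\Pi_n$, a commutative subring of $\Pi_n$ all of whose elements are free of $x_1,\ldots,x_n$. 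The first step is a bookkeeping lemma: since $\Ord(T^{\pm1})=0$, conjugation by $T$ preserves $\Ord$ (Remark~\ref{R:ord_properties}); it carries $\sigma(Q)$ to $\sigma(T)\sigma(Q)\sigma(T)^{-1}$ (Remark~\ref{R:Schur_oper}); and since the factor $1+S^-$ has $HT_n$ equal to $1$, while the other factors either commute with $\partial_1,\ldots,\partial_{n-1}$ or are invertible functions, it transforms $HT_n$ and all nested highest coefficients by conjugation and therefore preserves $\ord_\Gamma$ together with the property of $\ord_\Gamma$ being defined (compare Lemma~\ref{L:lemma2.7}, Corollary~\ref{C:corol2.1}). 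Hence it suffices to establish each item for $B''\subseteq\mathcal R$ and pull it back.

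Item~1. Since $K_y[[\partial_1,\ldots,\partial_{n-1}]]$ is a domain, the Laurent series ring $K_y[[\partial_1,\ldots,\partial_{n-1}]]((\partial_n^{-1}))$ is a domain (it embeds into the field $\Quot(K_y[[\partial_1,\ldots,\partial_{n-1}]])((\partial_n^{-1}))$), and it is commutative; hence $\mathcal R$, $B''$ and $B$ are integral domains. For $P,Q\in B''$ one has $\sigma(P),\sigma(Q)$ in the same domain, so $\sigma(P)\sigma(Q)\ne0$, whence $\Ord(PQ)=\Ord(P)+\Ord(Q)$ and $\sigma(PQ)=\sigma(P)\sigma(Q)$ by Remark~\ref{R:ord_properties}; transporting back, $-\Ord$ is additive on $B$ and hence a discrete rank one valuation on $B$, on $\Quot(B)$ and on $\gr(B)$ (which $\sigma$ embeds into $\hat D_n^{sym}$), and $\gr(B)\cong\gr(B'')$ is a domain. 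Item~3. Elements of $\sigma(B'')\subseteq K_y[[\partial_1,\ldots,\partial_{n-1}]]((\partial_n^{-1}))$ contain no $x_1,\ldots,x_n$; reducing the isomorphism $\gr(B)\cong\gr(B'')$ modulo $(x_1,\ldots,x_n)$ inside $\gr(\Pi_n)$, where $\sigma(T)$ stays invertible, one sees that no nonzero element of $\gr(B)$ lies in $(x_1,\ldots,x_n)\gr(\hat D_n^{sym})$, i.e. $\Phi$ restricts on $\gr(B)$ to an embedding into $K_y[\xi_1,\ldots,\xi_n]$.

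Items~2 and~5 (case $K_y=K$). Here $\mathcal R=K[\partial_1,\ldots,\partial_{n-1}]((\partial_n^{-1}))\cap\Pi_n$ by Corollary~\ref{C:corol3.1}: every element is a polynomial in $\partial_1,\ldots,\partial_{n-1}$ with a Laurent tail in $\partial_n$, so $\ord_\Gamma$ is defined on all of $\mathcal R$; transporting back (the nested highest coefficients of $P\in B$ are conjugates of those of $TPT^{-1}$, hence constant-coefficient polynomials), $\ord_\Gamma$ is defined on all of $B$. On the domain $\mathcal R$, which is graded compatibly with the antilexicographic order, the $\Gamma$-leading term is multiplicative, so $-\ord_\Gamma$ is a valuation; its value group contains $\ord_\Gamma(P_1),\ldots,\ord_\Gamma(P_n)$, an integer $n\times n$ matrix of determinant $\pm l_n\ne0$, hence is of rank $n$, and it extends to $\Quot(B)$. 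For item~5, $\sigma(TPT^{-1})$ is $\Ord$-homogeneous with constant coefficients, so each monomial $\underline\partial^{\underline k}$ in it satisfies $|\underline k|=\Ord(P)$, whence $|\ord_\Gamma(\sigma(TPT^{-1}))|=\Ord(P)$, and since $T$-conjugation preserves $\ord_\Gamma$ of homogeneous symbols (by the bookkeeping lemma, or Lemma~\ref{L:lemma2.7} applied to the symbols $\sigma(T)^{\pm1}$), $|\ord_\Gamma(\sigma(P))|=\Ord(P)$.

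Items~4 and~6 concern the spectral module $F$. When $K_y=K$ finiteness of $\Ord$ forces $F=K[\partial_1,\ldots,\partial_n]$, with $\gr F=F$ graded by total degree; this is a torsion free module over the domain $\gr(B)\subseteq K[\xi_1,\ldots,\xi_n]$ (a polynomial ring is torsion free over every subring), so $F$ is torsion free over $B$ by the leading term argument (using $\sigma(bf)=\sigma(b)\sigma(f)\ne0$ from item~1); in the relative case one reduces to checking that $\gr F$ embeds in a torsion free $K_y[\xi_1,\ldots,\xi_n]$-module. For item~6, with $K'=\Quot(K_y)$: item~3 yields $\Quot(\gr B)\hookrightarrow\Quot(K_y[\xi_1,\ldots,\xi_n])$, so $\trdeg_{K'}\Quot(\gr B)\le n$, and a standard transcendence-degree computation for the $\Ord$-filtration (valid because $\gr B$ is a domain) gives $\trdeg_{K'}\Quot(B)=\trdeg_{K'}\Quot(\gr B)\le n$; conversely $P_1,\ldots,P_n$ are algebraically independent over $K'$ because their $\ord_\Gamma$'s (equivalently their highest symbols) are, so $\trdeg_{K'}\Quot(B)=n$; finally $\Quot(\gr B)\cdot\gr F$ is generated over $\Quot(\gr B)$ by the finitely many generators $\xi_1,\ldots,\xi_n$ and is algebraic over it, hence module finite, and lifting a finite homogeneous generating set to $F$ and running a filtered Nakayama descent shows $\Quot(B)\cdot F$ is finitely generated over $\Quot(B)$; in particular $\Quot(B)$ is finite over $K'(P_1,\ldots,P_n)$ and therefore finitely generated over $K'$. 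The main obstacle is the bookkeeping lemma of the first paragraph — tracking $\ord_\Gamma$ and $HT_n$ through the explicit Schur operators of Lemma~\ref{L:lemma7} and Theorem~\ref{T:lemma8}, where in the absence of condition $A_1$ one must exploit that the conjugated operators in $B''$ have constant coefficients — together with the transcendence-degree and filtered-descent arguments underlying items~4 and~6.
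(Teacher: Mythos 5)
Your overall strategy for items 1--3 and 5 (conjugate $B$ into constant coefficients by the Schur operators of Lemma \ref{L:lemma7}, Lemma \ref{L:lemma2.9} and Theorem \ref{T:lemma8}, then transport) is the paper's, and those items are essentially right. One caution on item 3: $(x_1,\ldots,x_n)\hat D_n^{sym}$ is only a one-sided ideal, so ``reducing the isomorphism $\gr(B)\cong\gr(B'')$ modulo $(x_1,\ldots,x_n)$'' does not commute with conjugation as written; the paper instead computes $HT_n(\sigma(Q))$ explicitly, using that the Schur operator factors as $fS_1'S_2$ with $\partial_i(S_1')=0$ for $i<n$ and $HT_n(\sigma(S_2))=1$, to conclude that $HT_n(\sigma(Q))$ has constant coefficients and hence $\Phi(\sigma(Q))\neq 0$. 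Your ``bookkeeping lemma'' contains the right ingredients, but this computation is the step that actually does the work.

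The genuine gap is in items 4 and 6, where you treat $\gr F\cong K_y[\xi_1,\ldots,\xi_n]$ as a $\gr(B)$-module via $\Phi$ followed by polynomial multiplication. Item 3 gives only a \emph{vector-space} embedding, not a ring map, and the induced action of $\sigma(b)$ on $\sigma(f)\in\gr F$ is $\pi(\sigma(f)\sigma(b))$, which need not equal $\sigma(f)\cdot\Phi(\sigma(b))$: for the homogeneous operator $b_0=\partial_1-x_1\partial_1^2$ one has $\Phi(b_0)=\xi_1\neq 0$, yet $\pi(\partial_1\cdot b_0)=\pi(-x_1\partial_1^3)=0$. Nothing in your argument excludes such symbols, so ``a polynomial ring is torsion free over every subring'' does not apply, and likewise the module-finiteness of $\gr F$ over $\Quot(\gr B)$ cannot be read off from $K[\xi_1,\ldots,\xi_n]$ being algebraic over $\Quot(\gr B)$. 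The paper proves item 4 directly: $fb=fS^{-1}(SbS^{-1})S$ with $SbS^{-1}$ constant-coefficient, and right multiplication by each factor preserves non-membership in the left ideal $(x_1,\ldots,x_n)\hat E_n$. For item 6 it uses no graded module structure at all, but a growth estimate on $\triangle(N)=\{a\in F\mid \Ord(a)<N\}$: were $\Quot(B)\cdot F$ infinitely generated, one could pick $M$ elements of $F$ independent over $K'[P_1,\ldots,P_n]$ with $Me_n>1$ and obtain $M\bigl(e_n(N-(M-1)\Ord(a_1))^n+\cdots\bigr)>N^n\ge\dim_{K'}\triangle(N)$ for $N\gg 0$, a contradiction. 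You should either adopt these arguments or prove that the induced $\gr(B)$-action on $\gr F$ is injective (which is exactly the content of the paper's $HT_n$ computation), rather than identify it with polynomial multiplication.
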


\begin{rem}
\label{R:finitely_gen}
Unlike the case $n=1$ quasi-elliptic rings are not necessarily finitely generated. The most simple example is the subring $K[1,\partial_1^i\partial_2^j, i\ge 0, j>0]\subset \hat{D}_2$. More interesting examples can be found in \cite{KasmanPreviato}. 
\end{rem}

\begin{proof}
1. The first item follows from corollary \ref{C:corol3.1} and remark \ref{R:Schur_oper}.

2. By lemma \ref{L:lemma7} there exists an invertible function $f$ such that the operators $P_i':=f^{-1}P_if$ are almost normalised, where $P_i$ are quasi-elliptic operators from $B$. By lemma \ref{L:lemma2.9} and corollary \ref{C:normalised_roots} the corresponding operators $L_1, \ldots , L_n\in E_n$ are almost normalised as well. Note that the conjugation by function preserves the $\Gamma$-order.  

Then the same arguments as in corollary \ref{C:purity} combined with theorem \ref{T:lemma8}, item 3 show that there is an operator $S\in \hat{E}_n$ such that $SBS^{-1}\subset K[\partial_1, \ldots ,\partial_{n-1}]((L_{n,0}^{-1}))$. Since $\ord_{\Gamma}(S)= (\underline{0})$ and the $\Gamma$-order is defined on all elements of the ring $K[\partial_1, .. ,\partial_{n-1}]((L_{n,0}^{-1}))$, we obtain the claim of this item.

3-5. The proof of items 3-5 is an easy generalisation of \cite[Lemma 2]{Zheglov2018}.

By lemma \ref{L:lemma7} in combination with theorem \ref{T:lemma8} and corollary \ref{C:corol3.1} there exists an invertible operator $S$ of the form $S=S_1S_2$, where $S_1\in \hat{D}_n^n$ is invertible and $S_2\in \hat{E}_n$  satisfies the condition $A_1$, such that $SBS^{-1}\in K_y[\partial_1, \ldots ,\partial_{n-1}]((\partial_n^{-1}))$. Moreover, $S_1=fS_1'$, where $f$ is an invertible function and $\partial_i(S_1')=0$ for $i<n$.

Since $S$ is invertible, for any non-zero element $Q\in B$ we have $\Ord (SQS^{-1})=\Ord (Q)$. But the symbol of $SQS^{-1}$ has constant coefficients, therefore, the image $\sigma'(Q)=\Phi (\sigma (Q))\in K_y[\xi_1,\ldots ,\xi_n]$ is not zero.
Indeed, on the one hand side we have 
$$HT_n(\sigma (S)\cdot \sigma (Q)\cdot \sigma (S)^{-1})=HT_n(\sigma (S))\cdot HT_n(\sigma (Q))\cdot HT_n(\sigma (S)^{-1}).
$$ 
On the other hand, 
$HT_n(\sigma (S)\cdot \sigma (Q)\cdot \sigma (S)^{-1})=C_1\in K_y[\partial_1, \ldots , \partial_{n-1}]$, $\sigma (S)=\sigma (S_1)\cdot \sigma (S_2)$, and $HT_n(\sigma (S))=\sigma (S_1)$. Thus, $HT_n(\sigma (Q))=\sigma (S_1)^{-1}\cdot C_1  \cdot \sigma (S_1)=C_1$, because $\sigma (S_1)=\sigma (f)\sigma (S_1')=c \sigma (S_1')$, where $c\in K_y$, and  $\partial_i(S_1')=0$ for $i<n$ (i.e. $C_1$ commutes with $\sigma (S_1)$). This means that $\sigma '(Q)\neq 0$ in $K_y[\xi_1,\ldots ,\xi_n]$. Thus, $gr(B)$ is embedded in $K_y[\xi_1,\ldots ,\xi_n]$ as a {\it vector space}. 

The module $F$ is a torsion free $B$-module, because for any non-zero $f\in F$ we have $f S\notin (x_1, \ldots , x_n) \hat{E}_n$ and for any non-zero $b\in B$ 
$$
fb=fS^{-1}(SbS^{-1})S\notin  (x_1, \ldots , x_n) \hat{E}_n.
$$

If $K_y=K$, then from the arguments above it follows $\Ord (\sigma (Q))=|\ord_{\Gamma}(\sigma (Q))|$ and therefore $\Ord (Q)=|\ord_{\Gamma}(\sigma (Q))|$ for any $Q\in B$.

6. The proof of this item  is an easy generalisation of  \cite[Lemma 2]{OsipovZheglov}. 

Consider the subspace $\bigtriangleup (N)= \{a\in F | \Ord (a)<  N\}$, where $N \in \dn$. Note that this subspace has a finite dimension over the field $K'$ and this dimension is not greater than $N^n$.

Suppose that the localisation $\Quot (B)\cdot F$ is infinitely generated. This, in particularly, means that there are infinitely many linearly independent over  $K'[P_1, \ldots ,P_n]$ elements in $F$, where $P_1, \ldots ,P_n\in B$ are formally quasi-elliptic operators. Obviously, $P_1, \ldots ,P_n$ are algebraically independent (cf. theorem \ref{T:lemma8}). 

We can estimate the dimension of a subspace generated over $K'$ by
powers of $P_1, \ldots ,P_n$ which lie in the subspace $\bigtriangleup (N)$ for some $N$.  It's clear that this dimension looks like a polynomial of degree $n$ in $N$ for $N\gg 0$, say 
$$
\dim_{K'} (\bigtriangleup (N))= e_n N^n+e_{n-1}N^{n-1}+\ldots +e_0
$$
for some numbers $e_i$. 

Consider $M$ linearly independent over  $K'[P_1, \ldots ,P_n]$ elements $a_1,\ldots , a_M\in F$, where $M$
satisfies the condition $Me_n > 1$. Without loss of generality we can assume that
$\Ord (a_1)\ge \Ord (a_i)$ for all $i$. Since the dimension of a subspace 
generated over $K'$ by powers of $P_1, \ldots ,P_n$ which lie in the subspace $\bigtriangleup (N-M\Ord (a_1))$ is $e_n(N-M\Ord (a_1))^n+\ldots +e_0$ for big $N$, we obtain that the dimension of a 
subspace generated over $K'$ by elements $a_1,\ldots , a_M$ multiplied to powers of $P_1, \ldots ,P_n$ which lie in the subspace $\bigtriangleup (N-M\Ord (a_1))$ is not less than $M(e_n(N-(M-1)\Ord (a_1))^n+\ldots +e_0)$. 
The last subspace is inside $\bigtriangleup (N)$. On the other hand, since $Me_n>1$, we have for $N$ sufficiently large 
$M(e_n(N-(M-1)\Ord (a_1))^n+\ldots +e_0)> N^n> \dim_{K'} \bigtriangleup (N)$, a contradiction. \\

\end{proof}

\section{Sato theory for $\hat{D}_n^{sym}$}
\label{S:Sato_theory}

Another important technical tool in investigating commutative subrings of $\hat{D}_n^{sym}$ is a higher dimensional analogue of the Sato theory in dimension one. In this section we collect technical statements of this theory for the ring  $\hat{D}_n^{sym}$. They improve corresponding results appeared in \cite[\S 5]{BurbanZheglov2017}. A review of the Schur theory for the ring $D_1$ see in \cite{Mul} and in \cite{Quandt} for the  relative case. 

\subsection{Regular operators and units}
\label{S:regular_op}

Consider the ring 
$$V_n:= K_y\{\{\partial_1, \ldots , \partial_{n-1}\}\}((\partial_n^{-1})),$$ 
where $K_y\{\{\partial_1, \ldots , \partial_{n-1}\}\}=K_y[[\partial_1, \ldots , \partial_{n-1}]]\cap \hat{D}_n^n$. 
It has a structure of a right $\hat{E}_n$-module via the isomorphism of vector spaces $V_n\simeq \hat{E}_n/(x_1, \ldots ,x_n) \hat{E}_n$. We will extend all definitions relevant to the ring $\hat{E}_n$ (e.g. the notions of orders, condition $A_1$, etc.) on the ring $V_n$.

 Using the anti-lexicographical order on the group $\dz\oplus\ldots \oplus \dz$ we define the {\it lowest term} $LT(a)$ of any series $a$ from $V_n$ as follows: if $a=\sum_{\underline{i}\le \ord_{\Gamma}(a)}a_{\underline{i}}\underline{\partial}^{\underline{i}}$, then 
$$
LT(a) = \bar{a}_{\underline{\ord_{\Gamma}(a)}}\underline{\partial}^{\underline{\ord_{\Gamma}(a)}}\in K [\partial_1, \ldots , \partial_{n-1}]((\partial_n^{-1})),
$$ 
where $\bar{a}_{\underline{\ord_{\Gamma}(a)}}\in K$ means the residue of the element $a_{\underline{\ord_{\Gamma}(a)}}$ from $K_y$ modulo the ideal $\idm$ (obviously, $\ord_{\Gamma}$  is well defined on all elements of $V_n$). 

\begin{defin}
\label{D:defin2}
The support of a $K_y$-subspace $W\subset V_n$ is the $K_y$-subspace $\Sup (W)$ in the space $V_n$ generated by $\LT (a)$ for all $a\in W$. 
\end{defin}

\begin{defin}
\label{D:1space}
A subspace $W\subset  V_n\cap \Pi_n$ is called 1-space, if there is a basis $\{w_k\}$ in $W$ such that $\bar{w}_k:=w_k|_{y=0}$ form a basis of the space $\Sup (W)$.
\end{defin}

\begin{defin} Let $W \subseteq V_n$ be a $1$-subspace.
\begin{enumerate}
\item For any $k \in \dz$, we put:
$
W_k := \bigl\{w \in W \, \big| \, \Ord (w) \le k \bigr\}.
$
\item $H_W(k):= \dim_{K}\bigl(\bar{W}_k\bigr)$ is the \emph{Hilbert function} of $W$.
\end{enumerate}
\end{defin}

\begin{defin}
\label{D:regular}
An element $P \in \hat{\Pi}_n$ is called \emph{regular} if the $K$--linear map $F \xrightarrow{\pi (-\circ \sigma(P))} V_n$ is injective, where $\circ$ means the action on the module $\hat{\Pi}_n$ and 
$$\pi :\hat{\Pi}_n \rightarrow \hat{\Pi}_n/ (x_1, \ldots , x_n) \hat{\Pi}_n =V_n\cap \Pi_n$$ 
is the projection map. In particular, $P$ is regular if and only if its symbol $\sigma(P)$ is regular.

We'll denote by $f\diamond P:= \pi (f\circ P)$, $f\in F$, $P\in \hat{\Pi}_n$ the map above.  
\end{defin}

\begin{lem}{(cf. \cite[L.5.9]{BurbanZheglov2017})}
\label{L:regularOp}
Let $P \in \hat{\Pi}_n$. Then the following results are true.
\begin{enumerate}
\item The element $P$ is regular if and only if for any $m \in \dn_0$, the elements of the set
$\bigl\{\underline{\partial}^{\underline{k}} \diamond \sigma(P)\,  \big| \, \underline{k} \in \dn_0: |\underline{k}| = m\bigr\} \subset V_n$
are linearly independent.
\item Assume that $P$ is regular. Then $P$ is not a torsion element, i.e.~the equation $Q \circ P = 0$ for $Q\in \hat{D}_n^{sym}$ implies that $Q = 0$.
\end{enumerate}
\end{lem}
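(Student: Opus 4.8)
The plan is to deduce both assertions from the homogeneous (graded) structure of $\hat{D}_n^{sym}$ and of the bimodule $\hat{\Pi}_n$. The formal facts I will use, all already available, are: a product of homogeneous elements is homogeneous with additive degree (an immediate consequence of the properties of $\sigma$ in remark \ref{R:ord_properties}, extended to $\hat{\Pi}_n$ by lemma \ref{L:module_structure}); the submodule $(x_1,\ldots,x_n)\hat{\Pi}_n$ is graded (the $x_i$ are homogeneous), so the projection $\pi$, and hence the map $f\mapsto\pi(f\circ\sigma(P))$, are graded, shifting the degree by a fixed amount; and both $F$ and $V_n$ are the (completed) direct products of their homogeneous components.

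For item 1: since $\sigma(P)$ is homogeneous, $\pi(-\circ\sigma(P))$ carries the degree-$d$ homogeneous component $F_d$ of $F$ into a single homogeneous component of $V_n$, and distinct $d$ land in distinct components. Hence an element of $F$ lies in the kernel of $\pi(-\circ\sigma(P))$ if and only if each of its homogeneous components does, so $P$ is regular if and only if $\pi(-\circ\sigma(P))$ is injective on every $F_d$. When $K_y=K$ one has $F\simeq K[\partial_1,\ldots,\partial_n]$, so $F_d\neq 0$ only for $d\le 0$ and $F_{-m}=\bigoplus_{|\underline{k}|=m}K\,\underline{\partial}^{\underline{k}}$; injectivity of $\pi(-\circ\sigma(P))$ on $F_{-m}$ is precisely linear independence of $\{\underline{\partial}^{\underline{k}}\diamond\sigma(P)\mid|\underline{k}|=m\}$, which is item 1. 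The relative case is the same over $K_y$, using $\underline{y}^{\underline{j}}\underline{\partial}^{\underline{k}}\diamond\sigma(P)=\underline{y}^{\underline{j}}(\underline{\partial}^{\underline{k}}\diamond\sigma(P))$.

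For item 2: let $0\neq Q\in\hat{D}_n^{sym}$ with $Q\circ P=0$. Passing to highest symbols, if $\sigma(Q)\circ\sigma(P)$ were nonzero it would equal $\sigma(Q\circ P)\neq 0$, which is impossible; hence $\sigma(Q)\circ\sigma(P)=0$. Put $R:=\sigma(Q)\neq 0$ and $A:=\sigma(P)$, both homogeneous with $A$ regular, and expand $R$ in powers of $x_1,\ldots,x_n$ alone: $R=\sum_{\underline{i}\in\dn_0^n}\underline{x}^{\underline{i}}R^{[\underline{i}]}$, where $R^{[\underline{i}]}\in K_y[[\partial_1,\ldots,\partial_n]]$ and $\Ord(R^{[\underline{i}]})\le\Ord(R)+|\underline{i}|<\infty$, so $R^{[\underline{i}]}\in F$. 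Choose $\underline{i}_0$ minimal --- first by $|\underline{i}_0|$, then lexicographically --- among the $\underline{i}$ with $R^{[\underline{i}]}\neq 0$; then $R^{[\underline{i}]}=0$ whenever $\underline{i}\le\underline{i}_0$ and $\underline{i}\neq\underline{i}_0$. Expanding $R\circ A=\sum_{\underline{i}}\underline{x}^{\underline{i}}(R^{[\underline{i}]}\circ A)$, and each $R^{[\underline{i}]}\circ A$ in powers of $x$ via the commutation of a constant-coefficient operator past a monomial $\underline{x}^{\underline{j}}$ (which strictly lowers the $x$-degree), one sees that the only surviving contribution to the coefficient of $\underline{x}^{\underline{i}_0}$ in $R\circ A$ comes from $\underline{i}=\underline{i}_0$ and equals $\pi(R^{[\underline{i}_0]}\circ A)$. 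Since $R\circ A=0$, this coefficient vanishes, i.e. $R^{[\underline{i}_0]}\diamond P=\pi(R^{[\underline{i}_0]}\circ\sigma(P))=\pi(R^{[\underline{i}_0]}\circ A)=0$, and regularity of $P$ forces $R^{[\underline{i}_0]}=0$, contradicting the choice of $\underline{i}_0$. Hence $Q=0$.

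I expect the bookkeeping in item 2 to be the only real point: isolating the lowest-order-in-$x$ part of $R\circ A$ and identifying it with $\pi(R^{[\underline{i}_0]}\circ A)$ requires the right well-order on multi-indices together with the explicit rule for commuting constant-coefficient operators past powers of $x$. Everything else is the graded formalism already set up in Sections \ref{S:algebra_hat_D_n}--\ref{S:algebra_D_n} and the definition of regularity.
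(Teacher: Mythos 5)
Your argument is correct and follows the paper's own proof in essence: item 1 is exactly the paper's graded-components argument, and item 2, like the paper's, passes to symbols and reduces torsion-freeness to exhibiting a nonzero element of $F$ annihilated by $-\diamond\,\sigma(P)$. The only (harmless) divergence is in item 2, where the paper produces that element as $\pi(\underline{\partial}^{\underline{k}}\circ\sigma(Q))$ for suitable $\underline{k}$ and uses $\pi(\pi(a)\circ b)=\pi(a\circ b)$, whereas you extract the minimal nonzero $x$-coefficient of $\sigma(Q)$ directly; both routes are valid and of comparable length.
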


\begin{proof} (1) Let $d := \Ord (P) = \Ord \bigl(\sigma(P)\bigr)$. Then for any $\underline{k} \in \dn_0$ we have:
$\Ord\bigl(\underline{\partial}^{\underline{k}} \circ \sigma(P)\bigr) = |\underline{k}| + d$ provided $\underline{\partial}^{\underline{k}} \circ \sigma(P) \ne 0$. Therefore, the linear map $F \xrightarrow{\pi (-\circ \sigma(P))} (V_n\cap \Pi_n)\subset V_n$ splits into a direct sum of its graded components
$F_m \xrightarrow{\pi (-\circ \sigma(P))} (V_n\cap \Pi_n)_{m+d}$, implying the first statement.

\smallskip
\noindent
(2) Let $Q \ne 0$ be such that $Q \circ P = 0$. Then $\sigma(Q) \ne 0$ as well,  whereas  $\sigma(Q) \circ \sigma(P) = 0$. Next, there exists
$\underline{k} \in \dn_0$ such that $F\ni \pi (\underline{\partial}^{\underline{k}} \circ \sigma(Q)) \ne 0$ in $V_n$. On the other hand,
$$
\pi (\pi (\underline{\partial}^{\underline{k}} \circ \sigma(Q)) \circ \sigma(P)) = \pi (\underline{\partial}^{\underline{k}} \circ \sigma(Q) \circ \sigma(P)) = 0,
$$
hence $P$ is not regular, contradiction.
\end{proof}

There is a slight modification of the slice decomposition from definition \ref{D:slices} which we will use in this section and which holds also for elements from $\hat{\Pi}_n$:
\begin{equation}
\label{E:slice_decomposition1}
P = \sum\limits_{\underline{i} \ge \underline{0}} \frac{\underline{x}^{\underline{i}}}{\underline{i}!} \, P_{(\underline{i})}, \quad \mbox{\rm where} \quad \underline{x}^{\underline{i}}=x_1^{i_1}\ldots x_n^{i_n}, \quad  i!=i_1!\ldots i_n!
\end{equation}
$$
P_{(\underline{i})} = \underline{i}! \sum_{\substack{\underline{k} \in \sdn_0^{n-1}\times\sdz \\ |\underline{k}| - |\underline{i}| \le d = \Ord (P)}} \alpha_{\underline{k}, \underline{i}} \, \underline{\partial}^{\underline{k}} \in V_n\cap \Pi_n, \quad \alpha_{\underline{k}, \underline{i}}\in K_y.
$$

\begin{lem}
\label{L:statement}
The following statement is true: for any $P,Q\in \hat{\Pi}_n$ 
$$
P = Q \quad \mbox{if and only if} \quad \underline{\partial}^{\underline{i}} \diamond P = \underline{\partial}^{\underline{i}} \diamond Q
\quad \mbox{for any} \quad \underline{i} \in \dn_0^n.
$$
\end{lem}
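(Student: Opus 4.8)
The plan is to note that the ``only if'' direction is immediate from the definition of $\diamond$, and to concentrate on the converse. Setting $R := P - Q \in \hat{\Pi}_n$ and using additivity of the left action $\circ$ and of $\pi$, the hypothesis becomes $\underline{\partial}^{\underline{i}} \diamond R = 0$ for all $\underline{i} \in \dn_0^n$, and it remains to deduce $R = 0$. For this I would pass to the slice decomposition \eqref{E:slice_decomposition1}, $R = \sum_{\underline{j} \ge \underline{0}} \frac{\underline{x}^{\underline{j}}}{\underline{j}!} R_{(\underline{j})}$ with uniquely determined slices $R_{(\underline{j})} \in V_n \cap \Pi_n$, so that it suffices to prove $R_{(\underline{j})} = 0$ for every $\underline{j}$.

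The heart of the argument is the identity
$$
\underline{\partial}^{\underline{i}} \diamond R \;=\; \pi\bigl(\underline{\partial}^{\underline{i}} \circ R\bigr) \;=\; \sum_{\underline{0} \le \underline{j} \le \underline{i}} \binom{\underline{i}}{\underline{j}} \, \underline{\partial}^{\underline{i}-\underline{j}} \circ R_{(\underline{j})},
$$
a finite sum over the box $\{\underline{j} : \underline{0}\le\underline{j} \le \underline{i}\}$. To establish it I would apply the Leibniz rule to each $\underline{\partial}^{\underline{i}} \circ \bigl(\frac{\underline{x}^{\underline{j}}}{\underline{j}!} R_{(\underline{j})}\bigr)$, using that $R_{(\underline{j})}$ has coefficients in $K_y$ and therefore commutes with the derivations; this gives $\sum_{\underline{0} \le \underline{l} \le \min(\underline{i},\underline{j})} \binom{\underline{i}}{\underline{l}} \frac{\underline{x}^{\underline{j}-\underline{l}}}{(\underline{j}-\underline{l})!} \underline{\partial}^{\underline{i}-\underline{l}} \circ R_{(\underline{j})}$. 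Then I would observe that $\pi$ --- which reads off the $\underline{0}$-slice, i.e.\ sets $x_1 = \cdots = x_n = 0$ --- annihilates every summand in which some $x_k$ survives to positive power, leaving only the terms with $\underline{l} = \underline{j}$, and these occur precisely when $\underline{j} \le \underline{i}$. Summing over $\underline{j}$ yields the displayed formula.

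From this, the vanishing of all slices follows by induction on $|\underline{i}|$: for $\underline{i} = \underline{0}$ the identity gives $R_{(\underline{0})} = \underline{\partial}^{\underline{0}} \diamond R = 0$; assuming $R_{(\underline{j})} = 0$ whenever $|\underline{j}| < m$, and taking any $\underline{i}$ with $|\underline{i}| = m$, every term on the right with $|\underline{j}| < m$ vanishes by the inductive hypothesis, while the only term with $|\underline{j}| = m$ and $\underline{j} \le \underline{i}$ is $\binom{\underline{i}}{\underline{i}} R_{(\underline{i})} = R_{(\underline{i})}$, so $R_{(\underline{i})} = 0$. By uniqueness of the slice decomposition $R = 0$, i.e.\ $P = Q$.

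The step I expect to require the most care is the justification of the first identity, namely that the left $\hat{D}_n^{sym}$-action $\underline{\partial}^{\underline{i}} \circ (-)$ may be computed term by term on the (in general infinite) slice decomposition of $R$ and that $\pi$ commutes with the resulting sum. This is precisely the kind of slice-wise manipulation underlying the construction of the module structure in Lemma \ref{L:module_structure}: for each fixed multidegree only finitely many slices of $R$ contribute to the corresponding slice of $\underline{\partial}^{\underline{i}} \circ R$, so both operations are continuous for the relevant topology and the rearrangement is legitimate. Everything else reduces to elementary bookkeeping with binomial coefficients.
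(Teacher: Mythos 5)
Your proof is correct and takes essentially the same route as the paper's: both rest on the identity $\underline{\partial}^{\underline{i}} \diamond P = P_{(\underline{i})} + (\mbox{terms involving only slices } P_{(\underline{j})} \mbox{ with } |\underline{j}| < |\underline{i}|)$ followed by induction on $|\underline{i}|$. You merely make the lower-order terms explicit via the Leibniz rule and binomial coefficients, where the paper abbreviates them as ``l.o.t.''.
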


\begin{proof}
Note that for any $\underline{i} \in \Sigma$ we have the following identity
$\underline{\partial}^{\underline{i}} \diamond P = P_{(\underline{i})}+ l.o.t.$, where least order terms are elements of the form $\underline{\partial}^{\underline{j}}\cdot P_{(\underline{k})}$ with $|\underline{j}|< |\underline{i}|$ and $|\underline{k}|< |\underline{i}|$. 

In particular, $1\diamond P=P_{(\underline{0})}$ and therefore the equalities 
$$
\underline{\partial}^{\underline{i}} \diamond P = \underline{\partial}^{\underline{i}} \diamond Q
$$
imply, by induction in $|\underline{i}|$, the equalities of all slices of $P, Q$, hence the equality of $P$ and $Q$. 
\end{proof}

\begin{defin}
\label{D:units}
Let $\hat{D}_{n,-}^{sym}:=\{P\in \hat{D}_n^{sym}| \Ord (P)\le 0\}$ and  $\hat{U}_n^{sym}\subset \hat{D}_{n,-}^{sym}$ be the group of units.
\end{defin}

\begin{lem}{(cf. \cite[L.5.11]{BurbanZheglov2017})} 
\label{L:5.11}
The following results are true.
\begin{enumerate}
\item Let $P \in \hat{D}_{n,-}^{sym}$. Then we have: $P \in \hat{U}_n^{sym}$ if and only if $\sigma(P) \in \hat{U}_n^{sym}$.
\item Let $Q \in \hat{U}_n^{sym}$. Then $\Ord (Q) = 0$ and
$\overline{Q_{(\underline{0})}} \in K^\ast$. Moreover, $Q$ is regular.
\end{enumerate}
\end{lem}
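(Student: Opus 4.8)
The plan is to deal with the two parts in turn, with part~1 carrying the weight.

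\emph{Part 1.} First I would record the basic order bookkeeping: if $P\in\hat{U}_n^{sym}$ with inverse $P^{-1}\in\hat{D}_{n,-}^{sym}$, then by remark~\ref{R:ord_properties}
$0=\Ord(1)=\Ord(PP^{-1})\le\Ord(P)+\Ord(P^{-1})\le 0$, so $\Ord(P)=\Ord(P^{-1})=0$; the same argument works whenever $P$, or $\sigma(P)$, is assumed invertible. For $P\in\hat{U}_n^{sym}\Rightarrow\sigma(P)\in\hat{U}_n^{sym}$, I would first check $\sigma(P)\sigma(P^{-1})\ne 0$: writing $P=\sigma(P)+P'$, $P^{-1}=\sigma(P^{-1})+P''$ with $\Ord(P'),\Ord(P'')$ strictly smaller, if the product of the symbols vanished then every term of $PP^{-1}$ would have strictly negative order, contradicting $PP^{-1}=1$. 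Hence $\sigma(P)\sigma(P^{-1})=\sigma(PP^{-1})=\sigma(1)=1$, and symmetrically $\sigma(P^{-1})\sigma(P)=1$; since $\Ord(\sigma(P^{-1}))=0$, the symbol $\sigma(P^{-1})$ lies in $\hat{D}_{n,-}^{sym}$ and $\sigma(P)\in\hat{U}_n^{sym}$. For the converse, given $\sigma(P)\in\hat{U}_n^{sym}$ the order bookkeeping gives $\Ord(P)=0$; then $P=\sigma(P)(1-N)$ with $N:=-\sigma(P)^{-1}\bigl(P-\sigma(P)\bigr)$ of order $\le -1$, and I would invert $1-N$ by the geometric series $T:=\sum_{k\ge 0}N^{k}$. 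The delicate point is that $T$ is a well-defined element of $\hat{D}_{n,-}^{sym}$: since $\Ord(N^{k})\le -k$, a fixed monomial $\underline{x}^{\underline{i}}\underline{\partial}^{\underline{k}}$ can occur in $N^{k}$ only for $k\le|\underline{i}|-|\underline{k}|$, so each coefficient of $T$ is a finite sum, all monomials of $T$ have order $\le 0$, and $(1-N)T=T(1-N)=1$ by telescoping (multiplication being continuous monomial-wise). Then $P^{-1}=T\sigma(P)^{-1}\in\hat{D}_{n,-}^{sym}$, so $P\in\hat{U}_n^{sym}$.

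\emph{Part 2.} The equality $\Ord(Q)=0$ is the order bookkeeping of part~1 applied to $Q$. For $\overline{Q_{(\underline{0})}}\in K^{\ast}$ I would use the left action of $\hat{D}_n^{sym}$ on $\hat{R}_y$ from theorem~\ref{T:D_n_properties}: for $P\in\hat{D}_{n,-}^{sym}$ and $g\in\hat{R}_y$, evaluating $P(g)$ at the origin kills every term $\alpha_{\underline{k},\underline{i}}\underline{x}^{\underline{i}}\underline{\partial}^{\underline{k}}$ of the expansion~\eqref{E:expOperatorP} with $\underline{i}\ne\underline{0}$, while $\Ord(P)\le 0$ forces $\underline{k}=\underline{0}$ for the terms with $\underline{i}=\underline{0}$; hence $P(g)(0)=\overline{P_{(\underline{0})}}\,g(0)$, where $\overline{P_{(\underline{0})}}=\alpha_{\underline{0},\underline{0}}\in K$ is the free coefficient. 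Applying this to $1=(QQ^{-1})(1)=Q\bigl(Q^{-1}(1)\bigr)$ and evaluating at $0$ gives $1=\overline{Q_{(\underline{0})}}\cdot\overline{Q^{-1}_{(\underline{0})}}$, so $\overline{Q_{(\underline{0})}}\in K^{\ast}$ (equivalently, $P\mapsto\overline{P_{(\underline{0})}}$ is a $K$-algebra homomorphism $\hat{D}_{n,-}^{sym}\to K$, hence carries units to units). Finally, for regularity, part~1 gives that $\sigma(Q)$ is a unit of $\hat{D}_n^{sym}$; for $f\in F$ with lift $\tilde{f}\in\hat{D}_n^{sym}$, the relation $f\diamond\sigma(Q)=0$ means $\tilde{f}\sigma(Q)\in(x_1,\dots,x_n)\hat{D}_n^{sym}$, and right multiplication by $\sigma(Q)^{-1}$ gives $\tilde{f}\in(x_1,\dots,x_n)\hat{D}_n^{sym}$ because $(x_1,\dots,x_n)\hat{D}_n^{sym}$ is a right ideal; thus $f=0$, so $\sigma(Q)$ — and hence $Q$, by lemma~\ref{L:regularOp} — is regular.

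The step I expect to be the real obstacle, though still routine, is the verification that the geometric series inverting $1-N$ genuinely lies in $\hat{D}_n^{sym}$ and keeps order $\le 0$; everything else reduces quickly to the submultiplicativity of $\Ord$ in remark~\ref{R:ord_properties}, the evaluation-at-the-origin identity for order $\le 0$ operators, and the elementary observation that $(x_1,\dots,x_n)\hat{D}_n^{sym}$ is a right ideal.
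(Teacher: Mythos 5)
Your proof is correct and follows the same route as the paper: the paper declares item~1 and the first assertions of item~2 ``obvious''/``clear'' (your geometric-series inversion of $1-N$ and the evaluation-at-the-origin homomorphism are exactly the standard arguments being suppressed there), and your regularity argument --- right-multiplying $\tilde f\,\sigma(Q)\in(x_1,\dots,x_n)\hat D_n^{sym}$ by $\sigma(Q)^{-1}$ and using that $(x_1,\dots,x_n)\hat D_n^{sym}$ is a right ideal --- is precisely the paper's. No gaps.
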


\begin{proof}
The proof of first item is obvious. In the second item it is clear that $\Ord (Q) = 0$ and
$\overline{Q_{(\underline{0})}} \in K^\ast$.
By item 1 $\sigma (Q)$ is a unit. 
If $Q$ is not regular, then $\sigma (Q)$ is not regular and therefore there exists $w\in F$ such that $w\diamond \sigma (Q)=0$.  But then 
$$
w=w\cdot \sigma (Q)\cdot (\sigma (Q))^{-1}=0 \quad \mbox{mod $(x_1, \ldots ,x_n) \hat{D}_n$},
$$
a contradiction. 
 
\end{proof}

\subsection{A higher-dimensional analogue of the Sato theorem}
\label{S:Sato_theorem}

\begin{defin}
\label{D:SatoGrassmannian} 
Let $\mu \in \dz$.
\begin{enumerate}
\item We put:
$
\mathsf{Gr}_\mu(V_n) := \Bigl\{\mbox{1-spaces $W \subseteq V_n\cap \Pi_n$} \, \Big| \, H_W(\mu +k) = C_{n+k}^n  \quad \mbox{\rm for any} \quad k \in \dz \Bigr\}
$ (recall that $H_F(k) = C_{n+k}^n$).
\item Let $W \in \mathsf{Gr}_\mu(V_n)$. Then $S \in \hat{\Pi}_n$ is called a  \emph{Sato operator} of $W$ if the following conditions are fulfilled:
\begin{itemize}
\item $S$ is regular and $\Ord (S) = \mu$.
\item We have: $W = F \diamond S$.
\end{itemize}
\end{enumerate}
\end{defin}

\begin{ex}
\label{Ex:Sato_F}
Note that $F\in \mathsf{Gr}_0(V_n)$. If $K_y=K$, then any unit $U\in \hat{U}_n^{sym}$ is a Sato operator for $F$. 
Indeed, by lemma \ref{L:5.11} and by properties of $\Ord$, we have $\Ord (\underline{\partial}^{\underline{k}}\cdot U )= |\underline{k}|$. Moreover, $\underline{\partial}^{\underline{k}}\diamond U \in F$ and all such elements are linearly independent as $U$ is regular (see lemma \ref{L:regularOp}). Therefore, $F_k\diamond U=F_k$ for any $k\in \dn$ by dimension reasons and $F=F\diamond U$.
\end{ex}

\begin{lem}
\label{L:SatoOperators}
Let $\mu \in \dz$, $W \in \mathsf{Gr}_\mu(V_n)$ and $T$ be a Sato operator of $W$. Then the following results are true.
\begin{enumerate}
\item 
$U \cdot T$ is a Sato operator of $W$ for any invertible operator $U \in \hat{U}_n^{sym}$.
\item 
The linear map $F \xrightarrow{-\diamond T} W$ is a bijection.
\end{enumerate}
\end{lem}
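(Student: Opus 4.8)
The plan is to treat the two items separately, using Lemma~\ref{L:5.11} to handle units, Lemma~\ref{L:regularOp} for regularity, and the symbol--multiplicativity of $\Ord$ on the bimodule $\hat{\Pi}_n$ recorded in Lemma~\ref{L:module_structure}. Throughout I identify $F$ with its image $K_y[[\partial_1,\dots,\partial_n]]\cap\Pi_n$ inside $\hat{D}_n^{sym}$, so that $f\circ P$ (for $f\in F$, $P\in\hat{\Pi}_n$) is the left $\hat{D}_n^{sym}$-action on $\hat{\Pi}_n$ and $f\diamond P=\pi(f\circ P)$.

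\emph{Item 1.} Let $U\in\hat{U}_n^{sym}$. By Lemma~\ref{L:5.11}, $\Ord(U)=0$, $U$ is regular, and $\sigma(U)$ is again a unit of $\hat{D}_n^{sym}$. Since $T$ is regular, $\sigma(T)\neq0$, hence $\sigma(U)\sigma(T)\neq0$, and by property~2 of Lemma~\ref{L:module_structure} we get $\sigma(U\cdot T)=\sigma(U)\sigma(T)$ and $\Ord(U\cdot T)=\Ord(U)+\Ord(T)=\mu$. For regularity of $U\cdot T$, observe that $f\mapsto f\diamond\sigma(U\cdot T)=\pi\bigl((f\sigma(U))\circ\sigma(T)\bigr)$ is the composite $F\xrightarrow{\ \cdot\,\sigma(U)\ }F\xrightarrow{\ -\diamond\sigma(T)\ }V_n$; the first map is a bijection of $F=\hat{D}_n^{sym}/(x_1,\dots,x_n)\hat{D}_n^{sym}$ because right multiplication by the unit $\sigma(U)$ is invertible, and the second is injective because $T$ (equivalently $\sigma(T)$) is regular, so the composite is injective. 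Finally, for $f\in F$ one has $f\diamond(U\cdot T)=\pi\bigl((fU)\circ T\bigr)=(fU)\diamond T$, and since $f\mapsto fU$ is a bijection of $F$, $F\diamond(U\cdot T)=(FU)\diamond T=F\diamond T=W$. Thus $U\cdot T$ is a Sato operator of $W$.

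\emph{Item 2.} Surjectivity of $F\xrightarrow{\,-\diamond T\,}W$ is the defining equality $W=F\diamond T$. For injectivity, let $0\neq f\in F$ and let $\sigma(f)\neq0$ be its leading homogeneous component; it is again $x$-independent, so $\sigma(f)\in F$. Since $\sigma(T)$ is regular, $\sigma(f)\diamond\sigma(T)=\pi(\sigma(f)\circ\sigma(T))\neq0$, in particular $\sigma(f)\circ\sigma(T)\neq0$, and therefore $\sigma(f\circ T)=\sigma(f)\circ\sigma(T)$ by property~2 of Lemma~\ref{L:module_structure}. Now if $g\in\hat{\Pi}_n$ lies in $(x_1,\dots,x_n)\hat{\Pi}_n$ then so does every homogeneous component of $g$, in particular $\sigma(g)$; hence $\pi(\sigma(g))\neq0$ forces $\pi(g)\neq0$. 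Applying this with $g=f\circ T$, from $\pi(\sigma(f\circ T))=\sigma(f)\diamond\sigma(T)\neq0$ we conclude $f\diamond T=\pi(f\circ T)\neq0$. So $-\diamond T$ is injective.

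\emph{Main obstacle.} The only delicate point is bookkeeping rather than any single estimate: one must keep track of which product is taken where --- multiplication in $\hat{D}_n^{sym}$, the left $\hat{D}_n^{sym}$-action and right $\Pi_n$-action on $\hat{\Pi}_n$, and the right $\hat{D}_n^{sym}$-action on $F$ --- and verify the associativity identities $f\diamond(U\cdot T)=(fU)\diamond T$ and $\sigma(f\circ T)=\sigma(f)\circ\sigma(T)$ used above. All of these reduce to Lemma~\ref{L:module_structure} once the identification $F\hookrightarrow\hat{D}_n^{sym}$ is fixed. As a cross-check, injectivity in Item~2 can alternatively be obtained by a Hilbert-function count: regularity forces $\Ord(f\diamond T)=\Ord(f)+\mu$ for every $f\neq0$, so $F_k\diamond T=W_{k+\mu}$ for all $k$; reducing modulo $\idm$ and comparing $\dim_K\bar{F}_k=C_{n+k}^n=H_W(\mu+k)$ shows $\bar{F}_k\to\bar{W}_{k+\mu}$ is bijective for every $k$, and a Krull-intersection argument using the $y$-torsion-freeness of $\hat{\Pi}_n$ upgrades this to injectivity of $-\diamond T$ on all of $F$.
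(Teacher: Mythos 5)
Your proof is correct. Item 1 is essentially the paper's argument: both compute $\Ord(U\cdot T)=\Ord(U)+\Ord(T)=\mu$ via $\sigma(U\cdot T)=\sigma(U)\cdot\sigma(T)$, both obtain regularity of $U\cdot T$ by factoring $-\diamond\sigma(U\cdot T)$ through the bijection of $F$ given by right multiplication with the unit $\sigma(U)$ (lemma \ref{L:5.11}), and both use $F\diamond U=F$ to keep the image equal to $W$. In item 2 surjectivity is the defining equality $W=F\diamond T$ in both texts, but for injectivity you take a genuinely different (if closely related) route: the paper shows that the elements $\underline{\partial}^{\underline{k}}\diamond T$ are linearly independent and then invokes the Hilbert-function hypothesis $H_W(\mu+k)=C_{n+k}^n$ to conclude that each graded piece $F_m\xrightarrow{-\diamond T}W_{m+\mu}$ is an isomorphism ``by dimension reasons'', whereas you argue directly that for any $0\neq f\in F$ regularity of $\sigma(T)$ gives $\sigma(f)\diamond\sigma(T)\neq 0$, hence $\sigma(f\circ T)=\sigma(f)\circ\sigma(T)\notin (x_1,\dots,x_n)\hat{\Pi}_n$ and so $f\diamond T\neq 0$; the intermediate observation that homogeneous components of elements of $(x_1,\dots,x_n)\hat{\Pi}_n$ stay in that submodule is correct and is the key point. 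The mechanism (regularity of the symbol plus multiplicativity of $\sigma$) is the same, but your version buys injectivity without any appeal to the Hilbert function and works verbatim in the relative case $K_y\neq K$, where $F_m$ is not finite-dimensional over $K$ and the paper's count must be read modulo $\idm$; what it does not produce in the main argument (and is not required by the statement) is the graded refinement $F_m\diamond T=W_{m+\mu}$, which your concluding cross-check recovers in any case.
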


\begin{proof}
1. If $U \in \hat{U}_n^{sym}$, then $U \cdot T \ne 0$ and $\Ord (U \cdot T) = \Ord (U) + \Ord (T) = \mu$. Next,
$F \diamond (U \circ T) = (F \diamond U) \diamond T = F \diamond T = W$. Finally, $\sigma(U \circ T) = \sigma(U) \circ \sigma(T)$ as $\sigma(U)$ is invertible and regular.
Hence, the linear map $F \xrightarrow{- \diamond \sigma(U\circ T)} V_n$ is injective, implying that the operator $U \circ T$ is regular. Hence, $U \cdot T$ is indeed
a Sato operator of $W$.

\smallskip
\noindent
2. Since $T$ is regular, all elements from $F_m\diamond \sigma (T)$, $m\in \dn_0$, are linearly independent.  In particular, 
$\Ord (\underline{\partial}^{\underline{k}}\diamond \sigma (T))=|\underline{k}|+\mu$. Therefore, all elements from $F_m\diamond T$ are also linearly independent, hence  
for any $m \in \dn_0$, the  linear map $F_m \xrightarrow{-\diamond T} W_{m+\mu}$
is an isomorphism by the dimension reasons. This implies the second statement.
\end{proof}

The following theorem is an improvement of theorems \cite[Th.3.1]{Zheglov2013}, 
\cite[T. 5.16]{BurbanZheglov2017}.

\begin{thm}
\label{T:SatoAction}
Let $\mu \in \dz$ and $W \in \mathsf{Gr}_\mu(V_n)$. Then the following statements are true.
\begin{enumerate}
\item The vector space $W$ possesses  a Sato operator $S$.

If $T$ is another Sato operator for $W$ then there exists a uniquely determined invertible operator $U \in \hat{U}_n^{sym}$ such that $S = U\circ T$.
\item
If $W\subset F$, then $S\in \hat{D}_n^{sym}$.
\item
If $\mu =0$ and $\Sup (W)=F$, then there exists a unique Sato operator $S=1+S_-$, where   $S_-\in \hat{D}_n^n[[\partial_n^{-1}]]\partial_n^{-1}$, and therefore $|\ord_{\Gamma}(S)|=\Ord (S)=0$. In particular, $S$ is invertible in $\hat{E}_n$, $S^{-1}=1+\tilde{S}_-$, where $\tilde{S}_-=-S_-+S_-^2-\ldots$. 
\item
In particular, if $\mu =0$, $\Sup (W)=F$ and $S$ is a Sato operator, then there exists a unique factorisation $S=U\circ S_0$, where $S_0$ is the operator from previous item and $U\in \hat{U}_n^{sym}$.
\item
The following assertion holds:
$$
\hat{D}_n^{sym}=\{ P\in \hat{\Pi}_n|  F\diamond P\subseteq F\}.
$$
\end{enumerate}

\end{thm}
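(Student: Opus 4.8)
The plan is to prove items (1)--(5) roughly in that order, with the key technical work concentrated in item (1), specifically in constructing the Sato operator $S$ by successive approximation, exactly mimicking the construction in Theorem~\ref{T:lemma8}. First I would fix a homogeneous $K$-basis of $V_n$ adapted to the $\Ord$-grading, and note that since $W\in\mathsf{Gr}_\mu(V_n)$ the Hilbert function condition $H_W(\mu+k)=C^n_{n+k}$ matches that of $F$ shifted by $\mu$; this is what makes the bijection in item~2 of Lemma~\ref{L:SatoOperators} possible and is the structural input one needs. To build $S$, I would look for it in the form $S=\sum_{m\ge 0}S_m$ where $S_m$ is a homogeneous component of $\Ord$-degree $\mu+ (\text{appropriate shift})$, building it so that $F_k\diamond S$ spans $W_{\mu+k}$ for every $k$. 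Concretely, choose for each $k$ a basis of $W_{\mu+k}$ compatible with the filtration; since $\dim \bar W_{\mu+k}=C^n_{n+k}$ equals the number of monomials $\underline{\partial}^{\underline k}$ with $|\underline k|\le k$, one inductively defines $S$ so that $\underline{\partial}^{\underline k}\diamond S$ realizes a prescribed basis element of $\bar W_{\mu+k}$ modulo lower-order terms --- this is solvable because passing to $\bar{(-)}$ (reduction mod $\idm\cap K_y$) the leading term $\underline{\partial}^{\underline k}\diamond\sigma(S)$ equals $S_{(\underline k)}+\text{l.o.t.}$ by Lemma~\ref{L:statement}, so the slices of $S$ can be solved for one graded degree at a time. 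Regularity of $S$ (hence of $\sigma(S)$) then follows from Lemma~\ref{L:regularOp}(1): the constructed elements $\{\underline{\partial}^{\underline k}\diamond\sigma(S)\,|\,|\underline k|=m\}$ are linearly independent by construction, being a basis (mod $\idm$) of the degree-$m$ graded piece. The finiteness $\Ord(S)<\infty$ so that $S\in\hat{\Pi}_n$ comes from the degree bookkeeping.

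For the uniqueness part of item~(1): if $S,T$ are both Sato operators of $W$, then $F\diamond S=F\diamond T=W$. Using that $-\diamond T\colon F\to W$ is a bijection (Lemma~\ref{L:SatoOperators}(2)), there is a unique $K$-linear endomorphism $\phi$ of $F$ with $\phi(f)\diamond T=f\diamond S$ for all $f\in F$; one shows $\phi$ is realized by right multiplication by a unique operator $U\in\hat{D}_{n,-}^{sym}$, and since $\Ord(S)=\Ord(T)=\mu$ and the leading terms match one gets $\Ord(U)=0$ and $\overline{U_{(\underline 0)}}\in K^*$, hence $U\in\hat{U}_n^{sym}$ by Lemma~\ref{L:5.11}(1) (its symbol is a unit). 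Then $U\circ T$ and $S$ have the same action $F\diamond(U\circ T)=F\diamond S$ and, since both are determined by their action via Lemma~\ref{L:statement} applied through the diamond map, $S=U\circ T$.

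Items (2)--(4) are then short. For (2): if $W\subseteq F$, the leading ($\partial_n$-nonnegative) part is all that occurs in $F\diamond S$; examining the slice decomposition \eqref{E:slice_decomposition1} of $S$, any slice contributing a negative power of $\partial_n$ would force $W\not\subseteq F$, so $S\in\hat{D}_n^{sym}$. For (3): when $\mu=0$ and $\Sup(W)=F$, the normalization $1\diamond S=S_{(\underline 0)}$ must have leading term $1$ (its reduction is the lowest-term of a basis vector of $\Sup(W)=F$, which one can rescale to $1$); subtracting off, the construction in item~(1) can be carried out with $S=1+S_-$, $S_-\in\hat{D}_n^n[[\partial_n^{-1}]]\partial_n^{-1}$, and uniqueness here is forced because any unit $U$ with $U\circ S_0$ again of this form must be $1$ (its constant-order slice must be $1$). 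Invertibility of $S=1+S_-$ in $\hat{E}_n$ with the stated geometric-series inverse is standard since $\ord_n(S_-)<0$, cf. Corollary~\ref{C:corol2.1}. Item (4) is immediate: factor an arbitrary Sato operator $S$ as $U\circ S_0$ using item~(1)'s uniqueness statement with $T=S_0$.

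Finally, item~(5): the inclusion $\hat{D}_n^{sym}\subseteq\{P\in\hat{\Pi}_n\,|\,F\diamond P\subseteq F\}$ is clear since for $P\in\hat{D}_n^{sym}$ and $f\in F$ the product $f\circ P$ lies in $\hat{D}_n^{sym}$, whose image under $\pi$ is $F$. Conversely, if $P\in\hat{\Pi}_n$ with $F\diamond P\subseteq F$, write the slice decomposition \eqref{E:slice_decomposition1}; by Lemma~\ref{L:statement} we have $\underline{\partial}^{\underline i}\diamond P=P_{(\underline i)}+\text{l.o.t.}$, and the hypothesis $\underline{\partial}^{\underline i}\diamond P\in F\subseteq K_y[[\partial_1,\dots,\partial_n]]$ forces every slice $P_{(\underline i)}$ to lie in $K_y[\partial_1,\dots,\partial_n]$ (no negative powers of $\partial_n$), hence $P\in\hat{\cm}_n$ with only nonnegative $\partial_n$-powers and finite $\Ord$, i.e. $P\in\hat{D}_n^{sym}$. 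I expect the main obstacle to be item~(1)'s existence proof: organizing the induction over $\Ord$-graded pieces so that the system determining the slices of $S$ is genuinely solvable using only the dimension-count from $W\in\mathsf{Gr}_\mu(V_n)$, and controlling that the resulting infinite sum converges in $\hat{\Pi}_n$ --- essentially the content of Theorem~\ref{T:lemma8} transported to the module setting, but one must check the Hilbert-function hypothesis supplies exactly the freedom needed at each stage.
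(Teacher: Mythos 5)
Your slice-by-slice construction of $S$ and your treatment of items (2), (4), (5) follow essentially the paper's route, but two steps are genuinely unsupported. The first is the invertibility of $U$ in the uniqueness part of item (1): from $\Ord (U)=0$ and $\overline{U_{(\underline{0})}}\in K^\ast$ you conclude $U\in \hat{U}_n^{sym}$ "by Lemma \ref{L:5.11}(1) (its symbol is a unit)". This does not follow: Lemma \ref{L:5.11} gives those properties as necessary conditions for a unit, not sufficient ones, and $\hat{D}_n^{sym}$ has zero divisors, so an operator of order zero with invertible constant slice need not be a unit; you give no argument that $\sigma (U)$ is invertible. The criterion that would do the job, Corollary \ref{C:units}, requires regularity of $\bar{U}$ and is itself deduced from Theorem \ref{T:SatoAction}, so invoking it here would be circular. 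The paper closes this gap symmetrically: it constructs $U$ with $S=U\circ T$ and also $V$ with $T=V\circ S$, so that $(1-U\cdot V)\circ S=0$ and $(1-V\cdot U)\circ T=0$, and then regularity of $S$ and $T$ together with Lemma \ref{L:regularOp}, item 2, forces $U\cdot V=V\cdot U=1$. Some version of this two-sided argument (or an honest proof that $\sigma(U)$ is a unit) is needed.

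The second gap is your uniqueness argument in item (3): you claim that a unit whose constant slice is $1$ must equal $1$. That is false — the shift operators $\exp \bigl(c\, x_q*\partial_q\bigr)$, such as the $A_{k;i,q}$ of Section \ref{S:centralizers}, are units with constant slice $1$. The correct mechanism is different: if two operators of the form $1+S_-$ with $S_-\in \hat{D}_n^n[[\partial_n^{-1}]]\partial_n^{-1}$ both satisfy $F\diamond S=W$, then at each step of the recursion the slice is determined only up to an element of $W$ lying entirely in the negative $\partial_n$-part; since $\Sup (W)=F$ forces the lowest term of every nonzero $w\in W$ to be a monomial with non-negative exponents, $W$ meets that negative part only in $0$, so the slices, hence $S$, are unique. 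Relatedly, your sketch of the existence part of item (3) leaves implicit exactly where $\Sup (W)=F$ enters — namely that one can choose a basis $w_{\underline{k}}=\underline{\partial}^{\underline{k}}+w_{\underline{k}}^-$ of $W$ and, at each stage, subtract an element of $W_m$ so that the remaining slice has only negative powers of $\partial_n$ — which is the main content of the paper's proof of that item.
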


\begin{proof}
{\bf 1.} Our construction of a Sato operator $S$ is algorithmic and depends on the following choice. Namely, for any $\underline{k} \in \dn_0^n$, we choose
$w_{\underline{k}} \in W_{\mu + |\underline{k}|}$ such that for any $m \in \dn_0$, the set
$\bigl\{\bar{w}_{\underline{k}} \, \big| \, \underline{k} \in \dn_0^n: |\underline{k}| = m \bigr\}$, where $\bar{w}_{\underline{k}}$ means the residue class in the space $W_{m+\mu}/W_{m+\mu-1}$,  forms a basis of the vector space $W_{m+\mu}/W_{m+\mu-1}$  (at this place, we essentially use the assumption on the Hilbert function of $W$). Then the following statements are true:
\begin{itemize}
\item $\Ord \bigl(w_{\underline{k}}\bigr) = \mu + |\underline{k}|$ for any $\underline{k} \in \dn_0^n$;
\item the  set $\bigl\{{w}_{\underline{k}} \, \big| \, \underline{k} \in \dn_0^n: |\underline{k}| \le  m \bigr\}$ is  a basis of the vector space $W_{m+\mu}$.
\end{itemize}
We need to construct the operator $S$ in the form $S :=
 \sum\limits_{\underline{k} \ge \underline{0}} \dfrac{\underline{x}^{\underline{k}}}{\underline{k}!} S_{(\underline{k})} \in \hat{\Pi}_n
$
such that $\Ord (S_{(\underline{k})})\le \mu + |\underline{k}| $ for all $\underline{k} \in \dn_0^n$. 

Put $S_{(\underline{0})}= {w}_{\underline{0}}$. Now we can find all slices $S_{(\underline{k})}$ recursively:
if we know all slices with $|\underline{k}|\le m-1$, then the slices with $|\underline{k}|=m$ can be found from the linear equations:
\begin{equation}
\label{E:lin_equations}
w_{\underline{k}}=\underline{\partial}^{\underline{k}}\diamond S= S_{(\underline{k})} + \mbox{known terms},
\end{equation}
where $|\underline{k}|=m$ and known terms include linear combinations of terms of the form $S_{(\underline{l})} \cdot \underline{\partial}^{\underline{t}}$ with $|\underline{l}|< m$ and 
$|\underline{l}|+ |\underline{t}| \le \mu + m$. Obviously, this system has a unique solution, which determines the slice $S_{(\underline{k})}$. By construction of $S$ we have:
\begin{itemize}
\item 
 $\Ord (S) = \mu$ (since $\Ord (S_{(\underline{0})})=\mu$ and $\Ord S_{(\underline{k})}\le \mu +|\underline{k}|$);
 \item $\underline{\partial}^{\underline{k}} \diamond S = w_{\underline{k}}$ for all $\underline{k} \in \dn_0^n$, hence $W = F \diamond S$.
 \item $\underline{\partial}^{\underline{k}} \diamond \sigma(S) = \sigma\bigl(w_{\underline{k}})$ for all $\underline{k} \in \dn_0^n$. Since  $\bigl\{\bar{w}_{\underline{k}} \, \big| \, \underline{k} \in \dn_0^n: |\underline{k}| = m \bigr\}$ form a basis of the vector space $W_{m+\mu}/W_{m+\mu-1}$, according to Lemma \ref{L:regularOp}, the operator $S$ is regular.
\end{itemize}
Summing up, $S$ is a Sato operator of the vector space $W$.

Let $S$ and $T$ be two Sato operators of $W$. As both operators are regular, for any $\underline{k} \in \dn_0^n$ there exist an element $w_{\underline{k}}'\in F$ of order $\Ord (w_{\underline{k}}')=|\underline{k}|$ such that 
$$
\underline{\partial}^{\underline{k}} \diamond S = w_{\underline{k}}' \diamond T
$$
Since the elements  $\underline{\partial}^{\underline{k}} \diamond \sigma (S)$, $|\underline{k}|=m$ are linearly independent, so are the elements $w_{\underline{k}}' \diamond \sigma (T)$, hence $w_{\underline{k}}'$. Then by dimension reasons, we have
$$
W'=\langle w_{\underline{k}}', \underline{k}\in \dn_0^n\rangle =F.
$$
Let $U\in \hat{\Pi}_n$ be the operator constructed as above for the space $W'$, i.e. $\underline{\partial}^{\underline{k}} \diamond U =w_{\underline{k}}'$. Then $\Ord (U)=0$, moreover, $U\in \hat{D}_n^{sym}$ by construction above. 
By the construction of $U$, we have for all $\underline{i}\in \dn_0^n$:
$$
\underline{\partial}^{\underline{i}} \diamond S= \bigl(\underline{\partial}^{\underline{i}}\diamond U\bigr)  \diamond T = \underline{\partial}^{\underline{i}} \diamond (U \circ T)
$$
so $S = U\circ T$ by Lemma \ref{L:statement}. In a similar way, we can find $V \in \hat{D}_{n,-}^{sym}$ such that $T = V \circ S$. Therefore, we have: $(1- U\cdot V)\circ S = 0$. Since
$S$ is regular, Lemma \ref{L:regularOp} implies that $U\cdot V = 1$. In a similar way, $V\cdot U = 1$, hence $U, V \in \hat{U}_{n}^{sym}$ as claimed.

The uniqueness of the unit $U$ also follows from Lemma \ref{L:regularOp}.

{\bf 2.} This item follows immediately from the construction of item 1.

{\bf 3.} We can construct the Sato operator practically by the same arguments as in item 1, with minimal appropriate modifications. Namely, we will look for the operator $S$ in the form $S :=
 \sum\limits_{\underline{k} \ge \underline{0}} \dfrac{\underline{x}^{\underline{k}}}{\underline{k}!} S_{(\underline{k})} \in \hat{\Pi}_n
$
such that 
\begin{itemize}
\item
$\Ord (S_{(\underline{k})})\le |\underline{k}| $ for all $\underline{k} \in \dn_0^n$, 
\item
$S_{(\underline{k})}\in K_y[[\partial_1, \ldots , \partial_{n-1}]][[\partial_n^{-1}]]\cdot \partial_n^{-1}\cap \Pi_n$ for all $\underline{k} \neq \underline{0}$ 
\item
$S_{(\underline{0})}\in 1+K_y[[\partial_1, \ldots , \partial_{n-1}]][[\partial_n^{-1}]]\cdot \partial_n^{-1}\cap \Pi_n$. 
\end{itemize}

First let's prove that we can choose a basis $w_{\underline{k}}$, $\underline{k} \in \dn_0^n$ in the space $W$ such that 
\begin{enumerate}
\item\label{E:a1}
$w_{\underline{k}}=\underline{\partial}^{\underline{k}}+ w_{\underline{k}}^-$, where $w_{\underline{k}}^-\in K_y[[\partial_1, \ldots , \partial_{n-1}]][[\partial_n^{-1}]]\cdot \partial_n^{-1}\cap \Pi_n$,
\item\label{E:a2}
$\Ord (w_{\underline{k}})= |\underline{k}|$.
\end{enumerate}
We'll construct such a basis recursively in the subspaces $W_m$, $m\ge 0$. 
The proof is by induction on $m=\Ord (w_{\underline{k}})$. 

Let $m=0$. 
Let $w_{\underline{0}}$ be a generator of the space $W_0$. Since  $W,\Sup (W)\in \mathsf{Gr}_0 (V_n)$, $\dim_K (\Sup (W_0))=H_W(0)=1$ and $\dim_K (\Sup (W_{-1}))=H_W(-1)=0$, we have $0=\Ord (w_{\underline{0}})=\Ord (LT(w_{\underline{0}}))=|\ord_{\Gamma}(w_{\underline{0}})|$. In particular, we can choose a monic generator $w_{\underline{0}}$. Then  $\LT (w_{\underline{0}})=1$ (as it belongs to $F$ and $w_{\underline{0}}$ is monic). Therefore, $w_{\underline{0}}^- :=w_{\underline{0}}-1 \in K_y[[\partial_1, \ldots , \partial_{n-1}]][[\partial_n^{-1}]]\cdot \partial_n^{-1}\cap \Pi_n$.

Suppose $m$ is arbitrary. By induction, there exists a basis $w_{\underline{k}}$, $\underline{k} \in \dn_0^n$, $|\underline{k}|\le m-1$ in the space $W_{m-1}$, satisfying properties \ref{E:a1} and \ref{E:a2}. Let's complete this basis to a basis $w_{\underline{k}}$, $|\underline{k}|\le m$. According to the condition on dimensions of $W_m$  we can assume all its elements are monic; besides we can assume that $\LT (w_{\underline{k}})\notin F_{m-1}$ for all $\underline{k} \in \dn_0^n$ with $|\underline{k}|=m$. Besides, it holds $m=\Ord (w_{\underline{k}})= |\ord_{\Gamma}(w_{\underline{k}})|$ for all elements $w_{\underline{k}}$ with $|\underline{k}|=m$.  So, by renumbering elements of the basis if needed,  we can assume that $\LT (w_{\underline{k}})=\underline{\partial}^{\underline{k}}$. But then, obviously, there exists a basis in $W_m$ satisfying conditions \ref{E:a1} and \ref{E:a2}.

Now put $S_{(\underline{0})}= {w}_{\underline{0}}$. All other slices $S_{(\underline{k})}$ we can find recursively:
if we know all slices with $|\underline{k}|\le m-1$, then the slices with $|\underline{k}|=m$ can be found from the linear equations:
\begin{equation}
\label{E:lin_equations1}
\underline{\partial}^{\underline{k}}\diamond S\in W_m.
\end{equation}
As in item 1 we have
$$
\underline{\partial}^{\underline{k}}\diamond S= S_{(\underline{k})} + \mbox{known terms},
$$
where $|\underline{k}|=m$ and known terms include linear combinations of terms of the form $S_{(\underline{l})} \cdot \underline{\partial}^{\underline{t}}$ with $|\underline{l}|< m$ and 
$|\underline{l}|+ |\underline{t}| \le m$. By the induction hypothesis, $\Ord (\mbox{known terms})\le m$, and therefore there exists an element $w\in W_m$ such that $\underline{\partial}^{\underline{k}}\diamond S -w\in K_y[[\partial_1, \ldots , \partial_{n-1}]][[\partial_n^{-1}]]\cdot \partial_n^{-1}\cap \Pi_n$ (because all monomials of non-negative order with non-negative power of all derivatives belong to the space $F_m$, generated by the least terms of the basis elements $w_{\underline{k}}$). Now put
$$
S_{(\underline{k})}:= \underline{\partial}^{\underline{k}}\diamond S -w.
$$
Clearly, all conditions on $S_{(\underline{k})}$ are satisfied.

Obviously, this system has a unique solution, which determines the slice $S_{(\underline{k})}$.

{\bf 4.} This follows from items 1 and 3.

{\bf 5.} Let $P=
 \sum\limits_{\underline{k} \ge \underline{0}} \dfrac{\underline{x}^{\underline{k}}}{\underline{k}!} P_{(\underline{k})} 
$
be the slice decomposition. Let $m$ be the first number such that there exists $\underline{k}\in \dn_0^n$ with  $P_{(\underline{k})}\notin F$,  $m=|\underline{k}|$. But then 
$$
\underline{\partial}^{\underline{k}}\diamond P= P_{(\underline{k})} + P',
$$
where  $P'\in F$. Therefore, $P_{(\underline{k})}$ must belong to $F$ (as 
$\underline{\partial}^{\underline{k}}\diamond P\in F$), a contradiction.
So, all slices must belong to $F$ and therefore $P\in \hat{D}_n^{sym}$. 
\end{proof}

\begin{cor}
\label{C:units}
An operator $P\in \hat{D}_{n,-}^{sym}$ is a unit if and only if $\Ord (P)=0$ and $\bar{P}$ is regular, where $\bar{P}=P|_{\underline{y}=\underline{0}}$. 
\end{cor}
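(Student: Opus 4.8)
The plan is to first settle the ``absolute'' case $K_y=K$ (where $\bar P=P$) via the Sato theorem, and then to bootstrap to arbitrary $K_y$ by a topological Nakayama argument along the reduction map $P\mapsto\bar P=P|_{\underline y=\underline 0}$, which is a ring homomorphism $\hat D_n^{sym}\to\hat D_n^{sym}|_{\underline y=\underline 0}$ not increasing $\Ord$ (setting $\underline y=\underline 0$ only raises the valuation of coefficients), with image the subring $\hat D_n^{sym}|_{\underline y=\underline 0}$ of $y$-free operators, which is closed under inverses.

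\textbf{Absolute case.} I would prove: when $K_y=K$, an operator $P\in\hat D_{n,-}^{sym}$ is a unit iff $\Ord(P)=0$ and $P$ is regular. The ``only if'' part is Lemma~\ref{L:5.11}(2). For ``if'', recall $F\in\mathsf{Gr}_0(V_n)$ and that by Example~\ref{Ex:Sato_F} the identity operator $1$ is a Sato operator of $F$. Given $\Ord(P)=0$, $P$ regular and $P\in\hat D_n^{sym}\subset\hat\Pi_n$: by Theorem~\ref{T:SatoAction}(5), $F\diamond P\subseteq F$; by regularity and Lemma~\ref{L:regularOp}(1) the elements $\underline\partial^{\underline k}\diamond P$ with $|\underline k|\le m$ are linearly independent and of order $\le m$, so $\dim_K(F_m\diamond P)=\dim_K F_m$ and hence $F_m\diamond P=F_m$; thus $F\diamond P=F$ and $P$ is a Sato operator of $F$. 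Applying the uniqueness in Theorem~\ref{T:SatoAction}(1) to the Sato operators $1$ and $P$ of $F$ yields $U\in\hat U_n^{sym}$ with $1=U\circ P=UP$, whence $P=U^{-1}\in\hat U_n^{sym}$. This is exactly the corollary for $K_y=K$.

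\textbf{General $K_y$, nontrivial direction.} Assume $\Ord(P)=0$ and $\bar P$ regular. I would first note $\Ord(\bar P)=0$: indeed $\Ord(\bar P)\le\Ord(P)=0$, while if $\Ord(\bar P)<0$ then the homogeneous operator $\sigma(\bar P)$ has negative order, so each of its monomials carries a positive power of some $x_j$ and $1\diamond\sigma(\bar P)=\pi(\sigma(\bar P))=0$, contradicting regularity of $\bar P$ (Lemma~\ref{L:regularOp}(1), $m=0$). By the absolute case, $\bar P$ is then a unit of $\hat D_n^{sym}|_{\underline y=\underline 0}$, with inverse $\bar P^{-1}$ in that subring. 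Put $J:=\bar P^{-1}(P-\bar P)$; every monomial of $P-\bar P$ is divisible by some $y_i$ and $\bar P^{-1}$ is $y$-free, so the same holds for $J$, and $\Ord(J)\le 0$. Then $P=\bar P\,(1+J)$, and $\sum_{k\ge 0}(-J)^k$ converges in $\cm_n$ — only the summands with $k\le t$ contribute to a monomial of $y$-degree $t$ — and has $\Ord\le 0$ since $\Ord((-J)^k)\le 0$; it is the inverse of $1+J$. Hence $P$ is a product of two units. Conversely, if $P$ is a unit then $\Ord(P)=0$ by Lemma~\ref{L:5.11}(2), and $\bar P$ is a unit of $\hat D_n^{sym}|_{\underline y=\underline 0}$ (inverse $\overline{P^{-1}}$), hence regular by Lemma~\ref{L:5.11}(2) applied in the absolute case.

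\textbf{Main obstacle.} Beyond quoting Theorem~\ref{T:SatoAction} and Lemma~\ref{L:5.11}, the delicate point is the lift from $\bar P$ to $P$, i.e.\ that $1+J$ is invertible \emph{inside} $\hat D_n^{sym}$ for $J\in(y_1,\dots,y_m)\hat D_{n,-}^{sym}$: one must check both that the Neumann series is a legitimate element of $\cm_n$ (finiteness of each coefficient, using that the $y$-degree in $J^k$ grows with $k$) and that it retains finite $\Ord$. The auxiliary observation that regularity of $\bar P$ already forces $\Ord(\bar P)=0$, needed so that the absolute case applies, is the other spot requiring a line of care.
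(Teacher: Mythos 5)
Your proposal is correct, and the ``only if'' direction together with the absolute case $K_y=K$ coincide with the paper's argument (Lemma \ref{L:5.11}(2) for necessity; for sufficiency, $F\diamond P=F$ plus the uniqueness of Sato operators for $F$ from Theorem \ref{T:SatoAction}(1), compared against the identity operator). Where you diverge is the relative case $K_y\neq K$: the paper stays entirely inside the Sato machinery, asserting that $F\diamond\bar P=F$ ``therefore'' gives $F\diamond P=F$ and then applying Theorem \ref{T:SatoAction}(1) to $P$ itself over $K_y$ to get $P=U\circ 1$; you instead reduce to the absolute case and lift via the factorization $P=\bar P\,(1+J)$ with $J=\bar P^{-1}(P-\bar P)\in(y_1,\dots,y_m)\hat D_{n,-}^{sym}$, inverting $1+J$ by a Neumann series that converges because the $y$-degree of $J^k$ grows with $k$ and each $\Ord((-J)^k)\le 0$. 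The paper's route is shorter but compresses the passage from $\bar P$ to $P$ into one word; yours makes that Nakayama-type step explicit and elementary (and as a bonus isolates the small but necessary observation that regularity of $\bar P$ already forces $\Ord(\bar P)=0$, so the absolute case is applicable). Both are valid; your version is a useful expansion of exactly the point the paper leaves terse.
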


\begin{proof}
If $P$ is a unit, then $\Ord (P)=0$ and $P$ is regular by Lemma \ref{L:5.11}, item 2. Obviously, $\bar{P}$ is regular by the same reason.

Conversely, if $\bar{P}$ is regular and $\Ord (P)=0$, then $F\diamond \bar{P}=F$ and therefore $F\diamond P=F$, cf. example \ref{Ex:Sato_F}. Then by theorem \ref{T:SatoAction}, item 1 $P=U\circ 1$, where $U$ is a unit. 
\end{proof}

\begin{ex}
\label{R:Birkhoff}
As an application of our general Sato theory we can give a short proof (though not effective) of the generalized Birkhoff decomposition theorem from \cite{Mulase_inv}, cf. \cite[Th. 11.19]{Zheglov_book}.

Recall the notation from \cite{Mulase_inv}. Consider the completion of the ring $K[x, t_1, t_2, \ldots ]$ with respect to the $K$-valuation $\nu$ defined as $\nu (t_n):=n$, $\nu (x)=1$. Denote this completion by $K[[x,t]]$.

Set $\cd := K[[x,t]] [\partial ]$, $\cee :=  K[[x,t]]((\partial^{-1}))$. Denote by $\Upsilon$ the subgroup of zeroth order monic invertible operators from $\cee$. Consider a completion of the ring $\cee$ with respect to the valuation $v$ defined as $v(t_i):=i$, $v(x)=0$: $\hat{\cee}:=\{ \sum_{i=-\infty}^{\infty} a_i\partial^i| \quad v(a_i)\stackrel{i\rightarrow \infty}{\longrightarrow} \infty\}$. Denote by $\hat{\cd}$ the corresponding completion of the ring $\cd$. Denote by $Pr:K[[x,t]]\rightarrow K[[x,t]]/(x,t_1,t_2,\ldots )\simeq K$ the natural projection. 

Define the subgroups 
$$
\hat{\cd}^*=\{ P\in \hat{\cd}| \quad \mbox{$Pr(P)=1$ and $\exists$ $P^{-1}\in \hat{\cd}$}\},
$$ 
$$
\hat{\cee}^*=\{ P\in \hat{\cd}| \quad \mbox{$Pr(P)\in \Upsilon$ and $\exists$ $P^{-1}\in \hat{\cee}$}\}. 
$$ 
\begin{thm}[generalized Birkhoff decomposition by M. Mulase]
\label{T:Birkhoff}
For any $\Psi\in \hat{\cee}^*$ there exists a unique operators $S\in \Upsilon$, $Y\in\hat{\cd}^*$ such that $\Psi =YS$. 
\end{thm}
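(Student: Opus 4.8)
The statement is a Birkhoff-type factorisation $\hat\cee^{*}=\hat\cd^{*}\cdot\Upsilon$, and the plan is to read it off from the Sato theory of Section~\ref{S:Sato_theory} applied with $n=1$. The first step, which carries the real weight, is to set up a dictionary: take $n=1$ and $K_y=K[[t_1,t_2,\dots]]$, so that $\hat R_y$ becomes $K[[x,t]]$, and run all of Section~\ref{S:Sato_theory} with the $\idm$-adic valuation and the order function $\Ord$ replaced throughout by Mulase's valuation $v$ (so the relevant ``highest symbol'' of an element of $\hat\cee$ is its $v$-leading part; for $\Psi\in\hat\cee^{*}$ the hypothesis $Pr(\Psi)\in\Upsilon$ says exactly that this $v$-symbol is a monic invertible operator of order $0$ in $\partial$). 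Under this dictionary $\hat D_1^{sym}$ corresponds to $\hat\cd$, $\hat\Pi_1$ to $\hat\cee$, the module $F$ to $K[\partial]$, $V_1$ to $K[[t]]((\partial^{-1}))$, the normalised Sato operators $1+S_-$ of Theorem~\ref{T:SatoAction}(3) to the elements of $\Upsilon$, and, by (the $v$-version of) Theorem~\ref{T:SatoAction}(5), $\hat\cd$ is characterised inside $\hat\cee$ as $\{P\mid F\diamond P\subseteq F\}$.

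\textbf{Existence.} Let $\Psi\in\hat\cee^{*}$. Since the $v$-symbol of $\Psi$ is a unit (Lemma~\ref{L:5.11}), $\Psi$ is regular and $\Ord(\Psi)=0$, so $W:=F\diamond\Psi$ is a point of $\mathsf{Gr}_0(V_1)$ with $\Sup(W)=F$, and $\Psi$ is itself a Sato operator for $W$. By Theorem~\ref{T:SatoAction}(3) there is a unique normalised Sato operator $S=1+S_-\in\Upsilon$ for $W$. Put $Y:=\Psi S^{-1}\in\hat\cee$. Using $f\diamond(PQ)=(f\diamond P)\diamond Q$ together with $W=F\diamond S$ one computes $F\diamond Y=(F\diamond\Psi)\diamond S^{-1}=W\diamond S^{-1}=F$ and, symmetrically, $F\diamond Y^{-1}=F$; hence $-\diamond Y$ is a bijection of $F$, which forces $\Ord(Y)=0$ and $Y$ regular, so $Y$ is a unit, i.e. a Sato operator for $F$ (Theorem~\ref{T:SatoAction}(1)). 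By the characterisation of $\hat\cd$ just recalled, $Y\in\hat\cd$ and likewise $Y^{-1}\in\hat\cd$. Finally, $\Ord(Y)=0$ makes all coefficients of $Y$ at the positive powers of $\partial$ lie in the ideal $(t)$, hence vanish under $Pr$; comparing the coefficients of $\partial^{0}$ in $\Psi=YS$, applying $Pr$, and using that $Pr(\Psi)\in\Upsilon$ and $S$ is monic, one gets $Pr(Y)=1$. Thus $Y\in\hat\cd^{*}$ and $\Psi=YS$ is a decomposition of the required kind.

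\textbf{Uniqueness.} This is group-theoretic and uses no Sato theory: an operator in $\hat\cd^{*}\cap\Upsilon$ has only non-negative powers of $\partial$ and is simultaneously of the form $1+(\text{strictly negative powers of }\partial)$, hence equals $1$; so $\hat\cd^{*}\cap\Upsilon=\{1\}$, and $Y_1S_1=Y_2S_2$ with $Y_i\in\hat\cd^{*}$, $S_i\in\Upsilon$ gives $Y_2^{-1}Y_1=S_2S_1^{-1}\in\hat\cd^{*}\cap\Upsilon=\{1\}$, i.e. $Y_1=Y_2$ and $S_1=S_2$. The main obstacle is exactly the first step: Mulase's completions are formed with respect to $v$, which behaves quite differently from the $\idm$-adic/$\Ord$-setting of Section~\ref{S:Sato_theory} (most visibly $v(x)=0$), so one must check that the recursive, slice-by-slice constructions underlying Lemma~\ref{L:regularOp} and Theorem~\ref{T:SatoAction} --- in particular the convergence of the normalised Sato operator --- remain valid after this substitution, and that the use of countably many parameters $t_i$ (i.e. $m=\infty$) causes no trouble; granting this, the argument above is short, which is precisely why the proof is short but not effective.
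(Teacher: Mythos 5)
Your proposal is correct and follows essentially the same route as the paper: form $W=F\diamond\Psi$, extract the unique normalised Sato operator $S\in\Upsilon$ via Theorem~\ref{T:SatoAction}(3), set $Y=\Psi S^{-1}$, and deduce uniqueness from $\hat{\cd}^*\cap\Upsilon=\{1\}$. You are in fact more explicit than the paper about the needed adaptation of the Sato machinery to Mulase's valuation $v$ and about why $Y\in\hat{\cd}^*$, both of which the paper leaves implicit.
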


\begin{proof} Consider the space $W:=F\diamond \Psi$.  Then the proof of the Sato theorem \ref{T:SatoAction}, item 3 can be applied to get a uniquely defined operator $S\in \Upsilon$ such that $W=F\diamond S$. Therefore,  $Y:=\Psi S^{-1}\in \hat{\cd}$, and, obviously, $Y\in \hat{\cd}^*$. 

The decomposition $\Psi =YS$ is unique: if $Y_1S_1=Y_2S_2$, then $Y_2^{-1}Y_1=S_2S_1^{-1}$, but $\hat{\cd}^*\cap \Upsilon =\{1\}$, therefore $Y_1=Y_2$ and $S_1=S_2$. 
\end{proof}

\end{ex}

\subsection{Schur pairs, two constructions}
\label{S:Schur_pairs}

The Schur theory for $\hat{D}_n$ leads to a notion of Schur pair associated to a commutative ring of operators. In this section we describe this notion for quasi-elliptic subrings from $\hat{D}_n^{sym}$.  A review of this notion for  subrings from $D_1$ see in \cite{Mul} and in \cite[\S 5]{Quandt} for generalisations in the case of relative situation. The Sato theory is used to establish one-to-one correspondence between Schur pairs and quasi-elliptic rings.

\begin{defin}
\label{D:sch}
We say that a pair of subspaces $(A,W)$, where $A,W \subset  V_n$ and $A$ is a $K$-algebra with unity such that $W\cdot A\subset W$, is a {\it 1-Schur pair in $V_n$}  if $A$ and $W$ are 1-spaces (see definition \ref{D:1space}) and $W \in \mathsf{Gr}_\mu(V_n)$. 

We say that 1-Schur pair is a quasi elliptic 1-Schur pair if $A$ is a quasi-elliptic ring. 

We say that two 1-Schur pairs $(A,W)$ and $(A',W')$ are equivalent if $A'=T^{-1}AT$ and $W'=W\diamond T:= \pi (W\circ T)$, where $T$ is a 1-admissible operator, cf. definition \ref{D:admissible}, and we consider $W$ as a submodule of the bimodule $\hat{\Pi}_n$, cf. lemma \ref{L:module_structure} and definition \ref{D:defin2}.
\end{defin}

The Schur pairs appear naturally from quasi-elliptic subrings from $\hat{D}_n$ due to the following construction.

{\bf Construction 1.} Given a quasi-elliptic ring $B\subset \hat{D}_n$ we choose a set of monic formally quasi-elliptic elements $P_1, \ldots , P_n$. Then by lemma \ref{L:lemma2.9} there exist monic formally quasi-elliptic operators $L_1,\ldots , L_n$ with $\Ord (L_i)=1$ for all $i$. Then by lemma \ref{L:lemma7} and corollary \ref{C:normalised_roots} there exists an operator $S_0$  such that $S_0^{-1}L_iS_0$ and $S_0^{-1}P_iS_0$  are normalized (of the same order), and by theorem \ref{T:lemma8} there exists a monic invertible operator $S$ satisfying the condition $A_1$ with $\Ord (S)=|\ord_{\Gamma}(S)|=0$ such that $SS_0^{-1}L_iS_0S^{-1}=\partial_i\in V_n$.  

The spaces $A$ and $W$ are obtained as follows: 
$$
A:=SS_0^{-1}BS_0S^{-1}\subset V_n\cap \Pi_n \quad W=F\diamond (S_0S^{-1})=F\diamond S^{-1}\subset V_n\cap \Pi_n
$$ 
(the last equality follows from the special form of $S_0$, so that $F\diamond S_0=F$). Note that $\Sup (W) =F$, $W\in \mathsf{Gr}_0(V_n)$, and $\dim_K (\Sup (W_k))=H_F(k)$, because $LT(\underline{\partial}^{\underline{j}}\diamond S^{-1})=\underline{\partial}^{\underline{j}}$ for any $\underline{j}\in \dn_0^n$.
 
By the same facts from the Schur theory it follows that $A$ does not depend on the choice of $S_0,S$ (but  $W$ depends: it changes by a multiplication with a 1-admissible operator with constant coefficients), but may depend on the choice of $P_1, \ldots , P_n$ . Choosing other elements $P_1',\ldots , P_n'$ we obtain another subring $A'\subset V_n$, which differs from $A$ by conjugation with a 1-admissible operator. This explains the definition of equivalence relation between 1-Schur pairs. Note that in both cases 1-Schur pairs $(A,W)$ and $(A',W')$ are equivalent.

{\bf Construction 2.} Vice versa, given a  1-Schur pair $(A, W)$ with $\Sup (W) =F$, we can apply theorem \ref{T:SatoAction}, item 3 and find an invertible  Sato operator $S$. Then $B:=SAS^{-1}\in \hat{E}_n\cap \Pi_n$ consists of operators $P$ such that $F\diamond P\subseteq F$, hence $B\subset \hat{D}_n$ by theorem \ref{T:SatoAction}, item 5. If $(A, W)$ is a quasi-elliptic 1-Schur pair, then 
since $\Ord (S)=|\ord_{\Gamma}(S)|=\Ord (S^{-1})=|\ord_{\Gamma}(S^{-1})|=0$ and formal quasi-elliptic operators from $A$ satisfy condition $A_1$ (by definition), it follows from lemma \ref{L:lemma2.7} and corollary \ref{C:corol2.1} that $B$ is also quasi-elliptic. 

For a generic Schur pair, the following theorem is an improvement of \cite[Th. 5.18]{BurbanZheglov2017}

\begin{thm}\label{T:SatoSchur}
Let $(A, W)$ be a 1-Schur pair and $S$ be a Sato operator of $W$. Then the following statements are true.
\begin{enumerate}
\item For any element $f \in A$, there exists a uniquely determined operator $\mathtt{L}_S(f) \in \hat{D}_n^{sym}$ such that
$
S \circ f = \mathtt{L}_S(f) \circ S.
$
Moreover, $\Ord \bigl(\mathtt{L}_S(f)\bigr) = \Ord (f)$ for any $f\in A$.
\item Next, for any $f_1, f_2 \in A$ and $\lambda_1, \lambda_2 \in K$ we have:
\begin{equation*}
\mathtt{L}_S(f_1 \cdot f_2) = \mathtt{L}_S(f_1) \cdot \mathtt{L}_S(f_2) \quad \mbox{\rm and} \quad \mathtt{L}_S(\lambda_1 f_1 + \lambda_2 f_2) = \lambda_1 \mathtt{L}_S(f_1) + \lambda_2 \mathtt{L}_S(f_2)
\end{equation*}
In other words, the map $A \rightarrow \hat{D}_n^{sym}, f \mapsto \mathtt{L}_S(f)$ is a homomorphism of $K$--algebras, which is moreover
injective.
\end{enumerate}
\end{thm}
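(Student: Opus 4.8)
The plan is to construct $\mathtt{L}_S(f)$ slice by slice, in exactly the same way the Sato operator itself was built in Theorem~\ref{T:SatoAction}(1), and then to deduce every other assertion from the uniqueness of such a construction together with the regularity of $S$ and the domain property of $V_n$.

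\emph{Existence.} Fix $f\in A$. By the associativity of the bimodule structure (Lemma~\ref{L:module_structure}) and $W\cdot A\subseteq W$, the image $F\diamond(S\circ f)$ equals $\{(g\diamond S)\cdot f\mid g\in F\}=W\cdot f$ (product in the ring $V_n$), which lies in $W=F\diamond S$. Since $S$ is regular with $\Ord(S)=\mu$, Lemma~\ref{L:SatoOperators}(2) then provides, for each $\underline{k}\in\dn_0^n$, a unique $w_{\underline{k}}\in F$ with $\underline{\partial}^{\underline{k}}\diamond(S\circ f)=w_{\underline{k}}\diamond S$; moreover the graded refinement of that bijection, already used in the proofs of Theorem~\ref{T:SatoAction}(1) and Lemma~\ref{L:SatoOperators}(2), says $-\diamond S$ maps $F_m$ isomorphically onto $W_{m+\mu}$, so from $\underline{\partial}^{\underline{k}}\diamond(S\circ f)\in W_{|\underline{k}|+\mu+\Ord(f)}$ (properties of $\Ord$) I get $\Ord(w_{\underline{k}})\le|\underline{k}|+\Ord(f)$. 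Now I would run the recursion of Theorem~\ref{T:SatoAction}(1): set $\mathtt{L}_{(\underline 0)}:=w_{\underline 0}$ and, at level $m$, solve $\underline{\partial}^{\underline{k}}\diamond\mathtt{L}=\mathtt{L}_{(\underline{k})}+(\text{known lower-order terms})=w_{\underline{k}}$ for $|\underline{k}|=m$. The order bound on the $w_{\underline{k}}$ makes $\mathtt{L}_S(f):=\sum_{\underline{k}}\frac{\underline{x}^{\underline{k}}}{\underline{k}!}\mathtt{L}_{(\underline{k})}$ converge in $\hat{\Pi}_n$ with $\Ord(\mathtt{L}_S(f))\le\Ord(f)$, and since every $w_{\underline{k}}$ lies in $F$, Theorem~\ref{T:SatoAction}(5) forces $\mathtt{L}_S(f)\in\hat{D}_n^{sym}$. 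Finally $\underline{\partial}^{\underline{k}}\diamond(\mathtt{L}_S(f)\circ S)=(\underline{\partial}^{\underline{k}}\diamond\mathtt{L}_S(f))\diamond S=w_{\underline{k}}\diamond S=\underline{\partial}^{\underline{k}}\diamond(S\circ f)$ for all $\underline{k}$, so $\mathtt{L}_S(f)\circ S=S\circ f$ by Lemma~\ref{L:statement}.

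\emph{Uniqueness and order.} If $P\in\hat{D}_n^{sym}$ also satisfies $P\circ S=S\circ f$, then $(P-\mathtt{L}_S(f))\circ S=0$, and regularity of $S$ with Lemma~\ref{L:regularOp}(2) gives $P=\mathtt{L}_S(f)$. For the order equality I would argue as follows: $V_n$ is a domain (a Laurent series ring over the domain $K_y\{\{\partial_1,\dots,\partial_{n-1}\}\}$) and $\sigma(S)$ is regular, hence for $f\ne 0$ one has $\sigma(S)\circ\sigma(f)\ne 0$ and therefore $\Ord(S\circ f)=\mu+\Ord(f)$; combining this with $\Ord(\mathtt{L}_S(f))\le\Ord(f)$ and $\mu+\Ord(f)=\Ord(\mathtt{L}_S(f)\circ S)\le\Ord(\mathtt{L}_S(f))+\mu$ yields $\Ord(\mathtt{L}_S(f))=\Ord(f)$ (the case $f=0$ being trivial).

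\emph{Homomorphism property, injectivity, and the main obstacle.} $K$-linearity is immediate from uniqueness, and multiplicativity follows the same way using bimodule associativity (Lemma~\ref{L:module_structure}):
$$
S\circ(f_1f_2)=(S\circ f_1)\circ f_2=\mathtt{L}_S(f_1)\circ(S\circ f_2)=\bigl(\mathtt{L}_S(f_1)\cdot\mathtt{L}_S(f_2)\bigr)\circ S,
$$
so $\mathtt{L}_S(f_1f_2)=\mathtt{L}_S(f_1)\cdot\mathtt{L}_S(f_2)$ by uniqueness, and $\mathtt{L}_S(1)=1$ because $S\circ 1=1\circ S$. Injectivity is then free: $\mathtt{L}_S(f)=0$ forces $\Ord(f)=\Ord(\mathtt{L}_S(f))=-\infty$, i.e.\ $f=0$ (equivalently, $S\circ f=0$ makes $W\cdot f=F\diamond(S\circ f)=0$, whence $f=0$ as $W\ne 0$ and $V_n$ is a domain). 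I expect the one genuinely substantial step to be the slice construction of $\mathtt{L}_S(f)$ together with the estimate $\Ord(w_{\underline{k}})\le|\underline{k}|+\Ord(f)$: this is precisely what guarantees both convergence of the series of slices in $\hat{\Pi}_n$ and, via Theorem~\ref{T:SatoAction}(5), membership in $\hat{D}_n^{sym}$; everything afterwards is formal manipulation of the bimodule axioms and of the order function.
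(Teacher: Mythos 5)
Your proposal is correct and follows essentially the same route as the paper: define the preimages $w_{\underline{k}}\in F$ of $\underline{\partial}^{\underline{k}}\diamond S\cdot f$ under the bijection $-\diamond S$, rebuild $\mathtt{L}_S(f)$ by the slice recursion of Theorem \ref{T:SatoAction}, conclude membership in $\hat{D}_n^{sym}$ from the slices lying in $F$, and get uniqueness and the homomorphism property from regularity of $S$, Lemma \ref{L:regularOp} and Lemma \ref{L:statement}. Your explicit two-sided argument for $\Ord(\mathtt{L}_S(f))=\Ord(f)$ (via $\sigma(S)\circ\sigma(f)\neq 0$ and the upper bound from the construction) is a welcome addition, as the paper asserts this equality without spelling it out.
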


\begin{proof}
For any multiindex $\underline{k}\in\dn_0^n$ we define $w_{\underline{k}}:=\underline{\partial}^{\underline{k}}\diamond S\cdot f\in W$. Since $W=F\diamond S$, there exist (uniquely defined) elements $w_{\underline{k}}'\in F$ such that $w_{\underline{k}} =w_{\underline{k}}'\diamond S$. 

Put $W'=\langle w_{\underline{k}}', \underline{k}\in\dn_0^n\rangle$. Then by the proof of theorem \ref{T:SatoAction}, item 1,2 there exists an operator $\mathtt{L}_S(f)\in \hat{D}_n^{sym}$ such that $w_{\underline{k}}'=\underline{\partial}^{\underline{k}}\diamond \mathtt{L}_S(f)$. Then by lemma \ref{L:statement} we have 
$
S \circ f = \mathtt{L}_S(f) \circ S.
$
The uniqueness of $\mathtt{L}_S(f)$ follows from the regularity of $S$ and Lemma  \ref{L:regularOp}.

\smallskip
\noindent
(2) Let $f_1, f_2 \in A$. By construction, we have:
$$
S \circ (f_1 \cdot f_2) = \bigl(\mathtt{L}_S(f_1) \circ S\bigr) \circ f_2 = \bigl(\mathtt{L}_S(f_1) \cdot \mathtt{L}_S(f_2)\bigr)\circ S.
$$
Since the operator $L_S(f)$ is uniquely determined by $f \in A$, we get:  $\mathtt{L}_S(f_1) \cdot \mathtt{L}_S(f_2) = \mathtt{L}_S(f_1 \cdot f_2)$. The proof of the second statement is analogous, hence
$A \rightarrow \hat{D}_n^{sym} \, f\mapsto L_S(f)$ is indeed a homomorphism of $K$--algebras. For $f \ne 0$ we have: $\sigma(S \circ f) = \sigma(S) \circ \sigma(f) \ne 0$, hence $\mathtt{L}_S(f) \ne 0$, too.

\end{proof}

There is an analogue of proposition \ref{P:quasi-ell-properties}, item 6 for quasi-elliptic 1-Schur pairs:

\begin{lem}
\label{L:quasi-ell-properties}
Let $(A,W)$ be a  quasi-elliptic 1-Schur pair. Then $\trdeg_{K'}(\Quot (A))=n$, where $K'=\Quot (K_y)$, the field $\Quot (A)$ is finitely generated over $K'$ and the localisation $\Quot (A)\cdot W$ is a finitely generated $\Quot (A)$-module.
\end{lem}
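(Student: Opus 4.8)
The plan is to transport the statement across the Sato operator to the analogous statement for the ring $B = S A S^{-1} \subset \hat{D}_n^{sym}$, which was already established in Proposition \ref{P:quasi-ell-properties}, item 6. By Construction 2 (or directly by Theorem \ref{T:SatoAction}, item 1, together with Construction 2), since $(A,W)$ is a quasi-elliptic 1-Schur pair there is a Sato operator $S$ of $W$ with $\Ord(S) = |\ord_{\Gamma}(S)| = 0$, and by Theorem \ref{T:SatoSchur} the map $f \mapsto \mathtt{L}_S(f)$ is an injective homomorphism of $K$-algebras $A \to \hat{D}_n^{sym}$ with $\Ord(\mathtt{L}_S(f)) = \Ord(f)$; moreover the image $B := \mathtt{L}_S(A) = S A S^{-1}$ lies in $\hat{D}_n$ (by Theorem \ref{T:SatoAction}, item 5) and is quasi-elliptic (by Construction 2, using Lemma \ref{L:lemma2.7} and Corollary \ref{C:corol2.1}, since the formally quasi-elliptic generators of $A$ satisfy condition $A_1$ and conjugation by $S$ preserves this). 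Hence $A \cong B$ as $K$-algebras, so $\Quot(A) \cong \Quot(B)$ as $K'$-algebras, and $\trdeg_{K'}(\Quot(A)) = \trdeg_{K'}(\Quot(B)) = n$ with $\Quot(A)$ finitely generated over $K'$, directly from Proposition \ref{P:quasi-ell-properties}, item 6.

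It remains to handle the statement that $\Quot(A) \cdot W$ is a finitely generated $\Quot(A)$-module. Here I would use that the map $- \diamond S \colon F \to W$ is a bijection (Lemma \ref{L:SatoOperators}, item 2, together with the fact that $W = F \diamond S$) which is moreover compatible with the module structures: for $f \in F$ and $g \in A$ one has $(f \diamond S) \cdot g = \pi(f \circ S \circ g) = \pi(f \circ \mathtt{L}_S(g) \circ S) = (f \diamond \mathtt{L}_S(g)) \diamond S$, so that under the identification $W \cong F$ the right $A$-action on $W$ corresponds to the right $B$-action on $F$ (via $g \leftrightarrow \mathtt{L}_S(g)$). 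Consequently $\Quot(A) \cdot W$ is isomorphic, as a module over $\Quot(A) \cong \Quot(B)$, to $\Quot(B) \cdot F$, which is a finitely generated $\Quot(B)$-module by Proposition \ref{P:quasi-ell-properties}, item 6. This gives the claim.

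The main technical point to check carefully — and the step I would expect to require the most care — is the verification that the bijection $- \diamond S$ really intertwines the two module structures on the nose, i.e. that the associativity identity $\pi((f\circ S)\circ g) = \pi(f\circ(S\circ g))$ holds in $\hat{\Pi}_n$ with the appropriate placement of $\pi$, using that $S \circ g = \mathtt{L}_S(g)\circ S$ is an identity in $\hat{\Pi}_n$ and that the bimodule structure of Lemma \ref{L:module_structure} is genuinely associative. Once this compatibility is in place, everything else is a formal transfer of the already-proved Proposition \ref{P:quasi-ell-properties}, item 6, along the algebra isomorphism $A \cong B$; no new estimates on orders or dimensions are needed beyond those recorded earlier.
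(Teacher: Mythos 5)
Your route is genuinely different from the paper's: the paper proves the lemma by observing that the proof of Proposition \ref{P:quasi-ell-properties}, item 6, is a pure dimension-growth count that runs verbatim with $W$ in place of $F$ and $A$ in place of $B$ --- one only needs that $\dim_{K'}\{w\in W \mid \Ord(w)<N\}$ grows like a polynomial of degree $n$ in $N$ (which is exactly the Hilbert-function condition $W\in\mathsf{Gr}_\mu(V_n)$) and that $A$ contains $n$ formally quasi-elliptic, hence algebraically independent, elements whose monomials fill up a positive proportion of that space. No Sato operator is involved.

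Your transfer argument has a genuine gap at the step where you claim $B=\mathtt{L}_S(A)=SAS^{-1}$ lies in $\hat{D}_n$ and is quasi-elliptic, which is what you need in order to quote Proposition \ref{P:quasi-ell-properties}, item 6, for $(B,F)$. Construction 2 and Theorem \ref{T:SatoAction}, items 3--5, give this only under the additional hypotheses $\mu=0$ and $\Sup(W)=F$: it is only then that the Sato operator has the special form $S=1+S_-$, is invertible in $\hat{E}_n$, satisfies $A_1$ and has $|\ord_{\Gamma}(S)|=0$, so that conjugation preserves $\Gamma$-orders, monicity and condition $A_1$ (via Lemma \ref{L:lemma2.7} and Corollary \ref{C:corol2.1}). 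The lemma, however, is stated for an arbitrary quasi-elliptic 1-Schur pair; $\Sup(W)=F$ is an extra condition (it is imposed separately in Theorem \ref{T:schurpair}), and for general $W\in\mathsf{Gr}_\mu(V_n)$ the Sato operator of Theorem \ref{T:SatoAction}, item 1, is merely an element of $\hat{\Pi}_n$: it need not be invertible, so ``$SAS^{-1}$'' is undefined, and Theorem \ref{T:SatoSchur} only guarantees $\Ord(\mathtt{L}_S(f))=\Ord(f)$, not preservation of $\ord_{\Gamma}$ or monicity, nor $B\subset\hat{D}_n$. To repair this you would either have to first replace $(A,W)$ by an equivalent normalized pair with $\Sup(W)=F$ and check that all three conclusions are invariant under equivalence of Schur pairs (material that only appears later, in Remark \ref{R:rank_one_pairs}), or --- more economically --- drop the transfer and run the growth argument of Proposition \ref{P:quasi-ell-properties}, item 6, directly on $(A,W)$, as the paper does. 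Your intertwining identity $(f\diamond S)\cdot g=(f\diamond\mathtt{L}_S(g))\diamond S$ is fine and is not where the difficulty lies.
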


\begin{proof}
The proof is literally the same as the proof of proposition \ref{P:quasi-ell-properties}, item 6.
\end{proof}

The most interesting examples of quasi-elliptic rings in $\hat{D}_n^{sym}$ and of quasi-elliptic 1-Schur pairs are those of rank $1$:

\begin{defin}
\label{D:an-alg-rank}
Let $B\subset \hat{D}_n$ be a commutative subring. Then we define the {\it analytical rank} as 
$$
An.rank(B):=\rk (F\cdot  \Quot (B))=\dim_{\Quot (B)}(F\cdot  \Quot (B)).
$$

Analogously, if $(A,W)$ is a Schur pair, we define its analytical rank as 
$$
An.rank((A,W)):={\rk}_A (W)=\dim_{\Quot (A)}(W\cdot  \Quot (A)).
$$

For brevity, we'll use the word {\it rank} instead of analytical rank in this paper, cf. \cite[Def. 7.1]{Zheglov_belovezha}. 
\end{defin}

\begin{defin}
\label{D:normal_Schur_pair}
A quasi-elliptic 1-Schur pair is called {\it normalized} if $1\in W$ and $A$ contains formal quasi-elliptic operators which are {\it monomials}. 
\end{defin}

\begin{rem}
\label{R:rank_one_pairs}
1) Note that construction 1 produces almost normalised Schur pairs in the following sense: if we multiply $W$ with an appropriate operator $S$ with constant coefficients, then the pair $(S^{-1}AS,W\diamond S)=(A,WS)$ is an equivalent normalized pair. Besides, by the same arguments from the construction, using lemma \ref{L:lemma7}, corollary \ref{C:normalised_roots} and theorem \ref{T:lemma8}, we can find for any quasi-elliptic 1-Schur pair an equivalent normalized quasi-elliptic 1-Schur pair. 

2) There is one beautiful property of a normalized quasi-elliptic 1-Schur pair of rank one: since $1\in W$ and $A\cdot W\subset W$, we have $W\in \Quot (A)$. We will use this property  to prove some non-trivial results on the behaviour of a 1-quasi-elliptic ring under a linear change of variables.
\end{rem}

\subsection{Admissible linear change of variables}
\label{S:admissible_change}

Before we'll formulate the next lemma, we need to introduce new notation. Consider a set of elements $Y\subset V_n$ of fixed $\Ord$-order $p$. Note that the $\Gamma$-order induces on $Y$ an order function with values in $\dz$ and with the maximal value $(0,\ldots ,0, p)$ (since the $\Gamma$-order takes values in $\dz_+\oplus \ldots \oplus \dz_+\oplus \dz$). Namely, $(0, \ldots , 0,1, p-1)<(0,\ldots ,0, p)$ and there are no orders $\underline{j}=(j_1, \ldots ,j_n)$ with $|\underline{j}|=p$ such that $(0, \ldots , 0, 1, p-1)<\underline{j}<(0,\ldots ,0, p)$. We'll use the notation $(0, \ldots , 0,1, p-1)= (0,\ldots ,0, p)-1$. Analogously, $(0,\ldots ,0, p)-i$ will denote the $\Gamma$-order such that there are exactly $i-1$ orders greater than this, e.g.  $(0,\ldots ,0, p)-2=(0, \ldots , 1,0, p-1)$,  $(0,\ldots ,0, p)-3=(0, \ldots , 1,0,0, p-1)$, and so on.

\begin{lem}
\label{L:linear_changes}

Let $(A,W)$ be a normalized quasi-elliptic 1-Schur pair. Assume that $v_1, \ldots ,v_k\in A$ are elements with $\Ord (v_i)=p>0$ for all $i=1, \ldots , k\le n$, $\sigma (v_i)=\sigma (\bar{v}_i)$,  for all $i=1, \ldots , k$ and $\sigma(\bar{v}_1), \ldots , \sigma(\bar{v}_k)$ are algebraically  independent (over $K$). Assume additionally that $v_1\partial_n^{-\Ord (v_k)}, \ldots , v_k\partial_n^{-\Ord (v_k)} \in K_y[[\partial_1', \ldots , \partial_{n-1}', \partial_n^{-1}]]$, where $\partial_i'=\partial_i\partial_n^{-1}$. \footnote{If $K_y=K$, this condition holds automatically.} Denote by $\hat{K}$  some completion of $K$ (cf. \cite[Ch. VI]{Bu}).

Then there exists an invertible linear change of variables 
$$\varphi :\partial_i \mapsto \sum_{j=1}^{n-1}c_{ij}\partial_j +\lambda_i \partial_n, \quad i=1, \ldots , n-1, \quad c_{ij},\lambda_i\in K, \quad \partial_n \mapsto \partial_n
$$ 
such that the vector space $\hat{K}_y\langle \varphi (v_1), \ldots , \varphi (v_k)\rangle$ (over $\hat{K}_y$) has a basis $w_1 , \ldots , w_k\in \hat{K}_y[[\partial_1', \ldots , \partial_{n-1}', \partial_n^{-1}]]$ with $\Ord (w_i)=p$, $\sigma (w_i)=\sigma (\bar{w}_i)$, $i=1,\ldots ,k$, $\ord_{\Gamma}(\sigma (w_1))=\ord_{\Gamma}(w_1)=(0, \ldots ,0, p)$, $\ord_{\Gamma}(\sigma (w_i))=\ord_{\Gamma}(w_i)=\ord_{\Gamma}(\sigma (w_1))- (i-1)$, $k\ge i>1$, and $\sigma (w_1), \ldots ,\sigma (w_k)$ are monic. In particular, $w_1, \ldots , w_k$ are formally quasi-elliptic. 

Moreover, there is an open subset $0\in U\subset \hat{K}^{n-1}$ such that this is true for any $(\lambda_1, \ldots ,\lambda_{n-1})\in U$. 
\end{lem}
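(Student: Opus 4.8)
The plan is to reduce everything to the behaviour of the highest symbols under the linear change $\varphi$, and then to apply an open-density argument in the space of parameters $(\lambda_1,\ldots,\lambda_{n-1})\in\hat{K}^{n-1}$. First I would observe that, since $\sigma(v_i)=\sigma(\bar v_i)\in K[\partial_1,\ldots,\partial_n]$ is a homogeneous constant-coefficient polynomial of degree $p$ (because $\Ord(v_i)=|\ord_\Gamma(\sigma(v_i))|=p$ by the quasi-ellipticity hypotheses, cf.\ proposition \ref{P:quasi-ell-properties}), applying $\varphi$ to the $\partial_i$ ($i<n$) and leaving $\partial_n$ fixed sends $\sigma(v_i)$ to another degree-$p$ homogeneous polynomial $g_i:=\sigma(v_i)(\varphi(\partial_1),\ldots,\varphi(\partial_{n-1}),\partial_n)$; algebraic independence of $\sigma(\bar v_1),\ldots,\sigma(\bar v_k)$ is preserved because $\varphi$ is an invertible linear change, so $g_1,\ldots,g_k$ remain algebraically independent. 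Since $\varphi(v_i)$ differs from $v_i$ only through this linear substitution, its highest symbol is exactly $g_i$ and the condition $\sigma(\varphi(v_i))=\sigma(\overline{\varphi(v_i)})$ is inherited. The additional hypothesis that $v_j\partial_n^{-p}\in K_y[[\partial_1',\ldots,\partial_{n-1}',\partial_n^{-1}]]$ (with $\partial_i'=\partial_i\partial_n^{-1}$) guarantees, after the substitution, $\varphi(v_j)\partial_n^{-p}\in \hat K_y[[\partial_1',\ldots,\partial_{n-1}',\partial_n^{-1}]]$; over $\hat K_y$ the Gaussian elimination that produces the basis $w_1,\ldots,w_k$ stays inside this ring, which is why we pass to a completion $\hat K$.

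Next I would carry out the \emph{normalization of the leading monomial}. In the anti-lexicographically ordered group $\dz^n$ the maximal $\Gamma$-order among degree-$p$ elements is $(0,\ldots,0,p)$, whose coefficient in $g_i$ is $\sigma(v_i)(\lambda_1,\ldots,\lambda_{n-1},1)$, a polynomial in $\lambda$. By algebraic independence of the $\sigma(\bar v_i)$, for generic $\lambda$ these $k$ values $\mu_i(\lambda):=\sigma(v_i)(\lambda,1)$ are nonzero and, more importantly, one can run a Gauss elimination on the matrix whose $(i,\underline j)$-entry is the coefficient of $\underline\partial^{\underline j}$ in $g_i$ over all $\underline j$ with $|\underline j|=p$: after a suitable $\hat K_y$-linear recombination $w_i=\sum_j a_{ij}\varphi(v_j)$ one arranges that $w_1$ has $\Gamma$-order exactly $(0,\ldots,0,p)$, $w_2$ has $\Gamma$-order $(0,\ldots,0,p)-1=(0,\ldots,1,p-1)$, $w_3$ has $\Gamma$-order $(0,\ldots,1,0,p-1)$, and so on; this is precisely the echelon form with respect to the anti-lexicographic order on the $k$-element set of $\Gamma$-orders of degree $p$ — here one needs $k\le n$ so that these $k$ consecutive $\Gamma$-orders are all available. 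Rescaling by the appropriate nonzero leading coefficients makes each $\sigma(w_i)$ monic. The echelon property plus the fact that each $w_i$ is still a leading-symbol-preserving element (an $\hat K_y$-combination of the $\varphi(v_j)$) gives $\sigma(w_i)=\sigma(\bar w_i)$ and $\ord_\Gamma(\sigma(w_i))=\ord_\Gamma(w_i)=\ord_\Gamma(\sigma(w_1))-(i-1)$, so that $w_1,\ldots,w_k$ are formally quasi-elliptic by definition \ref{D:quasi-elliptic}.

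Finally I would make the genericity quantitative: all the requirements (the leading coefficient $\mu_1(\lambda)=\sigma(v_1)(\lambda,1)\ne0$; the successive Gauss-pivot minors being nonzero; invertibility of the matrix $(a_{ij})$) are each the non-vanishing of a fixed polynomial in $(\lambda_1,\ldots,\lambda_{n-1})$, hence each cuts out a Zariski-open, and in $\hat K$-topology dense, subset of $\hat K^{n-1}$ containing a neighbourhood of any of its points; their finite intersection is still open, and one checks it is nonempty (and can be arranged to contain $0$ after first conjugating the pair by a preliminary linear change so that $\lambda=0$ already lies in it, using that construction 1 / lemma \ref{L:lemma7} only determines $A$ up to equivalence). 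Taking $U$ to be this open set finishes the proof. The main obstacle I expect is the bookkeeping in the Gauss-elimination step: one must verify that the recombination producing the echelon form of $\Gamma$-orders can be done with coefficients in $\hat K_y$ (not merely in $\hat K_y((\partial_n^{-1}))$) and that it simultaneously preserves the property $\sigma(w_i)=\sigma(\bar w_i)$ and does not disturb membership in $\hat K_y[[\partial_1',\ldots,\partial_{n-1}',\partial_n^{-1}]]$ — this is exactly where the hypothesis $v_j\partial_n^{-p}\in K_y[[\partial_1',\ldots,\partial_{n-1}',\partial_n^{-1}]]$ and the passage to the completion $\hat K$ are used, and where one must be careful that the $\Gamma$-orders below the leading one are handled by the anti-lexicographic ordering lemma \ref{L:lemma2.7}.
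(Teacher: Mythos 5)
Your outline misses the two points on which the paper's proof actually turns, and both failures stem from treating $\sigma(v_i)$ as a degree-$p$ homogeneous \emph{polynomial} in $\partial_1,\ldots,\partial_n$. An element of $V_n$ that is homogeneous of $\Ord$-order $p$ is in general an infinite series $\sum_{|\underline{j}|=p}c_{\underline{j}}\,\underline{\partial}^{\underline{j}}$ with $j_1,\ldots,j_{n-1}\ge 0$ arbitrary and $j_n\in\dz$ (e.g.\ $\sum_{j\ge 0}c_j\partial_1^j\partial_n^{p-j}$). Hence after the shift $\partial_i\mapsto\partial_i+\lambda_i\partial_n$ the coefficient at the leading monomial $\partial_n^p$ is an \emph{infinite} series in $(\lambda_1,\ldots,\lambda_{n-1})$, not a polynomial: there is no ``fixed polynomial whose non-vanishing cuts out a Zariski-open set,'' and one must first prove that these series converge at all. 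This is precisely what the paper's reduction accomplishes: after dividing by $\partial_n^p$ the elements $v_i'$ lie in $K_y[[\partial_1',\ldots,\partial_{n-1}',\partial_n^{-1}]]$, are algebraic over $\Quot(K_y)(\partial_1',\ldots,\partial_{n-1}',\partial_n^{-1})$ by lemma \ref{L:quasi-ell-properties}, and therefore become analytic functions on a polydisc after completing to $\hat K$; the open set $U$ is an open subset of that polydisc where certain analytic (not polynomial) functions do not vanish. Your proposal never secures convergence of the evaluation at $\lambda$, which is the whole reason the completion $\hat K$ appears in the statement.

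The second and more serious gap is that the central claim --- that for generic $\lambda$ a Gauss elimination puts the family into the echelon of $\Gamma$-orders $(0,\ldots,0,p),(0,\ldots,0,p)-1,\ldots,(0,\ldots,0,p)-(k-1)$ --- is asserted to follow ``by algebraic independence of the $\sigma(\bar v_i)$'' with no argument. This is the actual content of the lemma. Algebraic independence of $k$ symbols does not by itself give linear independence of the relevant rows of leading coefficients (and note the set of $\Gamma$-orders of total degree $p$ is infinite, so the constraint $k\le n$ does not come from ``availability'' of echelon positions but from $\trdeg_{K'}\Quot(A)=n$). The paper proves the claim by induction on $k$: it must show that a certain $k\times k$ minor of the matrix of Taylor coefficients $\bigl(\underline{\partial}^{\underline{j}}(v_i')(h)\bigr)$ is not identically zero as an analytic function of $h$, and the key step is that $\sigma(v_k)$, rewritten as a series in the new coordinates $w_1,\ldots,w_{k-1},t_k,\ldots,t_n$, must depend on one of $t_k,\ldots,t_n$ --- otherwise one would exhibit $n+1$ algebraically independent elements of $A$, contradicting lemma \ref{L:quasi-ell-properties}. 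There is also a separate case analysis (entries of $\ord_{\Gamma}(\sigma(v_k))$ all $\le 1$ versus not) handled by an auxiliary substitution $\partial_j\mapsto\partial_j+\epsilon\partial_{n-k-1}$. None of this mechanism appears in your proposal, so the proof as outlined does not go through.
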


\begin{proof}
First, note that we can reduce the proof to the case $p=0$ by multiplying all elements on $\partial_n^{-p}$. 

Let $P_1, \ldots , P_n$ be monic monomial formally quasi-elliptic elements in $A$. Consider the  ring 
\begin{multline*}
A'=K_y[P_1\partial_n^{-\Ord (P_1)}, \ldots , P_{n-1}\partial_n^{-\Ord (P_n)}, \partial_n^{-1}, v_1\partial_n^{-\Ord (v_1)}, \ldots , v_k\partial_n^{-\Ord (v_k)}]= \\
K_y[\partial_1\partial_n^{-1}, \ldots , \partial_{n-1}\partial_n^{-1}, \partial_n^{-1}, v_1\partial_n^{-\Ord (v_1)}, \ldots , v_k\partial_n^{-\Ord (v_k)}].
\end{multline*} 
It is easy to see that $A'\subset K_y[[\partial_1', \ldots , \partial_{n-1}', \partial_n^{-1}]]$, where $\partial_i'=\partial_i\partial_n^{-1}$, the ideal $$I=(\partial_1', \ldots , \partial_{n-1}', \partial_n^{-1}, y_1, \ldots ,y_n)\cdot K_y[[\partial_1', \ldots , \partial_{n-1}', \partial_n^{-1}]]\cap A'$$ 
is maximal in $A'$ and that $\hat{A'}_{I}\simeq K[[y_1, \ldots ,y_n, \partial_1', \ldots , \partial_{n-1}', \partial_n^{-1}]]$, i.e. $A_I'$ is a regular local ring by \cite[Prop. 11.24]{AM}. By lemma \ref{L:quasi-ell-properties}, the elements $v_1\partial_n^{-\Ord (v_1)}, \ldots , v_k\partial_n^{-\Ord (v_k)}$ are algebraic over the field $\Quot (K_y)(\partial_1', \ldots , \partial_{n-1}', \partial_n^{-1})$, in particular, the images of these elements in $\hat{A'}_{I}\hat{\otimes}_K\hat{K}\simeq \hat{K} [[y_1, \ldots ,y_n, \partial_1', \ldots , \partial_{n-1}', \partial_n^{-1}]]$ are  analytic functions (determined by their Taylor series in some open poly-cylinder $P_0(r)$ with the center at zero). 

Put $v_i':=v_i\partial_n^{-\Ord (v_i)}$, $i=1,\ldots ,k$. Our claim is now reduced to the claim about linear changes $\partial_i' \mapsto \partial_i'+\lambda_i$ for analytic functions $v_i'$ (note that $\ord_{\Gamma}$ is still defined on elements of $\hat{K} [[y_1, \ldots ,y_n, \partial_1', \ldots , \partial_{n-1}', \partial_n^{-1}]]$). 

The proof is by induction on $k$. Let $k=1$. Let $P_0(r)$ denotes the open poly-cylinder in $\widehat{K}^{n+m}$ with the center at zero. Then for some small $r$ we have for any $\lambda =(\lambda_1, \ldots ,\lambda_{n-1})\in K^{n-1}$, $|\lambda |<r$ and $(y_1, \ldots ,y_n, \partial_1'=t_1, \ldots ,\partial_{n-1}'=t_{n-1}, \partial_n^{-1})\in P_0(r-|\lambda|)$ (see e.g. \cite[Ch. 2]{Serr})
$$
v_1'(t_1+\lambda_1, \ldots , t_{n-1}+\lambda_{n-1})=\frac{1}{\underline{j}!}\sum_{\underline{j}=(j_1, \ldots , j_{n-1})\ge (\underline{0})} \underline{\partial}^{\underline{j}}(v_1')(\lambda_1, \ldots , \lambda_{n-1})t^{\underline{j}}.
$$
For any fixed $\lambda_1, \ldots ,\lambda_{n-1}\in K$, $|\lambda |<r$ we'll consider the function $v_1'(t_1+\lambda_1, \ldots , t_{n-1}+\lambda_{n-1})$ as an element of the ring $\widehat{K}[[y_1, \ldots , y_n, \partial_n^{-1}]] [[t_1, \ldots , t_{n-1}]]$, and we will use the same notation for the order functions $\Ord$, $\ord_{\Gamma}$ defined on   elements of this ring (setting $\Ord (t_i)=0$, $\ord_{\Gamma}(t_i)=(\underline{0})-(n-i)$).

 Consider the function $\tilde{v}_1:= \bar{v}_1'|_{\partial_n^{-1}:=0}$. Since $\Ord (v_1')=0$ and $\sigma (v_1')=\sigma (\bar{v}_1')$, we have  $\tilde{v}_1\neq 0$. 
Clearly, there is an open subset $0\in U\subset \hat{K}^{n-1}$ of points $(\lambda_1, \ldots ,\lambda_{n-1})\in K^{n-1}$, $|\lambda |<r$ such that $\tilde{v}_1(\lambda_1, \ldots ,\lambda_{n-1})\neq 0$. Then 
$$
\ord_{\Gamma}(v_1'(t_1+\lambda_1, \ldots , t_{n-1}+\lambda_{n-1}))=
\ord_{\Gamma}(\bar{v}_1'(t_1+\lambda_1, \ldots , t_{n-1}+\lambda_{n-1}))=(\underline{0}).
$$
Without loss of generality we can assume  $\tilde{v}_1(\lambda_1, \ldots ,\lambda_{n-1})=1$. 
Note that the series $f(y):=v_1'|_{\partial_1'=\ldots =\partial_{n-1}'=\partial_n^{-1}=0}-1$ converges in the same poly-cylinder and therefore the series $w_1:=v_1'-f(y)v_1'$ converges too; besides, $\sigma (w_1)$ is monic and $\sigma (w_1)=\sigma (\bar{w}_1)$, $\ord_{\Gamma}(\sigma (w_1))=\ord_{\Gamma} (w_1)=(\underline{0})$.

For generic $k$ consider the  $\hat{K}_y$-vector space $H_k=\hat{K}_y \langle v_1'(t), \ldots , v_{k}'(t)\rangle $. By induction, there exists a linear change of variables $\varphi$ such that there is a basis $w_1(t), \ldots , w_{k-1}(t)$ in the space $\hat{K}_y\langle \varphi (v_1'(t)), \ldots , \varphi (v_{k-1}'(t))\rangle$ with the desired properties. Then we can complete this basis to a basis $w_1(t), \ldots , w_{k-1}(t), v_k(t)$ such that $\ord_{\Gamma}(\sigma (v_k(t)))\le (\underline{0})-(k-1)$ (and all other properties of $\sigma (v_k(t))$ remain valid, in particular it is monic). If $\ord_{\Gamma}(\sigma (v_k(t)))= (\underline{0})-(k-1)$, then clearly we can choose $v_k$ is such a way that $\ord_{\Gamma}(\sigma (v_k(t)))=\ord_{\Gamma}(v_k(t))$, and we are done, so we assume that $\ord_{\Gamma}(\sigma (v_k(t)))< (\underline{0})-(k-1)$.

If each entry of $\ord_{\Gamma}(\sigma (v_k(t)))$ is not more than one, then 
$$\sigma (v_k(t))= t_j+\mbox{terms of lower $\Gamma$-order}, $$
where $j<n-k$. Now apply the linear change $\varphi :\partial_j\mapsto \partial_j +\epsilon \partial_{n-k-1}$, $\partial_i \mapsto \partial_i$, $i\neq j$.  It is not difficult to see that $\varphi (w_i)$, $\varphi (v_k)$ are well defined for any $\epsilon$, belong to $K_y[[\partial_1', \ldots , \partial_{n-1}', \partial_n^{-1}]]$ and are analytic in the same poly-disc (clearly they are still algebraic; besides, this linear change is a result of conjugating with the operator $S:=\exp (\sum_{i=1}^{n-1}(\sum_{j=1}^{n-1}(c_{i,j}-\delta_{i,j})x_j)*\partial_i)) \in \hat{D}_n^n$ for appropriate constants $c_{i,j}$, cf. corollary \ref{C:linear_changes} below). Obviously, they satisfy all our conditions just for  $\epsilon =1$.


Assume there is an entry of $\ord_{\Gamma}(\sigma (v_k(t)))$ which is more than one.  Now  we represent the series $w_i(t+h)$, $v_k(t+h)$, where $h=(y_1=0,\ldots , y_n=0, \lambda_1, \ldots , \lambda_{n-1}, \partial_n^{-1}=0)\in P_0(r)$, $t=(y_1, \ldots, y_n, t_1, \ldots , t_{n-1}, \partial_n^{-1})\in P_0(r-|h|)$, in {\it powers of $t$} as vector-rows with entries 
\begin{multline*}
\underline{\partial}^{\underline{j}=(j_1, \ldots ,j_{n-1})}(v_i')(h):=\\
\sum_{(i_1, \ldots , i_{n+1})\ge (\underline{0})}\frac{1}{\underline{i}!}\partial_{y_1}^{i_1}\ldots \partial_{y_n}^{i_n}\partial_{\partial_n^{-1}}^{i_{n+1}}\frac{1}{\underline{j}!}\partial_{t_1}^{j_1}\ldots \partial_{t_{n-1}}^{j_{n-1}}(v_i')(h)y_1^{i_1}\ldots y_n^{i_n}\partial_n^{-i_{n+1}}\\
\in \widehat{K}[[y_1, \ldots , y_n, \partial_n^{-1}]] [[\lambda_1, \ldots , \lambda_{n-1}]],
\end{multline*}
each entry is labelled by the multi-index $(j_1, \ldots , j_{n-1})\ge (\underline{0})$, and these indices are ordered w.r.t. the  order mentioned before this lemma. Now we can reformulate the problem of finding the needed basis in $H_k=\hat{K}_y \langle w_1(t+h), \ldots , w_{k-1}(t+h), v_{k}(t+h)\rangle $: there is such a basis $w_i$ iff the $k\times k$-minor of the matrix $(\underline{\partial}^{\underline{j}}(v_i')(h))$, $\underline{j}< \underline{0}+k$, $i=1, \ldots ,k$ is invertible, i.e. its determinant is non-zero for some $h$.  

To show that such $h$ exists we need to show that the determinant of this matrix is not identically zero as a series in the ring $\widehat{K}[[y_1, \ldots , y_n, \partial_n^{-1}]] [[\lambda_1, \ldots , \lambda_{n-1}]]$ (after, may be, some linear change of coordinates). For convenience, we'll present each entry of this minor as a series of the form $f(\lambda_1, \ldots ,\lambda_{n-1})+(\cm^{\Ord f+1})$, where $f(t_1,\ldots ,t_{n-1})$ is a homogeneous polynomial with respect to the $\Ord$-order, and $ (\cm^{\Ord f+1})$ denotes the sum of elements from the ideal $\cm$ generated by $y_1, \ldots ,y_n, \partial_n^{-1}, \lambda_1, \ldots ,\lambda_{n-1}$. 
Clearly, we can assume that $\underline{\partial}^{\underline{j}}(w_i)(h)=(\cm)$ for $i=1, \ldots , k-1$, $(\underline{0})-(k-2)\le \underline{j}< (\underline{0}) - (i-1)$. Making a linear change $\partial_i \mapsto \partial_i+\varepsilon_{i,k} \partial_k + \ldots + \varepsilon_{i,n} \partial_n$, $i=1, \ldots , k-1$ for appropriate constants $\varepsilon_{i,j} \in K$ if necessary we can also assume that $\underline{\partial}^{\underline{j}}(w_i)(h)=(\cm)$ for $i=k, \ldots , n$. 

Note that 
$$\widehat{K}[[y_1, \ldots , y_n, \partial_n^{-1}]] [[w_1(t), \ldots , w_{k-1}(t), t_k, \ldots , t_n]]= \widehat{K}[[y_1, \ldots , y_n, \partial_n^{-1}]] [[t_1, \ldots , t_n]].$$ 
Then, note that $\sigma (v_k(t))$ written as a series in the ring \\
$\widehat{K}[[y_1, \ldots , y_n, \partial_n^{-1}]] [[w_1, \ldots , w_{k-1}, t_k, \ldots , t_n]]$ depends on one of $t_k, \ldots , t_n$ (it is not difficult to see that $v_k(t)$ is  analytical also as a series in new variables). Indeed, if it does not depend on $t_k, \ldots , t_n$, then it is algebraically independent with $t_k, \ldots , t_n$. It is not difficult to see that $\sigma (v_1(t)), \ldots , \sigma (v_k(t))$ are algebraically independent over $K$ iff $\varphi (\sigma (v_1(t))), \ldots , \varphi (\sigma (v_k(t)))$ are algebraically independent for any linear change $\varphi$ from the formulation, and also iff $\sigma (w_1(t)), \ldots, \sigma (w_{k-1}(t)), \sigma (v_k(t))$ are algebraically independent. So,  $\sigma (v_k(t))$  is also algebraically independent with $\sigma (w_1), \ldots , \sigma (w_{k-1})$ (as it is algebraically independent with $\sigma (v_1), \ldots , \sigma (v_{k-1})$  by assumption). Thus, applying the inverse linear change of variables to the subspace generated by these elements, we get $n+1$ algebraically independent elements in the ring $A$ - a contradiction with lemma \ref{L:quasi-ell-properties}. 

Making a linear change of variables if necessary we can assume that $\sigma (v_k(t))$ depends on $t_k$. Then, making a linear change $\partial_i \mapsto \partial_i +\lambda_i \partial_n$, $i=1, \ldots ,k$ for generic $\lambda_i$ from some open subset $0\in U\subset \hat{K}^k$ (as in the first induction step) we'll get  a new series $v_k(\lambda )$ which contains a monomial of the form $ct_k$, $0\neq c\in \hat{K}$. At the same time, in the space $\hat{K}\langle w_1(\lambda ), \ldots , w_{k-1}(\lambda )\rangle$ we can still choose a basis with the same properties as before. Then taking an appropriate linear combination of this basis and new $v_k(\lambda )$, we can obtain a series of the form $v_k(\lambda )= \lambda_k+ (\cm)$ with $\ord_{\Gamma}(v_k)=\ord_{Gamma}(\sigma (v_k))=\ord_{\Gamma}(\lambda_k)$. 

 With this notation the minor has the form 
$$
\left (
\begin{array}{ccccccc}
1+ (\cm ) & (\cm ) & \ldots &  (\cm )& (\cm )\\
(\cm ) & 1 +(\cm ) & \ldots &   (\cm )& (\cm )\\
\vdots & \ldots  & \ddots &  (\cm )& \vdots \\
(\cm ) & \ldots & \ldots & 1 +(\cm ) & (\cm )\\
(\cm )& \ldots & \ldots & (\cm )  &  1 +(\cm ) \\
\end{array}
\right )
$$
which means that its determinant is not equal to zero, and we are done. 

\end{proof}

\begin{defin}
\label{D:admissible_changes}
Let $(A,W)$ be a 1-Schur pair. A linear change of variables $\varphi$ from lemma \ref{L:linear_changes} is called {\it admissible} if the spaces $\varphi (A), \varphi (W)$ are well defined (i.e. all elements $\varphi (a), \varphi (w)$, $a\in A$, $w\in W$ are well defined). 
\end{defin}

\begin{rem}
\label{R:admissible_changes}
1) If $K$ is a complete field and $(A,W)$ is a normalized 1-Schur pair associated with a quasi-elliptic ring $B\subset \hat{D}_n$ of rank one, and if there is a basis $\{ w_i\}$ in $W$ such that for all $i$ $w_i\in  K_y[[\partial_1', \ldots , \partial_{n-1}', \partial_n^{-1}]]$ (the notation is from lemma \ref{L:linear_changes})\footnote{This condition is trivial if $K_y=K$}, then there exist admissible changes. Indeed, by remark \ref{R:admissible_changes} $A\subset W\subset \Quot (A)$. Then from the proof of lemma \ref{L:linear_changes} we get that elements from $W, A$ are analytic functions (after multiplication with an appropriate power of $\partial_n$), and therefore there exist admissible changes, because the intersection of countably many open neighbourhoods of zero in $K^{n-1}$ is not empty for a complete field $K$. 

If $A$ is finitely generated over $K$ and $W$ is a finitely generated $A$-module, the same arguments show there exists an open subset $0\in U\subset K^{n-1}$ such that the linear change $\varphi$ from lemma \ref{L:linear_changes} is admissible for any $(\lambda_1, \ldots ,\lambda_{n-1})\in U$. 

2) Clearly, any invertible linear change of coordinates can be decomposed into a composition of a linear change from lemma \ref{L:linear_changes} and a linear change of the form $\partial_i \mapsto \partial_i$, $i=1, \ldots , n-1$,  $\partial_n \mapsto \alpha \partial_n +\sum_{i=1}^{n-1}c_i \partial_i$, $\alpha\neq 0$. Obviously, the last linear change is admissible, i.e. both spaces are well defined after its application. 

3) Note that if $B\subset \hat{D}_n$ is a quasi-elliptic ring and $\varphi$ is {\it any} linear change then $\varphi (B)$ is a well defined ring in $\hat{D}_n^{sym}$.

4) Let $S$ be the Sato operator from construction 1 constructed by given quasi-elliptic ring $B\subset \hat{D}_n$, let $(A,W)$ be the corresponding 1-Schur pair, let $\varphi$ be an admissible linear change. Then $\varphi (S)$ is also well defined operator from $\hat{\Pi}_n$. For, all slices of $S$ are finite linear combinations of elements from $W$, cf. the proof of theorem \ref{T:SatoAction}.

5) Let $(A,W)$ be a 1-Schur pair. Then for any admissible $\varphi$ we have $\varphi (a_1a_2)= \varphi (a_1) \varphi (a_2)$, $\varphi (aw)= \varphi (a) \varphi (w)$ for any $a_i, a\in A$, $w\in W$  -- this follows easily from the Taylor series expansion.
\end{rem}

\begin{prop}
\label{P:admissible_changes}
Let $K$ be a complete field. Let $B \subset \hat{D}_n$ be a quasi-elliptic ring, let $(A,W)$ be a normalized 1-Schur pair associated with $B$ by construction 1, let $\varphi$ be an admissible linear change of variables. 

Then there exists an operator $U\in \hat{U}_n^{sym}$ such that  $B':= U^{-1}\varphi (B)U\subset \hat{D}_n$. Moreover, $B'$ is the ring associated with the 1-Schur pair $(\varphi (A), \varphi (W))$ by the construction 2, and if this Schur pair is quasi-elliptic, then $B'$ is quasi-elliptic. 
\end{prop}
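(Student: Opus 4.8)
The plan is to run Construction~1 and Construction~2 side by side and to check that the admissible change $\varphi$ transports the first into the second. Start from $(A,W)$ as produced by Construction~1: there is a Sato operator $S$ of $W$ (the composite of the normalizing operator and the inverse Schur operator of Theorem~\ref{T:lemma8}), with $\Ord(S)=|\ord_\Gamma(S)|=0$, $S$ regular, $W=F\diamond S\in\mathsf{Gr}_0(V_n)$, $\Sup(W)=F$, $B=S\,A\,S^{-1}$, and $W$ carries the triangular basis $\{\underline\partial^{\underline j}\diamond S\}$ with leading terms $\underline\partial^{\underline j}$. By Remark~\ref{R:admissible_changes}, items 3--5, the operators $\varphi(B)$, $\varphi(S)$ and the spaces $\varphi(A)$, $\varphi(W)$ are all well defined, and $\varphi$ (which fixes the $x_i$, hence commutes with the projection $\pi$, preserves the $\dz$--grading by $|\underline i|-|\underline k|$ and therefore commutes with $\sigma$, preserves $\Ord$, and commutes with $\bar{(\,\cdot\,)}=(\,\cdot\,)|_{y=0}$) respects the relevant products and the $\diamond$--action; thus $\varphi(B)\varphi(S)=\varphi(BS)=\varphi(SA)=\varphi(S)\varphi(A)$ and $\varphi(W)=\varphi(F)\diamond\varphi(S)$. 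Since $\varphi$ is an invertible substitution fixing $\partial_n$ and sending each $\partial_i$, $i<n$, into the maximal ideal of $K_y[[\partial_1,\dots,\partial_n]]$, it preserves $F$, so $\varphi(F)=F$ and $\varphi(W)=F\diamond\varphi(S)$.

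Next I would verify that $(\varphi(A),\varphi(W))$ is again a $1$--Schur pair with $\Sup(\varphi(W))=F$. That $\varphi(A)$ is a $K$--algebra with unit and $\varphi(W)\cdot\varphi(A)\subseteq\varphi(W)$ is immediate from $W\cdot A\subseteq W$ and Remark~\ref{R:admissible_changes}(5); since $\varphi$ commutes with $\sigma$ and preserves $\Ord$, the operator $\varphi(S)$ is again regular of order $0$, and since $\varphi$ commutes with $\bar{(\,\cdot\,)}$ and preserves $\Ord$ we get $H_{\varphi(W)}=H_W$, hence $\varphi(W)\in\mathsf{Gr}_0(V_n)$ once it is known to be a $1$--space. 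The remaining point -- that $\varphi(W)$ (and $\varphi(A)$) is a $1$--space and that $\Sup(\varphi(W))=F$ -- is the heart of the matter and the place where admissibility is genuinely used: under $\varphi$ the $\Gamma$--order can jump, because a $\partial_i$ with $i<n$ acquires a $\partial_n$--component, so one must exploit that $\varphi$ is the admissible change of Lemma~\ref{L:linear_changes}, under which $\varphi(A),\varphi(W)\subset K_y[[\partial_1',\dots,\partial_{n-1}',\partial_n^{-1}]]$ and $\varphi$ acts there as an affine substitution of the $\partial_i'$; combining this with the rigidity of the Hilbert function $H_{\varphi(W)}(k)=C_{n+k}^n$ forces the leading $\Gamma$--terms of $\varphi$ applied to the triangular basis of $W$ to again span $F$, giving both the $1$--space property and $\Sup(\varphi(W))=F$. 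I expect this verification -- controlling $\ord_\Gamma$, $\LT$ and the $1$--space condition under $\varphi$ -- to be the main obstacle.

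Granting this, the rest is bookkeeping. By Theorem~\ref{T:SatoAction}, items 1 and 3, $\varphi(W)$ has a distinguished Sato operator $S'=1+S'_-$ with $S'_-\in\hat D_n^n[[\partial_n^{-1}]]\partial_n^{-1}$ (hence $S'\in\hat E_n$, invertible there), and there is a uniquely determined unit $U\in\hat U_n^{sym}$ with $\varphi(S)=U\circ S'$; in particular $\varphi(S)$ is invertible, so from $\varphi(B)\varphi(S)=\varphi(S)\varphi(A)$ we obtain $\varphi(B)=\varphi(S)\varphi(A)\varphi(S)^{-1}$ and therefore $B':=U^{-1}\varphi(B)U=S'\,\varphi(A)\,(S')^{-1}$. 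Since $S',(S')^{-1}\in\hat E_n$ and $\varphi(A)\subseteq V_n\cap\Pi_n\subseteq\hat E_n$, we get $B'\subseteq\hat E_n$; and applying Theorem~\ref{T:SatoSchur} to the $1$--Schur pair $(\varphi(A),\varphi(W))$ with Sato operator $S'$ gives $B'=\{\mathtt{L}_{S'}(f):f\in\varphi(A)\}\subseteq\hat D_n^{sym}$, i.e.\ $F\diamond B'\subseteq F$. As $\hat E_n\cap\hat D_n^{sym}=\hat D_n$ (immediate from the definitions), this yields $B'\subseteq\hat D_n$. Finally, $S'$ is exactly the Sato operator that Construction~2 attaches to $(\varphi(A),\varphi(W))$ and $B'=S'\varphi(A)(S')^{-1}$ is exactly the ring it produces, so $B'$ is that ring; and if $(\varphi(A),\varphi(W))$ is quasi--elliptic then, since $\Ord(S')=|\ord_\Gamma(S')|=0$ and the formally quasi--elliptic elements of $\varphi(A)$ satisfy condition $A_1$, Lemma~\ref{L:lemma2.7} and Corollary~\ref{C:corol2.1} (exactly as in Construction~2) show that $B'$ is quasi--elliptic, completing the proof.
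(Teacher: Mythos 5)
Your proposal follows the same route as the paper's proof: it uses the multiplicativity of $\varphi$ on the products $Sf=\mathtt{L}_S(f)S$ to conclude that $\varphi(S)$ intertwines $\varphi(A)$ and $\varphi(B)$ and is a Sato operator for $\varphi(W)=F\diamond\varphi(S)$, then factors $\varphi(S)=U\circ S_0$ via Theorem~\ref{T:SatoAction} and identifies $B'=S_0\varphi(A)S_0^{-1}$ with the output of Construction~2. The one step you single out as the main obstacle --- that $\varphi(W)$ is again a $1$-space in $\mathsf{Gr}_0(V_n)$ with $\Sup(\varphi(W))=F$, so that Theorem~\ref{T:SatoAction}, item 4 applies --- is not verified in the paper's proof either (it is silently assumed there), so your sketch of how to control $\ord_{\Gamma}$ and $\LT$ under $\varphi$ via admissibility and the rigidity of the Hilbert function is a reasonable, if deferred, supplement rather than a deviation.
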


\begin{proof} Let $(A,W)$ be a normalized Schur pair associated with the ring $B$, $A=S^{-1}BS$, $W=F\diamond S$, $S\in \hat{E}_n$, cf. remark \ref{R:rank_one_pairs}. Then by remark \ref{R:admissible_changes}, for any $f\in A$ and corresponding operator $\mathtt{L}_S(f)$ the operators $\varphi (S)$, $\varphi (f)$, $\varphi (\mathtt{L}_S(f))$ are well defined and moreover, $\varphi (Sf)=\varphi (S)\varphi (f)$, $\varphi (\mathtt{L}_S(f)S)= \varphi (\mathtt{L}_S(f)) \varphi (S)$. Indeed, the products of operators on the right hand sides are well defined by their definition, the left hand sides are well defined as all slices of $Sf$ and $\mathtt{L}_S(f)S$ are finite sums of products of slices of $S$ and $f$ (for the first product) and of $S$ and polynomials  (for the second product). The identities then follows from the Taylor series expansions of left and right hand sides (alternatively one can compare $\varphi (\underline{\partial}^{\underline{j}})\diamond l.h.s$ and $\varphi (\underline{\partial}^{\underline{j}})\diamond r.h.s$ by induction on $|\underline{j}|$). Finally we have 
$$
\varphi (Sf)=\varphi (S)\varphi (f)=\varphi (\mathtt{L}_S(f)S)= \varphi (\mathtt{L}_S(f)) \varphi (S),
$$
i.e. $\varphi (S)\varphi (A)=\varphi (B) \varphi (S)$. Besides, $\varphi (W)= F \diamond \varphi (S)$ as $\varphi (F)=F$, i.e. $\varphi (S)$ is a Sato operator for $\varphi (W)$. 
By theorem \ref{T:SatoAction}, item 4 there is a unique factorisation $\varphi (S)= U\circ S_0$, where $U\in \hat{U}_n^{sym}$, hence $S_0\varphi (A)S_0^{-1}=U^{-1}\varphi (B)U$. From construction 2 we know (cf. also theorem \ref{T:SatoAction}, item 6) that $B'=S_0\varphi (A)S_0^{-1}\in \hat{D}_n$ and moreover it is quasi-elliptic if $\varphi (A)$ is. 
\end{proof}

\section{Schur theory for $\hat{D}_n^{sym}$}
\label{S:general_Schur_theory}

There is an analogue of the Schur theory also for the ring $\hat{D}_n^{sym}$. It is completely new and differs from the Schur theory for $\hat{D}_n$. 

\subsection{Centralizers of operators with constant coefficients}
\label{S:centralizers}

Let's introduce the following notation. For any $P\in \hat{D}_n^{sym}$ we put 
$$
P_{[q]}:= \sum_{\underline{k}\in \sdn_0^n, |\underline{k}|=q} 
\frac{x^{\underline{k}}}{\underline{k}!}P_{(\underline{k})}
$$ 
(a partial slice decomposition), where we use the slice decomposition \eqref{E:slice_decomposition1}. Thus, $P=\sum_{q\ge 0}P_{[q]}$. Put $\bar{P}:= P|_{y=0}$. 

Let $k\in\dn$ be a number. Fix a primitive $k$-th root of unity $\xi_k$ in an appropriate extension $\tilde{K}$ of the field $K$. For any $i\in \dz$ and $1\le q\le n$ we define operators 
$$
A_{k; i,q}:= \exp ((\xi_k^i -1)x_q*\partial_q)\in \hat{D}_n^{sym}\otimes_K \tilde{K} 
$$
(we use here the same notations as in theorem \ref{T:D_n_properties}). Recall also the operators of integration introduced in the proof of theorem \ref{T:D_n_properties}:
$$
\int_i:=(1-\exp ((-x_i)*\partial_i))\cdot \partial_i^{-1}=\sum_{k=0}^{\infty}\frac{x^{k+1}}{(k+1)!}(-\partial)^k.
$$
Note that all these operators are homogeneous and $\Ord (A_{k; i,q})=0$, $\Ord (\int_i)= -1$. Besides, $\partial_i \int_i=1$, but $\int_i\partial_i=1-\delta_i$.

\begin{prop}
\label{P:roots_of_unity}
The following properties hold: 
\begin{enumerate}
\item
For any $i, j, q, p, l$ we have 
$$
A_{k; i,q} A_{k; j,q}= A_{k; i+j,q}, \quad A_{k; i,q}A_{k; j,l}=A_{k; j,l}A_{k; i,q}, 
$$
$$
\partial_q^pA_{k; i,q}=\xi_k^{pi}A_{k; i,q}\partial_q^p, \quad A_{k; i,q}x_q^p= \xi_k^{pi}x_q^pA_{k; i,q}, \quad \int_q^pA_{k;i,q}=\xi_k^{-pi}A_{k;i,q}\int_q^p,
$$
$$
[\partial_q^p, A_{k; i,l}]=0, \quad [x_q^p, A_{k; i,l}]=0, \quad [\int_q^p, A_{k; i,l}]=0 \mbox{ for $q\neq l$} 
$$ 
In particular, $A_{k; i,q} A_{k; k-i,q}=1$. 
\item
For a given $Q\in \hat{D}_n^{sym}$ assume that $[\partial_q^k, Q]=0$ for some $q\in \{1, \ldots ,n\}$. 

Then $Q=c_0+c_1A_{k; 1,q}+\ldots +c_{k-1}A_{k; k-1,q}\in \hat{D}_n^{sym}\otimes_K\tilde{K}$, where $c_i\in \hat{D}_n^{sym}\otimes_K\tilde{K}$ are given by the following formula:
$$
c_i=\sum_{m=0}^{\infty} c_{i,m} \partial_q^{m} +c_{i,-1}\int_q+\ldots +c_{i,-k+1}\int_q^{k-1},
$$
where $c_{i,j}$ don't depend on $x_q, \partial_q$ and $\Ord (c_{i,j})\le \Ord (Q)-j$ for all $i,j$ (so that $\Ord (c_i)\le \Ord (Q)$). 

\item
In particular, if $[\partial_q^{k_j}, Q]=0$ for $q=i_1, \ldots , i_l \in \{1, \ldots ,n\}$, $j=1, \ldots ,l$, then 
$$
Q=\sum_{j_{1}=0}^{k_{1}-1}\ldots \sum_{j_{l}=0}^{k_{l}-1}c_{j_1,\ldots ,j_l}A_{k_1; j_1,i_1}\ldots A_{k_l; j_l,i_l},
$$
where $c_{j_1,\ldots ,j_l}\in \hat{D}_n^{sym}\otimes_K\tilde{K}$ are given by analogous formulas 
and $\Ord (c_{j_1,\ldots ,j_l})\le \Ord (Q)$. Besides, the coefficients $c_{j_1,\ldots ,j_l}$ are uniquely defined. 

If $K_y=K$ and $\{i_1, \ldots , i_l\}=\{1, \ldots ,n\}$, then all coefficients  $c_{j_1,\ldots ,j_l}$ are {\it polynomials} in $\partial_q$, $\int_q$, $q=1,\ldots , n$ with constant coefficients (from $\tilde{K}$) and the degree of these polynomials with respect to $\partial_q$ is not greater than $\Ord (Q)$ and the degree of these polynomials with respect to $\int_q$ is not greater than $k-1$. 
\end{enumerate}

\end{prop}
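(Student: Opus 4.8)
The plan is to treat the three items in order: item~1 is bookkeeping, item~2 is the core, and item~3 is an induction on $l$ that reduces to item~2. For item~1 the first step is to identify $A_{k;i,q}$ concretely: by Theorem~\ref{T:D_n_properties} and the receipt given in its proof, $A_{k;i,q}=\exp((\xi_k^i-1)x_q*\partial_q)$ is the operator realizing the continuous $\tilde K$-algebra automorphism $\alpha_{i,q}$ of $\hat R_y\otimes_K\tilde K$ determined by $x_q\mapsto\xi_k^i x_q$, $x_p\mapsto x_p$ for $p\ne q$. Since an operator of $\hat D_n^{sym}\otimes_K\tilde K$ annihilating all of $\hat R_y\otimes_K\tilde K$ has all coefficients zero (inspect its values on the monomials $\underline x^{\underline m}$ — a triangular system), operators are determined by their action on $\hat R_y\otimes_K\tilde K$, so conjugation by $A_{k;i,q}$ is exactly the automorphism of the operator algebra induced by $\alpha_{i,q}$. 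All identities of item~1 then follow mechanically: $A_{k;i,q}A_{k;j,q}$ and $A_{k;i+j,q}$ realize the same automorphism $\alpha_{i+j,q}$, with $\alpha_{k,q}=\mathrm{id}$ giving $A_{k;i,q}A_{k;k-i,q}=1$; conjugation by $A_{k;i,q}$ sends $x_q^p\mapsto\xi_k^{ip}x_q^p$, $\partial_q^p\mapsto\xi_k^{-ip}\partial_q^p$ and $\int_q^p\mapsto\xi_k^{ip}\int_q^p$ (the last since $\int_q x_q^m=x_q^{m+1}/(m+1)$ is $\alpha_{i,q}$-equivariant), and fixes $x_p,\partial_p,\int_p$ for $p\ne q$; rearranging yields the stated commutations.

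For item~2 the key device is a faithful ``matrix'' representation of $\hat D_n^{sym}\otimes_K\tilde K$ adapted to the $q$-th variable. Expanding in powers of $x_q$ over the power series ring $\hat R_y^q$ in the remaining $x$'s and $y$'s, any $Q$ is uniquely $Q=\sum_{a,b\ge0}R_{a,b}\,x_q^a\partial_q^b$ with $R_{a,b}$ not involving $x_q,\partial_q$ and $\Ord(R_{a,b})\le\Ord(Q)-(b-a)$; letting $Q$ act on the $x_q$-expansion $f=\sum_M f_M x_q^M/M!$ with $f_M\in\hat R_y^q$ produces operators $Q_{M,M'}$ on $\hat R_y^q$ with $(Qf)_M=\sum_{M'}Q_{M,M'}(f_{M'})$, each $Q_{M,M'}$ a finite $\mathbb Q$-combination of the $R_{a,b}$, hence well defined, not involving $x_q,\partial_q$, and satisfying $\Ord(Q_{M,M'})\le\Ord(Q)-(M'-M)$; faithfulness is the triangularity argument again. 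Now $(\partial_q^k)_{M,M'}=\delta_{M',M+k}$, and a direct computation shows $[\partial_q^k,Q]=0$ is equivalent to $Q_{M,M'}=Q_{M-k,M'-k}$ whenever $M,M'\ge k$ — so $Q_{M,M'}$ depends only on $M'-M$ and $M'\bmod k$ — together with $Q_{M,M'}=0$ whenever $M\ge k>M'$ (whence also $Q_{M,M'}=0$ for $M-M'\ge k$). On the other hand the matrices of $A_{k;i,q}$ (namely $\delta_{M,M'}\xi_k^{iM'}$), of $\partial_q^p$, and of $\int_q^s$ are all ``Toeplitz'', so for $c_i$ of the prescribed shape one finds that $\sum_{i=0}^{k-1}c_iA_{k;i,q}$ has matrix $\sum_i c_{i,M'-M}\xi_k^{iM'}$, where $c_{i,j}$ is the $\partial_q^j$-coefficient for $j\ge0$, the $\int_q^{-j}$-coefficient for $-(k-1)\le j<0$, and $0$ otherwise. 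Given $Q$ in the centralizer, for each $\Delta\ge-(k-1)$ one inverts the Vandermonde system $\bigl(Q_{M,M+\Delta}\bigr)_{M\bmod k}=(\xi_k^{iM'})\cdot(c_{i,\Delta})_i$ to define $c_{i,\Delta}$; these are $\tilde K$-linear combinations of the entries $Q_{M,M+\Delta}$, hence do not involve $x_q,\partial_q$ and obey $\Ord(c_{i,j})\le\Ord(Q)-j$; the operator $\sum_i c_iA_{k;i,q}$ then has the same matrix as $Q$ (both vanish for $M-M'\ge k$), so equals $Q$ by faithfulness, and uniqueness of the $c_{i,j}$ is Vandermonde once more. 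Conceptually this is the isotypic decomposition of $Q$ for the order-$k$ automorphism $\theta=\mathrm{Ad}(A_{k;1,q})$, which fixes $\partial_q^k$; the matrix picture makes it explicit and keeps track of orders.

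For item~3 I iterate: apply item~2 in the variable $i_1$ to get $Q=\sum_{j_1}c_{j_1}A_{k_1;j_1,i_1}$. Each $c_{j_1}$ again has the ``pure in $\partial_{i_1},\int_{i_1}$ with coefficients free of $x_{i_1},\partial_{i_1}$'' shape, and $A_{k_1;j_1,i_1}$ commutes with $\partial_{i_2}$ by item~1, so $[\partial_{i_2}^{k_2},Q]=0$ together with the uniqueness in item~2 forces $[\partial_{i_2}^{k_2},c_{j_1}]=0$; apply item~2 in $i_2$ to each $c_{j_1}$, and so on. After $l$ steps, reordering the commuting $A$'s, one obtains the asserted expansion; the bound $\Ord(c_{j_1,\dots,j_l})\le\Ord(Q)$ propagates because at each step $\Ord(c_{i,j})\le\Ord(\text{outer})-j$ forces $\Ord(c_{i,j}\partial_q^j)\le\Ord(\text{outer})$ and, for $j=-s<0$, $\Ord(c_{i,-s}\int_q^s)\le\Ord(\text{outer})$. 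When $K_y=K$ and $\{i_1,\dots,i_l\}=\{1,\dots,n\}$, peeling off every variable leaves innermost coefficients involving none of the $x_q,\partial_q,\int_q,\delta_q$, hence lying in $\tilde K$; finiteness of $\Ord$ bounds the $\partial_q$-degree by $\Ord(Q)$, and the construction bounds the $\int_q$-degree by (the value of $k$ attached to that variable) minus one, giving the stated polynomiality.

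The step I expect to be hardest is the careful construction of the matrix representation in item~2 — verifying that each $Q_{M,M'}$ really is a finite-sum operator not involving $x_q,\partial_q$, that the representation is a faithful ring map, and that $\Ord(Q_{M,M'})\le\Ord(Q)-(M'-M)$ — together with the subsequent bookkeeping needed for the sharp bound $\Ord(c_{i,j})\le\Ord(Q)-j$. The reason only powers of $\int_q$ up to $k-1$ occur is precisely the vanishing $Q_{M,M'}=0$ for $M-M'\ge k$, which must be squeezed out of the ``$M\ge k>M'$'' half of the commutator condition.
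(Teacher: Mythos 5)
Your proof is correct, and for the core item 2 it takes a genuinely different route from the paper's. The paper replaces the $\partial_i$ by commuting formal variables $\tilde\partial_i$, so that $[\partial_q^k,Q]=0$ becomes the constant-coefficient linear equation $\prod_{i=1}^k(\partial_q+(1-\xi_k^i)\tilde\partial_q)(Q')=0$, solved by combinations of the exponentials $\exp((\xi_k^i-1)x_q\tilde\partial_q)$ with coefficients free of $x_q$; it then needs three auxiliary lemmas to return to $\hat{D}_n^{sym}$: an order bound on the coefficients (Lemma \ref{L:various_decompositions}), the Vandermonde linear independence of the $A_{k;i,q}$ (Lemma \ref{L:linear_independence}), and a rather delicate coefficient extraction (Lemma \ref{L:restriction_on_int}) showing that only $\int_q^s$ with $s\le k-1$ occur. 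Your infinite-matrix representation over the subring of operators free of $x_q,\partial_q$ compresses all of this: the centralizer condition becomes $k$-periodicity along diagonals together with vanishing below the $(k-1)$-st subdiagonal, the $c_{i,j}$ are produced by an explicit Vandermonde inversion on each diagonal (which gives existence, uniqueness and the bound $\Ord(c_{i,j})\le\Ord(Q)-j$ simultaneously), and the bound on the powers of $\int_q$ falls out of the band condition instead of being squeezed out of a monomial computation. The price is the set-up you flag yourself (well-definedness, faithfulness, and multiplicativity of the matrix map for the products actually used -- all harmless, since the matrices of $\partial_q^k$ and of $A_{k;i,q}$ have one nonzero entry per row, so every composition needed is a finite sum per entry). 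Items 1 and 3 follow essentially the paper's route.

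One caveat, which applies equally to the paper (whose treatment of item 3 is only ``follows by induction''): the final assertion, that each coefficient $c_{j_1,\ldots,j_l}$ has degree at most $\Ord(Q)$ in each $\partial_q$, does not follow from ``finiteness of $\Ord$'' as you claim. Tracking your bounds through the induction yields, for a monomial $\gamma\prod_q\partial_q^{m_q}\prod_q\int_q^{s_q}$ with $\gamma\in\tilde K^*$, only $\sum_qm_q-\sum_qs_q\le\Ord(Q)$. Indeed $Q=\partial_1^3\int_2(1+A_{2;1,2})$ commutes with $\partial_1^2$ and $\partial_2^2$, has $\Ord(Q)=2$, but its coefficient $c_{0,0}=\partial_1^3\int_2$ has $\partial_1$-degree $3$. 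So that sub-claim should be read with the $\int$-degrees subtracted (equivalently, as $\Ord(c_{j_1,\ldots,j_l})\le\Ord(Q)$, which both you and the paper do establish); this is an imprecision in the statement rather than a defect of your method.
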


\begin{proof} 1. From general properties of operators (cf. theorem \ref{T:D_n_properties}) we have 
$$A_{k; i,q}(f(x_1, \ldots , x_n))=f(x_1, \ldots , \xi_k^ix_q,\ldots , x_n)$$ 
for any $f\in \tilde{K}[[x_1, \ldots , x_n]]$. All identities in item 1 immediately follow from this observation.

2. The identity $[\partial_q^k, Q]=0$ can be rewritten as 
\begin{equation}
\label{E:comm}
\sum_{i=1}^k C_k^i \partial_q^i(Q)\partial_q^{k-i}=0. 
\end{equation}
Note that any solution $Q\in \hat{D}_n^{sym}$ of this equation gives a solution \\
$Q'\in \tilde{K}[[x_1,\ldots , x_n, \tilde{\partial}_1, \ldots , \tilde{\partial}_n ]]$ of the equation  
$$
\sum_{i=1}^k C_k^i \partial_q^i(Q')\tilde{\partial}_q^{k-i}=0,
$$
where $\tilde{\partial}_q$ means a new formal variable (commutative with all $x_i$). Namely, we just replace $\partial_i$ by $\tilde{\partial}_i$ in the series $Q$. 

On the other hand, the last equation can be written in the form 
$$
\prod_{i=1}^k(\partial_q+ (1-\xi_k^i)\tilde{\partial}_q)(Q')=0.
$$
Any solution of the last equation {\it in the commutative ring} \\
$\tilde{R}:=\tilde{K}[[x_1, \ldots , x_n, \tilde{\partial}_1, \ldots , \widehat{\tilde{\partial}_q}, \ldots , \tilde{\partial}_n]]((\tilde{\partial}_q))$ has the form
$$
Q'= c_0 +c_1 \exp ((\xi_k-1)x_q\tilde{\partial}_q) +\ldots +c_{k-1}\exp ((\xi_k^{k-1}-1)x_q\tilde{\partial}_q), 
$$
where $c_i\in \tilde{R}$ don't depend on $x_q$ (as it follows from elementary differential algebra).
If we choose a canonical representation form of elements in $\tilde{R}$ such that each monomial has the form $\underline{x}^{\underline{j}}\underline{\tilde{\partial}}^{\underline{k}}$ (with $x_i$ on the left and 
$\tilde{\partial}_j$ on the right), then the right hand side of the last formula can be rewritten as 
$$
Q'':= \sum_{\underline{j}=(j_1,\ldots , \hat{j}_q, \ldots , j_n) \ge 0} \underline{x}^{\underline{j}}(c_{0, (\underline{j})} + \exp ((\xi_k-1)x_q\tilde{\partial}_q) c_{1, (\underline{j})} + \ldots + \exp ((\xi_k^{k-1}-1)x_q\tilde{\partial}_q) c_{k-1, (\underline{j})})=
$$ 
\begin{equation}
\label{E:summand'}
\sum_{\underline{j}=(j_1,\ldots , \hat{j}_q, \ldots , j_n) \ge 0} 
\underline{x}^{\underline{j}}(c_{0, (\underline{j})} + 
\tilde{A}_{k; 1,q} c_{1, (\underline{j})} + \ldots + 
\tilde{A}_{k; k-1,q} c_{k-1, (\underline{j})}),
\end{equation}
where $c_{i, (\underline{j})}$ denotes the corresponding slice of the coefficient $c_i$. Then $Q'=Q''$ also as elements written in this representation. Note that, since $Q'$ contains only non-negative powers of $\partial_q$, all brackets in \eqref{E:summand'} are series in which $\partial_q$ appears only with non-negative powers (and if we replace $\tilde{\partial}_i$, $i=1, \ldots ,n$ by $\partial_i$ in all terms of $Q''$, we get again the operator $Q$).

\begin{lem}
\label{L:various_decompositions}
For any $i=0, \ldots , k-1$ we have $\Ord (c_{i, (\underline{j})})\le \Ord (Q) +|\underline{j}|$ in formula \eqref{E:summand'}, where the order $\Ord$ is defined on $\tilde{R}$ in the same way as on $\hat{\cm}_n$.
\end{lem}

\begin{proof}
Since the elements $c_{i, (\underline{j})}$ are polynomials in $\tilde{\partial}_q^{-1}$, any bracket in \eqref{E:summand'} can be written in the form 
$$
(\tilde{c}_{0, (\underline{j})} + 
\tilde{A}_{k; 1,q} \tilde{c}_{1, (\underline{j})} + \ldots + 
\tilde{A}_{k; k-1,q} \tilde{c}_{k-1, (\underline{j})})\tilde{\partial}_q^{-m_{\underline{j}}},
$$
where $\tilde{c}_{i, (\underline{j})}\in \tilde{K}[[x_1, \ldots , x_n, \tilde{\partial}_1, \ldots , \tilde{\partial}_n]]$, and $m_{\underline{j}}\ge 0$, i.e. the series in brackets is divisible by  $\tilde{\partial}_q^{m_{\underline{j}}}$. We'll additionally assume that $m_{\underline{j}}$ is minimal, i.e. that $GCD (\tilde{c}_{0, (\underline{j})}, \ldots , \tilde{c}_{k-1, (\underline{j})})$ is not divisible by $\tilde{\partial}_q$. 

Obviously, the slice decomposition is unique also in the space $\tilde{R}$, and therefore for any fixed $\underline{j}$ in \eqref{E:summand'} we have the unique decomposition 
\begin{multline*}
c_{0, (\underline{j})} + \tilde{A}_{k; 1,q} c_{1, (\underline{j})} + \ldots + 
\tilde{A}_{k; k-1,q} c_{k-1, (\underline{j})} = 
\sum_{|\underline{l}|\ge -m_{\underline{j}}} (c_{0,\underline{l}} + c_{1, \underline{l}} \tilde{A}_{k; 1,q} + \ldots + 
c_{k-1, \underline{l}} \tilde{A}_{k; k-1,q})\underline{\tilde{\partial}}^{\underline{l}},
\end{multline*}
where $c_{i, \underline{l}} \in \tilde{K}_y$. Since $\Ord (Q) <\infty$ and $\Ord (A_{k; i,q})=0$, we should have 
$$
\Ord (c_{0, (\underline{j})} + \tilde{A}_{k; 1,q}  c_{1, (\underline{j})} + \ldots + 
\tilde{A}_{k; k-1,q} c_{k-1, (\underline{j})})\le \Ord (Q) +|\underline{j}|
$$
and therefore by lemma \ref{L:linear_independence} below $c_{i, \underline{l}}=0$ for all  $\underline{l}$ with $|\underline{l}|\ge \Ord (Q) +|\underline{j}|$ and all $i=0,\ldots ,k-1$. But this exactly means that $\Ord (c_{i, (\underline{j})})\le \Ord (Q) +|\underline{j}|$ for any $i$.
\end{proof}

\begin{lem}
\label{L:linear_independence}
The sum 
$$
A:=c_{0,\underline{l}} + c_{1, \underline{l}} \tilde{A}_{k; 1,q}+ \ldots + 
c_{k-1, \underline{l}} \tilde{A}_{k; k-1,q}, \quad c_{i, \underline{l}}\in \tilde{K}_y
$$
is equal to zero iff  $c_{i, \underline{l}}=0$, $i=0,\ldots ,k-1$. If it is not equal to zero, then it is of order zero.
\end{lem}

\begin{proof}
Clearly, $\Ord (A)\le 0$. Note that the equality $A=0$ is equivalent to the infinite system of linear equations
$$
c_{0,\underline{l}}  + c_{1, \underline{l}} + \ldots + 
c_{k-1, \underline{l}}=0, \quad  c_{1, \underline{l}} (\xi_k-1)^j + \ldots + 
c_{k-1, \underline{l}} (\xi_k^{k-1}-1)^j =0, \quad j\in \dn .
$$
By the well known property of the Vandermonde matrix this system has the unique solution $c_{i, \underline{l}}=0$, $i=0,\ldots ,k-1$. The second claim is obvious.
\end{proof}

From lemma \ref{L:various_decompositions} it follows that the series $\tilde{c}_{i, (\underline{j})}$ will belong to $\hat{D}_n^{sym}$ after replacing $\tilde{\partial}_i$ by $\partial_i$ in all terms. Now note that in $\hat{D}_n^{sym}$ we have 
\begin{multline*}
(\tilde{c}_{0, (\underline{j})} + \tilde{A}_{k; 1,q} \tilde{c}_{1, (\underline{j})} + \ldots + 
\tilde{A}_{k; k-1,q} \tilde{c}_{k-1, (\underline{j})})\partial_q^{-m_{\underline{j}}}|_{\tilde{\partial}_i\mapsto \partial_i}=\\
(\tilde{c}_{0, (\underline{j})} + \tilde{A}_{k; 1,q} \tilde{c}_{1, (\underline{j})} + \ldots + 
\tilde{A}_{k; k-1,q} \tilde{c}_{k-1, (\underline{j})})|_{\tilde{\partial}_i\mapsto \partial_i}\int_q^{m_{\underline{j}}},
\end{multline*}
i.e. $Q$ can be written in the form:
\begin{equation}
\label{E:summand}
Q= \sum_{\underline{j}=(j_1,\ldots , \hat{j}_q, \ldots , j_n) \ge 0} 
\underline{x}^{\underline{j}}(\tilde{c}_{0, (\underline{j})} + \tilde{A}_{k; 1,q} \tilde{c}_{1, (\underline{j})} + \ldots + 
\tilde{A}_{k; k-1,q} \tilde{c}_{k-1, (\underline{j})})|_{\tilde{\partial}_i\mapsto \partial_i}\int_q^{m_{\underline{j}}}.
\end{equation}

Besides, all summands in the sum \eqref{E:summand} are well defined elements of $\hat{D}_n^{sym}$ of order $\le \Ord (Q)$ and their sum is also well defined in $\hat{D}_n^{sym}$. 

\begin{lem}
\label{L:restriction_on_int}
In the formula \eqref{E:summand} we have $m_{\underline{j}}\le k-1$. 
\end{lem}

\begin{proof}
First note that all numbers $m_{\underline{j}}$ are bounded from above, because the coefficients $c_i$ of $Q'$ are polynomials in $\tilde{\partial}_q^{-1}$. Now assume the converse: there exists $m:=m_{\underline{j}}\ge k$. Assume that $m$ is a maximal such number. Let $Q_l$ be a homogeneous component of $Q$ containing  a summand with $\int_q^{m}$. Note that each summand of $Q_l$ has the form $c\partial_q^i$ or $c\int_q^j$, where $c$ is a linear combination of $A_{k;s,q}$, $s=0,\ldots , k-1$ with coefficients not depending on $x_q, \partial_q$, and $j\le m$, i.e.
\begin{multline}
\label{E:Q_l}
Q_l=\sum_{i=0}^{\infty} (a_{i,0}+a_{i,1}A_{k;1,q}+\ldots +a_{i,k-1}A_{k;k-1,q})\partial_q^i +\\
(a_{-1,0}+\ldots +a_{-1,k-1}A_{k;k-1,q})\int_q+\ldots + (a_{-m,0}+a_{-m,1}A_{k;1,q}+\ldots +a_{-m,k-1}A_{k;k-1,q})\int_q^m.
\end{multline}

Since $Q_l$ is homogeneous, we have $\Ord (a_{i,0}+a_{i,1}A_{k;1,q}+\ldots +a_{i,k-1}A_{k;k-1,q})=l-i$. Since $(a_{-m,0}+a_{-m,1}A_{k;1,q}+\ldots +a_{-m,k-1}A_{k;k-1,q})\neq 0$, there is $p\ge 0$ such that 
$$
(a_{-m,0}+\frac{a_{-m,1}(\xi_k-1)^px_q^p\partial_q^p}{p!}+\ldots +\frac{a_{-m,k-1}(\xi_k^{k-1}-1)^px_q^p\partial_q^p}{p!})\neq 0.
$$ 
We'll assume that $p$ is minimal such number. From lemma \ref{L:linear_independence} it follows that $p\le k-1$. 

Since $[\partial_q^k,Q]=0$, it follows 
$$
0=[\partial_q^k,Q_l]=\partial_q^k(Q_l)+k\partial_q^{k-1}(Q_l)\partial_q+\ldots +k\partial_q(Q_l)\partial_q^{k-1}.
$$ 
Now let's expand all series in this sum and collect all summands containing $x_q^{m-k+p}\partial_q^p$. Note that there are no such summands except the expression 
$$
m\ldots (m-k+1)(a_{-m,0}+\frac{a_{-m,1}(\xi_k-1)^px_q^p\partial_q^p}{p!}+\ldots +\frac{a_{-m,k-1}(\xi_k^{k-1}-1)^px_q^p\partial_q^p}{p!})x_q^{m-k+p}\partial_q^p
$$ 
(coming from $(a_{-m,0}+a_{-m,1}A_{k;1,q}+\ldots +a_{-m,k-1}A_{k;k-1,q})\partial_q^k(\int_q^m)$), and since this expression is not zero, we get a contradiction. Indeed, 
for $i\ge 0$ and any $0<l\le k$ we have
$$
\partial_q^l((a_{i,0}+a_{i,1}A_{k;1,q}+\ldots +a_{i,k-1}A_{k;k-1,q})\partial_q^i)\partial_q^{k-l}=
(a_{i,1}'A_{k;1,q}+\ldots +a_{i,k-1}'A_{k;k-1,q})\partial_q^{i+k} 
$$ 
(here $a_{i,r}'=a_{i,j}\xi_k^{jr}$), and all monomials containing $x_q^{m-k+p}$ from this series contain also $\partial_q^{m+p+i}$. For other expressions from \eqref{E:Q_l}  we have
\begin{multline*}
\partial_q^l((a_{i,0}+a_{i,1}A_{k;1,q}+\ldots +a_{i,k-1}A_{k;k-1,q})\int_q^j)\partial_q^{k-l}=\\
\partial_q^l(\int_q^j(a_{i,0}+a_{i,1}'A_{k;1,q}+\ldots +a_{i,k-1}'A_{k;k-1,q}))\partial_q^{k-l}=\\
\sum_{\mu + \nu =l}c_{\mu ,\nu}\partial_q^{\nu}(\int_q^i)\partial_q^{\mu}(a_{i,0}+a_{i,1}'A_{k;1,q}+\ldots +a_{i,k-1}'A_{k;k-1,q})\partial_q^{k-l}=\\
\sum_{\mu + \nu =l}c_{\mu ,\nu}\partial_q^{\nu}(\int_q^i)(a_{i,1}'(\xi_k-1)^{\mu}A_{k;1,q}+\ldots +a_{i,k-1}'(\xi_k^{k-1}-1)^{\mu}A_{k;k-1,q})\partial_q^{k-\nu},
\end{multline*}
where $c_{\mu, \nu}\in \dn$ and $a_{i,r}'=a_{i,j}\xi_k^{jr}$. Now consider any series 
$$
\partial_q^{\nu}(\int_q^i)(a_{i,1}'(\xi_k-1)^{\mu}A_{k;1,q}+\ldots +a_{i,k-1}'(\xi_k^{k-1}-1)^{\mu}A_{k;k-1,q})\partial_q^{k-\nu}
$$ 
from this formula and note that all monomials containing $x_q^{m-k+p}$ from this series contain also $\partial_q^{m-k+p-i+\nu +k-\nu}=\partial_q^{m+p-i}$. If $i<m$, then $m+p-i>p$, and therefore there are no terms containing $x_q^{m-k+p}\partial_q^p$.

If $i=m$, but $\nu <k$, then  
$$
(a_{i,1}'(\xi_k-1)^{\mu +s}x_q^s\partial_q^s+\ldots +a_{i,k-1}'(\xi_k^{k-1}-1)^{\mu +s}x_q^s\partial_q^s)=0
$$
for $s=0, \ldots , \nu -k+p$ by the condition on $p$ (its minimality), and therefore there are no terms containing $x_q^{m-k+p}\partial_q^p$ again. 
\end{proof}

 Now, expanding all brackets in \eqref{E:summand} and using the identities from item 1, we can rewrite $Q$ in the form stated in item 2.
 
3. This item immediately follows from item 2 by induction.  Let's show the uniqueness of the coefficients. 

It suffices to prove that the equality 
$$
0=\sum_{j_{1}=0}^{k_{1}-1}\ldots \sum_{j_{l}=0}^{k_{l}-1}c_{j_1,\ldots ,j_l}A_{k_1; j_1,i_1}\ldots A_{k_l; j_l,i_l}
$$
implies the equalities $c_{j_1,\ldots ,j_l}=0$. By lemma \ref{L:statement} it is equivalent to the system of  equalities $\underline{\partial}^{\underline{q}}\diamond A=0$, where $A$ means our sum above and $\underline{q}\in \dn_0^n$. Note that this system is equivalent to the system $A_{(\underline{q})}=0$, $\underline{q}\in \dn_0^n$. 

Let $\underline{i}$ be the subset in $\{1, \ldots ,n\}$ complement to the set $\{i_1, \ldots ,i_l \}$. Then note that the last system is equivalent to the system 
\begin{equation}
\label{E:nach_uslovija}
\sum_{j_{1}=0}^{k_{1}-1}\ldots \sum_{j_{l}=0}^{k_{l}-1}(c_{j_{1},\ldots ,j_l})_{(\underline{i})} \xi_{k_{1}}^{q_{1}j_{1}}\ldots \xi_{k_l}^{q_lj_l}=0, \quad \underline{q}\in \dn_0^n.
\end{equation}

Now note that system \eqref{E:nach_uslovija} is a system of linear equations with indeterminates $(c_{j_{1},\ldots ,j_l})_{(\underline{i})}$ and the matrix $(\xi_{k_{1}}^{q_{1}j_{1}}\ldots \xi_{k_l}^{q_lj_l})$, which is just the matrix of the tensor product of the Vandermonde matrices $(\xi_{k_{s}}^{q_{s}j_{s}})$, $q_s, j_s=0, \ldots , k_s$. Hence it is invertible and the system has a unique (zero) solution. 

\end{proof}

\begin{defin}
\label{D:p-regular}
An element $P\in \hat{D}_n^{sym}$ is called $p$-regular, where $p\in \dn_0$, if the elements of the set $\{\underline{\partial}^{\underline{k}}\diamond \sigma (P)| \quad \underline{k}\in \dn_0^n: |\underline{k}|\le p\}\subset F$ are linearly independent. 
\end{defin}

We start with the Schur theory for the case $n=1$.

\subsection{Schur theory, $n=1$}
\label{S:Schur,n=1}

\begin{prop}
\label{P:Schur,n=1}
Let $P\in \hat{D}_1^{sym}$ be a regular operator with $\Ord (P)=\Ord (\bar{P})=k>0$ and such that $\bar{P}$ is regular.

Then there exists an invertible operator $S\in \hat{D}_1^{sym}$ with $\Ord (S)=0$ such that \\
$P=S^{-1}\partial_1^kS$. 

\end{prop}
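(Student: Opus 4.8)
The plan is to construct $S$ slice-by-slice, mimicking the construction of the Sato operator in Theorem \ref{T:SatoAction}, but now staying inside $\hat{D}_1^{sym}$ since all orders are non-negative. First I would normalize: since $\Ord(P)=\Ord(\bar P)=k$ and $\bar P$ is regular, $\bar P$ is $p$-regular for all $p$, so $\sigma(\bar P)=\sigma(P)$ is a homogeneous operator of $\Ord$-order $-k$ (degree $k$ in $\partial_1$ in the $n=1$ case). The regularity hypothesis says the map $F\xrightarrow{-\diamond\sigma(P)}V_1$ is injective; equivalently, by Lemma \ref{L:regularOp}, the set $\{\partial_1^m\diamond\sigma(P)\mid m\ge 0\}$ is linearly independent. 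In dimension one $F=K[[\partial_1]]$ and $\sigma(P)=c\,\partial_1^k$ for some $c\in K^\ast$ (the highest symbol of a regular operator of order $-k$ with constant highest coefficient); after rescaling $P$ by a constant we may assume $c=1$, i.e.\ $\sigma(P)=\partial_1^k$.

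Next I would set up the equation $SP=\partial_1^k S$ and solve for $S=\sum_{i\ge 0}\frac{x_1^i}{i!}S_{(i)}$ with $S_{(0)}=1$ and $\Ord(S_{(i)})\le i$, so that $\Ord(S)=0$. Applying $\partial_1^m\diamond(-)$ to both sides: on the left $\partial_1^m\diamond(SP)$ and on the right $\partial_1^m\diamond(\partial_1^kS)=\partial_1^{m+k}\diamond S$. Using the identity from Lemma \ref{L:statement} that $\partial_1^i\diamond S=S_{(i)}+\text{l.o.t.}$, where lower-order terms involve $S_{(j)}$ with $j<i$ (already determined), the condition $\partial_1^m\diamond(SP)=\partial_1^{m+k}\diamond S$ becomes, at each level $m$, a \emph{linear equation} whose leading unknown is $S_{(m)}$ and whose remaining terms are known. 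Because $\sigma(P)=\partial_1^k$, the ``coefficient'' of $S_{(m)}$ on the right is essentially $\partial_1^k$ acting invertibly (via $\int_1$, cf.\ the operators of integration from Theorem \ref{T:D_n_properties}), so each $S_{(m)}$ is uniquely solvable, with the order bound $\Ord(S_{(m)})\le m$ propagating by induction from $\Ord(P)=k$ and $\Ord(S_{(j)})\le j$ for $j<m$. This produces $S\in\hat{D}_1^{sym}$ with $\Ord(S)=0$ and $S_{(0)}=\overline{S_{(0)}}=1\in K^\ast$, hence $S$ is a unit by Corollary \ref{C:units} (its reduction mod $y$ is regular: $\bar S$ again has $\overline{S_{(0)}}=1$ and order zero, and one checks regularity directly, or simply notes $\bar S$ satisfies $\bar S\bar P=\partial_1^k\bar S$ with $\bar P$ regular, forcing $\bar S$ regular).

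Finally I would verify $SP=\partial_1^kS$ genuinely holds: by construction $\partial_1^m\diamond(SP)=\partial_1^m\diamond(\partial_1^kS)$ for all $m\ge 0$, so Lemma \ref{L:statement} gives $SP=\partial_1^kS$, i.e.\ $P=S^{-1}\partial_1^kS$. Then undo the initial rescaling: if $\sigma(P)=c\,\partial_1^k$ with $c\in K^\ast$, apply the above to $c^{-1}P$ and absorb $c$ (a $k$-th root of $c$ in $\tilde K$, conjugating by a suitable $A_{k;\cdot,1}$ or just by a scalar) — actually for $n=1$ the cleanest route is: after arranging $\sigma(P)=c\partial_1^k$, replace $\partial_1$ by $c^{1/k}\partial_1$ does not live in $\hat{D}_1^{sym}$, so instead build $S$ solving $SP=c\,\partial_1^k S$ directly (the argument is identical, only the invertible leading coefficient changes from $1$ to $c$), yielding $P=S^{-1}(c\partial_1^k)S$; but $c\partial_1^k=\partial_1^k$ is \emph{not} generally true, so one must instead observe that regularity of $\bar P$ combined with $\Ord(\bar P)=k$ forces, after possibly composing with a unit, the highest symbol to be exactly $\partial_1^k$. \textbf{The main obstacle} I anticipate is precisely this last normalization of the highest symbol to $\partial_1^k$ (rather than $c\partial_1^k$): one needs to check that conjugating $P$ by an appropriate unit in $\hat{D}_1^{sym}$ — or rescaling — reduces to the case $c=1$ without destroying regularity of $\bar P$, and to confirm the recursion's solvability bound $\Ord(S_{(m)})\le m$ holds uniformly; the rest is the routine slice-by-slice induction already rehearsed in the proof of Theorem \ref{T:SatoAction}.
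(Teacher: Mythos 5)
Your core recursion is the paper's: write $S=\sum_{i\ge 0}\frac{x_1^i}{i!}S_{(i)}$ with $S_{(0)}=1$ and solve $(\partial_1^kS)_{[p]}=(SP)_{[p]}$ level by level. Two corrections to the setup, though. The leading unknown at level $m$ is $S_{(m+k)}$, not $S_{(m)}$ (the right-hand side is $\partial_1^{m+k}\diamond S=S_{(m+k)}+\mbox{l.o.t.}$), and consequently $S_{(1)},\dots,S_{(k-1)}$ are \emph{not} determined by the recursion and must be chosen — the paper sets $S_{[1]}=\cdots=S_{[k-1]}=0$, and this choice matters later. More importantly, your opening normalization rests on a false claim: in $\hat{D}_1^{sym}$ the highest symbol of a regular operator of order $k$ is a homogeneous series $\sum_{j\ge 0}\alpha_j x_1^j\partial_1^{j+k}$, not a monomial $c\,\partial_1^k$ (e.g.\ $\partial_1^k+x_1\partial_1^{k+1}$ is regular of order $k$). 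The good news is that no normalization is needed at all: the equation $(\partial_1^kS)_{[0]}=P_{[0]}$ determines $S_{(k)}$ so as to absorb whatever $P_{[0]}$ is. So the ``main obstacle'' you flag at the end is a non-issue, but the fact that you leave it unresolved means this part of your argument is not closed.

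The genuine gap is the invertibility of $S$. Corollary \ref{C:units} requires $\bar{S}$ to be regular, and this does \emph{not} follow from $\overline{S_{(0)}}=1$ and $\Ord(S)=0$: for $\sigma(\bar{S})=\sum_j s_jx_1^j\partial_1^j$ one computes $\partial_1^m\diamond\sigma(\bar{S})=\bigl(\sum_{j\le m}s_j\,m!/(m-j)!\bigr)\partial_1^m$, which can vanish for some $m$, so regularity is a real condition on all the slices. Your fallback — that $\bar{S}\bar{P}=\partial_1^k\bar{S}$ with $\bar{P}$ regular ``forces'' $\bar{S}$ regular — is the right idea but is precisely the inductive argument that forms the technical core of the paper's proof, and it cannot be dismissed as immediate: one first establishes $k$-regularity from $\partial_1^i\diamond S=\partial_1^i$ for $i<k$ (here the choice $S_{[1]}=\cdots=S_{[k-1]}=0$ is used) together with $\partial_1^k\diamond S=P_{[0]}$, whose symbol has order exactly $k$ by regularity of $P$, so that $1,\partial_1,\dots,\partial_1^{k-1},\overline{\sigma(P_{[0]})}$ are independent; then for $p>k$ one writes $\partial_1^p\diamond S=\partial_1^{p-k}\diamond\bigl((S_{[0]}+\cdots+S_{[p-k]})P\bigr)$ and invokes regularity of $P$ together with $(p-k)$-regularity of the partial sum. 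As written, your proposal asserts the conclusion of this induction without performing it.
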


\begin{proof}
We'll find the operator $S$ recursively, looking for the partial slice decompositions $S_{[q]}$, 
$S=\sum_{q\ge 0}S_{[q]}$. 

For any $p\ge 0$ we need to solve the system
$$
(\partial_1^kS)_{[p]}=(SP)_{[p]}.
$$
Note that $(\partial_1^kS)_{[p]}=\partial_1^k(S_{[p+k]}) +\mbox{expression depending on $S_{[q]}$, $q<p+k$}$, 
and $(SP)_{[p]} = ((S_{[0]}+\ldots +S_{[p]})P)_{[p]}$. Put $S_{[0]}:=1$, $S_{[p]}:=0$ for $1\le p\le k-1$. Then for $p=0$ we have
$$
(\partial_1^kS)_{[0]}=\partial_1^k(S_{[k]})+\partial_1^k=P_{[0]},
$$
and therefore the slice $S_{(k)}$ is uniquely determined; besides, $\Ord (S_{[k]})\le 0$. By the same reason all partial slice decompositions $S_{[p]}$, $p>k$ will be uniquely determined by this system and by induction $\Ord (S_{[p]})\le 0$ as well. Since $\Ord (S_{[0]})=0$, we get $\Ord (S)=0$ too. Note that $(\partial_1^k\bar{S})_{[p]}=(\bar{S}\bar{P})_{[p]}$ automatically. 

By corollary \ref{C:units} $S$ is invertible iff $S$, $\bar{S}$ are regular. First note that $S$, $\bar{S}$ are $k$-regular: 
for any $0\le i\le k$ we have 
$$
\partial_1^i\diamond S=\partial_1^i\diamond (S_{[0]}+\ldots + S_{[k]})=\left\{
\begin{array}{c}
\partial_1^i \quad \mbox{if $i<k$}\\
P_{[0]}  \quad \mbox{if $i=k$}
\end{array}
\right . ,
\quad 
$$
$$
\partial_1^i\diamond \bar{S}=\partial_1^i\diamond (\bar{S}_{[0]}+\ldots + \bar{S}_{[k]})=\left\{
\begin{array}{c}
\partial_1^i \quad \mbox{if $i<k$}\\
\bar{P}_{[0]}  \quad \mbox{if $i=k$}
\end{array}
\right . 
$$
Since $P$, $\bar{P}$ are regular, we have $\sigma (P)_{[0]}\neq 0$, $\Ord (\sigma (P)_{[0]})= \Ord (P_{[0]})=k$ and analogously $\overline{\sigma (P)}_{[0]}\neq 0$, $\Ord (\overline{\sigma (P)}_{[0]})= \Ord (\bar{P}_{[0]})=k$, therefore the elements \\
$1, \partial_1, \ldots , \partial_1^{k-1}, \overline{\sigma (P_{[0]})}$ are linearly independent. 

Now we can prove that $S$ is $p$-regular for $p>k$ by induction. For any $p>k$ we have
$$
\partial_1^p\diamond S= \partial_1^{p-k}\diamond (\partial_1^k S)=\partial_1^{p-k}\diamond (SP)=
\partial_1^{p-k}\diamond (S_{[0]}+\ldots +S_{[p-k]})P,
$$
$$
\partial_1^p\diamond \bar{S}= \partial_1^{p-k}\diamond (\partial_1^k \bar{S})=\partial_1^{p-k}\diamond (\bar{S}\bar{P})=
\partial_1^{p-k}\diamond (\bar{S}_{[0]}+\ldots +\bar{S}_{[p-k]})\bar{P}.
$$
Since $P$, $\bar{P}$ are regular  and $(S_{[0]}+\ldots +S_{[p-k]})$, $(\bar{S}_{[0]}+\ldots +\bar{S}_{[p-k]})$ are $(p-k)$-regular by induction, the symbols of the last elements in the equalities are not equal to zero (cf. lemma \ref{L:regularOp}), i.e. $S$, $\bar{S}$ are $p$-regular and therefore they are regular by induction.
 
\end{proof}

\begin{ex}
\label{Ex:Busov}
Let's illustrate propositions \ref{P:roots_of_unity} and  \ref{P:Schur,n=1} on one well known Wallenber's example of two commuting ordinary differential operators of orders 2 and 3 with rational coefficients:
$$
L=\partial^2-\frac{2}{(x+1)^2}, \quad P= 4\partial^3 -\frac{12}{(x+1)^2}\partial+\frac{12}{(x+1)^3}.
$$
Then direct calculations with the help of these propositions show the existence of an operator $S$ such that $SLS^{-1}=\partial^2$, and 
$$
SPS^{-1}=(4 \partial^3 +2 \int )-(4(1-\partial )-2 \int )A_{2;1,1}.
$$
\end{ex}

\subsection{Schur theory, general case}
\label{S:Schur,n}

\begin{prop}
\label{P:Schur,n}
Let $P\in \hat{D}_n^{sym}$ be a regular operator with $\Ord (P)=\Ord (\bar{P})=k>0$ and such that $\bar{P}$ is regular.

Then for any $i=1, \ldots ,n$ there exists an invertible operator $S_i\in \hat{D}_n^{sym}$ with $\Ord (S_i)=0$, $S_{i,0}=1$ such that $P=S_i^{-1}\partial_i^kS_i$. 
\end{prop}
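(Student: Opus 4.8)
The plan is to prove Proposition~\ref{P:Schur,n} by essentially the same recursive construction as in the case $n=1$ (Proposition~\ref{P:Schur,n=1}), carried out separately for each coordinate direction $i$, and then to verify invertibility of the resulting operator via Corollary~\ref{C:units}. Fix $i\in\{1,\ldots,n\}$. We seek $S_i=\sum_{q\ge 0}(S_i)_{[q]}$ in the form of its partial slice decomposition (in the sense of the notation before Proposition~\ref{P:roots_of_unity}), with $(S_i)_{[0]}=1$, solving the equations $(\partial_i^k S_i)_{[p]}=(S_i P)_{[p]}$ for all $p\ge 0$. As in the one-dimensional case, one observes that $(\partial_i^k S_i)_{[p]}=\partial_i^k\bigl((S_i)_{[p+k]}\bigr)+(\text{terms depending only on }(S_i)_{[q]},\ q<p+k)$, while $(S_i P)_{[p]}$ depends only on $(S_i)_{[q]}$ with $q\le p$. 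Setting $(S_i)_{[p]}:=0$ for $1\le p\le k-1$, the equation at level $p$ determines $(S_i)_{[p+k]}$ uniquely once all lower partial slices are known; moreover an easy induction using the $\Ord$-estimates from Remark~\ref{R:ord_properties} and the hypothesis $\Ord(P)=k$ gives $\Ord\bigl((S_i)_{[p+k]}\bigr)\le 0$, hence $\Ord(S_i)=0$. The same equations with $P$ replaced by $\bar P=P|_{y=0}$ are satisfied automatically by $\bar S_i$, since $\bar{(\ )}$ is a ring homomorphism commuting with taking partial slices.

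It remains to show $S_i\in\hat U_n^{sym}$. By Corollary~\ref{C:units} it suffices to check that $\Ord(S_i)=0$ (already done) and that $\bar S_i$ is regular; in fact I will show both $S_i$ and $\bar S_i$ are $p$-regular for every $p$, which by Lemma~\ref{L:regularOp}(1) (applied to $\sigma(S_i)$, whose graded pieces are the $\sigma((S_i)_{[q]})$) yields regularity. For $0\le j<k$ one has $\partial_i^{\,j}\diamond S_i=\partial_i^{\,j}\diamond\bigl((S_i)_{[0]}+\cdots+(S_i)_{[k-1]}\bigr)=\partial_i^{\,j}$, and $\partial_i^{\,k}\diamond S_i=\partial_i^k\diamond\bigl(1+(S_i)_{[k]}\bigr)=P_{[0]}$, using the level-$0$ equation $\partial_i^k\bigl((S_i)_{[k]}\bigr)+\partial_i^k=P_{[0]}$; the analogous identities hold for $\bar S_i$ with $\bar P_{[0]}$ in place of $P_{[0]}$. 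Regularity of $P$ (and of $\bar P$) gives $\sigma(P)_{[0]}\neq 0$ with $\Ord(\sigma(P)_{[0]})=\Ord(P_{[0]})=k$, so $1,\partial_i,\ldots,\partial_i^{\,k-1},\overline{\sigma(P)_{[0]}}$ are linearly independent in $V_n$; this is $k$-regularity of $\bar S_i$ (and of $S_i$). For $p>k$ we induct: $\partial_i^{\,p}\diamond S_i=\partial_i^{\,p-k}\diamond(\partial_i^k S_i)=\partial_i^{\,p-k}\diamond\bigl((S_i)_{[0]}+\cdots+(S_i)_{[p-k]}\bigr)P$, and since the truncation $(S_i)_{[0]}+\cdots+(S_i)_{[p-k]}$ is $(p-k)$-regular by the inductive hypothesis and $P$ is regular, Lemma~\ref{L:regularOp} shows the symbol of the right-hand side is nonzero; the same argument applies to $\bar S_i$. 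Hence $S_i,\bar S_i$ are regular, $S_i$ is invertible, and $P=S_i^{-1}\partial_i^k S_i$ by Lemma~\ref{L:statement} since $\partial_i^{\,j}\diamond(\partial_i^k S_i)=\partial_i^{\,j}\diamond(S_i P)$ for all multi-indices, equivalently $(\partial_i^k S_i)_{[p]}=(S_i P)_{[p]}$ for all $p$, which is how $S_i$ was constructed.

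The one point requiring genuine care—rather than routine bookkeeping—is the claim that the level-$0$ equation $\partial_i^k\bigl((S_i)_{[k]}\bigr)=P_{[0]}-\partial_i^k$ has a (unique) solution $(S_i)_{[k]}$ with $\Ord\le 0$, and similarly at higher levels: this uses that $P_{[0]}-\partial_i^k$ (and the analogous right-hand sides at level $p$) lies in the image of the operator $X\mapsto\partial_i^k(X)$ on slices with the correct order bound. Here the hypothesis that $P$ is \emph{regular} is essential precisely so that $P_{[0]}$ has $\Ord$ exactly $k$ with the top coefficient structure matching $\partial_i^k$, which makes $(S_i)_{[k]}$ well-defined of order $\le 0$; the operators of integration $\int_i$ from Theorem~\ref{T:D_n_properties}, satisfying $\partial_i\int_i=1$, provide the formal inverse needed to solve these equations inside $\hat D_n^{sym}$. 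I expect this solvability-plus-order-estimate step to be the main obstacle; everything else parallels the $n=1$ argument verbatim, the extra variables $x_j$, $j\neq i$, and $y$ playing only the role of parameters since $\partial_i$ acts on them trivially.
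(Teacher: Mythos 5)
There is a genuine gap, and it sits exactly where the $n>1$ case stops being a verbatim copy of Proposition~\ref{P:Schur,n=1}. The equation $(\partial_i^{k}S_i)_{[p]}=(S_iP)_{[p]}$ determines only $\partial_i^{k}\bigl(S_{i,[p+k]}\bigr)$, and since $\partial_i^{k}$ annihilates every term $\frac{\underline{x}^{\underline{j}}}{\underline{j}!}S_{i,(\underline{j})}$ with $j_i<k$, the recursion pins down only the slices $S_{i,(\underline{j})}$ with $j_i\ge k$. For $n=1$ every multi-index of weight $p+k$ has $j_1=p+k\ge k$, so everything is determined; for $n>1$ the slices with $j_i<k$ are \emph{free parameters}, and your claim that ``the equation at level $p$ determines $(S_i)_{[p+k]}$ uniquely'' is false. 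These free slices cannot simply be set to zero: regularity of $S_i$ in $n$ variables means linear independence of $\underline{\partial}^{\underline{j}}\diamond\sigma(S_i)$ over \emph{all} multi-indices $\underline{j}$, not just the pure powers $\partial_i^{p}$ that you check. For instance, take $n=2$, $i=1$, $P=\partial_2^2$ (regular, of order $2$): with the free slices zero you get $\partial_1^2\diamond\sigma(S_1)=\sigma(P)_{[0]}=\partial_2^2=\partial_2^2\diamond\sigma(S_1)$, so $S_1$ is not regular, hence not a unit by Corollary~\ref{C:units}, and the construction fails. The paper's proof handles this by \emph{choosing} the undetermined slices $S_{i,(\underline{j})}$ ($j_i<k$) at each level so that the symbols of the elements $\underline{\partial}^{\underline{j}}+\sigma(S_{i,(\underline{j})})$, together with the already-determined elements $\underline{\partial}^{(j_1,\ldots,j_i-k,\ldots,j_n)}\diamond\bigl((S_{i,[0]}+\cdots+S_{i,[p]})P\bigr)$, remain linearly independent (and likewise after reduction modulo $y$); this choice is the actual content of the inductive regularity argument and is entirely absent from your proposal.

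A secondary, smaller point: the ``solvability'' worry in your last paragraph is misplaced. One does not need the integration operators $\int_i$; the equation $\partial_i^{k}\bigl(S_{i,[p+k]}\bigr)=(\text{known})$ is solved slice-by-slice simply by shifting multi-indices by $k$ in the $i$-th entry, which is why only the slices with $j_i\ge k$ are reachable and why the remaining ones are genuinely free. The order estimate $\Ord(S_{i,[p+k]})\le 0$ then follows for the determined slices exactly as you say, but for the free slices it is a constraint you must impose when choosing them.
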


\begin{proof}
The proof is similar to proposition \ref{P:Schur,n=1}. First, note that $\sigma (P)_{[0]}=\sigma (P_{[0]})\neq 0$ (since $P$ is regular) and therefore $\Ord (P_{[0]})=\Ord (P)$. Analogously, $\sigma (\bar{P})_{[0]}=\sigma (\bar{P}_{[0]})\neq 0$ and $\Ord (\bar{P}_{[0]})=\Ord (\bar{P})$.

We'll find the operator $S_i$ recursively, looking for the partial slice decompositions $S_{i,[q]}$, 
$S_i=\sum_{q\ge 0}S_{i,[q]}$. 

For any $p\ge 0$ we need to solve the system
\begin{equation}
\label{E:system0}
(\partial_i^{k}S_i)_{[p]}=(S_iP)_{[p]}.
\end{equation}
Note that 
$(\partial_i^{k}S_i)_{[p]}=\partial_i^{k}(S_{i,[p+k]}) +\mbox{expression depending on $S_{i,[q]}$, $q<p+k$}$, 
and 
$$(S_iP)_{[p]} = ((S_{i,[0]}+\ldots +S_{i,[p]})P)_{[p]}.$$
In particular, $\partial_i^{k}(S_{i,[p+k]})$ is uniquely determined via expressions depending on $S_{i,[q]}$, $q<p+k$ and $P$, i.e. all slices $S_{i,(\underline{j})}$, where $\underline{j}=(j_1,\ldots ,j_n)$ with $j_i\ge k$ and $|\underline{j}|=k$, are uniquely determined. 

Put $S_{i,[0]}:=1$, $S_{i,[p]}:=0$ for $1\le p\le k-1$. Then for $p=0$ we have the system
$$
(\partial_i^{k}S_i)_{[0]}=\partial_i^{k}(S_{i,[k]})+\partial_i^{k}=P_{[0]}.
$$
From this equation the slice $S_{i,(\underline{j})}$ with $\underline{j} = (0, \ldots ,0, k)$ is uniquely  determined, and we define all other slices  $S_{i,(\underline{j})}$ with $|\underline{j}| =k$, $\underline{j} \neq (0, \ldots ,0, k)$  in such a way that the elements $\underline{\partial}^{\underline{j}} +\sigma (S_{i,(\underline{j})})$ are linearly independent with $\sigma (P)_{[0]}$ (and the elements $\underline{\partial}^{\underline{j}} +\sigma (\bar{S}_{i,(\underline{j})})$ are linearly independent with $\sigma (\bar{P})_{[0]}$ correspondingly). Then we get $\Ord (S_{i, [k]})\le 0$ (as $\Ord (\partial_i^{k}) = k$ and $\Ord (P_{[0]})= k$). 

Note that the operators $(S_{i,[0]}+\ldots + S_{i,[k]})$, $(\bar{S}_{i,[0]}+\ldots + \bar{S}_{i,[k]})$ are $k$-regular: 
for any $\underline{j}\in \dn_0^n$ with $|\underline{j}|\le k$ we have 
$$
\underline{\partial}^{\underline{j}}\diamond (S_{i,[0]}+\ldots + S_{i,[k]})=\left\{
\begin{array}{c}
\underline{\partial}^{\underline{j}} \quad \mbox{if $|\underline{j}|< k$}\\
\underline{\partial}^{\underline{j}}+ S_{i,(\underline{j})} \quad \mbox{if $|\underline{j}|= k$, $\underline{j}\neq (0, \ldots ,0, k) $}\\
P_{[0]}  \quad \mbox{if $\underline{j}= (0, \ldots ,0, k) $}
\end{array}
\right . ,
$$
$$
\underline{\partial}^{\underline{j}}\diamond (\bar{S}_{i,[0]}+\ldots + \bar{S}_{i,[k]})=\left\{
\begin{array}{c}
\underline{\partial}^{\underline{j}} \quad \mbox{if $|\underline{j}|< k$}\\
\underline{\partial}^{\underline{j}}+ \bar{S}_{i,(\underline{j})} \quad \mbox{if $|\underline{j}|= k$, $\underline{j}\neq (0, \ldots ,0, k) $}\\
\bar{P}_{[0]}  \quad \mbox{if $\underline{j}= (0, \ldots ,0, k) $}
\end{array}
\right .
$$
By construction of the slices  $S_{i,(\underline{j})}$, $\bar{S}_{i,(\underline{j})}$ all symbols of these elements are linearly independent. 

By the same reason all partial slice decompositions $S_{i,[p+k]}$, $p>0$ can be  determined by the equation
$$
(\partial_i^{k}S_i)_{[p]}=(\partial_i^{k}(S_{i,[0]}+\ldots +S_{i,[p+k]}))_{[p]}=(S_iP)_{[p]}=((S_{i,[0]}+\ldots +S_{i,[p]})P)_{[p]},
$$
 where all slices  $S_{i,(\underline{j})}$, where $|\underline{j}| =p+k$, $\underline{j}=(j_1, \ldots , j_n)$ with $j_i\ge k$,  are uniquely determined by the equation, and to define the slices  $S_{i,(\underline{j})}$, where $|\underline{j}| =p+k$, $\underline{j}=(j_1, \ldots , j_n)$ with $j_i<k$, let's 
consider first elements $v_{k, \underline{j}}:= S_{i,(\underline{j})} +\underline{\partial}^{\underline{j}} \diamond (S_{i,[0]}+\ldots +S_{i,[p+k-1]})$, where $\underline{j} =(j_1, \ldots , j_n)$ with $j_i\ge k$, $|\underline{j}|=p+k$. Observe that for all $q< p$ we have 
\begin{multline*}
(\partial_i^{k}(S_{i,[0]}+\ldots +S_{i,[p+k]}))_{[q]}=(\partial_i^{k}(S_{i,[0]}+\ldots +S_{i,[q+k]}))_{[q]}= \\
((S_{i,[0]}+\ldots +S_{i,[q]})P)_{[q]}=((S_{i,[0]}+\ldots +S_{i,[p]})P)_{[q]}
\end{multline*}
by induction, and therefore
\begin{multline*}
v_{k, \underline{j}} = \underline{\partial}^{(j_1, \ldots ,j_i- k, \ldots , j_n)}\diamond (\partial_i^{k} (S_{i,[0]}+\ldots +S_{i,[p+k-1]}+\frac{x^{\underline{j}}}{\underline{j}!}S_{i,(\underline{j})}))=\\
\underline{\partial}^{(j_1, \ldots ,j_i- k, \ldots , j_n)}\diamond (\sum_{q=0}^p (\partial_i^{k} (S_{i,[0]}+\ldots +S_{i,[p+k-1]}+\frac{x^{\underline{j}}}{\underline{j}!}S_{i,(\underline{j})}))_{[q]})=\\
\underline{\partial}^{(j_1, \ldots ,j_i- k, \ldots , j_n)}\diamond (\sum_{q=0}^p ((S_{i,[0]}+\ldots +S_{i,[p]})P)_{[q]})=\underline{\partial}^{(j_1, \ldots ,j_i- k, \ldots , j_n)}\diamond ((S_{i,[0]}+\ldots +S_{i,[p]})P),
\end{multline*}
where from the elements $v_{k, \underline{j}}$ are linearly independent (moreover, their symbols are linearly independent), since $P$ is regular  and $(S_{i,[0]}+\ldots +S_{i,[p]})$ is $p$-regular by induction. Analogously defined elements $v_{\underline{j}}:=\underline{\partial}^{\underline{j}} \diamond (S_{i,[0]}+\ldots +S_{i,[p+k-1]})$ with $|\underline{j}|<p+k$ and their symbols are also linearly independent by induction, and symbols of the elements $v_{\underline{j}}$, $v_{k, \underline{j}}$ are linearly independent because elements from two different groups (with $|\underline{j}|<p$ and with $|\underline{j}|=p$) have different $\Ord$- orders. 

Now we define the slices  $S_{i,(\underline{j})}$, where $|\underline{j}| =p+k$, $\underline{j}=(j_1, \ldots , j_n)$ with $j_i<k$, in such a way that symbols of the elements $v_{p, \underline{j}}:= S_{i,(\underline{j})} +\underline{\partial}^{\underline{j}} \diamond (S_{i,[0]}+\ldots +S_{i,[p+k-1]})$ 
are linearly independent with the symbols of elements $v_{k, \underline{j}}$ and  $v_{\underline{j}}$, $|\underline{j}| < p+k$ (and analogous properties hold for $\bar{v}_{p, \underline{j}}$). 

As a result, the operators $(S_{i,[0]}+\ldots +S_{i,[p+k]})$, $(\bar{S}_{i,[0]}+\ldots +\bar{S}_{i,[p+k]})$ are $(p+k)$-regular and satisfy the equations
$$
(\partial_i^{k}(S_{i,[0]}+\ldots +S_{i,[p+k]}))_{[p]}=((S_{i,[0]}+\ldots +S_{i,[p]})P)_{[p]}= ((S_{i,[0]}+\ldots +S_{i,[p+k]})P)_{[p]},
$$
$$
(\partial_i^{k}(\bar{S}_{i,[0]}+\ldots +\bar{S}_{i,[p+k]}))_{[p]}=((\bar{S}_{i,[0]}+\ldots +\bar{S}_{i,[p]})\bar{P})_{[p]}= ((\bar{S}_{i,[0]}+\ldots +\bar{S}_{i,[p+k]})\bar{P})_{[p]}.
$$
By induction and construction $\Ord (S_{i,[p+k]})\le 0$ as well. 

Now consider the operator $S_i:= \sum_{q\ge 0}S_{i,[q]}$. Since $\Ord (S_{i,[0]})=0$, we get $\Ord (S_i)=0$ too.  Moreover, $S$, $\bar{S}$ are regular (as they are $p$-regular for all $p>0$) and satisfy equation \eqref{E:system0} and analogous equation for $\bar{S}$ (for all partial slice decompositions).  By corollary \ref{C:units} $S_i$ is invertible, and we are done.
\end{proof}

\begin{thm}
\label{T:Schur,n}
Let $P_1, \ldots , P_n\in \hat{D}_n^{sym}$ be commuting operators with $\Ord (P_i)=\Ord (\bar{P}_i)=k$ for all $i=1,\ldots ,n$.  Assume that the spectral module $F$ of the ring $K[\sigma (\bar{P}_1), \ldots , \sigma (\bar{P}_n)]$ is finitely generated and Cohen-Macaulay. 

Then there exists an invertible operator $S\in \hat{D}_n^{sym}$ with $\Ord (S)=0$ such that
$$
S^{-1}\partial_i^{k_i}S=P_i, \quad i=1, \ldots ,n. 
$$
\end{thm}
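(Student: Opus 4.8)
The plan is to trivialize the $P_i$ one at a time, in the spirit of the single-operator statements (Propositions \ref{P:Schur,n=1} and \ref{P:Schur,n}), while keeping control of the ones already trivialized by means of the description of centralizers of constant-coefficient operators (Proposition \ref{P:roots_of_unity}). Before starting I would record that the hypotheses force regularity: freeness of $F$ over $R:=K[\sigma(\bar P_1),\ldots,\sigma(\bar P_n)]$, which follows from finite generation together with Cohen--Macaulayness (so that $R$ is, up to the obvious identification, an $n$-dimensional polynomial ring in the $\sigma(\bar P_i)$ and $F$ is a free $R$-module of some rank $r$), makes multiplication by each $\sigma(\bar P_i)$ injective on $F$; by Lemma \ref{L:regularOp} this means exactly that $P_i$ and $\bar P_i$ are regular, with $\Ord(P_i)=\Ord(\bar P_i)=k_i$.

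I would then argue by induction, constructing invertible operators $S^{(1)},\ldots,S^{(n)}\in\hat D_n^{sym}$ of order $0$ so that, after conjugating by $S^{(i)}\cdots S^{(1)}$, one has $P_1=\partial_1^{k_1},\ldots,P_i=\partial_i^{k_i}$ while $P_{i+1},\ldots,P_n$ still commute. For $i=1$ this is Proposition \ref{P:Schur,n} applied to $P_1$ (conjugation does not affect commutativity). For the inductive step, once $P_1=\partial_1^{k_1},\ldots,P_i=\partial_i^{k_i}$, each remaining $P_j$ commutes with $\partial_1^{k_1},\ldots,\partial_i^{k_i}$, hence by Proposition \ref{P:roots_of_unity}, item 3, it has a canonical weight decomposition along the operators $A_{k_\ell;j,\ell}$ ($\ell\le i$) with coefficients not involving $x_1,\ldots,x_i$ (up to integration operators), compatible with all commutation relations among the $P_j$. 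I would now run the recursion of Proposition \ref{P:Schur,n} for $P_{i+1}$ but \emph{inside} the subspace of operators commuting with $\partial_1^{k_1},\ldots,\partial_i^{k_i}$: the slice identities $(\partial_{i+1}^{k_{i+1}}S^{(i+1)})_{[p]}=(S^{(i+1)}P_{i+1})_{[p]}$ are solved degree by degree, the forced slices being determined as in the proof of Proposition \ref{P:Schur,n} and the free slices being chosen so that the partial sums stay $p$-regular and respect the weight grading; consistency of the (now overdetermined) system is guaranteed by $[P_{i+1},\partial_\ell^{k_\ell}]=0$ for $\ell\le i$. The resulting $S^{(i+1)}$ commutes with $\partial_1^{k_1},\ldots,\partial_i^{k_i}$, hence fixes $P_1,\ldots,P_i$; by Corollary \ref{C:units} it is invertible; and finally $S:=S^{(n)}\cdots S^{(1)}$ satisfies $S^{-1}\partial_j^{k_j}S=P_j$ for all $j$ by Lemma \ref{L:statement}.

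The step I expect to be the real obstacle is this last one: making the free slice choices in the constrained recursion so that, simultaneously, the partial sums of $S^{(i+1)}$ and of $\overline{S^{(i+1)}}$ remain $p$-regular for every $p$ while staying in the centralizer of $\partial_1^{k_1},\ldots,\partial_i^{k_i}$. This is precisely where the Cohen--Macaulay hypothesis enters: freeness of $F$ over $R$ pins down the Hilbert function of $F$, viewed as a module over $K[\partial_1^{k_1},\ldots,\partial_n^{k_n}]$, to be that of a free module of rank $r$, which is exactly what is needed to guarantee enough room at each degree to complete the linearly independent families of symbols (cf. the dimension count in the proof of Proposition \ref{P:Schur,n}); without it the recursion could stall for lack of space. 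An alternative route, bypassing the induction, is to perform the full simultaneous recursion for $S=\sum_{q\ge0}S_{[q]}$ solving
$$(\partial_i^{k_i}S)_{[p]}=(SP_i)_{[p]},\qquad 1\le i\le n,\ p\ge 0,$$
all at once, with consistency and available room governed by the same Cohen--Macaulay input; regularity of $S,\bar S$ and Corollary \ref{C:units} then again give invertibility, and Lemma \ref{L:statement} the desired conjugations.
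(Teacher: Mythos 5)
Your primary (inductive) route has a genuine gap, and it is exactly the one you flag yourself. After arranging $P_1=\partial_1^{k_1},\ldots,P_i=\partial_i^{k_i}$, you need $S^{(i+1)}$ lying in the centralizer of $\partial_1^{k_1},\ldots,\partial_i^{k_i}$ and conjugating $\partial_{i+1}^{k_{i+1}}$ to $P_{i+1}$. By Proposition \ref{P:roots_of_unity} that centralizer is spanned by operators built from the $A_{k_\ell;j,\ell}$ and the integration operators $\int_\ell$ — a rigid and rather exotic class (cf.\ Example \ref{Ex:Busov}, where conjugating an operator commuting with $\partial^2$ genuinely produces $A_{2;1,1}$ and $\int$ terms). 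You do not show that the slice recursion of Proposition \ref{P:Schur,n} admits solutions inside this class, nor that the ``free'' slices can be chosen there so that the partial sums of $S^{(i+1)}$ and $\overline{S^{(i+1)}}$ stay $p$-regular; the assertion that Cohen--Macaulayness supplies ``enough room'' is not an argument. As written, the induction does not close.

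The paper proves the theorem by what you call the ``alternative route'' — the full simultaneous recursion $(\partial_i^{k_i}S)_{[p]}=(SP_i)_{[p]}$ — but with two specific ingredients that your sketch omits and that carry the whole proof. First, there are \emph{no} free choices: all slices $S_{(\underline{j})}$ with $j_i<k_i$ for every $i$ (except $S_{(\underline{0})}=1$) are set to zero, and every remaining slice is forced by exactly one of the $n$ equations; the consistency of the overdetermined system is then verified by a direct computation using $[P_i,P_j]=0$, showing that a slice determined from the $P_1$-equation automatically satisfies the $P_i$-equations via $(\partial_i^{k_i}S)_{(\underline{j})}=(\underline{\partial}^{\underline{j}-(k_1,0,\ldots,0)}\partial_i^{k_i}S)_{[0]}\diamond P_1=(SP_i)_{(\underline{j})}$. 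Second, regularity of $\bar S$ is not arranged by choices but \emph{deduced} a posteriori: for $\underline{j}=\underline{l}+(q_1k_1,\ldots,q_nk_n)$ with $l_i<k_i$ one gets $\underline{\partial}^{\underline{j}}\diamond\sigma(\bar S)=\underline{\partial}^{\underline{l}}\diamond\sigma(\bar P_1^{q_1}\cdots\bar P_n^{q_n})$, and these elements are linearly independent because Cohen--Macaulayness makes $F$ a \emph{free} module over $K[\sigma(\bar P_1),\ldots,\sigma(\bar P_n)]$ for which the monomials $\underline{\partial}^{\underline{l}}$, $l_i<k_i$, may be taken as generators. This is the precise (and only) place the Cohen--Macaulay hypothesis enters — not as a dimension count ensuring room for choices, but as the source of linear independence for the forced solution. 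Supplying these two verifications would turn your second sketch into the paper's proof; without them neither of your routes is complete.
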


\begin{rem}
The condition on the spectral module is satisfied for many examples arising in quantum integrable systems, e.g. for rings of commuting partial differential operators, cf. \cite[Th. 4]{KurkeZheglov}, \cite[Cor. 1]{Zheglov2018}, \cite[Th. 4.1]{Zheglov2013}. Further examples will appear in a subsequent paper.
\end{rem}

\begin{proof} The proof is similar to proposition \ref{P:Schur,n}.

{\bf Construction.}  Namely, for any $p\ge 0$ we need to solve the system
\begin{equation}
\label{E:system}
(\partial_n^{k_n}S)_{[p]}=(SP_{n})_{[p]}, \quad , \ldots , \quad (\partial_1^{k_1}S)_{[p]}=(SP_1)_{[p]}
\end{equation}
We have again  
$$
(\partial_i^{k_i}S)_{[p]}=\partial_i^{k_i}(S_{[p+k_n]}) +\mbox{expression depending on $S_{[q]}$, $q<p+k_i$},
$$ 
and $(SP_i)_{[p]} = ((S_{[0]}+\ldots +S_{[p]})P_i)_{[p]}$. In particular, if this system is compatible, the expressions $\partial_i^{k_i}(S_{[p+k_i]})$ are uniquely determined via expressions depending on $S_{[q]}$, $q<p+k_i$ and $P_i$. 

Put $S_{[0]}:=1$, $S_{(\underline{j})}:=0$ for $\underline{j}=(j_1, \ldots , j_n)\neq \underline{0}$ with $j_i<k_i$, $i=1, \ldots ,n$. Then for $p=0$ we have
$$
(\partial_n^{k_n}S)_{[0]}=\partial_n^{k_n}(S_{[k_n]})+\partial_n^{k_n}=P_{n,[0]}, \quad , \ldots , \quad
(\partial_1^{k_1}S)_{[0]}=\partial_1^{k_1}(S_{[k_1]})+\partial_1^{k_1}=P_{1,[0]}
$$
From this system the slices $S_{(\underline{j})}$ with $\underline{j} = (0, \ldots ,0, k_i, 0, \ldots ,0)$, $i=1, \ldots ,n$ are uniquely  determined. 


Note that we have defined the partial slice decompositions $S_{[q]}$ with $q\le k_1$. We will define slices $S_{(\underline{j})}$  with $q:=|\underline{j}|>k_1$ by induction on $q$. Assume we have defined all slices $S_{(\underline{j})}$ with $|\underline{j}|< q$. We first determine the slices $S_{(\underline{j})}$ with $j_1\ge k_1$ from the equation
$$
(\partial_1^{k_1}S)_{[q-k_1]}=(SP_1)_{[q-k_1]}
$$
of the system \eqref{E:system}. We claim that other equations of this system remain compatible, i.e. for all $\underline{j}$ with $j_1\ge k_1$ as above we have 
$$
(\partial_i^{k_i}S)_{(\underline{j})}=(SP_i)_{(\underline{j})}, \quad i=2, \ldots , n.
$$
Indeed, we have on the one side
$$
(\partial_i^{k_i}S)_{(\underline{j})}= \underline{\partial}^{\underline{j}}\diamond (\partial_i^{k_i}S)=
(\underline{\partial}^{\underline{j}}\partial_i^{k_i}S)_{[0]}=(\partial_i^{k_i}\underline{\partial}^{\underline{j}-(k_1,0,\ldots ,0)}SP_1)_{[0]}=(\underline{\partial}^{\underline{j}-(k_1,0,\ldots ,0)}\partial_i^{k_i}S)_{[0]}\diamond P_1,
$$
and on the other side
\begin{multline*}
(SP_i)_{(\underline{j})}=\underline{\partial}^{\underline{j}}\diamond (SP_i)=(\underline{\partial}^{\underline{j}} SP_i)_{[0]} = (\underline{\partial}^{\underline{j}-(k_1,0,\ldots ,0)}SP_1P_i)_{[0]} = \\
(\underline{\partial}^{\underline{j}-(k_1,0,\ldots ,0)}SP_iP_1)_{[0]} = 
(\underline{\partial}^{\underline{j}-(k_1,0,\ldots ,0)}SP_i)_{[0]}\diamond P_1.
\end{multline*}

By induction, we have 
\begin{multline*}
(\underline{\partial}^{\underline{j}-(k_1,0,\ldots ,0)}\partial_i^{k_i}S)_{[0]} = \underline{\partial}^{\underline{j}-(k_1,0,\ldots ,0)}\diamond (\partial_i^{k_i}S)= (\partial_i^{k_i}S)_{(j_1-k_1,j_2, \ldots , j_n)}= \\
(SP_i)_{(j_1-k_1,j_2, \ldots , j_n)}= (\underline{\partial}^{\underline{j}-(k_1,0,\ldots ,0)}SP_i)_{[0]},
\end{multline*}
thus $(\partial_i^{k_i}S)_{(\underline{j})}=(SP_i)_{(\underline{j})}$. 

Therefore, continuing this line of reasoning, we  can determine recursively the slices $S_{(\underline{j})}$ with $j_i\ge k_i$, $j_l<k_l$ for $l<i$ from the equation
$$
(\partial_i^{k_i}S)_{[q-k_i]}=(SP_i)_{[q-k_i]}
$$
for each $i=2, \ldots , n$. 

At the end of the induction procedure we'll get the operator $S$. Note that $\Ord (S)=0$ by construction.
Automatically we get also the analogue of the equation \eqref{E:system} for $\bar{S}$, $\bar{P}$.

{\bf Regularity.} By corollary \ref{C:units} $S$ is invertible iff $\bar{S}$ is regular. Note that for any $\underline{j}=(l_1+q_1k_1, l_2+q_2k_2, \ldots , l_n+q_nk_n)$, where $q_i\in \dz_+$, $l_i<k_i$ for $i=1, \ldots , n$, we have 
$$
\underline{\partial}^{\underline{j}}\diamond \sigma (S)= \sigma (\underline{\partial}^{\underline{j}}\diamond \bar{S}) = \sigma (\underline{\partial}^{\underline{l}}\diamond \bar{S} \bar{P}_1^{q_1}\ldots \bar{P}_n^{q_n})=
\sigma (\underline{\partial}^{\underline{l}}\diamond \bar{P}_1^{q_1}\ldots \bar{P}_n^{q_n})= 
\underline{\partial}^{\underline{l}}\diamond \sigma (\bar{P}_1^{q_1}\ldots \bar{P}_n^{q_n}).
$$
Now note that the elements $\underline{\partial}^{\underline{l}}$ belong to the set of generators of the free \\
$K[\sigma (\bar{P}_1), \ldots , \sigma (\bar{P}_n)]$-module $F$ ($F$ is free since it is Cohen-Macaulay, see \cite{BH}). Indeed, for any fixed system of generators $e_1, \ldots , e_m$ the $\Ord$-order of any linear combination $\alpha_1 e_1 + \ldots +\alpha_m e_m$, where $\alpha_i$ are polynomials without zero terms, is greater than $k$. Thus, each monomial $\underline{\partial}^{\underline{l}}$ must be a linear combination of $e_i$'s with {\it constant} coefficients. Thus, making a linear change, we can replace some of generators by these monomials. On the other hand, all such monomials are obviously linear independent over $K$. 

Therefore, all elements $\underline{\partial}^{\underline{j}}\diamond \sigma (\bar{S})$ are linearly independent, and $\bar{S}$ is regular.

\end{proof}

\begin{cor}
\label{C:linear_changes}
Any invertible linear change of variables $\partial_i \mapsto \sum_{j=1}^{n}c_{i,j}\partial_j$, $c_{i,j}\in K$, $i=1, \ldots ,n$ can be obtained by conjugation with an operator $S\in \hat{U}_n^{sym}$. More precisely, 
\begin{equation}
\label{E:special_form}
S=c_0 \exp (\sum_{i=1}^{n}(\sum_{j=1}^{n}(c_{i,j}-\delta_{i,j})x_j)*\partial_i)) \in \hat{D}_n^{sym},
\end{equation}
where  $c_0, c_{i,j}\in K$, $c_0\neq 0$, and $\delta_{i,j}$ is the Kronecker delta. In particular, $\Ord (S)=\Ord (S^{-1})=0$, and 
$$
S^{-1}\partial_iS=\sum_{j=1}^{n}c_{i,j}\partial_j,  \mbox{\quad} x_i\mapsto \sum_{j=1}^{n}c_{i,j}'x_j,
$$
where $(c_{i,j}')$ denotes the inverse matrix of $(c_{i,j})$.
\end{cor}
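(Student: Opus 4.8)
The plan is to check directly that the explicit operator $S$ in \eqref{E:special_form} works, by identifying it with the operator realizing a coordinate change constructed in the proof of Theorem~\ref{T:D_n_properties}, and then to verify it is invertible by means of Corollary~\ref{C:units}. Since scalars are central, I would first reduce to $c_0=1$. Next, let $\alpha\in\End_{K-alg}^c\hat{R}_y$ be the invertible linear automorphism determined by $\alpha(x_i)=\sum_j c_{ij}x_j$ (replacing the matrix $(c_{ij})$ by its transpose if this is what the sign convention in the statement requires); then $u_i:=\alpha(x_i)-x_i=\sum_j(c_{ij}-\delta_{ij})x_j$, and, by the ``receipt'' in the proof of Theorem~\ref{T:D_n_properties}, item~4, the operator $S=P_\alpha=\sum_{\underline{i}\ge\underline{0}}\frac{1}{\underline{i}!}\underline{u}^{\underline{i}}\underline{\partial}^{\underline{i}}$ satisfies $S\circ f=\alpha(f)$ for all $f\in\hat{R}_y$. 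Because each $u_i$ is a linear form, $\Ord(\underline{u}^{\underline{i}}\underline{\partial}^{\underline{i}})=|\underline{i}|-\upsilon(\underline{u}^{\underline{i}})=0$ whenever this term is nonzero, so $\Ord(S)=0$ and $S$ is $\Ord$-homogeneous with $\sigma(S)=S$; the same reasoning applies to $T:=P_{\alpha^{-1}}$, which is again of the form \eqref{E:special_form} with $(c_{ij})$ replaced by the inverse matrix $(c_{ij}')$, so $\Ord(T)=0$.

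The second step is invertibility. Here I would use Corollary~\ref{C:units}: since $c_{ij}\in K$ we have $\bar{S}=S|_{\underline{y}=\underline{0}}=S$, and $\Ord(S)=0$, so it only remains to show that $S$ is regular. By Lemma~\ref{L:regularOp}(1) this is the statement that, for each $m$, the elements $\{\underline{\partial}^{\underline{k}}\diamond\sigma(S)\,:\,|\underline{k}|=m\}\subset F$ are linearly independent; computing $\underline{\partial}^{\underline{k}}\diamond S=\pi(\underline{\partial}^{\underline{k}}\cdot S)$ from the slice form of $S$, one finds that on the $m$-th graded piece the corresponding linear map is (the transpose of) the action of $\alpha$ on homogeneous polynomials of degree $m$, hence invertible because $\alpha$ is. (Alternatively one can prove that $\hat{R}_y$ is a faithful left $\hat{D}_n^{sym}$-module --- applying an operator annihilating $\hat{R}_y$ to the monomials $\underline{x}^{\underline{m}}$, separating the homogeneous components, which act in pairwise distinct total degrees, and then the slices, forces it to be $0$ --- and then observe that $S\circ T$ and $T\circ S$ both realize $\id_{\hat{R}_y}$ and therefore equal $1$.) Either way $S\in\hat{U}_n^{sym}$, $S^{-1}=T$, and $\Ord(S)=\Ord(S^{-1})=0$.

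Finally, the conjugation identities follow from $S^{-1}\circ f=\alpha^{-1}(f)$: for any operator $Q$, $(S^{-1}QS)\circ f=\alpha^{-1}\!\bigl(Q(\alpha(f))\bigr)$. With $Q=x_i$ this gives $S^{-1}x_iS=\alpha^{-1}(x_i)=\sum_j c_{ij}'x_j$; with $Q=\partial_i$, the chain rule $\partial_i(\alpha(f))=\alpha\bigl(\sum_l(\partial_i\alpha(x_l))\,\partial_l f\bigr)$ (with the $\partial_i\alpha(x_l)$ constant) gives $S^{-1}\partial_iS=\sum_j c_{ij}\partial_j$. I expect the invertibility step to be the only genuine obstacle: one cannot argue as in the proof of the Abhyankar formula (Corollary~\ref{C:Abhyankar}) that $S^{-1}=\sum_{q\ge0}(1-S)^q$, because there $\Ord(S-1)<0$ while here $\Ord(S-1)=0$; and since $\hat{D}_n^{sym}$ has zero divisors, invertibility really must be extracted either from the regularity of $S$ via Corollary~\ref{C:units} or from faithfulness of $\hat{R}_y$.
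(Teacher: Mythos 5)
Your proposal is correct and follows essentially the same route as the paper: identify $S$ with the operator $P_\alpha$ from Theorem \ref{T:D_n_properties}(4) realizing the linear automorphism $\alpha$, read off $\Ord(S)=0$ from homogeneity of the exponential's terms, and deduce the conjugation identities from the action on $\hat{R}_y$. The paper's own proof simply asserts invertibility by exhibiting the analogous exponential for the inverse matrix as $S^{-1}$ (implicitly relying on faithfulness of $\hat{R}_y$, i.e.\ your parenthetical alternative), whereas you justify this step explicitly via regularity and Corollary \ref{C:units}; both justifications work, and your remark that some such argument is genuinely required in a ring with zero divisors --- since the Neumann-series trick of Corollary \ref{C:Abhyankar} is unavailable here --- is well taken.
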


\begin{proof}
The proof immediately follows from theorem \ref{T:Schur,n}. If $(c_{i,j}')$ denotes the inverse matrix of $(c_{i,j})$, then it sends $x_i \mapsto \sum_{j=1}^{n-1}c_{i,j}'x_j$, $1\le i\le n-1$. 

 By theorem \ref{T:D_n_properties}, item 4) the operator $S$ gives the automorphism $x_i \mapsto \sum_{j=1}^{n-1}c_{i,j}x_j$, $1\le i\le n-1$
of the ring $\hat{R}$. In particular, it is invertible with the inverse 
$S^{-1}=\exp (\sum_{i=1}^{n}(\sum_{j=1}^{n}(c_{i,j}'-\delta_{i,j})x_j)*\partial_i))$. Note that $\Ord (S)=\Ord (S^{-1})=0$. Direct calculations show that 
$S^{-1}\partial_iS= \sum_{j=1}^{n}c_{i,j}\partial_j$ for $1\le i\le n$.
\end{proof}

\section{Classification theorems}
\label{S:classification}

Now we are ready to prove a first classification theorem - a classification of certain equivalent classes of quasi-elliptic rings in terms of Schur pairs.

\begin{defin}{(cf. \cite[Def.3.4]{Zheglov2013})}
\label{D:equivalent_rings}
The commutative quasi-elliptic rings $B_1$, $B_2\subset \hat{D}_n$ are said to be {\it equivalent in $\hat{D}_n$} if there is an invertible operator $S\in \hat{D}_n^n$  such that $B_1=SB_2S^{-1}$. 
\end{defin}

\begin{prop}
\label{P:admissible}
Any 1-admissible operator $T$ can be written in the form 
$$
T=S_2S_1,
$$ 
where $S_2\in \hat{E}_n$ is monic,  satisfies $A_1$, $\Ord (S_2)=\Ord (S_2^{-1})=\ord_n(S_2^{-1})=\ord_n(S_2)=0$, $\ord_{\Gamma}(S_2)=\ord_{\Gamma}(S_2^{-1})=(0,\ldots ,0)$, and $S_1\in \hat{D}_n^{n}$ is invertible and $\Ord (S_1)=0$. 
\end{prop}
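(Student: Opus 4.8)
The plan is to read the factorization off the Schur theory for $\hat{D}_n$, applied to the operators $L_i:=T^{-1}\partial_i T$. By Definition~\ref{D:admissible} these lie in $K_y[[\partial_1,\dots,\partial_{n-1}]]((\partial_n^{-1}))\cap\Pi_n$; since the $\partial_i$ commute, so do the $L_i$, and by Lemma~\ref{L:conjugation_preserves_order} they satisfy condition $A_1$ with $\Ord(L_i)=\Ord(\partial_i)=1$ and $\ord_n(L_i)=\ord_n(\partial_i)$. Using multiplicativity of $HT_n$ in the identities $T^{-1}\partial_i T=L_i$ one finds $HT_n(L_n)=HT_n(T)^{-1}HT_n(\partial_n)HT_n(T)=1$, so $\ord_\Gamma(L_n)=(0,\dots,0,1)$, while for $i<n$ one has $HT_n(L_i)=HT_n(T)^{-1}\partial_i HT_n(T)\in K_y[[\partial_1,\dots,\partial_{n-1}]]$; combined with $A_1$ and monicity this forces $\ord_\Gamma(L_i)=(0,\dots,1,\dots,0,0)$ with the $1$ in the $i$-th place. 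Thus $L_1,\dots,L_n$ are monic formally quasi-elliptic operators with $\Ord=|\ord_\Gamma|=1$; I expect the careful verification that they really are monic formally quasi-elliptic (in particular the $\Gamma$-order computation for $i<n$, which uses that $L_i\in V_n\cap\Pi_n$ has finite order) to be one of the delicate points.

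Next I would apply Lemma~\ref{L:lemma7} to produce an invertible $S'\in\hat{D}_n^n$ with $\Ord(S')=0$ such that the $S'^{-1}L_iS'$ are normalized (the passage to roots via Lemma~\ref{L:lemma2.9} and Corollary~\ref{C:normalised_roots} is trivial here, since $\Ord(L_i)=1$), and then Theorem~\ref{T:lemma8} to produce a monic operator $\Sigma=1+\Sigma^-$, $\Sigma^-\in\hat{D}_n^n[[\partial_n^{-1}]]\partial_n^{-1}$, satisfying $A_1$ with $\ord_\Gamma(\Sigma)=(0,\dots,0)$, such that $\Sigma^{-1}\partial_i\Sigma=S'^{-1}L_iS'$ for $i<n$ and $\Sigma^{-1}L_{n,0}\Sigma=S'^{-1}L_nS'$ with $L_{n,0}=\partial_n$ (the $\partial_n^0$-term of the normalized $L_n$ being zero). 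Hence $\Sigma^{-1}\partial_i\Sigma=S'^{-1}T^{-1}\partial_i TS'$ for every $i$, which gives $[\partial_i,U]=0$ for $U:=\Sigma S'^{-1}T^{-1}$; therefore $U$ lies in the commutative ring $V_n\cap\Pi_n$ and is invertible there, and $T=U^{-1}\Sigma S'^{-1}$.

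It then remains to assemble the two factors and to do the order bookkeeping. Since $V_n$ is an integral domain and $U$ is invertible in $\Pi_n$, its leading coefficient $HT_n(U)$ is a unit of $K_y\{\{\partial_1,\dots,\partial_{n-1}\}\}$, so $\Ord(HT_n(U))=0$; moreover, writing $U=HT_n(U)(1+R)$ with $\ord_n(R)<0$, invertibility of $U$ in $\Pi_n$ forces $\Ord(R)\le 0$ (otherwise $\sum_{j\ge 0}(-R)^j$ would have unbounded order, $V_n$ being a domain), whence $\Ord(U)=\Ord(U^{-1})=0$; together with $\Ord(\Sigma)=\Ord(S')=0$ (Theorem~\ref{T:lemma8}, Lemma~\ref{L:lemma7}, Lemma~\ref{L:5.11}) this yields $\Ord(T)=0$. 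Now set $S_1:=HT_n(T)=HT_n(U^{-1})S'^{-1}\in\hat{D}_n^n$; it is invertible with $\Ord(S_1)=0$ by Lemma~\ref{L:5.11}. Set $S_2:=TS_1^{-1}$. Then $HT_n(S_2)=HT_n(T)S_1^{-1}=1$ and $\ord_n(S_2)=0$, so $S_2=1+S_2^-$ with $S_2^-\in\hat{D}_n^n[[\partial_n^{-1}]]\partial_n^{-1}$ and $\ord_\Gamma(S_2)=(0,\dots,0)$; also $0\le\Ord(S_2)\le\Ord(T)+\Ord(S_1^{-1})=0$, so $\Ord(S_2)=0$ and $A_1$ holds for $S_2$. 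Corollary~\ref{C:corol2.1} then gives $\Ord(S_2^{-1})=0$, $\ord_\Gamma(S_2^{-1})=(0,\dots,0)$ and $A_1$ for $S_2^{-1}$, and $\ord_n(S_2^{-1})=0$ follows from monicity. This produces $T=S_2S_1$ with all required properties. The main obstacle is precisely this order bookkeeping — extracting ``$\Ord=0$ everywhere'' from mere invertibility in $\Pi_n$ — together with checking that the hypotheses of Lemma~\ref{L:lemma7} and Theorem~\ref{T:lemma8} are genuinely met by the $L_i$.
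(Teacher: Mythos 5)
Your overall strategy (conjugate the $\partial_i$ by $T$, normalize, apply Theorem \ref{T:lemma8}, then do order bookkeeping) is the same as the paper's, but there is a genuine gap at the very first step: the claim that the operators $L_i=T^{-1}\partial_iT$, $i<n$, are monic formally quasi-elliptic with $\ord_\Gamma(L_i)=(0,\ldots,1,\ldots,0,0)$, the $1$ in the $i$-th place. Definition \ref{D:admissible} only requires $L_i\in K_y[[\partial_1,\ldots,\partial_{n-1}]]((\partial_n^{-1}))\cap\Pi_n$; it says nothing about which $\partial_j$'s occur in $HT_n(L_i)$. Lemma \ref{L:conjugation_preserves_order} gives $\Ord(L_i)=1$ and $\ord_n(L_i)=0$, so $HT_n(L_i)$ is a constant-coefficient element of $\Ord$-order $1$ — for instance $\partial_1+\partial_2$, or any invertible substitution $\partial_i\mapsto\sum_{j,k}c_{i;j,k}\partial_j^k$ (with infinitely many terms when $K_y\neq K$). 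For such an $L_i$ the $\Gamma$-order is \emph{not} concentrated in the $i$-th slot (e.g.\ $\ord_\Gamma(\partial_1+\partial_2)=(0,1,0,\ldots,0)$), so the hypotheses of Lemma \ref{L:lemma7} and Theorem \ref{T:lemma8} are not met by $L_1,\ldots,L_n$ as they stand. Your formula $HT_n(L_i)=HT_n(T)^{-1}\partial_iHT_n(T)$ also presupposes that $HT_n(T)$ is invertible in $\hat{D}_n^n$, which is not known at this stage; and even granting it, such a conjugate need not be $\partial_i$ plus a constant.

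This is precisely the point where the paper invokes the \emph{second} Schur theory: the leading coefficients $HT_n(L_i)$, $i<n$, define an invertible substitution among $\partial_1,\ldots,\partial_{n-1}$ whose reduction mod $\idm$ is an invertible linear change, so the spectral module of $K[\overline{\partial_1'},\ldots,\overline{\partial_{n-1}'}]$ is Cohen--Macaulay and Theorem \ref{T:Schur,n} produces an invertible $S_0\in\hat{D}_{n-1}^{sym}\subset\hat{D}_n^n$ with $\Ord(S_0)=0$ undoing this substitution. Only the operators $(TS_0)^{-1}\partial_i(TS_0)$ are monic formally quasi-elliptic, and from that point on your argument (Lemma \ref{L:lemma7}, Theorem \ref{T:lemma8}, extraction of the constant-coefficient unit, and the order bookkeeping) runs essentially as in the paper. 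To repair your proof you must insert this conjugation by $S_0$ (absorbing it into your $S'$); without it the case of an admissible $T$ that genuinely mixes the $\partial_i$, $i<n$, is not covered.
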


\begin{proof}
Put $q_i'=T^{-1}\partial_iT$. From lemma \ref{L:conjugation_preserves_order} it follows that for $i=1, \ldots ,n-1$ we have $q_i'=\sum_{j=1}^{n-1}\sum_{k\ge 0} c_{i;j,k}\partial_j^k+ l.o.t.$, where l.o.t. mean terms with $\ord_n$-order less than zero and $c_{i;j,k}\in K_y$, and $q_n'=\partial_n + l.o.t.$, where l.o.t. mean terms with $\ord_n$-order less than one. Moreover, $\Ord (\sum_{j=1}^{n-1}\sum_{k\ge 0} c_{i;j,k}\partial_j^k)=1$.

 Note  that ${HT}_n(T\cdot {HT}_n(q_i')\cdot T^{-1})=\partial_i$, i.e. in particular the map 
$$
\partial_i \mapsto \partial_i':= \sum_{j=1}^{n-1}\sum_{k\ge 1} c_{i;j,k}\partial_j^k, \quad 1\le i\le n-1
$$
is invertible. Since $\Ord (\partial_i')=1$, and ${HT}_n(T\partial_i'T^{-1})=\partial_i\notin \idm$, we must have $\overline{\partial_i'}\neq 0$, and the map 
$$
\partial_i \mapsto \overline{\partial_i'}:= \sum_{j=1}^{n-1} c_{i;j}\partial_j, \quad 1\le i\le n-1 \quad c_{i;j}\in K
$$
is invertible too.
In particular, the spectral module $F_{n-1}=K[\partial_1, \ldots ,\partial_{n-1}]$ of the ring $K[\overline{\partial_1'}, \ldots , \overline{\partial_{n-1}'}]$ is Cohen-Macaulay. Then by theorem \ref{T:Schur,n} there exists an invertible operator $S_0\in \hat{D}_{n-1}^{sym}\subset \hat{D}_n^n$ with $\Ord (S_0)=0$ such that $S_0\partial_i' S_0^{-1}=\partial_i$ for all $i=1, \ldots ,n-1$.

Put $T_1=TS_0$. Then the operators $T_1^{-1}\partial_iT_1$ are monic formally quasi-elliptic. Then by lemma \ref{L:lemma7}  there exists an operator 
$S_1=c_0 \exp (\sum_{i=1}^{n} e_{i}x_i) \exp (\sum_{i=1}^{n-1}d_{i,n}x_{n}\partial_i)$ (see the proof of lemma \ref{L:lemma7}) such that the operators $S_1^{-1}T_1^{-1}\partial_iT_1S_1$ are normalized. 
Note that again $\Ord (S_1)=\Ord (S_1^{-1})=0$. 
And by theorem \ref{T:lemma8} there exists an operator $S_2\in \hat{E}_n$ such that $\Ord (S_2)=|\ord_{\Gamma}(S_2)|=\Ord (S_2^{-1})=|\ord_{\Gamma}(S_2^{-1})|=0$ , $S_2, S_2^{-1}$ satisfies $A_1$ and $S_2^{-1}S_1^{-1}T_1^{-1}\partial_iT_1S_1S_2=\partial_i$ for $1\le i\le n$. 

Therefore, $T':=T_1S_1S_2\in K_y[[\partial_1, \ldots , \partial_{n-1}]]((\partial_n^{-1}))\cap \Pi_n$. Since all operators in this product are of order zero and $\ord_n(T_1S_1S_2)=\ord_n(T)+\ord_n(S_0)+\ord_n(S_1)+\ord_n(S_2)=0$, it follows that $\Ord (T')=0$, where from $\Ord ((T')^{-1})=\ord_n((T')^{-1})=0$.  Thus, $T=T'S_2^{-1}S_1^{-1}S_0^{-1}$ and therefore $\Ord (T)=\Ord(T^{-1})=0$, $\ord_n(T)=\ord_n(T^{-1})=0$. The rest of the proof is straightforward.
\end{proof}

\begin{thm}{(cf. \cite[Th.3.2]{Zheglov2013})}
\label{T:schurpair} 
There is a one to one correspondence between the classes of equivalent quasi-elliptic 1-Schur pairs $(A,W)$ in $V_n$ of rank $r$ with $\Sup(W)=F$ and the classes of equivalent in $\hat{D}_n$ quasi-elliptic rings   $B\subset \hat{D}_n^{sym}$ of rank $r$.    
\end{thm}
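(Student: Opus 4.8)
The plan is to check that Construction~1 and Construction~2 induce mutually inverse, rank-preserving bijections between the two sets of equivalence classes. Write $\Phi$ for Construction~1, sending a quasi-elliptic ring $B\subset\hat{D}_n$ (together with a chosen set $P_1,\dots,P_n$ of monic formally quasi-elliptic elements of the required $\ord_{\Gamma}$-orders) to the pair $(A,W)=(\hat S^{-1}B\hat S,\,F\diamond\hat S)$ with $\hat S=S_0S^{-1}$, and $\Psi$ for Construction~2, sending a $1$-Schur pair $(A,W)$ with $\Sup(W)=F$ to $B=SAS^{-1}$ with $S$ the normalized Sato operator of $W$. Since $\hat S$ (resp. $S$) is a Sato operator of $W$, Lemma~\ref{L:SatoOperators}(2) gives that $f\mapsto f\diamond\hat S$ (resp. $f\mapsto f\diamond S$) is a bijection $F\to W$; using that $\hat{\Pi}_n$ is a bimodule (Lemma~\ref{L:module_structure}) one checks it intertwines the $B$-module structure on $F$ with the $A$-module structure on $W$, so after tensoring with $\Quot(B)\cong\Quot(A)$ it yields an isomorphism of localizations and hence equality of ranks. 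It thus remains to prove well-definedness on classes and that $\Phi,\Psi$ are mutually inverse.

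The composite $\Psi\circ\Phi$ is the easy one. In Construction~1 one has $W=F\diamond(S_0S^{-1})=F\diamond S^{-1}$ (because $F\diamond S_0=F$), and $S^{-1}=1+\tilde S_-$ with $\tilde S_-\in\hat{D}_n^n[[\partial_n^{-1}]]\partial_n^{-1}$ is of normalized shape, so by the uniqueness in Theorem~\ref{T:SatoAction}(3) it \emph{is} the normalized Sato operator of $W$. Hence $\Psi$ returns $B'=S^{-1}AS=S^{-1}(SS_0^{-1}BS_0S^{-1})S=S_0^{-1}BS_0$, and since $S_0$ is a product of an invertible function and an operator $\exp(\int p\,dx_n)$, it is invertible in $\hat{D}_n^n$, so $B'$ is equivalent to $B$ in $\hat{D}_n$. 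The same kind of argument shows $\Phi$ is well defined on classes: the class of $(A,W)$ is independent of the choice of the $P_i$ (recorded after Construction~1), and if $B_1=gB_2g^{-1}$ with $g\in\hat{D}_n^n$ invertible of order $0$, then running Construction~1 for $B_1$ with the translated data ($P_i^{(1)}=gP_i^{(2)}g^{-1}$, roots $gL_i^{(2)}g^{-1}$, normalizing operator $gS_0$) produces literally the same pair $(A,W)$ as for $B_2$, the factors $g,g^{-1}$ cancelling.

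The real content is well-definedness of $\Psi$ and the composite $\Phi\circ\Psi$, and here Proposition~\ref{P:admissible} is essential. If $(A_2,W_2)=(T^{-1}A_1T,W_1\diamond T)$ with $T$ $1$-admissible, factor $T=T_2T_1$ as in Proposition~\ref{P:admissible} with $T_1\in\hat{D}_n^n$ invertible of order $0$ and $T_2\in\hat{E}_n$ of the "Schur" type there; denoting by $S_i$ the normalized Sato operator of $W_i$, the operator $S_1T_2T_1$ is a Sato operator of $W_2=F\diamond(S_1T_2T_1)$, so Theorem~\ref{T:SatoAction}(1),(4) gives $U\in\hat{U}_n^{sym}$ with $S_1T_2T_1=US_2$. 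A short computation then yields $B_2=S_2A_2S_2^{-1}=U^{-1}(S_1A_1S_1^{-1})U=U^{-1}B_1U$; and since $U=S_1T_2T_1S_2^{-1}$ lies in $\hat{E}_n\cap\hat{D}_n^{sym}=\hat{D}_n$ while $\ord_n(U)=0$ (the $\ord_n$-highest terms of the four factors are $1,1,T_1,1$), in fact $U\in\hat{D}_n^n$, and likewise $U^{-1}$, so $B_1$ and $B_2$ are equivalent in $\hat{D}_n$. For $\Phi\circ\Psi=\mathrm{id}$, let $(A,W)$ be a quasi-elliptic $1$-Schur pair with $\Sup(W)=F$ and $B=\Psi(A,W)=SAS^{-1}$; pick monic formally quasi-elliptic $P_i^A\in A$ (they exist as $A$ is quasi-elliptic) and run Construction~1 for $B$ with $P_i=SP_i^AS^{-1}$, whose roots are $SL_i^AS^{-1}$ ($L_i^A$ the root of $P_i^A$, lying in $V_n$); this produces operators $S_0',S'$ with $(S'S_0'^{-1})P_i(S_0'S'^{-1})=\partial_i\partial_n^{l_i}$, hence, setting $R:=S'S_0'^{-1}S$, $RL_i^AR^{-1}=\partial_i$, i.e. $R^{-1}\partial_iR=L_i^A\in V_n\cap\Pi_n\subset K_y[[\partial_1,\dots,\partial_{n-1}]]((\partial_n^{-1}))\cap\Pi_n$; with $\ord_n(R)=0$ this says $R$ is $1$-admissible. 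The resulting pair is $(RAR^{-1},\,F\diamond S'^{-1})$ and $F\diamond S'^{-1}=F\diamond(S_0'S'^{-1})=(F\diamond S)\diamond(S^{-1}S_0'S'^{-1})=W\diamond R^{-1}$, while $A=R^{-1}(RAR^{-1})R$ and $W=(W\diamond R^{-1})\diamond R$; thus $(RAR^{-1},W\diamond R^{-1})$ is equivalent to $(A,W)$ via the $1$-admissible operator $R$, so $\Phi\circ\Psi=\mathrm{id}$ on classes. Finally, both constructions preserve rank, so the bijection restricts to the rank-$r$ parts.

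The main obstacle is precisely the control exercised in the previous paragraph: one must match the "abstract" equivalence of $1$-Schur pairs — conjugation by the potentially large group of $1$-admissible operators of $\Pi_n$ — with equivalence of rings inside the \emph{narrow} ring $\hat{D}_n^n$. This is not a formality: it needs Proposition~\ref{P:admissible} to split a $1$-admissible operator into a genuine $\hat{D}_n^n$-part and a "Schur" part which is absorbed into the Sato operator, and then the order bookkeeping ($\hat{E}_n\cap\hat{D}_n^{sym}=\hat{D}_n$ together with $\ord_n$ of the leftover unit being $0$) to force the conjugating unit back into $\hat{D}_n^n$. Everything else — regularity and invertibility of the operators produced (Lemma~\ref{L:regularOp}, Corollary~\ref{C:units}), associativity and compatibility of $\diamond$ with products, and the explicit cancellations — is routine given the Schur and Sato machinery of Sections~\ref{S:Schur_theory}–\ref{S:general_Schur_theory}.
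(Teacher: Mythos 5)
Your overall architecture coincides with the paper's: Constructions 1 and 2 furnish the two maps, Proposition \ref{P:admissible} splits a $1$-admissible operator into an invertible $\hat{D}_n^n$-part and a ``Schur'' part that gets absorbed into the Sato operator, and the two composites are checked to return equivalent objects. Your treatment of well-definedness of $\Psi$ on classes (via $S_1T_2T_1=US_2$ and the order bookkeeping forcing $U\in\hat{D}_n^n$) and of $\Phi\circ\Psi$ is correct and in fact more explicit than the paper, which disposes of the composites in one sentence and exhibits the normalized Sato operator $S_1^{-1}SS_2S_1$ of $W'$ directly instead of passing through the unit $U$; the rank computation via the $A\cong B$-equivariant bijection $F\to W$ is also fine.

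There is, however, one step that fails as written: your justification of well-definedness of $\Phi$ on classes of rings. You propose to ``run Construction~1 for $B_1=gB_2g^{-1}$ with the translated data $P_i^{(1)}=gP_i^{(2)}g^{-1}$''. But conjugation by an arbitrary invertible $g\in\hat{D}_n^n$ of order zero does not preserve the class of monic formally quasi-elliptic operators --- it need not even preserve definedness of $\ord_{\Gamma}$. For instance $g=1+x_1\partial_1$ is a unit of $\hat{D}_2^{2}$ of order $0$ (with $g^{-1}=\sum_{j\ge 0}\frac{(-1)^j}{(j+1)!}x_1^j\partial_1^j$), and $g(\partial_1\partial_2)g^{-1}=(\partial_1-\partial_1 g^{-1})\partial_2$ has unbounded degree in $\partial_1$, so $\ord_{\Gamma}$ is undefined on it and it is not a legitimate input to Construction~1; the ``literal cancellation'' is therefore unavailable. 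The repair is to run Construction~1 on $B_1$ with its \emph{own} quasi-elliptic operators, producing $\hat{S}_1$ with $\hat{S}_1^{-1}B_1\hat{S}_1=A_1$ and $W_1=F\diamond \hat{S}_1$, and then to check that $T:=\hat{S}_2^{-1}g^{-1}\hat{S}_1$ is $1$-admissible and realizes the equivalence: one has $A_1=T^{-1}A_2T$ and $W_1=W_2\diamond T$ (using $F\diamond g^{\pm1}=F$), while $T^{-1}\partial_iT=\hat{S}_1^{-1}\bigl(gL_i^{(2)}g^{-1}\bigr)\hat{S}_1$, and $gL_i^{(2)}g^{-1}$ commutes with the quasi-elliptic operators of $B_1$, so Corollary \ref{C:corol3.1} places this conjugate in $K_y[[\partial_1,\ldots ,\partial_{n-1}]]((\partial_n^{-1}))\cap\Pi_n$. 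To be fair, the paper itself dismisses this direction with ``obviously'', so the gap lies in your added justification rather than in a divergence from the paper's route; everything else in the proposal is sound.
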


\begin{rem}
\label{R:equivalent_rings}
In \cite[Th.3.2]{Zheglov2013} (see also \cite[Sec. 2]{KurkeZheglov}) a correspondence between slightly   narrower  classes was established: namely, the 1-admissible operators were assumed to satisfy an extra condition.  In fact, the extended equivalence classes are not much more, as it can be seen from the proof. 
\end{rem}

\begin{proof}  In one way we use the construction 1 from section \ref{S:Schur_pairs}: given a 1-quasi elliptic ring $B$ we construct a pair $(A,W)$ with the required properties. 
 If we choose any equivalent 1-quasi elliptic ring $B'$, then obviously the pair constructed by $B'$ will be equivalent to $(A,W)$. 

In another way we need to use the analogue of the Sato theory for subspaces in $V_n$ from section \ref{S:Sato_theory}. Namely, given a quasi-elliptic 1-Schur pair $(A,W)$, we construct the corresponding ring $B$ by construction 2 from section \ref{S:Schur_pairs} with the help of a Sato operator $S$. 

 If $(A',W')$ is an equivalent 1-Schur pair, then $A'=T^{-1}AT$, $W'=WT$ for some 1-admissible operator $T$, which can be written (see proposition \ref{P:admissible}) in the form $T=S_2S_1$. Then it is easy to see that 
$$W'=W\diamond (S_2S_1)= (F \diamond S_1)\diamond (S_1^{-1}SS_2S_1)=F\diamond (S_1^{-1}SS_2S_1),$$
i.e. the corresponding Sato operator for the space $W'$ from theorem \ref{T:SatoAction} is \\
$S'=S_1^{-1}S S_2S_1$. So, the corresponding ring $B'=S'A'(S')^{-1}=S_1^{-1}BS_1$, i.e. it is equivalent to $B$. 

At last, note that the composition of two constructions $B \rightsquigarrow (A,W)$, $(A,W) \rightsquigarrow B$ leads to equivalent objects. 
\end{proof} 

The theorem above can be improved for quasi-elliptic rings of rank one: as we have noted in remark \ref{R:admissible_changes} the set of admissible linear changes is not empty for such rings. 

\begin{defin}
\label{D:weak_equivalence} 
Two quasi-elliptic rings $B_1, B_2\subset \hat{D}_n$ are said to be {\it weakly equivalent} if there exits a unit $U\in \hat{U}_n^{sym}$ such that $B_1=UB_2U^{-1}$. 

Two quasi-elliptic 1-Schur pairs $(A,W)$ and $(A',W')$ are said to be {\it weakly equivalent} if $A'=\varphi_m\circ Ad(T_m)\circ \ldots \varphi_1 \circ Ad (T_1)(A)$ and $W'=\varphi_m\circ Ad(T_m)\circ \ldots \varphi_1 \circ Ad (T_1) (W)$ for some $m\in \dn$, where $T_i$ are 1-admissible operators and $\varphi_i$ are admissible linear changes of variables\footnote{Of course, $\varphi_i$ is an admissible change for the pair $(Ad(T_{i-1})\circ \ldots \varphi_1 \circ Ad (T_1)(A), Ad(T_{i-1})\circ \ldots \varphi_1 \circ Ad (T_1)(A)(W)) $}, $Ad(T)(A):=T^{-1}AT$, $Ad(T)(W):=W\diamond T$. 
\end{defin}

\begin{thm}
\label{T:schurpair1} 
Let $K$ be a complete field. Assume $K_y=K$.  

There is a one to one correspondence between the classes of weakly equivalent quasi-elliptic 1-Schur pairs $(A,W)$ in $V_n$ of rank one  with $\Sup(W)=F$ and the classes of weakly equivalent  quasi-elliptic rings  of commuting operators $B\subset \hat{D}_n^{sym}$ of rank one.    
\end{thm}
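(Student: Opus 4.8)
The idea is to rerun the proof of Theorem \ref{T:schurpair} with the equivalence relations replaced by the weak ones and check that the correspondence survives. In each weak equivalence class of rank-one quasi-elliptic rings I would fix a representative $B\subset\hat D_n$ (possible by the convention of Remark \ref{R:purity}: such a class is the $\hat U_n^{sym}$-orbit of a quasi-elliptic ring lying in $\hat D_n$), and in each weak equivalence class of rank-one quasi-elliptic $1$-Schur pairs with $\Sup(W)=F$ a \emph{normalized} representative, which exists by Remark \ref{R:rank_one_pairs}. The direct map is $B\rightsquigarrow(A,W)$: Construction 1 of Section \ref{S:Schur_pairs} followed by the normalization of Remark \ref{R:rank_one_pairs}, producing a normalized quasi-elliptic $1$-Schur pair of rank one with $\Sup(W)=F$. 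The inverse map is $(A,W)\rightsquigarrow B$: Construction 2, which by the argument recalled there (Theorem \ref{T:SatoAction} item 5, Lemma \ref{L:lemma2.7}, Corollary \ref{C:corol2.1}) produces a quasi-elliptic ring in $\hat D_n$, of the same rank by Definition \ref{D:an-alg-rank} applied through a Sato operator.

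The first task is well-definedness of the inverse map on weak classes; by Definition \ref{D:weak_equivalence} it suffices to treat the two generating moves on Schur pairs. A $1$-admissible adjoint $Ad(T)$ is handled as in the proof of Theorem \ref{T:schurpair}: write $T=S_2S_1$ via Proposition \ref{P:admissible}; if $S$ is a Sato operator of $W$ then $S_1^{-1}SS_2S_1$ is one of $W\diamond T$, and the associated ring is $S_1^{-1}BS_1$ with $S_1\in\hat D_n^n$ invertible, hence equivalent in $\hat D_n$, and a fortiori weakly equivalent, to $B$. An admissible linear change $\varphi$ is handled by Proposition \ref{P:admissible_changes}: the ring attached to $(\varphi(A),\varphi(W))$ is $U^{-1}\varphi(B)U$ for some unit $U$, and by Corollary \ref{C:linear_changes} $\varphi(B)=VBV^{-1}$ for a unit $V$, so this ring is $(U^{-1}V)B(U^{-1}V)^{-1}$, weakly equivalent to $B$. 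For the direct map I would argue via Sato operators: if $\Sigma$ is the operator from Construction 1 for $B$ (so $W=F\diamond\Sigma$ and $B=\{\mathtt{L}_\Sigma(a):a\in A\}$) and $B'=UBU^{-1}$ with $U\in\hat U_n^{sym}$, then since $U$ is regular with $F\diamond U=F$ (Example \ref{Ex:Sato_F}, using $K_y=K$), $U\Sigma$ is a Sato operator of the \emph{same} space $W$ with $\mathtt{L}_{U\Sigma}(a)=U\mathtt{L}_\Sigma(a)U^{-1}$; hence $(A,W)$ is a normalized $1$-Schur pair whose Construction-2 ring is $B'$, which together with the independence of Construction 1 on its internal choices (up to $1$-admissible adjoint) and the composition facts below yields well-definedness.

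It then remains to verify that the two compositions are the identity on weak classes. Construction 1 followed by Construction 2 returns a ring equivalent in $\hat D_n$ to the original, as noted at the end of the proof of Theorem \ref{T:schurpair}. For the reverse composition, starting from a normalized $(A,W)$ I would take a normalized Sato operator $S$ (Theorem \ref{T:SatoAction}, item 3), set $B=\{\mathtt{L}_S(a)\}$, pick the monic monomial formally quasi-elliptic $\mu_i\in A$ granted by Definition \ref{D:normal_Schur_pair}, note that $P_i:=S\mu_iS^{-1}\in B$ are again monic formally quasi-elliptic (orders preserved by Lemma \ref{L:lemma2.7}, as $S^{\pm1}$ satisfy $A_1$ with $\ord_{\Gamma}=0$), and run Construction 1 with these $P_i$; the output $(\tilde A,\tilde W)$ satisfies $\tilde A=Ad(T)(A)$, $\tilde W=W\diamond T$ for the $1$-admissible operator $T$ comparing the two Sato operators, so it is weakly equivalent to $(A,W)$. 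The rank-one hypothesis is used exactly where it must be: it underlies the normalization of Remark \ref{R:rank_one_pairs} (giving $W\subset\Quot(A)$) and, crucially, the existence of admissible linear changes over a complete field $K$ (Remark \ref{R:admissible_changes}), so that the enlarged move on the Schur-pair side has a counterpart on the ring side.

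The main obstacle I anticipate is the control of a \emph{general} unit $U\in\hat U_n^{sym}$: unlike the $S_1\in\hat D_n^n$ of Theorem \ref{T:schurpair}, such a $U$ need not preserve $\ord_{\Gamma}$, need not lie in $\hat E_n$, and conjugation by it need not send chosen monic formally quasi-elliptic generators of one ring to such generators of the other. The Sato-operator argument above is designed to bypass this by exhibiting $UBU^{-1}$ with the same $W$ and an $A$ conjugate by a $1$-admissible operator; making the accompanying order estimates rigorous --- that $U\Sigma\in\hat{\Pi}_n$, that $\mathtt{L}_{U\Sigma}(a)$ lands in $\hat D_n$, and that the comparison operator $T$ is genuinely $1$-admissible in the sense of Definition \ref{D:admissible} with $\ord_n(T)=0$ --- is where Lemmas \ref{L:module_structure}, \ref{L:conjugation_preserves_order}, \ref{L:5.11} and Proposition \ref{P:admissible} all come into play, and it is the technical heart of the proof.
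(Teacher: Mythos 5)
Your setup (Constructions 1 and 2 as the two maps, Proposition \ref{P:admissible} for $1$-admissible moves, Proposition \ref{P:admissible_changes} and Corollary \ref{C:linear_changes} for admissible linear changes) matches the paper, and the direction ``weakly equivalent pairs $\Rightarrow$ weakly equivalent rings'' is handled exactly as the paper does. The gap is in the other direction, which you yourself flag as the technical heart but do not actually close. Your Sato-operator argument shows that if $B'=UBU^{-1}$ and $(A,W)$ is the pair of $B$ with operator $\Sigma$, then $U\Sigma$ is a Sato operator of the \emph{same} $W$ intertwining $A$ with $B'$. That only proves that $(A,W)$ is \emph{some} Schur pair recovering $B'$ up to a unit; it does not prove that $(A,W)$ is weakly equivalent to the pair $(A',W')$ that Construction 1 actually produces from $B'$. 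The ``independence of Construction 1 on its internal choices'' cannot bridge this: that independence covers changing the choice of monic formally quasi-elliptic $P_i'\in B'$ and of the operators $S_0',S'$, all of which keep the Sato operator inside $\hat{E}_n$, whereas $U\Sigma$ involves a general unit $U\in\hat{U}_n^{sym}$ and does not arise from any internal choice in Construction 1. The comparison operator $T=\Sigma'^{-1}(U\Sigma)$ relates Sato operators of two \emph{different} spaces $W\neq W'$, so Theorem \ref{T:SatoAction}, item 1 does not apply, and none of the tools you cite (Lemma \ref{L:conjugation_preserves_order}, Proposition \ref{P:admissible}) can show $T$ is $1$-admissible: they all \emph{presuppose} $1$-admissibility rather than establish it. Attempting instead to deduce well-definedness of $B\rightsquigarrow(A,W)$ from the two composition identities is circular, since it requires injectivity of Construction 2 on weak classes, which is equivalent to the statement being proved.

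What is missing is the paper's actual mechanism. First, a preliminary reduction: take powers and an admissible linear change $\varphi'$ (Lemma \ref{L:linear_changes}, available by Remark \ref{R:admissible_changes} precisely because the rank is one and $K$ is complete) so that $A_1$ contains formally quasi-elliptic elements $a_1,\dots,a_n$ of one and the same $\Ord$-order $p$, with $\ord_{\Gamma}(a_i)=(0,\dots,0,p)-(i-1)$. Second, the unit $U$ induces a filtration-preserving isomorphism of modules $\tilde\varphi=\varphi_2^{-1}\circ\psi\circ\varphi_1:W_1\to W_2$ intertwining $A_1$ and $A_2$; this replaces any attempt to conjugate generators directly by $U$. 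Third, Lemma \ref{L:linear_changes} gives an admissible linear change $\varphi$ restoring the $\Gamma$-orders of $\varphi(\tilde\varphi(a_i))$, and Lemma \ref{L:lemma7} together with Theorem \ref{T:lemma8} then produce a genuine $1$-admissible $T$ with $T\varphi(\tilde\varphi(a_i))T^{-1}=a_i$ and $T\varphi(\tilde\varphi(\tilde a_1))T^{-1}=\tilde a_1$ for the monic $p$-th root $\tilde a_1$ of $a_1$. Finally --- and this is where rank one enters beyond guaranteeing admissible changes --- one passes from the generators to all of $A_1$ via $W\subset\Quot(A)$ and the expansion $a\tilde a_1^{-\Ord(a)}\in K[[a_2\tilde a_1^{-\Ord(a_2)},\dots,a_n\tilde a_1^{-\Ord(a_n)},\tilde a_1^{-1}]]$, concluding $T\varphi(\tilde\varphi(A_1))T^{-1}=A_1$ and hence $\varphi(A_2)=T^{-1}A_1T$. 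Without this module-isomorphism-plus-rank-one argument the equivalence of $(A_1,W_1)$ and $(A_2,W_2)$ is not established.
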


\begin{proof}
The proof is essentially the same as the proof of theorem \ref{T:schurpair}. Given two weakly equivalent rings $B_1, B_2$ we need to show that the corresponding quasi-elliptic 1-Schur pairs are weakly equivalent. 

Assume $B_1=UB_2U^{-1}$, denote by $(A_i, W_i)$  normalized 1-Schur pairs of rings $B_i$ constructed by the construction 1 of section \ref{S:Schur_pairs}. 

First, taking appropriate powers of formally quasi-elliptic elements in $A_1$ and making an appropriate  linear change of variables $\varphi'$ from lemma \ref{L:linear_changes} we can replace the 1-Schur pair $(A_1, W_1)$ with the quasi-elliptic 1-Schur pair $(A_1', W_1'):=(\varphi'(A_1), \varphi' (W_1))$ such that $\varphi'(A_1)$ contains formally quasi-elliptic operators of the same $\Ord$-order (we can find an admissible such $\varphi'$ by remark \ref{R:admissible_changes} since the Schur pairs are of rank one). 
By proposition \ref{P:admissible_changes} the quasi-elliptic ring $B_1'$ corresponding to this Schur pair by construction 2 from section \ref{S:Schur_pairs} is weakly equivalent to $B_1$. i.e. $B_1'$ is weakly equivalent to $B_2$. Obviously, it suffices to prove that $(A_1', W_1')$ is weakly equivalent to $(A_2, W_2)$, and we'll assume below that $(A_1, W_1)$ satisfies the properties of the pair $(A_1', W_1')$.

Let $B_i= S_iA_iS_i^{-1}$, $W_i=F\diamond S_i$. Denote by
$\varphi_i :W_i \rightarrow F$, $w_i\mapsto w_i\diamond S_i^{-1}$ the isomorphisms of $A_i$-modules $W_i$ with $B_i$-modules $F$ ($a_i\mapsto S_ia_iS_i^{-1}$). Then $\varphi_i^{-1}$ are given as  $\varphi_i :F \rightarrow W_i$, $w\mapsto w\diamond S_i$. At last, denote by $\psi :  F \rightarrow  F$, $w \mapsto w\diamond U^{-1}$ the isomorphism of $B_i$-modules $F$ ($b_1\mapsto Ub_1U^{-1}$). Then the composition $\tilde{\varphi}:= \varphi_2^{-1}\circ \psi \circ \varphi_1$ determines an isomorphism of $A_i$-modules $W_i$. By construction, it preserves the $\Ord$-order filtration on $W_i$ and on $A_i$. Let $a_1, \ldots ,a_n\in A_1$ be formally quasi-elliptic elements of the same $\Ord$-order, with $\ord_{\Gamma}(a_1)=(0, \ldots , 0, p)$, $\ord_{\Gamma}(a_i)=(\underline{0})-(i-1)$. Let $\tilde{a}_1$ be the monic $p$-th root of $a_1$: $\tilde{a}_1^p=a_1$. 
Applying lemma \ref{L:linear_changes} and remark \ref{R:admissible_changes}, we can find an admissible linear change of variables $\varphi$ such that $\ord_{\Gamma}(\varphi (\tilde{\varphi} (a_i)))=\ord_{\Gamma}(a_i)$, $i=1, \ldots ,n$. Applying theorem \ref{T:lemma8} and lemma \ref{L:lemma7}, we can find a 1-admissible operator $T$ such that $T\varphi (\tilde{\varphi} (a_i))T^{-1}=a_i$, $i=2, \ldots ,n$ and $T\varphi (\tilde{\varphi} (\tilde{a}_1))T^{-1}=\tilde{a}_1$. Then $T \varphi (\tilde{\varphi} (A_1))T^{-1}=A_1$. For, arguing as in lemma \ref{L:linear_changes}, for any $a\in A_1$ we have $a\tilde{a}_1^{-\Ord (a)}\in K[[a_2\tilde{a}_1^{-\Ord (a_2)}, \ldots , a_n\tilde{a}_1^{-\Ord (a_n)}, \tilde{a}_1^{-1}]]$.  Thus, $\tilde{\varphi} = \varphi^{-1} \circ Ad(T)$, i.e. $\varphi (A_2)=T^{-1}A_1T$ and $(A_2, W_2)$ is weakly equivalent to $(A_1, W_1)$. 

Conversely, two weakly equivalent quasi-elliptic 1-Schur pairs of rank one give, via the construction 2 and proposition \ref{P:admissible_changes}, two weakly equivalent quasi-elliptic rings in $\hat{D}_n^{sym}$. 
\end{proof}

\begin{rem}
If $K_y\neq K$, the theorem holds if an additional assumption is added: namely, in each equivalence class of Schur pairs corresponding to quasi-elliptic rings via construction 1, there should exist a representative pair satisfying the extra condition of remark \ref{R:admissible_changes}, item 1). 
\end{rem}

\begin{ex}
\label{Ex:Last}
Let's illustrate theorem \ref{T:schurpair1} as well as some other theorems of these paper on the explicit example calculated in \cite[Sec. 6]{BurbanZheglov2017}. 

Recall that there was a family of 1-Schur pairs $(A, W_{\beta})$ defined, where $A, W_{\beta}\in \dc [z_1,z_2]$, $A=\dc [z_1^2, z_1^3,  z_2^2, z_2^3]$, where $z_i=\partial_i$, and 
$$
W_\beta = \dc \cdot w + (\xi_2 + \xi_2^2 z_2 + \beta z_1) z_1^2 \dc [z_1] + (\xi_1 + \xi_1^2 z_1 + \beta z_2) z_2^2 \dc [z_2] + z_1^2 z_2^2 \dc [z_1, z_2],
$$
where
$
w = 1 + \xi_1 z_1 + \xi_2 z_2 + (\xi_1 \xi_2 - \beta) \cdot \left(z_1 z_2 + \left(\dfrac{z_1^2}{\xi_2^2} + \dfrac{z_2^2}{\xi_1^2}\right)\right).
$
In \cite[Th. 6.5]{BurbanZheglov2017} corresponding Sato operators $S_{\beta}$ were found, $S_{\beta}=S_0+\beta T$, where 
$$
S_0:= \partial_1 \partial_2 + \frac{1}{\xi_2 - x_2} \partial_1 + \frac{1}{\xi_1 - x_1} \partial_2 + \frac{1}{(\xi_1 - x_1)(\xi_2 - x_2)}
$$
and
\begin{equation*}
\begin{split}
T =  &   \frac{1}{(\xi_1 - x_1)(\xi_2 - x_2)}\left(\frac{1}{\xi_2}  \left(\delta_2 \partial_1 + (\xi_1-x_1) \delta_2\partial_1^2\right)  +
         \frac{1}{\xi_1}\left(\delta_1 \partial_2 + (\xi_2-x_2) \delta_1 \partial_2^2\right)\right)   + \\
           &  \frac{1}{(\xi_1 \xi_2 - \beta)(\xi_1 - x_1) (\xi_2 - x_2)} \delta_1 \delta_2\left(1 + \beta \left(\frac{\partial_1}{\xi_2} + \frac{\partial_2}{\xi_1}\right)\right),
\end{split}
\end{equation*}

It is not difficult to see that $A$ is not quasi-elliptic, but for a generic linear change of variables (obviously, all linear changes are admissible in our case) it becomes quasi-elliptic. Besides, for a generic linear change $\varphi$ we have $\Sup (\varphi (W_{\beta})z_2^{-2})=F$. By definition, all such quasi-elliptic Schur pairs $(\varphi (A), \varphi (W_{\beta})z_2^{-2})$ are weakly equivalent.  By theorem \ref{T:SatoAction}, item 3 there exists another Sato operator $S_n$ (invertible in $\hat{E}_n$) for the space $\varphi (W_{\beta})z_2^{-2}$. By theorem \ref{T:SatoAction}, item 4 we have $\varphi (S_{\beta})\partial_2^{-2}=U\circ S_n$ for some unity $U\in \hat{U}_n^{sym}$. 

By theorem \ref{T:Schur,n} the linear change $\varphi$ can be defined by conjugation with a Schur operator $S_{\varphi}\in \hat{D}_2^{sym}$ of order zero. Then the corresponding rings of commuting operators are just conjugated rings $S_{\varphi}\mathtt{L}(A)S_{\varphi}^{-1}$. By theorem \ref{T:schurpair1} they are weakly equivalent as well. 

Besides, such pairs $(\varphi (A), \varphi (W_{\beta})z_2^{-2})$ fit definition of a Schur pair (and even a pre-Schur pair of rank one) from papers \cite{Zheglov2013}, \cite[Def. 14]{Zheglov2018}, and therefore determine an algebraic-geometric pre-spectral data, see e.g. \cite[Th. 4]{Zheglov2018} (in these constructions the  Sato operator $S_n$ above was essentially used). By \cite[Th. 3]{Zheglov2018} the last data can be extended to a (reduced) geometric datum $(X, C, p, \cf_{\beta})$ of rank one, where $X$ is a projective surface (a natural compactification of $\Spec (A)$), $C$ is an integral ample Cartier divisor (by \cite[Th. 2]{Zheglov2018}), $\cf_{\beta}=\Proj (\widetilde{W_{\beta}})$ is a torsion free spectral sheaf with fixed Hilbert polynomial (see \cite[Sec. 6]{BurbanZheglov2017} or \cite[Def. 13, 14]{Zheglov2018}), and $p$ is a regular point on $C$. 

From \cite[Sec. 6]{BurbanZheglov2017} we know that the normalisation of the surface $\Spec (A)$ is just the affine plane $\da^2$. By \cite[Th.2.1.]{KOZ2014} we have $C^2=1$ and $C$ is a rational curve, and therefore by \cite[Th. 3.2]{KurkeZheglov} the normalisation of $X$ is isomorphic to the projective plane $\dpp^2$. From the explicit description of the spaces $W_{\beta}$ and from definition of the spectral sheaf $\cf_{\beta}$ it is not difficult to see that $\cf_{\beta}|_C\simeq n_*(\co_{\sdp^1})$, where $n:\dpp^1\rightarrow C$ is the normalisation morphism, iff $\beta =0$. In this case the operator $S_0$ is just a differential operator and $\varphi (S_{\beta})\partial_2^{-2}= cS_n$, $c\in \dc$. In view of \cite[Th. 7.9]{Zheglov_belovezha} this confirms the conjecture 7.11 about characterisation of commuting PDOs between all quasi-elliptic rings from \cite{Zheglov_belovezha}.

It is not difficult to see that the corresponding to such a geometric data Schur pairs from \cite[Th. 1]{Zheglov2018} will be weakly equivalent for different choices of $p$. Thus, the isomorphism class of weakly equivalent Schur pairs (or an isomorphism class $[S_{\varphi}\mathtt{L}(A)S_{\varphi}^{-1}]$ of weakly equivalent quasi-elliptic rings) determines an isomorphism class of pre-spectral datum $(X,C,\cf )$. A  detailed description of this correspondence (in a more general situation) will appear in \cite{Zheglov2020}. 
\end{ex}


\vspace{0.5cm}

\noindent A. Zheglov,  Lomonosov Moscow State  University, faculty
of mechanics and mathematics, department of differential geometry
and applications, Leninskie gory, GSP, Moscow, \nopagebreak 119899,
Russia
\\ \noindent e-mail
 $azheglov@math.msu.su$, $alexander.zheglov@math.msu.ru$, $abzv24@mail.ru$

\end{document}